\def\wh#1{\widehat{#1}}
\def\wt#1{\widetilde{#1}}
\theoremstyle{plain}
    \newtheorem{theorem}{Theorem}[section]
    \newtheorem{proposition}[theorem]{Proposition}
    \newtheorem{lemma}[theorem]{Lemma}
    \newtheorem{corollary}[theorem]{Corollary}
      \newtheorem{proposition-definition}[theorem]{Proposition-Definition}
\theoremstyle{definition}
    \newtheorem{definition}[theorem]{Definition}
    \newtheorem{notation}[theorem]{Notation}
    \newtheorem{remark}[theorem]{Remark}
	\newtheorem{construction}[theorem]{Construction}
\def\Alphabet{A,B,C,D,E,F,G,H,I,J,K,L,M,N,O,P,Q,R,S,T,U,V,W,X,Y,Z}
\def\alphabet{a,b,c,d,e,f,g,h,i,j,k,l,m,n,o,p,q,r,s,t,u,v,w,x,y,z}
\def\endpiece{xxx}
\def\makeAlphabet[#1]{\expandafter\makeA#1,xxx,}
\def\makealphabet[#1]{\expandafter\makea#1,xxx,}
\def\makeA#1,{\def\temp{#1}\ifx\temp\endpiece\else%
\mkbb{#1}\mkfrak{#1}\mkbf{#1}\mkcal{#1}\mkscr{#1} \mkbs{#1} \expandafter\makeA\fi}%
\def\makea#1,{\def\temp{#1}\ifx\temp\endpiece\else\mkfrak{#1}\mkbf{#1} \mkbs{#1} \expandafter\makea\fi}%
\def\mkbb#1{\expandafter\def\csname bb#1\endcsname{\mathbb{#1}}}
\def\mkfrak#1{\expandafter\def\csname fr#1\endcsname{\mathfrak{#1}}}
\def\mkbf#1{\expandafter\def\csname b#1\endcsname{\mathbf{#1}}}
\def\mkcal#1{\expandafter\def\csname c#1\endcsname{\mathcal{#1}}}
\def\mkscr#1{\expandafter\def\csname s#1\endcsname{\mathscr{#1}}}
\def\mkbs#1{\expandafter\def\csname bs#1\endcsname{{\boldsymbol{#1}}}}
\def\makeop[#1]{\xmakeop#1,xxx,}
\def\mkop#1{\expandafter\def\csname #1\endcsname{{\mathrm{#1}}}} %
\def\xmakeop#1,{\def\temp{#1}\ifx\temp\endpiece\else\mkop{#1} \expandafter \xmakeop\fi}%
\def\makeup[#1]{\xmakeup#1,xxx,}
\def\mkup#1{\expandafter\def\csname #1\endcsname{{\mathrm{#1}\,}}} %
\def\xmakeup#1,{\def\temp{#1}\ifx\temp\endpiece\else\mkup{#1} \expandafter\xmakeup\fi}%
\begin{document}
\title{Rigid analytic reconstruction of Hyodo--Kato theory}
\author[Ertl]{Veronika Ertl}
\address{Université Caen Normandie, Laboratoire de Mathématiues Nicolas Oresme, 6 boulevard Maréchal Juin, 14032 Caen CEDEX, France}
\email{veronika.ertl@unicaen.fr}
\author[Yamada]{Kazuki Yamada}
\address{Gakushuin University, Department of Mathematics, 1-5-1 Mejiro, Toshima-ku, Tokyo, Japan}
\email{k.yamada@gakushuin.ac.jp}
\date{\today}

\selectlanguage{british}
\begin{abstract}
	We give a new and very intuitive construction of Hyodo--Kato cohomology and the Hyodo--Kato map, based on logarithmic rigid cohomology.
	We show that it is independent of the choice of a uniformiser and study its dependence on  the  choice of a branch of the $p$-adic logarithm.
	Moreover, we show the compatibility with the classical construction of Hyodo--Kato cohomology and the Hyodo--Kato map.\\

\noindent
\textsc{R\'esum\'e.}
On met en place une nouvelle construction tr\`es intuitive de cohomologie de Hyodo--Kato et de morphisme de Hyodo--Kato,
fond\'ee sur la cohomologie rigide logarithemique.
On d\'emontre qu'elle est ind\'ependante du choix d'une uniformisante et on \'etudie sa d\'ependance du choix d'une branche 
du logarithme $p$-adique.
En outre, on montre la compatibilit\'e \`a la construction classique de la cohomologie et du morphisme de Hyodo--Kato.\\

\noindent
\textit{Key Words}: Rigid cohomology, logarithmic geometry, Hyodo--Kato theory\\
\textit{Mathematics Subject Classification 2010}:  14F30, 14F40  14G22
\end{abstract}

\selectlanguage{british}

\thanks{This research was partially supported by the JSPS grant KAKENHI 18H05233, 22K13899, and 23KJ0332, by the DFG grant SFB 1085 ``Higher Invariants'',  
by the KLL 2018--2019 PhD.\,Program Research Grant,
and by the Research Institute for Mathematical Sciences, an International Joint Usage/Research Center located in Kyoto University.}

\maketitle

\setcounter{tocdepth}{1}
\tableofcontents


%
\section*{Introduction}\label{Sec: Intro}
%

Let $V$ be a complete discrete valuation ring of mixed characteristic $(0,p)$ with fraction field $K$ and perfect residue field $k$.
Let $F$ be the fraction field of the ring of Witt vectors $W(k)$.
Hyodo--Kato theory was established by Hyodo and Kato \cite{HK} as an answer to the question of Jannsen \cite{Ja} concerning a cohomological construction of $(\varphi,N)$-module structures on the de Rham cohomology groups of proper schemes over $V$ of semistable reduction.
More precisely, they defined the crystalline Hyodo--Kato cohomology $R\Gamma_\HK^\cris(X)$ whose rational cohomology groups are finite dimensional $F$-vector spaces, endowed with a Frobenius-linear automorphism $\varphi$, an $F$-linear endomorphism $N$ satisfying $N\varphi= p \varphi N$, and a homomorphism $\Psi_\pi^\cris\colon H^{\cris,i}_\HK(X)\rightarrow H^i_\dR(X_K)$ which is an isomorphism after tensoring with $K$.
The latter depends on the choice of a uniformiser $\pi$ of $V$, and a change of uniformiser is encoded in a transition function involving the exponential of the monodromy.

Hyodo--Kato cohomology is a key object in the formulation of the semistable conjecture of Fontaine and Jannsen \cite{Fo} which restores the Galois action on the \'etale cohomology from the additional structures on the Hyodo--Kato and de Rham cohomology.
Therefore it plays an important role in several areas of arithmetic geometry, including the research of special values of $L$-functions.
The semistable conjecture is now a theorem due to different proofs by Tsuji \cite{Ts}, Faltings \cite{Fa2}, Niziol \cite{Ni}, and Beilinson \cite{Bei}. 
In particular, Beilinson represented  crystalline Hyodo--Kato cohomology by a complex with nilpotent monodromy.
What is more, his construction of the  Hyodo--Kato map is independent of the choice of a uniformiser.
This enabled Nekov\'a\v{r} and Nizio\l{} to extend syntomic cohomology to $K$-varieties.
However Beilinson's Hyodo--Kato theory doesn't lend itself to explicit computations, as it is based on very abstract (crystalline) considerations.

Whereas the original construction used crystalline methods, Gro\ss{}e-Kl\"onne proposed a rigid analytic version of the Hyodo--Kato map \cite{GK3}, based on log rigid cohomology instead of log crystalline cohomology.
It provides a description of the Hyodo--Kato map in terms of $p$-adic differential forms on certain dagger spaces. 
However it depends on the choice of a uniformiser of $V$ and passes through several zig-zags of quasi-isomorphisms whose intermediate objects are quite complicated.
For example, even if $X$ is affine, we have to pass through the cohomology of simplicial log schemes with boundary which are no longer affine.
Therefore his Hyodo--Kato map is also difficult to compute explicitly.

Our motivation is to establish a rigid Hyodo--Kato theory which would lend itself to explicit computation and which would be independent of the choice of a uniformiser.
In this paper, we  first give a new definition of  rigid Hyodo--Kato cohomology $R\Gamma_\HK^\rig(\cX)$ using a construction due to Kim and Hain \cite{KH}.
A difference to \cite{KH} is that we use a complex $\omega^\bullet_{\cZ/W^\varnothing,\bbQ}[u]$ on a dagger space over the open unit disk, instead of a complex on the fiber at zero.
This small difference allows us to define a rigid Hyodo--Kato map $\Psi_{\pi,q}\colon R\Gamma_\HK(\cX)\rightarrow R\Gamma_\dR(\cX)$ in a very simple and natural way.
Thus it can be computed explicitly in terms of \v{C}ech cocycles.
The authors expect that this feature will be instrumental in extending the construction of the Hyodo--Kato morphism to more general  variants of cohomology theories,
such as cohomology with coefficients in log overconvergent $F$-isocrystals.

Furthermore, our rigid Hyodo--Kato map slightly generalises the original Hyodo--Kato map, in the sense that the definition of $\Psi_{\pi,q}$ involves not only the choice of a uniformiser $\pi$ of $V$ but also of a non-zero element $q$ of the maximal ideal of $V$.
We will see that it is in fact independent of the choice of $\pi$, and depends only on the branch of the $p$-adic logarithm defined by $q$.
This is consistent with the fact that the  functor $D_{\mathrm{st}}$ associating filtered $(\varphi,N)$-modules to $p$-adic Galois representations  depends on the choice of a branch of the $p$-adic logarithm.
We will also see that our construction of the rigid Hyodo--Kato map for the choice $q=\pi$ is compatible with  Hyodo--Kato's original and with  Gro\ss e-Kl\"{o}nne's construction for the choice $\pi$.

\subsubsection*{Overview of the paper}

We start with technical preparations in \S \ref{Sec: Weak formal schemes}, where we introduce weak formal schemes 
which are not necessarily adic over the base.
In \S \ref{subsection: weak formal schemes} we first discuss notions and properties of pseudo weakly complete finitely generated (pseudo-wcfg) algebras over
a base ring $R$ with a distinguished ideal $I$,
and use them as building blocks to define weak formal schemes. 
We point out that  weak completeness is, unlike completeness, a relative notion.
While some notions concerning weak formal schemes, that are not necessarily $p$-adic, 
depend a priori on the choice of  an ideal of definition and its generators, 
it turns out that they are independent of such a choice.
In the case that the base is a complete discrete valuation ring of mixed characteristic $(0,p)$,
we define the dagger space associated to a weak formal scheme.
Crucially for the construction of rigid cohomology, we discuss differential forms of dagger spaces.

To be able to make use of these weak formal schemes in the construction of our (log) rigid cohomology we need rather strong lifting properties.
To facilitate this, we discuss issues related to smoothness and \'{e}taleness in \S \ref{subsection: smoothness}.
In particular we introduce strong smoothness and strong \'{e}taleness in the context of weak formal schemes and study necessary and sufficient conditions for these properties.

In \S \ref{subsection: weak formal log schemes} we handle in particular weak formal log schemes 
and extend  important concepts from the usual to the logarithmic case.
Notably, we discuss the exactification of an immersion of weak formal log schemes. 

The latter simplifies considerably the definition of log rigid cohomology that we recall in general in \S \ref{Sec: Log rig coh}: 
In order to define the log rigid cohomology of a log scheme in positive characteristic $p$, we consider weak formal local lifts to characteristic $0$ and show how to glue them.
The existence of exactifications allows us to only consider homeomorphic exact closed immersions at this point, instead of general closed immersions.
A point that plays an important role in the constructions of the subsequent section is that using Godement resolution, we may find an explicit complex representing log rigid cohomology. 

Based on this, we introduce in \S \ref{Sec: Rig HK coh} more specifically rigid Hyodo--Kato cohomology with Frobenius and monodromy operators.
The core of this construction is a Kim--Hain complex on a dagger space over the open unit disk -- 
a non-$p$-adic weak formal scheme over a complete discrete valuation ring of mixed characteristic $(0,p)$.
We also relate it to  other log rigid cohomology theories.
To establish these relations, it turned out to be useful 
to understand the Kim--Hain complexes in terms of (total complexes of) certain double complexes,
which we discuss in a technical part at the beginning of this section.

The use of Kim--Hain complexes over the open unit disk allows us to define the rigid Hyodo--Kato map in \S \ref{Sec: Rig HK map}
in a very direct way.
We study  dependence of the rigid Hyodo--Kato map on the choice of a uniformiser and a branch of the $p$-adic logarithm, 
and show that it becomes an isomorphism after tensoring with $K$.

To illustrate our results as well as the advantage of the use of not necessarily $p$-adic weak formal schemes, 
we compute in \S \ref{Sec: Tate curve} the rigid Hyodo--Kato cohomology and the rigid Hyodo--Kato map of a Tate curve.
In \S \ref{Sec: comparison}, we compare our rigid Hyodo-Kato map and the classical crystalline Hyodo--Kato map  in suitable situations.

\subsubsection*{Acknowledgements}

We would like to thank Kenichi Bannai for helpful comments and for creating a pleasant working atmosphere which allowed us to enjoy many productive discussions.
It is a pleasure to thank Yoshinosuke Hirakawa, David Loeffler, Yukiyoshi Nakkajima, Wies\l{}awa Nizio\l{} and Sarah Livia Zerbes for stimulating discussions and helpful comments related to the topic of this article. 
We are indebted to a referee of \cite{EY2} for pointing out to us an error in the literature that previously played a role in certain computations of the present paper. We also would like to thank Christopher Lazda for explaining this problem to us in detail.
We warmly thank the referee of this article and the referee of \cite{Ya} for many suggestions to improve this article.

%
\section{Weak formal schemes}\label{Sec: Weak formal schemes}
%

In this section we slightly generalise the notion of weak formal schemes, originally defined by Meredith \cite{Me}.
While in his sense all weak formal schemes are adic over the base ring, we don't make this assumption.
This takes advantage of the favourable properties of weak formal schemes,  but in addition simplifies many of the technical arguments.
A key point is that it allows us to canonically define exactifications of immersions of weak formal log schemes.

\subsection{Basic notions concerning weak formal schemes}\label{subsection: weak formal schemes}

Let $R$ be a noetherian ring with an ideal $I$.
For any $n\geq 0$, we denote  by $R_{[n]}$ the polynomial algebra $R[s_1,\ldots,s_n]$ and by $I_{[n]}\subset R_{[n]}$  the ideal generated by $I$ and $s_1,\ldots,s_n$.
Denote by $\widehat{R}$ the $I$-adic completion of $R$.

Recall that for an $R$-algebra $A$, the $I$-adic weak completion $A^\dagger$ of $A$ is the $R$-subalgebra of $\widehat{A} = \varprojlim_n A/I^nA$ which consists of elements $\xi$ which can be written as
	\[\xi = \sum_{i\geq 0}P_i(x_1,\ldots,x_r),\]
where $x_1,\ldots,x_r\in A$, $P_i(X_1,\ldots,X_r)\in I^i\cdot R[X_1,\ldots,X_r]$, and there exists a constant $c>0$ such that
	\[c(i + 1)\geq\mathrm{deg}P_i\]
for all $i\geq 0$.

An $R$-algebra $A$ is said to be \textit{weakly complete} if $A^\dagger = A$, and \textit{weakly complete finitely generated (wcfg)} if for some $k\in\mathbb{N}$ there exists a surjection $R[t_1,\ldots,t_k]^\dagger \rightarrow  A$ over $R$. 

According to a remark in \cite[paragraph after Thm.\ 1.5]{MW} weakly complete algebras over $R$ and $\widehat{R}$ are equivalent to each other.
In the same way, one can see that this is also true for weakly complete algebras which are in addition weakly finitely generated, as we show in the following lemma.

\begin{lemma}\label{lem: base completion}
	For any wcfg $R$-algebra $A$, the structure map uniquely extends to a homomorphism $\widehat{R} \rightarrow  A$ of $R$-algebras, and $A$ is $I\widehat{R}$-adically wcfg over $\widehat{R}$.
	Conversely, a wcfg algebra over $\widehat{R}$ is also wcfg over $R$.
\end{lemma}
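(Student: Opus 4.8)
The plan is to reduce both directions of the lemma to a single comparison of weak completions of polynomial rings: for every $k\ge 0$, the $I_{[k]}$-adic weak completion $R[t_1,\dots,t_k]^\dagger$ of $R[t_1,\dots,t_k]$ over $R$ should coincide, as a subring of the common $I_{[k]}$-adic completion, with the $I_{[k]}\widehat{R}$-adic weak completion $\widehat{R}[t_1,\dots,t_k]^\dagger$ of $\widehat{R}[t_1,\dots,t_k]$ over $\widehat{R}$. That the two completions are canonically identified I would establish first, using that $I$ is finitely generated since $R$ is noetherian: then $\widehat{R}/I^{n}\widehat{R}\cong R/I^{n}$ for all $n$, so degree by degree (in the $t$-monomials) $\widehat{R}[t_1,\dots,t_k]/(I_{[k]}\widehat{R})^{n}\cong R[t_1,\dots,t_k]/I_{[k]}^{n}$, whence $\widehat{R[t_1,\dots,t_k]}\cong\widehat{\widehat{R}[t_1,\dots,t_k]}$ compatibly with the structure maps. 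Granting the comparison, the lemma follows formally: a wcfg presentation $R[t_1,\dots,t_k]^\dagger\twoheadrightarrow A$ becomes verbatim a surjection $\widehat{R}[t_1,\dots,t_k]^\dagger\twoheadrightarrow A$ over $\widehat{R}$, exhibiting $A$ as $I\widehat{R}$-adically wcfg over $\widehat{R}$; restricting that surjection to the subring $\widehat{R}\subseteq\widehat{R}[t_1,\dots,t_k]^\dagger$ produces a ring homomorphism $\widehat{R}\to A$ extending the structure map of $A$ (the presentation being over $R$); and conversely a presentation of an $\widehat{R}$-algebra over $\widehat{R}$ is in particular one over $R$, so every wcfg $\widehat{R}$-algebra is wcfg over $R$.

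For the comparison itself, the inclusion $R[t_1,\dots,t_k]^\dagger\subseteq\widehat{R}[t_1,\dots,t_k]^\dagger$ is immediate: taking $r=0$ in the series description, every convergent series $\sum_{i\ge 0}a_i$ with $a_i\in I^iR$ already lies in $R[t_1,\dots,t_k]^\dagger$, so $\widehat{R}\subseteq R[t_1,\dots,t_k]^\dagger$, and since $I^iR\subseteq I^i\widehat{R}=(I\widehat{R})^i$ any series exhibiting an element of $R[t_1,\dots,t_k]^\dagger$ is simultaneously a series over $\widehat{R}$. The substance is the reverse inclusion. For this I would take an element $\sum_i Q_i(x_1,\dots,x_r)$ with $x_1,\dots,x_r\in\widehat{R}[t_1,\dots,t_k]$, $Q_i\in(I\widehat{R})^{i}\cdot\widehat{R}[X_1,\dots,X_r]$, and $\deg Q_i\le c(i+1)$, expand each coefficient of each $Q_i$ — an element of $I^{i}\widehat{R}$ — as a convergent series whose $l$-th term lies in $I^{i+l}R$ (again available because $\widehat{R}/I^{n}\widehat{R}\cong R/I^{n}$), substitute, and regroup the resulting double series by total $I$-order $n$. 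Because each $Q_i$ has only finitely many monomials, all of $X$-degree $\le c(i+1)\le c(n+1)$, the regrouped series takes the form $\sum_n P_n(x_1,\dots,x_r)$ with $P_n\in I^{n}R[X_1,\dots,X_r]$ and $\deg P_n\le c(n+1)$; and since the $x_j$ lie in $\widehat{R}[t_1,\dots,t_k]\subseteq R[t_1,\dots,t_k]^\dagger$, which is already weakly complete over $R$, this series converges inside $R[t_1,\dots,t_k]^\dagger$. Justifying the rearrangement and checking that the degree bound survives the reindexing is precisely the computation sketched in \cite[paragraph after Thm.~1.5]{MW} for the case $k=0$, now carried out with the additional variables $t_1,\dots,t_k$ present.

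Finally, the extension $\widehat{R}\to A$ is unique because $R$ is $I$-adically dense in $\widehat{R}$ while $A$, being weakly complete, is a subring of its $I$-adic completion and hence $I$-adically separated; any $R$-algebra homomorphism out of $\widehat{R}$ maps $I^{n}\widehat{R}$ into $I^{n}A$ and is thus $I$-adically continuous, so two such extensions, agreeing on $R$, must coincide. The only genuinely nontrivial step in this plan is the reverse inclusion in the comparison of weak completions — arranging the double series to converge and controlling the degrees after regrouping; everything else is either formal or a direct appeal to standard properties of weak completion (idempotence, stability under quotients) recorded in \cite{MW}.
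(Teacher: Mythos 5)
Your proposal is correct and follows essentially the same route as the paper: both reduce the lemma to the identity $R[t_1,\dots,t_k]^\dagger=\widehat{R}[t_1,\dots,t_k]^\dagger$ inside the common $I$-adic completion of $R[t_1,\dots,t_k]$ and prove the nontrivial inclusion by $I$-adically expanding the $\widehat{R}$-coefficients and regrouping by total $I$-order with the same degree bookkeeping (the paper first rewrites the element as a series in the $t_i$ alone, whereas you keep general weak generators and conclude via idempotence of the weak completion of $R[t_1,\dots,t_k]$ --- an immaterial difference), the first assertion being in both cases a quotation of \cite[Thm.~1.5]{MW} or a short direct argument. One small caveat: your ``$I_{[k]}$'' must be read as the extended ideal $I\cdot R[t_1,\dots,t_k]$ rather than the paper's $I_{[n]}=(I,s_1,\dots,s_n)\subset R_{[n]}$, since the wcfg condition uses the $I$-adic weak completion of the polynomial ring; with that reading your degree-by-degree identification of the two completions is exactly what is needed.
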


\begin{proof}
	The first claim of the lemma is just \cite[Thm.~1.5]{MW}, because $R^\dagger = \widehat{R}$.
	
	Then it suffices to show the equality
	\[R[t_1,\ldots,t_k]^\dagger=\widehat{R}[t_1,\ldots,t_k]^\dagger\]
	as subsets of the $IR[t_1,\ldots,t_k]$-adic completion of $R[t_1,\ldots,t_k]$.
	Here the left hand side is the $I$-adic weak completion of $R[t_1,\ldots,t_k]$ over $R$ and the right hand side is the $I\widehat{R}$-adic weak completion of $\widehat{R}[t_1,\ldots,t_k]$ over $\widehat{R}$.
	
	The inclusion $R[t_1,\ldots,t_k]^\dagger\subset\widehat{R}[t_1,\ldots,t_k]^\dagger$ is clear by definition.
	We prove the opposite inclusion.
	Note that any element of $\widehat{R}[t_1,\ldots,t_k]^\dagger$ is expressed as an infinite sum
	\[\xi=\sum_{i\geq 0}P_i(t_1,\ldots,t_k)\]
	where $P_i\in I^i\cdot \widehat{R}[t_1,\ldots,t_k]$ and there exists a constant $c>0$ such that
	\[\deg P_i\leq c(i+1)\]
	for all $i$. (In other words, $\{t_1,\ldots,t_k\}$ is a set of weak generators of $\widehat{R}[t_1,\ldots,t_k]^\dagger$ in the sense of \cite[Def.\ 2.1]{MW}.)
	Taking $I$-adic expansions of coefficients, each $P_i$ can be written as
		\[P_i(t_1,\ldots,t_k) = \sum_{j\geq i}Q_{i,j}(t_1,\ldots,t_k),\]
	where $Q_{i,j}(t_1,\ldots,t_k)\in I^j\cdot R[t_1,\ldots,t_k]$ and $\mathrm{deg}P_i  =  \max_{j\geq i} \mathrm{deg}Q_{i,j}$. 
	If we set
		\[P'_j(t_1,\ldots,t_k): = \sum_{i\leq j}Q_{i,j}(t_1,\ldots,t_k)\]
	for any $j\geq 0$, then we have
		\[\xi = \sum_{j\geq 0}P'_j(t_1,\ldots,t_k),\ P'_j\in I^j\cdot R[t_1,\ldots,t_k],\]
	and
		\[\mathrm{deg}P'_j\leq\max_{i\leq j}\{\mathrm{deg} Q_{i,j}\}\leq c(j + 1).\]
	Thus $\xi$ belongs to $R[t_1,\ldots,t_k]^\dagger$.
\end{proof}

\begin{remark}
	Let $A$ be an $R$-algebra and fix elements $x_1,\ldots,x_r\in A$, $\xi\in\widehat{A}$.
	Assume that $\xi$ can be written in the form
		\[\xi = \sum_{\alpha\in\bbN^r}a_\alpha x_1^{\alpha_1}\cdots x_r^{\alpha_r}\ (a_\alpha\in R)\]
	and there exists a constant $d>0$ such that
		\[d(\mathrm{ord}_I(a_\alpha) + 1)\geq\lvert\alpha\rvert\]
	for all $\alpha\in\bbN^r$.
	Then $\xi$ can also  be written as
		\[\xi = \sum_{i\geq 0}P_i(x_1,\ldots,x_r),\]
	where $P_i(X_1,\ldots,X_r)\in I^i\cdot R[X_1,\ldots,X_r]$ and there exists a constant $c>0$ such that
	\[c(i + 1)\geq\mathrm{deg}P_i\]
	for all $i\geq 0$.
	The converse holds if $R$ is $I$-adically complete, but not in general.
\end{remark}

For $R$ and $\widehat{R}$, we always consider the $I$-adic topology. 
In particular, if we say $A$ is a topological $R$-algebra, 
then the homomorphism $R\rightarrow A$ is continuous.

Recall that a topological ring $A$ is called \textit{pre-adic} if there exists an ideal $J\subset A$ such that $\{J^n\}_{n\geq 0}$ is a fundamental system of open neighbourhoods of $0$.
In this case, $J$ is called an \textit{ideal of definition} of $A$.

\begin{definition}\label{def: pwcfg}
	Let $A$ be a topological $R$-algebra which is a pre-adic ring. We say that $A$ is
	{\it pseudo-weakly complete finitely generated (pseudo-wcfg)} with respect to $(R,I)$ if there exists an ideal of definition $J$ of $A$ and a finite generating system $f_1,\ldots,f_n\in A$ of $J$ such that $A$ is $I_{[n]}$-adically wcfg over $R_{[n]}$ with respect to the map
		\[R_{[n]} = R[s_1,\ldots,s_n] \rightarrow  A,\ s_i\mapsto f_i.\]
	This means that there exists an integer $k\geq 0$ and an $R_{[n]}$-linear surjection 
		\[\rho\colon R_{[n]}[t_1,\ldots,t_k]^{\dagger} \rightarrow  A,\]
	whose domain denotes the $I_{[n]}$-adic weak completion of $R_{[n]}[t_1,\ldots,t_k]$.
	In this case, we call $\rho$ a representation of $A$.
	A morphism of pseudo-wcfg algebras with respect to $(R,I)$ 
is a morphism of topological $R$-algebras.
Note that any pseudo-wcfg algebra is noetherian, just as a wcfg algebra is noetherian by \cite[Thm.\,2.1]{MW}.
\end{definition}

\begin{lemma}\label{lem: independence}
	Let $A$ be a topological $R$-algebra with an ideal of definition.
	Choose elements $f_1,\ldots,f_n\in A$ which generates an ideal of definition of $A$.
	Then the $I_{[n]}$-adic weak completion of $A$ with respect to the map
		\[R_{[n]} = R[s_1,\ldots,s_n] \rightarrow  A,\ s_i\mapsto f_i\]
	is, up to canonical $R$-linear isomorphisms, independent of the choice of an ideal of definition and its generators.
\end{lemma}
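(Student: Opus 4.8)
The plan is to reduce the statement to a single comparison — ``adjoining extra generators of a larger ideal of definition does not change the weak completion'' — and then to prove that comparison by an explicit re-indexing of weakly convergent series. For the reduction, I would first observe that the sum of two ideals of definition of $A$ is again an ideal of definition: if $J$ and $J'$ both define the topology of $A$, then for every $k$ there is an $N$ with $J'^N\subseteq J^k$, so $(J+J')^{k+N}\subseteq J^k$, and since $J+J'\supseteq J$ is open, the $(J+J')$-adic topology is the topology of $A$; in particular $(J+J')^N\subseteq J$ and $(J+J')^{N'}\subseteq J'$ for suitable $N,N'$, whence $IA+J$, $IA+J'$ and $IA+(J+J')$ also all define the same topology and $\widehat{A}:=\varprojlim_k A/(IA+J)^k$ is unambiguous. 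Given generating systems $f_1,\dots,f_n$ of $J$ and $g_1,\dots,g_m$ of $J'$, the concatenated system generates the ideal of definition $J''=J+J'$, and the weak completion of $A$ it produces, viewed as a subring of $\widehat{A}$, does not depend on the ordering of the generators. Hence it suffices to prove: \emph{if $f_1,\dots,f_n$ generate an ideal of definition $J$ and $h_1,\dots,h_l\in A$ are such that $f_1,\dots,f_n,h_1,\dots,h_l$ generate an ideal of definition $J''\supseteq J$ with $J''^N\subseteq J$ for some $N$, then the $I_{[n+l]}$-adic weak completion of $A$ over $R_{[n+l]}$ (with $s_{n+j}\mapsto h_j$) coincides, inside $\widehat{A}$, with the $I_{[n]}$-adic weak completion of $A$ over $R_{[n]}$.} Applying this to $J\subseteq J''$ and (after reordering) to $J'\subseteq J''$ then identifies both original weak completions with the one attached to $J''$.

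For this comparison, the inclusion of the $I_{[n]}$-adic weak completion into the $I_{[n+l]}$-adic one is immediate from $R_{[n]}\subseteq R_{[n+l]}$ and $I_{[n]}\subseteq I_{[n+l]}$. For the reverse inclusion I would take $\xi=\sum_{i\geq 0}P_i(x_1,\dots,x_r)$ with $x_\bullet\in A$, $P_i\in I_{[n+l]}^i\cdot R_{[n+l]}[X_1,\dots,X_r]$ and $\deg_X P_i\leq c(i+1)$. Since $I_{[n+l]}^i\cdot R_{[n+l]}[X]$ is exactly the set of polynomials whose coefficient of $s^\beta X^\gamma$ lies in $I^{\max(0,\,i-|\beta|)}$, I may write $P_i=\sum_{\beta,\gamma}\mu_{i,\beta,\gamma}\,s^\beta X^\gamma$ with $\beta=(\beta',\beta'')\in\mathbb{N}^n\times\mathbb{N}^l$, $\mathrm{ord}_I\mu_{i,\beta,\gamma}+|\beta|\geq i$, and $|\gamma|\leq c(i+1)$. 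Splitting each exponent $\beta''_j=Nq_j+\rho_j$ with $0\leq\rho_j<N$, and using $h_j^N\in J''^N\subseteq J$ to write $h_j^N=\sum_k b_{jk}f_k$ with $b_{jk}\in A$, I substitute $s_k\mapsto f_k$ for $k\leq n$, $s_{n+j}\mapsto h_j$ expanded via $h_j^{\beta''_j}=\bigl(\sum_k b_{jk}f_k\bigr)^{q_j}h_j^{\rho_j}$, and $X_\bullet\mapsto x_\bullet$. This rewrites $\xi$ as an infinite $R$-linear combination of monomials in the finite family $\{f_1,\dots,f_n\}\cup\{x_1,\dots,x_r\}\cup\{b_{jk}\}\cup\{h_1,\dots,h_l\}$, where the $f_k$ are the images of $s_1,\dots,s_n\in R_{[n]}$, the $x$'s, $b$'s and $h$'s serve as weak generators over $R_{[n]}$, and each $h_j$ now occurs only with exponent $\rho_j<N$.

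I would then reorganise this series by assigning the monomial arising from a triple $(i,\beta,\gamma)$ to the layer $i':=\mathrm{ord}_I\mu_{i,\beta,\gamma}+|\beta'|+\sum_j q_j$ (each such monomial lies in $I_{[n]}^{i'}\cdot R_{[n]}[\text{weak generators}]$, since its $I_{[n]}$-order is at least $i'$), and by letting $P'_{i'}$ be the sum of the monomials in layer $i'$. The two estimates
\[
\textstyle\sum_j q_j\ \leq\ i'\qquad\text{and}\qquad i\ \leq\ \mathrm{ord}_I\mu_{i,\beta,\gamma}+|\beta|\ =\ i'+(N-1)\textstyle\sum_j q_j+\textstyle\sum_j\rho_j\ \leq\ Ni'+Nl
\]
(the first since $\mathrm{ord}_I\mu_{i,\beta,\gamma}\geq 0$ and $|\beta'|\geq 0$), together with $\sum_j\rho_j<Nl$, bound the total weak-generator degree of $P'_{i'}$ by $c(Ni'+Nl+1)+i'+Nl\leq C(i'+1)$ for a constant $C=C(c,N,l)$, and they also show that each layer contains only finitely many monomials, so each $P'_{i'}$ is a genuine polynomial. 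Finally, since every monomial in layer $i'$ lies in $(IA+J)^{i'}$ while layer $i'$ only receives monomials from the $P_i$ with $i\leq Ni'+Nl$, a triangular-rearrangement argument (letting $M\to\infty$ in $\sum_{i\leq M}P_i$) yields $\xi=\sum_{i'\geq 0}P'_{i'}(\mathrm{ev})$ in $\widehat{A}$; this exhibits $\xi$ in the $I_{[n]}$-adic weak completion of $A$ over $R_{[n]}$.

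I expect the degree bookkeeping in this last step to be the main obstacle. The $P_i$ carry no bound on their degree in the $s$-variables, so the substitution $s_{n+j}\mapsto h_j$ a priori produces unboundedly many new weak-generator factors; the point is that reducing the $h_j$-exponents modulo $N$ and re-indexing by the genuine $I_{[n]}$-order $i'$ rather than by $i$ compresses these contributions, and the hypothesis $J''^N\subseteq J$ — i.e.\ that the two ideals of definition define the same topology — is exactly what forces $i\leq Ni'+Nl$, so that re-indexing lowers the layer only by a bounded multiplicative factor and the linear degree bound survives. The remaining points — the monomial-wise description of $I_{[n+l]}^i\cdot R_{[n+l]}[X]$, the finiteness of the layers, and the triangular rearrangement — are routine.
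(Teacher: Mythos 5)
Your proof is correct. The reduction via $J''=J+J'$ is sound, the monomial description of $I_{[n+l]}^i\cdot R_{[n+l]}[X]$ is exact, the layer index $i'=\mathrm{ord}_I\mu_{i,\beta,\gamma}+|\beta'|+\sum_j q_j$ gives both the membership $P'_{i'}\in I_{[n]}^{i'}\cdot R_{[n]}[\text{weak generators}]$ and, via $\sum_j q_j\leq i'$ and $i\leq Ni'+Nl$, the linear degree bound and the finiteness of each layer, and the rearrangement in $\widehat{A}$ is legitimate because for every $k$ only finitely many monomials lie outside $(IA+J)^k$. The route differs from the paper's in organisation rather than in substance: the paper compares the two generating systems directly, using $J^k\subseteq J'$ to express the monic degree-$k$ monomials $F_1,\ldots,F_N$ in the $f$'s through the $g$'s and then expanding each coefficient $\theta(h_{i,\alpha})\in A$ in the mixed form $\sum a\,f^\beta F^\gamma$ with $|\beta|<k$, re-layering by $\ell=\mathrm{ord}_I(a)+|\gamma|$; you instead pass to the nested situation $J\subseteq J''$ with the generators of $J$ included among those of $J''$, and dispose of the extra generators by Euclidean division of their exponents by $N$, where $J''^N\subseteq J$. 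Both arguments hinge on the same two estimates (linear growth of the degree in the weak generators, and a bound of the old layer index by a fixed affine function of the new one), so for the lemma itself they deliver the same thing. The one point the paper's version yields beyond the statement is that it proves $B'$ is weakly complete as an $R_{[n]}$-algebra, allowing series entries $x_i\in B'$ and recording an explicit weak generating system; this refinement is what gets reused in Corollary \ref{cor: ideal of definition}. Your argument would recover that too with an easy supplement (the added elements $b_{jk}$ and $h_j$, together with a weak generating system over $R_{[n]}$, weakly generate over $R_{[m]}$), but as a proof of Lemma \ref{lem: independence} as stated it is complete as written.
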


\begin{proof}
	Assume that ideals $J = (f_1,\ldots,f_n)$ and $J' = (g_1,\ldots,g_m)$ define the same topology on $A$.
	Let $B$ and $B'$ be the weak completions of $A$ obtained from $f_1,\ldots,f_n$ and $g_1,\ldots,g_m$ respectively.
	We denote by
		\begin{align*}
		\theta\colon R_{[n]} \rightarrow  A,\ s_i\mapsto f_i  & & \text{and} & & \theta'\colon R_{[m]} \rightarrow  A,\ s_j\mapsto g_j
		\end{align*}
	the structure maps over $R_{[n]}$ and $R_{[m]}$ respectively.
	It is enough to show that $B$ and $B'$ coincide with each other as $A$-subalgebras of $\widehat A$, the $J$-adic completion of $A$.
	
	We claim that $B'$ is $I_{[n]}$-adically weakly complete over $R_{[n]}$ with respect to the composition $R_{[n]} \xrightarrow{\theta} A \rightarrow  B'$.
	Thus consider a series
		\begin{equation}\label{eq: Q1}
		\xi = \sum_{q\geq 0}P_q(x_1,\ldots,x_r)\in\widehat{A},
		\end{equation}
		where $x_1,\ldots,x_r\in B'$, $P_q(X_1,\ldots,X_r)\in I_{[n]}^{q}\cdot R_{[n]}[X_1,\ldots,X_r]$, and there exists a constant $c>0$ such that
		\begin{equation}\label{eq: MW1}
		c(q+1)\geq\mathrm{deg}P_q
		\end{equation}
	for any $q\geq 0$.
	This is an element in the $I_{[n]}$-adic weak completion of $B'$ with respect to $R_{[n]} \xrightarrow{\theta} A \rightarrow  B'$.
	We will show that $\xi$ lies in fact in $B'$.
	Since $J$ and $J'$ define the same topology, there exists an integer $k\geq 0$ with $J^k\subset J'$.
	Let $F_1,\ldots,F_N\in A$ be the monic monomials in $f_1,\ldots,f_n$ of degree $k$, i.e. each $F_\nu$ is of the form $F_\nu = f_1^{k_1}\cdots f_n^{k_n}$ with $k_1 + \cdots k_n = k$.
	Each $F_\nu$ has a presentation
		\begin{equation}\label{eq: Q2}
		F_\nu = \sum_{j = 1}^my_{\nu,j}g_j
		\end{equation}
	in terms of the generators $g_1,\ldots,g_m$ of $J'$ with some $y_{\nu,j}\in A$.
	For each $q\geq 0$, $P_q$ can be written as
		\begin{equation}\label{eq: Q3}
		P_q(X_1,\ldots,X_r) = \sum_{\substack{\alpha\in\bbN^r\\ \lvert\alpha\rvert\leq\mathrm{deg}P_q}}h_{q,\alpha}X_1^{\alpha_1}\cdots X_r^{\alpha_r}
		\end{equation}
	with $h_{q,\alpha}\in I_{[n]}^q\subset R_{[n]}$.
	Moreover, the image of $h_{q,\alpha}$ in $A$ under $\theta$ is a series
		\begin{equation}\label{eq: Q4}
		\theta(h_{q,\alpha}) = \sum_{\substack{\beta\in\bbN^n\\ \lvert\beta\rvert<k}} \sum_{\substack{\gamma\in\bbN^N\\ \lvert\gamma\rvert\leq M_{q,\alpha,\beta}}}a_{q,\alpha,\beta,\gamma}f_1^{\beta_1}\cdots f_n^{\beta_n}F_1^{\gamma_1}\cdots F_N^{\gamma_N}
		\end{equation}
	for some integers $M_{q,\alpha,\beta}\geq 0$, where $a_{q,\alpha,\beta,\gamma}\in R$ and
		\begin{equation}\label{eq: indep}
		\mathrm{ord}_I(a_{q,\alpha,\beta,\gamma}) + \lvert\beta\rvert + k\lvert\gamma\rvert\geq q.
		\end{equation}
	Thus we obtain
		\[\xi = \sum_{\ell\geq 0}\sum_{\lambda\in\Lambda_\ell}Q_\lambda \left((x_\mu)_{1\leq \mu\leq r},(y_{\nu,j})_{\substack{1\leq \nu\leq N\\ 1\leq j\leq m}},(f_i)_{1\leq i\leq n}\right),\]
	where $\Lambda_\ell$ is the set of all systems $ \left(q,\alpha,\beta,\gamma,(\ell_{\nu,j})_{\substack{1\leq \nu\leq N\\ 1\leq j\leq \gamma_\nu}}\right)$ such that
		\begin{align*}
		&q\geq 0,\\
		&\alpha\in\bbN^r, & & \lvert\alpha\rvert\leq\mathrm{deg}P_q,\\
		&\beta\in\bbN^n, & & \lvert\beta\rvert<k,\\
		&\gamma\in\bbN^N, & & \lvert\gamma\rvert\leq M_{q,\alpha,\beta},\\
		&1\leq\ell_{\nu,j}\leq m, & & \mathrm{ord}_I(a_{q,\alpha,\beta,\gamma}) + \lvert\gamma\rvert = \ell,
		\end{align*}
	and the polynomial $Q_\lambda$ is defined by
		\begin{eqnarray*}
		Q_\lambda \left((X_\mu)_{1\leq \mu\leq r},(Y_{\nu,j})_{\substack{1\leq \nu\leq N\\ 1\leq j\leq m}},(Z_i)_{1\leq i\leq n}\right)&: = &a_{q,\alpha,\beta,\gamma}X_1^{\alpha_1}\cdots X_r^{\alpha_r}Z_1^{\beta_1}\cdots Z_n^{\beta_n}\prod_{\substack{1\leq \nu\leq N\\ 1\leq j\leq\gamma_\nu}}s_{\ell_{\nu,j}}Y_{\nu,\ell_{\nu,j}}\\
		&\in&I_{[m]}^\ell\cdot R_{[m]}[(X_\mu)_{1\leq \mu\leq r},(Y_{\nu,j})_{\substack{1\leq \nu\leq N
		\\ 1\leq j\leq m}},(Z_i)_{1\leq i\leq n}]
		\end{eqnarray*}
	
	Note that for any $\lambda\in\Lambda_\ell$, \eqref{eq: indep} implies that $q$ must satisfy $q\leq k(\ell + 1)$, and hence that $\Lambda_\ell$ is a finite set.
	Since we have 
		\begin{eqnarray*}
		\mathrm{deg}Q_\lambda& = &\lvert\alpha\rvert + \lvert\beta\rvert + \lvert\gamma\rvert\leq \mathrm{deg}P_q + k + \lvert\gamma\rvert\leq c(q + 1) + k + \lvert\gamma\rvert\\
		&\leq&c(\mathrm{ord}_I(a_{q,\alpha,\beta,\gamma}) + \lvert\beta\rvert + k\lvert\gamma\rvert + 1) + k + \lvert\gamma\rvert\\
		&\leq&c(k\ell + k + 1) + k + \ell,
		\end{eqnarray*}
	there exists $d>0$ such that $\mathrm{deg}Q_\lambda\leq d(\ell + 1)$.
	Since $B'$ is $I_{[m]}$-adically weakly complete over $R_{[m]}$, we see that $\xi$ lies in $B'$.
	This proves the claim.
\end{proof}

\begin{corollary}\label{cor: ideal of definition}
	Let $A$ be a topological $R$-algebra.
	Then $A$ is pseudo-wcfg with respect to $(R,I)$ if and only if the condition in Definition \ref{def: pwcfg} holds for any ideal of definition $J$ of $A$ and any finite generating system $f_1,\ldots,f_n$ of $J$.
\end{corollary}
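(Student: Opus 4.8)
The implication $\Leftarrow$ is trivial: verifying the condition of Definition~\ref{def: pwcfg} for a single ideal of definition together with one of its finite generating systems already realises $A$ as pseudo-wcfg. For the converse, the plan is as follows. Fix a witness to pseudo-wcfg-ness: an ideal of definition $J_0=(f_1,\ldots,f_n)$ and an $R_{[n]}$-linear surjection $\rho\colon R_{[n]}[t_1,\ldots,t_k]^\dagger\to A$ whose structure map $R_{[n]}\to A$ sends $s_i$ to $f_i$; let $J=(g_1,\ldots,g_m)$ be an arbitrary ideal of definition equipped with a finite generating system. The goal is to produce, for some $l$, an $R_{[m]}$-linear surjection $R_{[m]}[u_1,\ldots,u_l]^\dagger\to A$ with $s_j\mapsto g_j$.

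I would split the conclusion into the two clauses of Definition~\ref{def: pwcfg}. Weak completeness over $R_{[m]}$ comes for free: $A$ is weakly complete over $R_{[n]}$ with respect to $(f_i)$ (any wcfg algebra is), so Lemma~\ref{lem: independence} identifies $A$ with its $I_{[m]}$-adic weak completion over $R_{[m]}$ with respect to $(g_j)$; in particular $IA+J$ is an ideal of definition of $A$, so every series of the shape prescribed in Definition~\ref{def: pwcfg}, in the $g_j$ and finitely many auxiliary elements, converges inside $A$. For the finiteness clause, put $\bar t_i:=\rho(t_i)$, choose $a\geq 1$ with $J_0^a\subseteq J$ (possible since $J_0$ and $J$ define the same topology), and for each $\delta\in\bbN^n$ with $\lvert\delta\rvert=a$ fix a presentation $f_1^{\delta_1}\cdots f_n^{\delta_n}=\sum_{j=1}^m y_{\delta,j}g_j$ with $y_{\delta,j}\in A$; only finitely many elements $y_{\delta,j}$ occur. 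The claim to be proved is that $f_1,\ldots,f_n,\bar t_1,\ldots,\bar t_k$ together with all the $y_{\delta,j}$ form a finite weak generating system of $A$ over $R_{[m]}$.

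To establish this claim I would take an arbitrary $\xi\in A$, use surjectivity of $\rho$ to write $\xi=\sum_{i\geq 0}P_i(\bar t_1,\ldots,\bar t_k)$ with $P_i\in I_{[n]}^i\cdot R_{[n]}[T_1,\ldots,T_k]$ and $\mathrm{deg}\,P_i\leq c(i+1)$, then substitute $s_j\mapsto f_j$ in the coefficients of $P_i$ and expand into a finite sum $\sum a_{i,\gamma,\beta}f_1^{\beta_1}\cdots f_n^{\beta_n}\bar t^{\gamma}$ with $a_{i,\gamma,\beta}\in R$, $\mathrm{ord}_I(a_{i,\gamma,\beta})+\lvert\beta\rvert\geq i$, $\lvert\gamma\rvert\leq c(i+1)$ (the expansion is finite because, for fixed $i$ and $\gamma$, the coefficient of $\bar t^{\gamma}$ is one element of $I_{[n]}^i\subseteq R[s_1,\ldots,s_n]$). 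Next I would split each $f^{\beta}$ into $q:=\lfloor\lvert\beta\rvert/a\rfloor$ blocks of $a$ factors plus a remainder of degree $<a$, replace every block by its expression $\sum_j y_{\delta,j}g_j$, and regroup all resulting terms according to the value $\ell$ of $\mathrm{ord}_I(\text{coefficient})$ plus the number of $g_j$-factors. The inequality $\mathrm{ord}_I(a_{i,\gamma,\beta})+\lvert\beta\rvert\geq i$ together with $q\geq\lvert\beta\rvert/a-1$ forces $i\leq a(\ell+1)$ for any term of level $\ell$, which, combined with the finiteness of the monomial expansion of each $P_i$, shows that the level-$\ell$ part $Q_\ell$ is a finite sum lying in $I_{[m]}^\ell\cdot R_{[m]}[U_1,\ldots,U_l]$; and since the number $q$ of substituted $g_j$-factors is $\leq\ell$, the $f$-remainder has degree $<a$, and the $\bar t$-degree $\lvert\gamma\rvert\leq c(i+1)$ is linear in $\ell$ via $i\leq a(\ell+1)$, the $U$-degree of $Q_\ell$ is at most $d(\ell+1)$ for a suitable $d$. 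Hence $\xi$ lies in the image of the extended map, and $A$ is $I_{[m]}$-adically wcfg over $R_{[m]}$.

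The genuine obstacle is precisely the combinatorial bookkeeping of the previous paragraph: one must check that, after trading high powers of the $f_i$ for the $g_j$ through the relations $f^{\delta}=\sum_j y_{\delta,j}g_j$, the rewritten series still has coefficients in the correct ideals $I_{[m]}^\ell$ and $U$-degrees growing at most linearly in $\ell$ — the one subtlety deserving attention being that the exponents $\beta$ are \emph{not} bounded in terms of $i$, so the final degree bound must be extracted from $q\leq\ell$ rather than from $\lvert\beta\rvert$. This estimate runs parallel to, and is slightly lighter than, the one carried out in the proof of Lemma~\ref{lem: independence}, which I would use as a template.
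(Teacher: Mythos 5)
Your argument is correct and follows essentially the same route as the paper: the paper likewise reduces weak completeness over $R_{[m]}$ to Lemma~\ref{lem: independence} and observes that the computation in that lemma's proof shows that the $f_i$, the weak generators over $R_{[n]}$ (your $\rho(t_j)$), and the coefficients $y_{\delta,j}$ expressing degree-$a$ monomials in the $f_i$ through the $g_j$ form a weak generating system over $R_{[m]}$. You merely re-execute that bookkeeping explicitly instead of citing it, with the same degree and order estimates.
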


\begin{proof}
	We use the same notation as in the proof of Lemma \ref{lem: independence}. 
	By the lemma, the $I_{[n]}$-adic and $I_{[m]}$-adic weak completeness of $A$ are equivalent to each other.
	Moreover the computation in the proof of the lemma also shows that if $(x_\mu)_{1\leq \mu\leq r}$) is a weak generating system of $A$ over $R_{[n]}$, then $(x_\mu)_{1\leq \mu\leq r}$, $(y_{\nu,j})_{\substack{1\leq \nu\leq N\\ 1\leq j\leq m}}$, and $(f_i)_{1\leq i\leq n}$ together form a weak generating system of $A$ over $R_{[m]}$.
\end{proof}

Now we discuss the weak formal spectrum of a pseudo-wcfg algebra.
From now on we suppose that $R$ is a noetherian local ring and $I$ is the maximal ideal of $R$.
Note that $R_{[n]}$ is not a local ring, but its $I_{[n]}$-adic completion $\widehat{R_{[n]}}=\widehat{R}\llbracket s_1,\ldots,s_n\rrbracket$ is a local ring with maximal ideal $\widehat{I_{[n]}}=I_{[n]}\widehat{R_{[n]}}$.
By Lem.\,\ref{lem: base completion} a wcfg algebra over $(R_{[n]},I_{[n]})$ is automatically wcfg over $(\widehat{R_{[n]}},\widehat{I_{[n]}})$.
For this reason the assumption that $R$ is local will not cause any problems in the arguments below.

\begin{definition}
Let $A$ be a pseudo-wcfg algebra with respect to $(R,I)$ 
with a representation $R_{[n]}[t_1,\ldots,t_k]^\dagger \rightarrow  A$.
	For $f\in A$, we denote by $A_f^\dagger$ the $I_{[n]}$-adic weak completion of $A_f$.
	Then by Lemma \ref{lem: independence} $A_f^\dagger$ is independent of the choice of representation. 
	
	We define $\Spwf A$ to be the topologically ringed space whose underlying topological space is $\Spec A/J$ for some ideal of definition $J$ of $A$ and the structure sheaf $\cO_{\Spwf A}$ is defined by
		\[\Gamma(\Spec (A/J)_{\overline{f}},\cO_{\Spwf A}): = A_f^\dagger,\]
	for $f\in A$ and $\overline{f}$ its image in $A/J$.
	
	Note that the topological space $\Spec A/J$ is independent of the choice of $J$.
	Moreover the structure sheaf $\cO_{\Spwf A}$ and hence the topologically ringed space $\Spwf A$ are also well-defined.
	
	An \textit{affine weak formal scheme} with respect to $(R,I)$ is a topologically ringed space which is isomorphic to $\Spwf A$ for some pseudo-wcfg algebra $A$ with respect to $(R,I)$.
\end{definition}

\begin{remark}\label{rem: Spwf} $\quad$
\begin{enumerate}
\item The notion of a pseudo-wcfg algebra and its weak formal spectrum as developed in the present paper can be understood
in the set-up developed by Meredith \cite{Me}.
As we have seen, a pseudo-wcfg algebra $A$ over $R$ can be seen as a wcfg algebra over $\widehat{R_{[n]}}$ for some $n$,
and $\Spwf A$ can be seen as a weak formal scheme over $\widehat{R_{[n]}}$ in Meredith's sense. 
The difference to the classical definition is that we endow $A$ only with an $R$-algebra structure, not $\widehat{R_{[n]}}$-algebra structure, thereby making the definition canonical.
\item\label{item: open prime} As a general fact for pre-adic rings, $\Spwf A$ is canonically identified with the set of open prime ideals of $A$.
\end{enumerate}
\end{remark}

In \cite[\S 0]{Me}, it was stated that the stalks of $\cO_{\Spwf A}$ at any point is a local ring. The proof seems to be omitted, but one can prove it by the same argument as in the case for formal schemes.
More precisely we have the following.

\begin{lemma}\label{lem: local ring}
	Let $\cZ=\Spwf A$ be an affine weak formal scheme with respect to $(R,I)$.
	Let $x\in\cZ$ be a point corresponding to an open prime ideal $\frp\subset A$.
	Then the stalk $\cO_{\cZ,x}$ is a local ring with maximal ideal $\frp\cO_{\cZ,x}$, and it is flat over $A$.
	Moreover, if we denote by $\tau\colon A\rightarrow\cO_{\cZ,z}$ the natural homomorphism, then we have $\tau^{-1}(\frp\cO_{\cZ,x})=\frp$.
\end{lemma}

\begin{proof}
By definition we have $\cO_{\cZ,x}=\varinjlim_{f\in A\setminus\frp}A_f^\dagger$, 
which is flat over $A$ 
because the $A_f^\dagger$ are flat over $A$ \cite[\S2, Lem.\,2]{Me}.
We also have $\frp\cO_{\cZ,x}=\varinjlim_{f\in A\setminus\frp}\frp A_f^\dagger$.

Let $J\subset A$ be an ideal of definition and set 
$\overline{A}:=A/J$, $\overline{\frp}:=\frp \overline{A}$, 
and $Z:=\Spec \overline{A}$.
Let $q_f\colon A_f^\dagger\rightarrow A_f^\dagger/JA_f^\dagger=\overline{A}_{\overline{f}}$ 
and $q_x\colon \cO_{\cZ,x}\rightarrow\cO_{Z,x}$ 
be the natural surjections, 
where $\overline{f}$ denotes the image of $f$ in $\overline{A}$.

Since $JA_f^\dagger$ is contained in the Jacobson radical of $A_f^\dagger$ \cite[Thm.\,1.6]{Me}, 
an element $g\in A_f^\dagger$ is invertible if and only if $q_f(g)$ is invertible.
This implies that an element $h\in \cO_{\cZ,x}$ is invertible if and only if $q_x(h)$ is invertible or equivalently $q_x(h)\notin \overline{\frp}\cO_{Z,x}$ because $\overline{\frp}\cO_{Z,x}$ is the unique maximal ideal of $\cO_{Z,x}$. 
Thus $h\in \cO_{\cZ,x}$ being invertible is equivalent to $h\notin\frp\cO_{\cZ,x}$ because we have $\frp\cO_{\cZ,x}=q_x^{-1}(\overline{\frp}\cO_{Z,x})$.
This shows that $\frp\cO_{\cZ,x}$ is the unique maximal ideal of $\cO_{\cZ,x}$.

Finally, we have $\tau^{-1}(\frp\cO_{\cZ,x})=\tau^{-1}(q_x^{-1}(\overline{\frp}\cO_{Z,x}))=q^{-1}(\overline{\tau}^{-1}(\overline{\frp}\cO_{Z,x}))=q^{-1}(\overline{\frp})=\frp$, where $\overline{\tau}\colon\overline{A}\rightarrow\cO_{Z,x}$ and $q\colon A\rightarrow\overline{A}$ are the natural maps.
\end{proof}

The weak formal schemes considered in a classical context are adic over the base ring $R$.
The notion of pseudo-wcfg algebras developed above allows us to extend this notion,
so that it includes weak formal schemes which are not necessarily adic over $R$.
We recall that a \textit{locally topologically ringed space} is a topologically ringed  space (i.e.\ a topological space with a sheaf of topological rings) such that the stalk at each point is a local ring.
	 A morphism of locally topologically ringed spaces is a morphism of locally ringed spaces which induces continuous homomorphisms of topological rings.

\begin{definition}
\begin{enumerate}
\item
	A \textit{weak formal scheme} with respect to $(R,I)$ 
is a topologically ringed space over $\Spwf\widehat{R}$ which admits an open covering $\{\cU_i\}$ such that for each $i$ there exists an isomorphism $\cU_i\xrightarrow{\cong}\Spwf A_i$ over $\Spwf\widehat{R}$ for some pseudo-wcfg algebra $A_i$ with respect to $(R,I)$.
Note that weak formal schemes with respect to $(R,I)$ are locally topologically ringed spaces by Lemma \ref{lem: local ring}.
	A morphism of weak formal schemes with respect to $(R,I)$ is a morphism of locally topologically ringed spaces over $\Spwf\widehat{R}$.
\item
Let $\cZ=(\cZ,\cO_\cZ)$ be a weak formal scheme.
A coherent ideal sheaf $\cJ\subset\cO_\cZ$ is called an \textit{ideal of definition} if
for any affine open weak formal subscheme $\cU=\Spwf A\subset\cZ$ the ideal $\Gamma(\cU,\cJ)\subset A$ is an ideal of definition.
\item	To a weak formal scheme $\cZ$ 
	we may associate a sheaf $\wh\cO_\cZ$ by taking the completion with respect to an ideal of definition. 
	We call $\wh\cZ:=(\cZ,\wh\cO_\cZ)$ the \textit{completion} of $\cZ$.
\end{enumerate}	
\end{definition}

\begin{lemma}\label{lem: principal open}
	Let $\cZ$ be a weak formal scheme.
	For any element $f\in\Gamma(\cZ,\cO_\cZ)$,
	\[D(f):=\{z\in\cZ\mid \text{the image of $f$ in $\cO_{\cZ,z}$ is invertible}\}\]
	is an open subset of $\cZ$.
\end{lemma}

\begin{proof}
	This is just a special case of \cite[\href{https://stacks.math.columbia.edu/tag/01HZ}{Lem.\,01HZ}]{stacks}.
\end{proof}

\begin{proposition-definition}\label{prop-def: associated sheaf}
	Let $\cZ=\Spwf A$ be an affine weak formal scheme with respect to $(R,I)$ and $M$ a finite $A$-module.
	\begin{enumerate}
	\item\label{item: finite module sheaf} The presheaf $M^\sim$ on principal open subsets of $\cZ$ defined by $D(f)\mapsto M\otimes_AA_f^\dagger$ is a sheaf.
		We denote again by $M^\sim$ the sheaf on $\cZ$ induced by $M^\sim$.
		This gives an exact functor from the category of finite $A$-modules to the category of coherent sheaves on $\cZ$.
	\item\label{item: sub or quotient} Let $\cF$ be a coherent sheaf on $\cZ$ which is a subsheaf or a quotient of $M^\sim$.
	Then we have a canonical isomorphism $\Gamma(\cZ,\cF)^\sim\xrightarrow{\cong}\cF$.
	\item\label{item: p-adic coherent}If $R$ is a discrete valuation ring with maximal ideal $I$ and $A$ is wcfg over $R$, then we have a canonical isomorphism $\Gamma(\cZ,\cF)^\sim\xrightarrow{\cong}\cF$ for any coherent sheaf $\cF$ on $\cZ$.
	\end{enumerate}
\end{proposition-definition}

\begin{proof}
	\eqref{item: finite module sheaf} is proved in \cite[\S 2, Props.\,5 and 12]{Me}	and \eqref{item: p-adic coherent} is \cite[\S 3, Thm.\,3]{Me}.
	To prove \eqref{item: sub or quotient}, it suffices by \eqref{item: finite module sheaf} to deal with the case that $\cF$ is a subsheaf of $M^\sim$.
	
	We first note that, for any $f\in A$, $\Gamma(D(f),\cF)$ is a finite $A_f^\dagger$-module, since it is a submodule of $\Gamma(D(f),M^\sim)=M\otimes_AA_f^\dagger$ and $A_f^\dagger$ is noetherian.
	In particular $F:=\Gamma(\cZ,\cF)$ is a finite $A$-module.
	
	By \cite[\href{https://stacks.math.columbia.edu/tag/01BK}{Lem.\ 01BK}]{stacks} we can take a finite covering of $\cZ$ of the form $\{D(g_i)\}_i$ with $g_i\in A$ such that $F_i^\sim\rightarrow\cF|_{D(g_i)}$, where $F_i:=\Gamma(D(g_i),\cF)$, is an isomorphism for each $i$.
	Let $L_i:=(M\otimes_AA_{g_i}^\dagger)/F_i$.
	Then we have an isomorphism $L_i^\sim\xrightarrow{\cong}(M^\sim/\cF)|_{D(g_i)}$.
	For any $f\in A$, we have the following commutative diagram:
	\begin{equation}\label{eq: diag Li}
	\xymatrix{
	\Gamma(\cZ,M^\sim/\cF)\otimes_AA_f^\dagger\ar[r]\ar[d]&
	\bigoplus_i\Gamma(D(g_i),M^\sim/\cF))\otimes_AA_f^\dagger\ar[d]\ar@{=}[r]&
	\bigoplus_iL_i\otimes_{A_{g_i}^\dagger}(A_{g_i}^\dagger\otimes_AA_f^\dagger)\ar[d]\\
	\Gamma(D(f),M^\sim/\cF)\ar[r]&
	\bigoplus_i\Gamma(D(f)\cap D(g_i),M^\sim/\cF)\ar@{=}[r]&
	\bigoplus_i L_i\otimes_{A_{g_i}^\dagger}A_{fg_i}^\dagger.
	}\end{equation}
	Since $A_f^\dagger$ is flat over $A$, the upper horizontal map in the diagram \eqref{eq: diag Li} is injective.
	
	Let $J\subset A$ be an ideal of definition.
	We have equalities $\widehat{A_{g_i}^\dagger\otimes_AA_f^\dagger}=\widehat{A_{fg_i}^\dagger}=\widehat{A_{fg_i}}$ between the $J$-adic completions.
	Thus both of $L_i\otimes_{A_{g_i}^\dagger}(A_{g_i}^\dagger\otimes_AA_f^\dagger)$ and $L_i\otimes_{A_{g_i}^\dagger}A_{fg_i}^\dagger$ are submodules of $L_i\otimes_{A_{g_i}^\dagger}\widehat{A_{fg_i}}$, 
	and it follows that the right vertical map and hence the left vertical map of \eqref{eq: diag Li} are injective.
	
	We also have a commutative diagram
	\begin{equation}\label{eq: diag F tilde}
	\xymatrix{0\ar[r]&
	F\otimes_AA_f^\dagger\ar[r]\ar[d]&
	M\otimes_AA_f^\dagger\ar[r]\ar@{=}[d]&
	\Gamma(\cZ,M^\sim/\cF)\otimes_AA_f^\dagger\ar[d]\\
	0\ar[r]&
	\Gamma(D(f),\cF)\ar[r]&
	\Gamma(D(f),M^\sim)\ar[r]&
	\Gamma(D(f),M^\sim/\cF),
	}\end{equation}
	with exact rows.
	(We again use flatness of $A_f^\dagger$ over $A$.)
	As a consequence of the above argument the right vertical map of \eqref{eq: diag F tilde} is injective, hence the left vertical map is an isomorphism.
	This shows that $F^\sim\rightarrow \cF$ is an isomorphism, as desired.
\end{proof}

\begin{remark}
	For any weak formal scheme $\cZ$, there exists at least one ideal of definition.
		Indeed, take an open covering $\{\cU_i\}_i$ of $\cZ$ given by affine weak formal schemes $\cU_i=\Spwf A_i$.
		Then taking the largest ideal of definition on each $A_i$, they glue and give an ideal of definition $\cJ$ of $\cZ$.
		
		Even if $\cZ$ is affine, it is in general not known whether a coherent sheaf $\cF$ on $\cZ$ comes from a finite module.
		However, by Proposition-Definition \ref{prop-def: associated sheaf} any ideal of definition on an affine weak formal scheme comes from a finite module.
\end{remark}

\begin{proposition}\label{prop: conservative}
	The functor $\cZ\mapsto\widehat{\cZ}$ from the category of weak formal schemes with respect to $(R,I)$ to the category of formal schemes is faithful and conservative.
\end{proposition}

\begin{proof}
	It suffices to show that the functor $A\mapsto\widehat{A}$ from the category of pseudo-wcfg algebras with respect to $(R,I)$ to the category of adic rings is faithful and conservative.
	Let $f\colon A\rightarrow B$ be a morphism of pseudo-wcfg algebras and $\widehat{f}\colon\widehat{A}\rightarrow\widehat{B}$ its completion.
	Then we have $f=\widehat{f}|_A$, hence the faithuful-ness is clear.
	Suppose that $\widehat{f}$ is an isomorphism.
	Then $f$ is injective as $A$, since the composite $A\xrightarrow{f}B\hookrightarrow\widehat{B}$ coincides with $A\hookrightarrow\widehat{A}\xrightarrow{\widehat{f}}\widehat{B}$ which is injective.
	Take an ideal of definition $J$ of $A$.
	Then $J\widehat{B}$ gives an ideal of definition of $\widehat{B}$, and we have $A/J=\widehat{A}/J\widehat{A}\cong \widehat{B}/J\widehat{B}=B/JB$.
	Thus $f$ is surjective by \cite[Thm.\,3.2]{MW}.
\end{proof}

\begin{proposition}\label{prop: Hom compare}
	Let $\cZ$ and $\cZ'$ be weak formal schemes with respect to $(R,I)$ and suppose that $\cZ=\Spwf A$ is affine.
	Then the natural map
	\begin{equation}\label{eq: Hom compare}\Hom(\cZ',\cZ)\rightarrow\Hom(A,\Gamma(\cZ',\cO_{\cZ'})),\end{equation}
	where the $\Hom$ sets are taken in the category of weak formal schemes with respect to $(R,I)$ 
	and in the category of topological $R$-algebras, respectively.
	In particular, the association $A\mapsto\Spwf A$ defines a fully faithful functor from the category of pseudo-wcfg algebras with respect to $(R,I)$ to the category of locally topologically ringed spaces over $\Spwf \wh{R}$.
\end{proposition}

\begin{proof}
	This is proved by the same argument as in the case for schemes or formal schemes.
	Indeed, the inverse map is constructed as follows:
	
	Let $\psi\colon A\rightarrow\Gamma(\cZ',\cO_{\cZ'})$ be a continuous homomorphism over $R$.
	For $z\in\cZ'$, the maximal ideal $\frm_z$ of $\cO_{\cZ',z}$ is open by Lemma \ref{lem: local ring}.
	Thus the inverse image of $\frm_z$ under the composition
	\[A\xrightarrow[]{\psi}\Gamma(\cZ',\cO_{\cZ'})\rightarrow\cO_{\cZ',z}\]
	is an open prime ideal of $A$, which defines a point of $\cZ=\Spwf A$ via the identification of Remark \ref{rem: Spwf}\eqref{item: open prime}.
	This gives a map $\cZ'\rightarrow\cZ$, which we denote by $g_\psi$.
	For $f\in A$ we have
	\[g_\psi^{-1}(\Spwf A_f^\dagger)=D(\psi(f)),\]
	which is open in $\cZ'$ by Lemma \ref{lem: principal open}.
	Thus $g_\psi$ is continuous.
	
	To construct a morphism $\cO_\cZ\rightarrow g_{\psi,*}\cO_{\cZ'}$, it suffices to give continuous homomorphisms $A_f^\dagger\rightarrow\Gamma(D(\psi(f)),\cO_{\cZ'})$ for $f\in A$ in a compatible way.
	Let $\{\Spwf B_\lambda\}_{\lambda\in L}$ be an affine open covering of $D(\psi(f))$ and let $\{\Spwf C_{\lambda,\lambda',\mu}\}_{\mu\in M_{\lambda,\lambda'}}$ be an affine open covering of $\Spwf B_\lambda\cap\Spwf B_{\lambda'}$ for $\lambda,\lambda'\in L$.
	For $\lambda\in L$, denote by $\psi_\lambda$ the composition of $\psi$ and the restriction map $\Gamma(\cZ',\cO_{\cZ'})\rightarrow B_\lambda$.
	Since $\psi_\lambda$ is continuous over $R$, there is a commutative diagram
	\[\xymatrix{
	A\ar[r]^{\psi_\lambda}&B_\lambda\\
	R_{[n]}\ar[u]\ar[r]&R_{[n+k]}\ar[u]
	}\]
	where the lower horizontal map is a natural injection and the vertical maps make $A$ and $B_\lambda$ wcfg over $(R_{[n]},I_{[n]})$ and $(R_{[n+k]},I_{[n+k]})$, respectively.
	Then $B_\lambda$ is weakly complete over $(R_{[n]},I_{[n]})$.
	Since $f$ is invertible on $\Spwf B_\lambda$, $\psi_\lambda$ uniquely extends to a homomorphism $A_f\rightarrow B_\lambda$.
	Moreover, by taking the weak completion over $(R_{[n]},I_{[n]})$, this uniquely extends to $A_f^\dagger\rightarrow B_\lambda$.
	Similarly $A\rightarrow C_{\lambda,\lambda',\mu}$ uniquely extends to $A_f^\dagger\rightarrow C_{\lambda,\lambda',\mu}$, which is again continuous.
	By uniqueness the composition
	\[A_f^\dagger\rightarrow \prod_{\lambda}B_\lambda\rightarrow\prod_{\lambda,\lambda',\mu}C_{\lambda,\lambda',\mu},\]
	is zero.
	Thus we obtain a continuous homomorphism
	\[A_f^\dagger\rightarrow\Ker\Bigl(\prod_{\lambda}B_\lambda\rightarrow\prod_{\lambda,\lambda',\mu}C_{\lambda,\lambda',\mu}\Bigr)=\Gamma(D(\psi(f),\cO_{\cZ'}).\]
	This defines a morphism $\cO_\cZ\rightarrow g_{\psi,*}\cO_{\cZ'}$ which we denote by $g_\psi^\sharp$.
	
	By construction $(g_\psi,g_\psi^\sharp)$ is a morphism over $\Spwf\widehat{R}$. Moreover, similarly to the case for schemes, one can easily see that $g_\psi^\sharp$ induces local homomorphisms on stalks and that the association $\psi\mapsto(g_\psi,g_\psi^\sharp)$ is in fact the inverse of \eqref{eq: Hom compare}.
\end{proof}

\begin{definition}
	Let $f\colon\cZ'\rightarrow\cZ$ be a morphism of weak formal schemes.
\begin{enumerate}
	\item We say $f$ is a \textit{closed immersion} if it is a closed immersion of topological spaces, the map $\cO_\cZ\rightarrow f_\ast\cO_{\cZ'}$ is surjective, and moreover its kernel $\cN$ is locally generated by sections.
		In this case, let $\cC_{\cZ'/\cZ}=f^*(\cN/\cN^2)$ and call it the \textit{conormal sheaf} of $f$.
		We say $f$ is a \textit{first order thickening} if $\cN^2=0$.
	\item We say $f$ is an \textit{immersion} if it can be written as a composition of a closed immersion $\cZ'\hookrightarrow \cZ''$ and an open immersion $\cZ''\hookrightarrow \cZ$.
	\item We say $f$ is {\it adic} if there exists an ideal of definition of $\cZ$ such that its inverse image by $f$ is an ideal of definition of $\cZ'$.
		Note that in this case any ideal of definition of $\cZ$ satisfies this property.
\end{enumerate}
\end{definition}

\begin{remark}
	The above definition of a closed immersion conforms to \cite[\href{https://stacks.math.columbia.edu/tag/01C2}{Def.\,01C2}]{stacks}, which is slightly stronger than the standard definition.
	 Here we adapt it in order to ensure that the ideal $\cJ$ is coherent.
	 Indeed, one may easily show that the structure sheaf of a weak formal scheme is coherent by using the exactness of the functor in Proposition-Definition \ref{prop-def: associated sheaf}\eqref{item: finite module sheaf} and the fact that any pseudo-wcfg algebra is noetherian.
	 Since the condition that $\cJ$ is locally generated by sections implies that it is of finite type, we see that $\cJ$ is coherent by \cite[\href{https://stacks.math.columbia.edu/tag/01BY}{Lem.\,01BY}]{stacks}.
	
	 In the definition of a closed immersion, we do not make any assumption concerning the topology. 
	 However, it automatically follows that the topology on $\cO_\cY$ is given by the quotient topology of $\cO_\cZ$ by the following proposition.
\end{remark}

\begin{proposition}\label{prop: closed immersion} $\quad$
	\begin{enumerate}
	\item\label{item: closed affine} Let $j\colon\cY\hookrightarrow\cZ=\Spwf A$ be a closed immersion into an affine weak formal schemes and let $N:=\Gamma(\cZ,\Ker(\cO_\cZ\rightarrow j_\ast\cO_\cY))$.
		Then there is a unique isomorphism $\cY\xrightarrow{\cong}\Spwf A/N$, where $A/N$ is endowed with the quotient topology, such that the natural closed immersion $\Spwf A/N\hookrightarrow\cZ$ is compatible with $j$.
	\item\label{item: closed stable} Closed immersions are stable under base change.
	\end{enumerate}
\end{proposition}

\begin{proof}
	We first prove \eqref{item: closed affine}.
	By Proposition-Definition \ref{prop-def: associated sheaf} \eqref{item: sub or quotient} 
	we have $\Ker(\cO_\cZ\rightarrow j_\ast\cO_\cY)=N^\sim$ and $j_\ast\cO_\cY=(A/N)^\sim$.
	Set $\cY':=\{z\in\cZ\mid (A/N)^\sim_z\neq 0\}$ 
	and let $j'\colon\cY'\hookrightarrow\cZ$ be the natural embedding.
	$\cY'$ has the structure of a locally ringed space by considering $j'^{-1}(A/N)^\sim$ as the structure sheaf.
	Then by \cite[\href{https://stacks.math.columbia.edu/tag/01HO}{Lem.\,01HO}]{stacks}, there is a unique isomorphism $\rho\colon\cY\xrightarrow{\cong}\cY'$ of locally ringed spaces such that $j=j'\circ\rho$.
	Thus it suffices to show that we may identify $\cY'$ with $\Spwf A/N$ and that $\Gamma(\cU,\cO_{\Spwf A/N})\rightarrow\Gamma(\cU,\rho_\ast\cO_{\cY})$ is a homeomorphism for any open subset $\cU\subset\cY'$.
	
	For $z\in\cZ$, let $\frp\subset A$ the open prime ideal corresponding to $z$.
	Then $(A/N)^\sim_z\neq 0$ is equivalent 
	to $N\cO_{\cZ,z}\subset\frp\cO_{\cZ,z}$ 
	and hence to $N\subset\frp$ by Lemma \ref{lem: local ring}.
	Thus $\cY'$ is homeomorphic to the set of open prime ideals of $A/N$ with the Zariski topology, 
	that is $\Spwf A/N$. 
	Clearly $j'^{-1}(A/N)^\sim$ coincides with $\cO_{\Spwf A/N}$, 
	hence $\cY'$ is identified with $\Spwf A/N$.
	
	To show that $\cO_{\Spwf A/N}\rightarrow\rho_\ast\cO_{\cY}$ induces homeomorphisms between sections on open subsets, it suffices to show that the continuous isomorphism $(A/N)_f^\dagger\xrightarrow{\cong}\Gamma(\rho^{-1}(D(f)),\cO_\cY)$ is homeomorphic for any $f\in A/N$.
	Set $B:=(A/N)_f^\dagger$ and $C:=\Gamma(\rho^{-1}(D(f)),\cO_\cY)$.
	Let $J\subset B$ be an ideal of definition.
	As $B\rightarrow C$ is continuous, 
	the topology of $C$ is coarser than the $JC$-adic topology.
	If $c\in C$ is contained in an ideal of definition of $C$, 
	then there exists a homomorphism $R\llbracket s\rrbracket\rightarrow C$, $s\mapsto c$, 
	which induces $(R/I)\llbracket s\rrbracket\rightarrow C/JC$, $s\mapsto \overline{c}$.
	However, we have an isomorphism $B/J\rightarrow C/JC$ 
	and $B/J$ is finitely generated over the field $R/I$.
	Thus we conclude that $\overline{c}$ must be nilpotent.
	This shows that the quotient topology on $C/JC$ is discrete, 
	and the topology on $C$ is finer than the $JC$-adic topology.
	
	Finally, \eqref{item: closed stable} immediately follows from \eqref{item: closed affine}, 
	since the property of closed immersion is local.
\end{proof}

\begin{lemma}\label{lem: fibre coproducts}
	The category of pseudo-wcfg algebras with respect to $(R,I)$  
has fibre coproducts.
	We denote them as $A_1\otimes^\dagger_AA_2$.
\end{lemma}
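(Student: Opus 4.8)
The plan is to realise $A_1\otimes^\dagger_AA_2$ as the weak completion of the ordinary pushout $A_1\otimes_AA_2$, presented concretely so that pseudo‑wcfg‑ness is automatic, and then to verify the universal property by reducing it to a functoriality statement for weak completions of the type underlying Lemma \ref{lem: independence}. First I would fix ideals of definition $J_1=(f_1,\dots,f_{n_1})$ of $A_1$ and $J_2=(g_1,\dots,g_{n_2})$ of $A_2$ together with representations $\rho_1\colon P_1:=R_{[n_1]}[t_1,\dots,t_{k_1}]^\dagger\twoheadrightarrow A_1$ and $\rho_2\colon P_2:=R_{[n_2]}[u_1,\dots,u_{k_2}]^\dagger\twoheadrightarrow A_2$, which exist by Definition \ref{def: pwcfg} (independence of these choices will be automatic once the universal property is established, consistently with Corollary \ref{cor: ideal of definition}). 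Set $P:=R_{[n_1+n_2]}[t_1,\dots,t_{k_1},u_1,\dots,u_{k_2}]^\dagger$ over $R_{[n_1+n_2]}=R[s_1,\dots,s_{n_1+n_2}]$. Since the inclusions of polynomial rings carry $I_{[n_1]}$ and $I_{[n_2]}$ into $I_{[n_1+n_2]}$ and preserve degree, they extend to continuous $R$‑algebra maps $j_1\colon P_1\to P$, $j_2\colon P_2\to P$. Choosing set‑theoretic lifts $\widetilde\iota_\nu\colon A\to P_\nu$ of the structure maps $\iota_\nu\colon A\to A_\nu$ along $\rho_\nu$, I would define
\[
A_1\otimes^\dagger_AA_2:=P/\mathfrak c,\qquad
\mathfrak c:=j_1(\Ker\rho_1)P+j_2(\Ker\rho_2)P+\sum_{a\in A}\bigl(j_1\widetilde\iota_1(a)-j_2\widetilde\iota_2(a)\bigr)P,
\]
equivalently the base change $P\otimes_{P_1\otimes_RP_2}(A_1\otimes_AA_2)$. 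As a quotient of $P=R_{[n_1+n_2]}[t_1,\dots,u_{k_2}]^\dagger$, this is pseudo‑wcfg with respect to $(R,I)$, with ideal of definition generated by the images of $I$ and of $s_1,\dots,s_{n_1+n_2}$; so the category has the right objects. The induced maps $\bar j_\nu\colon A_\nu=P_\nu/\Ker\rho_\nu\to P/\mathfrak c$ are well defined, continuous, and agree on $A$ by the last family of generators of $\mathfrak c$, giving a cocone.

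For the universal property, given a pseudo‑wcfg $B$ with continuous maps $\psi_\nu\colon A_\nu\to B$ satisfying $\psi_1\iota_1=\psi_2\iota_2$, it suffices to produce a continuous $R$‑algebra map $\widetilde\Psi\colon P\to B$ killing $\mathfrak c$, i.e.\ to map out of $P$. The key input, which plays the role of the Monsky--Washnitzer universal property of weak completion in our not‑necessarily‑adic setting, is: for any pseudo‑wcfg $B$, a continuous $R$‑algebra homomorphism $R_{[N]}[t_1,\dots,t_k]^\dagger\to B$ is the same datum as a choice of topologically nilpotent elements of $B$ for $s_1,\dots,s_N$ together with arbitrary elements of $B$ for $t_1,\dots,t_k$, the homomorphism being recovered by substitution. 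Granting this, I set $\widetilde\Psi(s_i):=\psi_1(f_i)$, $\widetilde\Psi(s_{n_1+i}):=\psi_2(g_i)$, $\widetilde\Psi(t_j):=\psi_1(\rho_1(t_j))$, $\widetilde\Psi(u_l):=\psi_2(\rho_2(u_l))$; the elements $\psi_1(f_i)$ and $\psi_2(g_i)$ are topologically nilpotent since $f_i\in J_1$, $g_i\in J_2$ are and $\psi_1,\psi_2$ are continuous, so $\widetilde\Psi$ exists. One checks $\widetilde\Psi\circ j_\nu=\psi_\nu\circ\rho_\nu$ (both sides agree on the variables of $P_\nu$, hence coincide by the uniqueness half of the input), so $\widetilde\Psi$ annihilates $j_\nu(\Ker\rho_\nu)$ and, using $\psi_1\iota_1=\psi_2\iota_2$, also each $j_1\widetilde\iota_1(a)-j_2\widetilde\iota_2(a)$; thus $\widetilde\Psi$ descends to $\Psi\colon A_1\otimes^\dagger_AA_2\to B$ with $\Psi\circ\bar j_\nu=\psi_\nu$. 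Uniqueness of $\Psi$ follows since $P\to A_1\otimes^\dagger_AA_2$ is surjective and a continuous map out of $P$ is determined on the variables, which ultimately uses that pseudo‑wcfg algebras are separated.

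The main obstacle is the key input above: showing that a set‑theoretic assignment of the variables of $R_{[N]}[t_1,\dots,t_k]^\dagger$ to elements of $B$ (topologically nilpotent for the $s$‑variables) genuinely extends to a continuous homomorphism on the weak completion. Convergence of $\sum_iP_i(\text{images})$ in the $J_B$‑adic topology is the easy part, using that the image $IB$ of $I$ and the images of the $s$‑variables are topologically nilpotent in $B$. The real work is to show the limit lies in $B$ itself: expanding each chosen image as a weak‑completion series in a set of weak generators of $B$ over $R_{[\ell]}$ for some ideal of definition $(h_1,\dots,h_\ell)$ of $B$, substituting, multiplying out and reindexing, one must recover a series of weak‑completion shape over $R_{[\ell]}$, with a linear degree bound $\deg Q_\lambda\le d(\ell+1)$, so that it lies in $B=B^\dagger$. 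This is precisely the multiply‑out‑and‑reorganise estimate carried out in the proof of Lemma \ref{lem: independence}, now applied to the composite of two weak‑completion presentations; once it is in place, the rest of the argument is formal.
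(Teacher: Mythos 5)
Your proposal is correct and takes essentially the same route as the paper: your $P/\mathfrak{c}$ is just a presentation of the paper's object, the $I_{[n_1+n_2]}$-adic weak completion of $A_1\otimes_AA_2$ with respect to the concatenated generating systems $f_{1,1},\dots,f_{1,n_1},f_{2,1},\dots,f_{2,n_2}$, and like the paper you reduce all well-definedness issues to the estimate in the proof of Lemma \ref{lem: independence}. The only real difference is that you make explicit the universal-property verification (via the mapping property of $R_{[N]}[t_1,\dots,t_k]^\dagger$ into pseudo-wcfg algebras), which the paper leaves implicit; your reduction of that mapping property to the multiply-out-and-reorganise estimate is accurate, the only adjustment being that the images of the $s$-variables are merely topologically nilpotent rather than generators of an ideal of definition, which the same computation handles.
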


\begin{proof}
	Let $A \rightarrow  A_i$ for $i = 1,2$ be continuous $R$-linear maps between pseudo-wcfg algebras with respect to $(R,I)$.  
	Take generating systems $f_{i,1},\ldots,f_{i,n_i}$ of ideals of definition of $A_i$ for $i = 1,2$.
	Then the $I_{[n_1 + n_2]}$-adic weak completion of $A_1\otimes_AA_2$ with respect to the map
		\[R_{[n_1 + n_2]} = R[s_1,\ldots,s_{n_1 + n_2}] \rightarrow  A_1\otimes_AA_2,\ s_k\mapsto\begin{cases}f_{1,k}\otimes 1&(1\leq k\leq n_1)\\ 1\otimes f_{2,k-n_1}&(n_1 + 1\leq k\leq n_1 + n_2),\end{cases}\]
	equipped with the $I_{[n_1+n_2]}$-adic topology, is independent of any choices, and gives the fibre coproduct.
\end{proof}

\begin{proposition}
	The category of weak formal schemes with respect to $(R,I)$ 
has fibre products.
\end{proposition}

\begin{proof}
	This follows from Proposition \ref{prop: Hom compare} and Lemma \ref{lem: fibre coproducts}.
\end{proof}

We may define the notion of continuous differentials as in \cite{MW}, which will be used for the study of smoothness of weak formal schemes and the definition of rigid cohomology.

\begin{definition}
	Let $A\rightarrow B$ be a morphism of pseudo-wcfg algebra with respect to $(R,I)$ 
and $J$ be an ideal of definition of $B$. 
	We define the complex of {\it continuous differentials} on $B$ over $A$ by
	\[ \Omega^\bullet_{B/A}: =  \underline{\Omega}^\bullet_{B/A}/\bigcap_{n\geq 0}J^n \underline{\Omega}^\bullet_{B/A},\]
	where $\underline{\Omega}^\bullet_{B/A}$ is the complex of K\"{a}hler differentials of $B$ over $A$.
	This is independent of the choice of an ideal of definition $J$.
\end{definition}

\begin{lemma}\label{lem: wf omega}
	Let $A\rightarrow B$ be a morphism of pseudo-wcfg algebra with respect to $(R,I)$. 
	\begin{enumerate}
	\item\label{item: omega finite} $\Omega^1_{B/A}$ is a finite $B$-module.
	\item\label{item: omega exterior} $\Omega^\bullet_{B/A}=\bigwedge_B\Omega^1_{B/A}$.
	\item\label{item: omega completion} If we let $\wh A$ and $\wh B$ be the completions of $A$ and $B$ with respect to their ideals of definition, then we have a canonical isomorphism
		$\Omega^1_{B/A}\otimes_B \wh B\xrightarrow{\cong}\Omega^1_{\wh B/\wh A}$.
	\item\label{item: omega local} For any $f\in B$ we have a canonical isomorphism
		$\Omega^1_{B/A}\otimes_BB_f^\dagger\xrightarrow{\cong}\Omega^1_{B_f^\dagger/A}$.
	\end{enumerate}
\end{lemma}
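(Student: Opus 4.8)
The plan is to reduce everything to the behaviour of the exterior differential on weakly convergent series, and then to invoke standard facts about finite modules over Noetherian adic rings. By Lemma~\ref{lem: base completion} we may assume $R$ is $I$-adically complete. Fix an ideal of definition $J$ of $B$ with a generating system $g_1,\dots,g_n$ and an associated representation $\rho\colon R_{[n]}[t_1,\dots,t_k]^\dagger\to B$, and write $b_1,\dots,b_{n+k}\in B$ for the images of $s_1,\dots,s_n,t_1,\dots,t_k$. The key point is the following calculus of series: if $\xi=\sum_{i\geq 0}P_i$ with $P_i\in I_{[n]}^i\cdot R_{[n]}[t_1,\dots,t_k]$ and $\deg P_i\leq c(i+1)$, then for each coordinate $x_\ell\in\{s_1,\dots,s_n,t_1,\dots,t_k\}$ the formal partial derivative series $\sum_i\partial P_i/\partial x_\ell$ again satisfies bounds of the same shape --- differentiating in an $s$-variable lowers the $I_{[n]}$-order by one and the degree as well, while differentiating in a $t$-variable keeps the $I_{[n]}$-order and lowers the degree --- so it defines an element $\partial_\ell\xi\in R_{[n]}[t_1,\dots,t_k]^\dagger$, and in $\Omega^1_{B/A}$ one has the identity
\[ d\bigl(\rho(\xi)\bigr)=\sum_{\ell}\rho(\partial_\ell\xi)\,db_\ell . \]
Indeed, truncating $\xi$ after $N$ terms and applying the ordinary chain rule to the polynomial truncation, the two errors that arise lie in $(I_{[n]}B)^{N-1}\underline{\Omega}^1_{B/A}$ by the Leibniz rule; as $N$ is arbitrary and the $J$-adic and $I_{[n]}$-adic topologies on $B$ coincide, they lie in $\bigcap_nJ^n\underline{\Omega}^1_{B/A}$, which is killed in $\Omega^1_{B/A}$.

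Part~\eqref{item: omega finite} is then immediate: by the displayed formula every $db$ with $b\in B$ is a $B$-linear combination of $db_1,\dots,db_{n+k}$. For part~\eqref{item: omega completion} the natural map $\Omega^1_{B/A}\otimes_B\widehat{B}\to\Omega^1_{\widehat{B}/\widehat{A}}$ (continuous differentials on the right) is surjective since the target is generated by the $db_i$; for injectivity one constructs an inverse. The $\widehat{B}$-module $\Omega^1_{B/A}\otimes_B\widehat{B}$ is finite, hence $J$-adically complete and separated, so $\widehat{b}\mapsto\lim_\nu db_\nu$ (for $b_\nu\in B$ with $b_\nu\to\widehat{b}$) is a well-defined continuous $\widehat{A}$-linear derivation $\widehat{B}\to\Omega^1_{B/A}\otimes_B\widehat{B}$ extending $d$; by the universal property of continuous differentials it factors through $\Omega^1_{\widehat{B}/\widehat{A}}$, and a check on the generators $db_i$ shows this is the inverse. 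Part~\eqref{item: omega local} is analogous: $\Omega^1_{B_f^\dagger/A}$ is finite over $B_f^\dagger$ by part~\eqref{item: omega finite} (the weak generators of $B_f^\dagger$ are $b_1,\dots,b_{n+k}$ and $1/f$, and $d(1/f)=-f^{-2}df$ already lies in the $B_f^\dagger$-span of the $db_i$), so $\Omega^1_{B/A}\otimes_BB_f^\dagger\to\Omega^1_{B_f^\dagger/A}$ is surjective; extending $d$ to $B_f$ by the quotient rule and then to $B_f^\dagger$ by termwise differentiation --- the series calculus shows that differentiating a weakly convergent series reduces to differentiating its coefficient series, which converge in the weakly complete ring $B_f^\dagger$, so the termwise derivative is again a finite $B_f^\dagger$-combination of the $db_\ell$ --- yields a continuous $A$-linear derivation into the finite (hence separated) module $\Omega^1_{B/A}\otimes_BB_f^\dagger$, and the universal property gives the inverse.

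For part~\eqref{item: omega exterior} put $L:=\underline{\Omega}^1_{B/A}$ and $L_0:=\bigcap_nJ^nL$, so $\Omega^1_{B/A}=L/L_0$ and $\underline{\Omega}^\bullet_{B/A}=\bigwedge_BL$. The kernel of $\bigwedge_B^qL\to\bigwedge_B^q\Omega^1_{B/A}$ is $L_0\wedge\bigwedge_B^{q-1}L$, which lies in $\bigcap_nJ^n\bigwedge_B^qL$ because $L_0\subseteq J^nL$ for all $n$; hence $\bigwedge_B^qL\to\Omega^q_{B/A}$ factors through a surjection $\bigwedge_B^q\Omega^1_{B/A}\to\Omega^q_{B/A}$, and injectivity amounts to $\bigcap_nJ^n\bigwedge_B^qL=L_0\wedge\bigwedge_B^{q-1}L$. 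By part~\eqref{item: omega finite} pick a finite free $B$-module $F$ surjecting onto $\Omega^1_{B/A}$ and lift it to a map $F\to L$; then $L=\mathrm{im}(F)+L_0$, so $\bigwedge_B^qL=\bigwedge_B^q\mathrm{im}(F)+L_0\wedge\bigwedge_B^{q-1}L$ and $H:=\bigwedge_B^qL/(L_0\wedge\bigwedge_B^{q-1}L)$ is a finite $B$-module. Since $B$ is Noetherian (being a wcfg algebra over a Noetherian adic ring, cf.\ \cite{Me}) and $J$ lies in its Jacobson radical (elements of an ideal of definition are topologically nilpotent, so $1+J\subseteq B^\times$), Krull's intersection theorem gives $\bigcap_nJ^nH=0$; as the image of $\bigcap_nJ^n\bigwedge_B^qL$ in $H$ lies in $\bigcap_nJ^nH$, the desired equality follows.

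I expect the main obstacle to be making the series calculus precise: one must track simultaneously the $I_{[n]}$-adic order and the polynomial degree of the differentiated series, and verify that the tail of a differentiated weakly convergent series genuinely lands in a high power of the ideal of definition times $\underline{\Omega}^1_{B/A}$, so that it is absorbed by $\bigcap_nJ^n\underline{\Omega}^1_{B/A}$ upon passage to continuous differentials. This is exactly the point where weak --- rather than merely formal --- completeness enters, and it is the step requiring care; the remainder is formal manipulation with exact sequences, universal properties of (continuous) Kähler differentials, and Krull's intersection theorem.
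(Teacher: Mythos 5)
Your argument is essentially correct, but it takes a more self-contained route than the paper, which proves none of these statements from scratch: parts \eqref{item: omega finite} and \eqref{item: omega exterior} are dispatched there by saying the arguments of \cite[Thm.\ 4.5]{MW} and \cite[Thm.\ 1.1]{MW2} go through, part \eqref{item: omega completion} is quoted from \cite[p.36, Rem.]{AJP2005}, and part \eqref{item: omega local} is deduced from \eqref{item: omega completion} together with the cited isomorphism $\Omega^1_{\widehat B/\widehat A}\otimes_{\widehat B}\widehat{B_f}\cong\Omega^1_{\widehat{B_f}/\widehat A}$ and the faithful flatness of $\widehat{B_f}$ over $B_f^\dagger$. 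Your series calculus for \eqref{item: omega finite} is exactly the Monsky--Washnitzer argument written out in the pseudo-wcfg setting, and your Krull-intersection proof of \eqref{item: omega exterior} is a clean replacement for the citation (it needs $B$ Noetherian and $J\subseteq\mathrm{rad}(B)$, both of which hold since $B$ is wcfg over the complete Noetherian ring $\widehat{R_{[n]}}$ by Lemma \ref{lem: base completion}). For \eqref{item: omega completion} your extension-by-continuity construction of the inverse derivation is fine because the target $\Omega^1_{B/A}\otimes_B\widehat B$ is a finite module over the complete ring $\widehat B$, hence complete and separated. What your route buys is independence from the formal-scheme literature; what the paper's route buys is that \eqref{item: omega local} becomes a two-line descent along $B_f^\dagger\rightarrow\widehat{B_f}$.

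The one step you should not wave through is the construction of the derivation $B_f^\dagger\rightarrow\Omega^1_{B/A}\otimes_BB_f^\dagger$ in part \eqref{item: omega local}. Unlike the formula $d(\rho(\xi))=\sum_\ell\rho(\partial_\ell\xi)\,db_\ell$ in part \eqref{item: omega finite} --- where the left-hand side is already defined and the series merely computes it --- here you are \emph{defining} a map by termwise differentiation of a chosen weakly convergent series, and $\Omega^1_{B/A}\otimes_BB_f^\dagger$ is not complete, so extension by continuity does not apply directly and well-definedness (independence of the chosen series, Leibniz rule, $A$-linearity) is a genuine issue. It is repairable with tools you already use: the target is finite over the Noetherian ring $B_f^\dagger$ with $J B_f^\dagger\subseteq\mathrm{rad}(B_f^\dagger)$, hence separated by Krull, so it injects into its completion $\Omega^1_{B/A}\otimes_B\widehat{B_f}$; there the value is the limit of $d$ of the partial sums, which is independent of the series, and your coefficient-convergence observation shows this limit lies in the image of $\Omega^1_{B/A}\otimes_BB_f^\dagger$. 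Note that this detour through $\widehat{B_f}$ is in substance the same completion input the paper invokes via faithful flatness. A smaller point: ``topologically nilpotent, so $1+J\subseteq B^\times$'' is not a valid implication in general; what makes it true here is weak completeness (the geometric series in an element of $J$ is weakly convergent), which is the standard fact that $I_{[n]}B$ lies in the Jacobson radical of a weakly complete algebra.
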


\begin{proof}
	\eqref{item: omega finite} and \eqref{item: omega exterior} follow by the same arguments to \cite[Thm.\ 4.5]{MW} and \cite[Thm.\ 1.1]{MW2}.
	\eqref{item: omega completion} is \cite[p.36, Rem.]{AJP2005}.
	Noting that $\widehat{B_f^\dagger}= \widehat{B_f}$, we have isomorphisms
	\begin{align*}
	\Omega^1_{B/A}\otimes_BB_f^\dagger\otimes_{B_f^\dagger}\wh{B_f^\dagger}&=\Omega^1_{B/A}\otimes_B\widehat{B_f}=\Omega^1_{B/A}\otimes_B\wh{B}\otimes_{\wh{B}}\wh{B_f}\\
	&\xrightarrow{\cong}\Omega^1_{\wh{B}/\wh{A}}\otimes_{\wh{B}}\wh{B_f}\xrightarrow{\cong}\Omega^1_{\wh{B_f}/\wh{A}}\xleftarrow{\cong}\Omega^1_{B_f^\dagger/A}\otimes_{B_f^\dagger}\wh{B_f^\dagger},
	\end{align*}
	where the third and fifth isomorphisms follow by \eqref{item: omega completion}, and the fourth isomorphism is \cite[Lem.\ 2.1.10 (2)]{AJP2005}.
	This implies \eqref{item: omega local}, since $\wh{B_f^\dagger}$ is faithfully flat over $B_f^\dagger$ by the following Lemma \ref{lem: faithfully flat}.
\end{proof}

\begin{lemma}\label{lem: faithfully flat}
	The completion of a pseudo-wcfg algebra $A$ with respect to an ideal of definition is faithfully flat over $A$.
\end{lemma}

\begin{proof}
	This follows immediately from \cite[Theorem 1.6]{MW} and \cite[Theorem 8.14]{Ma}.
\end{proof}

\begin{proposition-definition}\label{prop-def: Omega}
	For a morphism of weak formal schemes $f\colon\cZ'\rightarrow\cZ$ with respect to $(R,I)$, 
there exists a unique pair $(\Omega^1_{\cZ'/\cZ},d_{\cZ'/\cZ})$ of a coherent sheaf 
$\Omega^1_{\cZ'/\cZ}$ on $\cZ'$ and a $\cZ$-derivation $d_{\cZ'/\cZ}\colon\cO_{\cZ'}\rightarrow\Omega^1_{\cZ'/\cZ}$, 
such that for any affine open subsets $\Spwf A\subset \cZ$ and $\Spwf B\subset f^{-1}(\Spwf A)$ there is an isomorphism
	\[\Omega^1_{\cZ'/\cZ}|_{\Spwf B}\cong (\Omega^1_{B/A})^\sim\]
	which is compatible with derivations.
	We define $\Omega^\bullet_{\cZ'/\cZ}:=\bigwedge_{\cO_{\cZ'}}\Omega^1_{\cZ'/\cZ}$ and call this the complex of sheaves of \textit{continuous differentials} of $\cZ'$ over $\cZ$.
\end{proposition-definition}

\begin{proof}
	For any $x\in \cZ'$, take an affine open neighbourhood $\cV=\Spwf A\subset\cZ$ of $f(x)$, 
and set $\Omega^1_{f,x}:=\varinjlim_{x\in\Spwf B\subset f^{-1}(\cV)}\Omega^1_{B/A}$, where the limit runs through all affine neighbourhoods of $x$ in $f^{-1}(\cV)$.
	Note that $\Omega^1_{f,x}$ is independent of the choice of $\cV$.
	For any open subset $\cU\subset\cZ'$ we let $\Gamma(\cU,\Omega^1_{\cZ'/\cZ})$  be the set of all maps $s\colon\cU\rightarrow\coprod_{x\in\cU}\Omega^1_{f,x}$ such that for any $x\in\cU$ there exist affine open neighbourhoods $\Spwf A\subset\cZ$ of $f(x)$ and $\Spwf B\subset f^{-1}(\Spwf A)$ of $x$ and $\omega\in\Omega^1_{B/A}$ such that for any $y\in\Spwf B$ the image of $\omega$ under the canonical map $\Omega^1_{B/A}\rightarrow\Omega^1_{f,y}$ coincides with $s(y)$.
	For $\alpha\in\Gamma(\cU,\cO_{\cZ'})$ and $x\in\cU$, take  affine open neighbourhoods $\Spwf A\subset\cZ$ of $f(x)$ and $\Spwf B\subset f^{-1}(\Spwf A)$ of $x$, and define $(d_{\cZ'/\cZ}\alpha)_x$ to be the image of $\alpha|_{\Spwf B}$ under the maps $B\xrightarrow{d_{B/A}}\Omega^1_{B/A}\rightarrow\Omega^1_{f,x}$.
	We obtain a derivation $d_{\cZ'/\cZ}\colon\Gamma(\cU,\cO_{\cZ'})\rightarrow\Gamma(\cU,\Omega^1_{\cZ'/\cZ})$ given by $\alpha\mapsto(x\mapsto(d_{\cZ'/\cZ}\alpha)_x)$.
	By construction $\Omega^1_{\cZ'/\cZ}$ is a sheaf with a $\cZ$-derivation $d_{\cZ'/\cZ}$, and the stalk of $\Omega^1_{\cZ'/\cZ}$ at a point $x\in\cZ'$ is canonically isomorphic to $\Omega^1_{f,x}$.
	
	For affine open subsets $\Spwf A\subset \cZ$ and $\Spwf B\subset f^{-1}(\Spwf A)$, the natural homomorphism $\Omega^1_{B/A}\rightarrow\Gamma(\Spwf B,\Omega^1_{\cZ'/\cZ})$ induces a morphism of sheaves $(\Omega^1_{B/A})^\sim\rightarrow\Omega^1_{\cZ'/\cZ}|_{\Spwf B}$.
	This is an isomorphism as the induced morphisms on stalks are isomorphisms by Lemma \ref{lem: wf omega} \eqref{item: omega local}.
\end{proof}

\begin{proposition}\label{prop: Omega affine}
	Let $\cZ$ be a weak formal scheme with respect to $(R,I)$ and let $\cZ':=\cZ\times_{\Spwf \wh{R}}\Spwf \wh R\llbracket s_1,\ldots,s_n\rrbracket[t_1,\ldots,t_k]^\dagger$.
	Then $\Omega^1_{\cZ'/\cZ}$ is a free $\cO_{\cZ'}$-module generated by the global sections $ds_1,\ldots,ds_n,dt_1,\ldots,dt_k\in\Gamma(\cZ',\Omega^1_{\cZ'/\cZ})$.
\end{proposition}

\begin{proof}
	It is clear that there exists a homomorphism $\bigoplus_{i=1}^n\cO_{\cZ'}ds_i\oplus\bigoplus_{j=1}^k\cO_{\cZ'}dt_j\rightarrow\Omega^1_{\cZ'/\cZ}$.
	To show that this is an isomorphism, 
	we may suppose that $\cZ$ is affine.
	Then the assertion follows from a similar result for formal schemes \cite[Ex.\,2.1.7]{AJP2005} and Lemma \ref{lem: faithfully flat}.
\end{proof}

Next we give the definition of the weak completion of a weak formal scheme along a locally closed weak formal subscheme.
We first explain the local construction.

\begin{definition}\label{def: weak formal completion along closed}
Let $A$ be a pseudo-wcfg algebra with respect to $(R,I)$ 
with a representation $\rho\colon R_{[n]}[t_1,\ldots,t_k]^\dagger \rightarrow  A$.
Let $J\subset A$ be an ideal (not necessarily an ideal of definition) with a generating system $f_1,\ldots,f_m\in A$.
Define the {\it weak completion} of $A$ along $J$ as the $I_{[n + m]}$-adic weak completion of $A$ with respect to the map $R_{[n + m]} = R_{[n]}[s_{n + 1},\ldots,s_{n + m}] \rightarrow  A$ given by $s_i\mapsto\rho(s_i)$ for $1\leq i\leq n$ and $s_{n+j}\mapsto f_j$ for $1\leq j\leq m$, equipped with the $I_{[n+m]}$-adic topology.
In the following statement, we denote it by $A'$. 
\end{definition}

\begin{proposition}\label{prop: weak completion}
	Let $A$ and $J$ be as above.
	The weak completion $A'$ of $A$ along $J$ is pseudo-wcfg with respect to $(R,I)$ 
and satisfies the following universal property:
	Let $f\colon A\rightarrow B$ be a homomorphism of pseudo-wcfg algebras with respect to $(R,I)$ and suppose that $f$ is continuous when we consider the $J$-adic topology on $A$.
	Then $f$ uniquely factors through a continuous homomorphism $A'\rightarrow B$.
	Moreover, $A'$ depends only on the topology on $A$ defined by $J$.
\end{proposition}

\begin{proof}  
	We first prove that $A'$ is pseudo-wcfg with respect to $(R,I)$.
	The $I_{[n+m]}$-adic weak completion of the map
		\[R_{[n + m]}[t_1,\ldots,t_k] \rightarrow  A,\ t_j\mapsto\rho(t_j),\]
	is a map $\rho'\colon R_{[n + m]}[t_1,\ldots,t_k]^\dagger \rightarrow  A'$.
	We want to show that $\rho'$ is surjective.
	Any element $\xi\in A'$ has a presentation of the form
		\[\xi = \sum_{i\geq 0}P_i(x_1,\ldots,x_r),\]
	where $x_1,\ldots,x_r\in A$, $P_i(X_1,\ldots,X_r)\in I_{[n + m]}^i\cdot R_{[n + m]}[X_1,\ldots,X_r]$, and there exists a constant $c>0$ such that
		\[c(i + 1)\geq \mathrm{deg}P_i\]
	for any $j\geq 0$.
	Since $\rho$ is surjective, each $x_j$ can be written as
		\[x_j = \sum_{\ell\geq 0}\rho (Q_{j,\ell}(t_1,\ldots,t_k)),\]
	 where $Q_{j,\ell}(t_1,\ldots,t_k)\in I^\ell_n\cdot R_{[n]}[t_1,\ldots,t_k]$ and there exists a constant $d>0$ such that
		\[d(\ell + 1)\geq\mathrm{deg}Q_{j,\ell}\]
	for any $j = 1,\ldots,r$ and $\ell\geq 0$. 
	If we write 
		\begin{align*}
		P_i(X_1,\ldots,X_r) = \sum_{\substack{\alpha\in\bbN^r\\ \lvert\alpha\rvert\leq\mathrm{deg}P_i}}g_{i,\alpha}X_1^{\alpha_1}\cdots X_r^{\alpha_r}, & & g_{i,\alpha}\in I_{[n + m]}^i,\\
		Q_{j,\ell}(t_1,\ldots,t_k) = \sum_{\substack{\beta\in\bbN^k\\ \lvert\beta\rvert\leq\mathrm{deg}Q_{j,\ell}}}a_{j,\ell,\beta}t_1^{\beta_1}\cdots t_k^{\beta_k}, & &  a_{j,\ell,\beta}\in I_{[n]}^\ell,
		\end{align*}
	we obtain an expression
		\[\xi = \sum_{q\geq 0}\sum_{\lambda\in\Lambda_q}\rho (F_\lambda(t_1,\ldots,t_k)),\]
	where $\Lambda_q$ is the set of all systems $ \left(i,\alpha,(\ell_{j,\nu})_{\substack{1\leq j\leq r\\ 1\leq \nu\leq \alpha_j}}\right)$ such that
		\begin{align*}
		i\geq 0, & & \alpha\in\bbN^r, & & \lvert\alpha\rvert\leq\mathrm{deg}P_i,& &\ell_{j,\nu}\geq 0, & & i + \sum_{1\leq j\leq r}\sum_{1\leq \nu\leq \alpha_j}\ell_{j,\nu} = q,
		\end{align*}
	and the polynomial $F_\lambda$ is defined by
		\begin{eqnarray*}
		F_\lambda(t_1,\ldots,t_k): = g_{i,\alpha}\prod_{1\leq j\leq r} \prod_{1\leq\nu\leq\alpha_j} \sum_{\substack{\beta\in\bbN^k\\ \lvert\beta\rvert\leq\mathrm{deg}Q_{j,\ell_{j,\nu}}}}a_{j,\ell_{j,\nu},\beta}t_1^{\beta_1}\cdots t_k^{\beta_k}
		\in I_{[n + m]}^q\cdot R_{[n + m]}[t_1,\ldots,t_k].
		\end{eqnarray*}
	We note that $\Lambda_q$ is a finite set.
	Since we have
		\begin{eqnarray*}
		\mathrm{deg}F_\lambda&\leq&\sum_{1\leq j\leq r}\sum_{1\leq \nu\leq \alpha_j}\mathrm{deg}Q_{j,\ell_{j,\nu}}\leq\sum_{1\leq j\leq r}\sum_{1\leq \nu\leq \alpha_j}d(\ell_{j,\nu} + 1) = d(q-i + \lvert\alpha\rvert)\\
		&\leq&d \left(q-i + c(i + 1)\right)\leq d \left(q + c(q + 1)\right),
		\end{eqnarray*}
	there exists a constant $c'>0$ such that $\mathrm{deg}F_\lambda\leq c'(q + 1)$.
	This shows the surjectivity of $\rho'$.
	
	Next we prove the universal property.
	Let $f\colon A\rightarrow B$ be a homomorphism as in the statement.
	By the assumption on $f$, we can take an ideal of definition of $B$ which contains the image of $I_{[n+m]}$ under the composite $R_{[n+m]}\rightarrow A\rightarrow B$.
	Thus $B$ is $I_{[n+m]}$-adically weakly complete.
	By \cite[Thm.\,1.5]{MW}, $f$ uniquely extends to a continuous morphism $A'\rightarrow B$.
	
	Finally, the independence from the choice of an ideal of definition and its generating system immediately follows from the universal property.
\end{proof}

\begin{proposition-definition}\label{prop-def: weak completion}
	The natural inclusion from the category of homeomorphic closed immersions of weak formal schemes to the category of immersions of weak formal schemes has a right adjoint of the form $(Z\hookrightarrow\cZ)\mapsto (Z\hookrightarrow\cZ_Z)$.
	We call $\cZ_Z$ the \textit{weak completion} of $\cZ$ along $Z$.
\end{proposition-definition}

\begin{proof}
	We will show that, for any closed immersion $Z\hookrightarrow\cZ$ we may associate in a functorial way a factorization $Z\hookrightarrow\cZ_Z\rightarrow\cZ$ satisfying the following:
	\begin{enumerate}
	\item\label{item: weak completion 1} $Z\hookrightarrow\cZ_Z$ is a homeomorphic closed immersion,
	\item\label{item: weak completion 2} For any commutative diagram
		\[\xymatrix{
		Y\ar[d]\ar@{^(->}[rr]&&\cY\ar[d]\\
		Z\ar@{^(->}[r]&\cZ_Z\ar[r]&\cZ
		}\]
		where the upper horizontal morphism is a homeomorphic closed immersion, there exists a unique morphism $\cY\rightarrow\cZ_Z$ which keeps the diagram commutative.
	\end{enumerate}
	Let $i\colon Z\hookrightarrow\cZ$ be an immersion with a factorization into a closed immersion $Z\hookrightarrow\cZ'$ and an open immersion $\cZ'\rightarrow\cZ$.
	If $\cZ'|_Z$ satisfying the above conditions for $Z\hookrightarrow\cZ'$ exists, then it also satisfies the above conditions for $Z\hookrightarrow\cZ$, and hence is independent of the choice the factorization.
	Therefore we may assume that $i$ is a closed immersion.
	
	Moreover, if $\cZ_Z$ satisfying the above conditions exists, 
	then for any weak formal open subscheme $\cU\subset\cZ$, 
	the factorization $Z\times_\cZ\cU\hookrightarrow \cZ_Z\times_\cZ\cU\rightarrow\cU$   
	also satisfies the same conditions.
	Therefore, in order to give a construction of $\cZ_Z$, we may work locally.
		
	Thus let $\cZ=\Spwf A$ be affine and $i$ defined by an ideal $J\subset A$.
	Let $B$ be the weak completion of $A$ along $J$.
	We will show that the factorization $\Spwf A/J\hookrightarrow\Spwf B\rightarrow\Spwf A$ satisfies the above conditions.
	
	The condition \eqref{item: weak completion 1} is clear by construction.	
	To see \eqref{item: weak completion 2}, we may suppose that $Y=\Spwf D$ and $\cY=\Spwf C$ are affine.
	Let $M$ be an ideal of definition of $C$ and $J'$ the kernel of the surjection $C\rightarrow D$.
	Since $Y\hookrightarrow\cY$ is homeomorphic, $J'(C/M)$ is nilpotent.
	Thus we have $J'^k\subset M$ for some integer $k$, hence the weak completion of $C$ along $J'$ is $C$ itself.
	Since we have $JC\subset J'$, the map $A\rightarrow C$ uniquely extends to a map $B\rightarrow C$ by the functoriality of weak completion.
	This finishes the proof.
\end{proof}

From now on, let $V$ be a complete discrete valuation ring of characteristic $(0,p)$ with maximal ideal $\frm$ and fraction field $K$.
Next we provide the construction of dagger spaces associated to weak formal schemes with respect to $(V,\frm)$.
This will be crucial for the computation of (log) rigid cohomology in later sections. 
Indeed, Gro\ss{}e-Kl\"onne introduced dagger spaces as overconvergent analogue to rigid spaces \cite{GK1} which allowed him to interpret rigid cohomology in terms of their de Rham cohomology. 
A dagger space can be obtained as the generic fibre of a weak formal scheme \cite{LM}. 
More precisely, let $A$ be a wcfg algebra  with respect to $(V,\frm)$.
Then by \cite[\S4.2]{LM}, $A\otimes_V K$ is a $K$-dagger algebra in the sense of \cite[\S1.2]{GK1} and one may consider the associated dagger space $\Sp(A\otimes_V K)$ which is the set of maximal ideals endowed with a $G$-topology induced from the $G$-topology on the rigid space obtained by completing \cite[\S2.7]{GK1}.
In the following, we extend the construction of the associated dagger space to pseudo-wcfg algebras and weak formal schemes, following \cite[(0.2.6)]{Ber}.

\begin{lemma}\label{lem: shrink by J}
	Let $A$ be a pseudo-wcfg algebra with respect to $(V,\frm)$ and $J\subset A$ an ideal.
	\begin{enumerate}
	\item\label{item: universal} There exists a $p$-torsion free pseudo-wcfg algebra $A[\frac{J}p]^\dagger$ and a morphism $\iota\colon A\rightarrow A[\frac{J}p]^\dagger$ satisfying $\iota(J)\subset pA[\frac{J}p]^\dagger$, which are universal in the following sense:
	Any morphism $\tau\colon A\rightarrow B$ of pseudo-wcfg algebras with $B$ $p$-torsion free and $\tau(J)\subset pB$ factors uniquely through a morphism $A[\frac{J}p]^\dagger\rightarrow B$.
	\item\label{item: localization} Let $f\in A$ and $J_f:=JA_f^\dagger$.
		Then there exists a unique isomorphism $\left(A[\frac{J}p]^\dagger\right)_f^\dagger\cong A_f^\dagger[\frac{J_f}p]^\dagger$ of $A$-algebras.
	\item\label{item: wcfg} If $J$ is an ideal of definition of $A$, then $A[\frac{J}p]^\dagger$ is wcfg.
	\end{enumerate}
\end{lemma}

\begin{proof}
	Take elements $a_1,\ldots,a_n\in A$ generating $J$.
	Then
	\[A\left[\frac{J}p\right]^\dagger:=\left(A[x_1,\ldots,x_n]^\dagger/(px_1-a_1,\ldots, px_n-a_n)\right)/(\text{$p$-torsion})\]
	and the natural inclusion $\iota:A\rightarrow A[\frac Jp]^\dagger$ satisfy the universal property.
	Indeed, let $\tau\colon A\rightarrow B$ be a morphism as in the statement.
	By the assumption on $B$ and $\tau$, for any $i=1,\ldots,n$, there exists a unique element $b_i\in B$ such that $\iota(a_i)=pb_i$.
	Thus $\tau$ extends uniquely to a morphism $A[\frac Jp]^\dagger\rightarrow B$, which maps $x_i$ to $b_i$.
	This shows \eqref{item: universal}.
	
	For any $f\in A$, $\left(A[\frac{J}p]^\dagger\right)_f^\dagger$ is flat over $A[\frac Jp]^\dagger$ \cite[\S 2 Lem.\,2]{Me}, and hence $p$-torsion free.
	Note also that the natural map $A\rightarrow\left(A[\frac{J}p]^\dagger\right)_f^\dagger$ is universal with respect to morphisms $A[\frac Jp]^\dagger\rightarrow C$ with $f$ invertible in $C$.
	These properties together with the universality of $A_f^\dagger[\frac {J_f}p]^\dagger$ as in \eqref{item: universal} imply \eqref{item: localization}.
	
	When $J$ is an ideal of definition of $A$, then by the construction above we see that $A[\frac Jp]^\dagger/pA[\frac Jp]^\dagger$ is of finite type over $V/pV$, hence \eqref{item: wcfg} follows.
\end{proof}

\begin{proposition}
	Let $A$ be a pseudo-wcfg algebra  with respect to $(V,\frm)$ with an ideal of definition $J$.
	For any $k\geq 0$, the morphism $A[\frac{J^{k+1}}p]^\dagger\rightarrow A[\frac{J^k}p]^\dagger$ given by the universal property induces an open immersion
	\[\Sp\left(A\left[\frac{J^k}{p}\right]^\dagger\otimes_VK\right)\hookrightarrow \Sp\left(A\left[\frac{J^{k+1}}{p}\right]^\dagger\otimes_VK\right)\]
	of a Weierstraß domain.
	Moreover, the dagger space
		\[\bigcup_{k\geq 0}\Sp \left(A\left[\frac{J^k}{p}\right]^\dagger\otimes_VK\right)\]
	is independent of the choice of an ideal of definition $J$.
\end{proposition}

\begin{proof}
	Let $\widetilde{J}:=JA[\frac{J^{k+1}}p]^\dagger$.
	Then by the universal property as in Lemma \ref{lem: shrink by J} we see that $A[\frac{J^k}p]^\dagger=(A[\frac{J^{k+1}}p]^\dagger)[\frac{\widetilde{J}^k}p]^\dagger$.
	Thus, for elements $f_1,\ldots,f_m\in A[\frac{J^{k+1}}p]^\dagger$ generating $\widetilde{J}^k$, we have
	\[A\left[\frac{J^k}p\right]^\dagger\otimes_VK=\left(A\left[\frac{J^{k+1}}p\right]^\dagger\otimes_VK\right)\left[x_1,\ldots,x_m\right]^\dagger/\left(x_1-\frac{f_1}p,\ldots,x_n-\frac{f_m}p\right),\]
	which shows the first assertion.
	
	To show the second assertion, let $J'$ be another ideal of definition.
	Then there exists an integer $r\geq 0$ such that $J^r\subset J'$.
	Thus there are canonical morphisms $A[\frac{J^{rk}}p]^\dagger\rightarrow A[\frac{J'^k}p]^\dagger$ for all $k$ compatible with each other.
	They induce a morphism
	\[\bigcup_{k\geq 0}\Sp \left(A\left[\frac{J'^k}{p}\right]^\dagger\otimes_VK\right)\rightarrow\bigcup_{k\geq 0}\Sp \left(A\left[\frac{J^k}{p}\right]^\dagger\otimes_VK\right),\]
	and an opposite morphism is constructed similarly.
	Again by the universal property of Lemma \ref{lem: shrink by J}, we see that they are inverse to each other.
\end{proof}

For a wcfg algebra $B$ with respect to $(V,\frm)$, the specialization map $\mathrm{sp}\colon\Sp(B\otimes_VK)\rightarrow \Spwf B$ is defined as in the case of formal schemes.
Let $A$ be a pseudo-wcfg algebra $A$ with respect to $(V,\frm)$ with an ideal of definition $J$.
Noting that $A[\frac{J^k}p]^\dagger$ is wcfg by Lemma \ref{lem: shrink by J}\eqref{item: wcfg}, we see that the composites
\[\Sp\left(A\left[\frac{J^k}p\right]\otimes_VK\right)\xrightarrow{\mathrm{sp}}\Spwf A\left[\frac{J^k}p\right]^\dagger\rightarrow\Spwf A\]
for all $k$ together induce a well-defined morphism
\begin{equation}\label{eq: sp for affine}
\mathrm{sp}\colon \bigcup_{k\geq 0}\Sp \left(A\left[\frac{J^k}{p}\right]^\dagger\otimes_VK\right)\rightarrow\Spwf A.
\end{equation}
For any $f\in A$, we have
\[\mathrm{sp}^{-1}(\Spwf A_f^\dagger)=\bigcup_{k\geq 0}\Sp \left(A_f^\dagger\left[\frac{J^k_f}{p}\right]^\dagger\otimes_VK\right)\]
where we set $J_f=JA_f^\dagger$.

\begin{definition}\label{def: gen fib}
	Let $\cZ$ be a weak formal scheme with respect to $(V,\frm)$. 
	Take an ideal of definition $\cJ$ of $\cZ$ and an affine covering $\{\Spwf A_\lambda\}_{\lambda}$ of $\cZ$, and let $J_\lambda=\Gamma(\Spwf A_\lambda,\cJ)$.
	Then for each $k\geq 0$, the weak formal schemes $\Spwf A_\lambda[\frac{J_\lambda^k}{p}]^\dagger$ naturally glue due to Lemma \ref{lem: shrink by J}\eqref{item: localization}.
	We denote by $\cZ[\frac{\cJ^k}{p}]^\dagger$ the resulting weak formal scheme.
	Similarly, the dagger spaces $\bigcup_{k\geq 0}\Sp (A_\lambda[\frac{J_\lambda^k}{p}]^\dagger\otimes_RK)$ naturally glue.
	We call the resulting dagger space $\frZ$ the {\it dagger space associated to} $\cZ$.
	The maps \eqref{eq: sp for affine} for all $A_\lambda$ together induce a map $\mathrm{sp}\colon \frZ\rightarrow\cZ$, which we call the \textit{specialization map}.
\end{definition}

\subsection{Smoothness and \'{e}taleness}
\label{subsection: smoothness}

In this subsection, we go back to the general situation: We consider weak formal schemes with respect to a pair $(R,I)$, consisting of a noetherian ring and an ideal.

In order to discuss lifting properties of a morphism of weak formal schemes, we will often consider a commutative diagram of weak formal schemes
\begin{equation}\label{eq: test diagram}\xymatrix{
\cY'\ar[r]^-i\ar[d]^-{\theta'}&\cY\ar[d]^-\theta\\
\cZ'\ar[r]^-f&\cZ
}\end{equation}
where $i$ is a homeomorphic closed immersion defined by an ideal $\cN\subset\cO_\cY$.

As in the classical case, the following lemma will be useful for the study of smoothness of weak formal schemes.

\begin{lemma}\label{lem: pseudo torsor}
	Consider the commutative diagram \eqref{eq: test diagram} and suppose that $i$ is a first order thickening.
	Let $\cF$ be the sheaf of sets on $\cY$ defined by $\Gamma(\cU,\cF):=\{g\colon \cU\rightarrow\cZ'\mid f\circ g=\theta|_{\cU},\ g\circ i|_{\cU'}=\theta'|_{\cU'}\}$ for each open weak formal subscheme $\cU\subset\cY$,
	where $\cU'$ denotes $\cU\times_\cY\cY'$.
	Then the sheaf $\cH:=\cH om_{\cO_{\cY'}}(\theta'^*\Omega^1_{\cZ'/\cZ},\cC_{\cY'/\cY})$ naturally acts on $\cF$, and it turns $\cF$ into a pseudo-$\cH$-torsor.
	This means that the action of $\Gamma(\cU,\cH)$ is simply transitive if $\Gamma(\cU,\cF)$ is non-empty.
\end{lemma}

\begin{proof}
	The lemma is proved in a similar manner to the classical case.
	We first 
	give a description of the action.
	For sections $g\in\Gamma(\cU,\cF)$ and $\varphi\in\Gamma(\cU,\cH)$, the morphism $g^\varphi\colon\cU\rightarrow\cZ'$ is given by the continuous map $g$ between the underlying topological spaces and the homomorphism of topological rings $g^{-1}\cO_{\cZ'}\rightarrow\cO_\cY$ defined by $\alpha\mapsto g^*\alpha+\varphi(\theta'^*(d\alpha))$ where we identify a section of $\cC_{\cY'/\cY}$ with that of $\cN\subset\cO_\cY$.
	
	Conversely, suppose that two sections $g,h\in\Gamma(\cU,\cF)$ are given.
	To prove that there is a unique section $\varphi\in\Gamma(\cU,\cH)$ satisfying $g^\varphi=h$, 
	we may assume that $\cZ$, $\cZ'$, $\cY$, and $\cY'$ are affine and that  $\cU=\cY$.
	Then the diagram \eqref{eq: test diagram} corresponds to a commutative diagram of pseudo-wcfg algebras
	\[\xymatrix{
	D&C\ar[l]_-{i^\sharp}\\
	B\ar[u]_-{\theta'^\sharp}&A,\ar[l]_-{f^\sharp}\ar[u]_-{\theta^\sharp}
	}\]
	and $g$ and $h$ give morphisms $g^\sharp$ and $h^\sharp$ from $B$ to $C$. 
	
	Let $J\subset B$ and $J'\subset C$ be ideals of definition.
	Then $J'D$ is an ideal of definition of $D$ by Proposition \ref{prop: closed immersion} \eqref{item: closed affine}.
	Because $\theta'^\sharp$ is continuous we have $J^nD\subset J'D$ for some $n\geq 1$.
	Since $N:=\Ker i^\sharp$ is $J'$-adically separated, we see that $N$ regarded as a $B$-module via $\theta'^\sharp$   is $J$-adically separated.
	Thus the $A$-linear derivation $h^\sharp-g^\sharp\colon B\rightarrow N$ uniquely factors through a $B$-linear homomorphism 
	$\varphi\colon\Omega^1_{B/A}\rightarrow N$.
	Since we have a canonical isomorphism $\Hom_B(\Omega^1_{B/A},N)\cong \Hom_D(\Omega^1_{B/A}\otimes D,N)=\Gamma(\cU,\cH)$, $\varphi$ gives an element of $\Gamma(\cU,\cH)$.
	It is easy to see that this $\varphi$ satisfies $g^\varphi=h$.
\end{proof}

We define smoothness and \'{e}taleness of morphisms of weak formal schemes by the infinitesimal lifting property for first order thickenings.
However it would be convenient for discussions concerning rigid cohomology to require stronger lifting properties, as in the following definition:

\begin{definition}\label{def: non-log-lifting-properties}
	Let $f\colon \cZ' \rightarrow \cZ$ be a morphism of weak formal schemes with respect to $(R,I)$.
	\begin{enumerate}
	\item We say $f$ is {\it smooth} if for any commutative diagram as in \eqref{eq: test diagram} where $i$ is a first order thickening, there exists locally on $\cY$ a morphism $g\colon\cY\rightarrow\cZ'$ such that $g\circ i=\theta'$ and $f\circ g=\theta$.
	\item We say $f$ is {\it \'{e}tale} if for any commutative diagram as in \eqref{eq: test diagram} where $i$ is a first order thickening, there exists a unique morphism $g\colon\cY\rightarrow\cZ'$ such that $g\circ i=\theta'$ and $f\circ g=\theta$.
	\item We say $f$ is {\it strongly smooth} if it is smooth and for any commutative diagram as in \eqref{eq: test diagram} where $\cY$ is adic over $\Spwf R$, there exists locally on $\cY$ a morphism $g\colon\cY\rightarrow\cZ'$ such that $g\circ i=\theta'$ and $f\circ g=\theta$.
	\item We say $f$ is  {\it strongly \'{e}tale} if it is \'{e}tale and for any commutative diagram as in \eqref{eq: test diagram} where $\cY$ is adic over $\Spwf R$, there exists a unique morphism $g\colon\cY\rightarrow\cZ'$ such that $g\circ i=\theta'$ and $f\circ g=\theta$.
	\end{enumerate}
\end{definition}

\begin{remark}\label{rem: formal smoothness}
	Formal smoothness and formal \'{e}taleness of a morphism $f\colon\cZ'\rightarrow\cZ$ of locally noetherian formal schemes was defined by infinitesimal lifting properties for first order thickenings of schemes (\cite[Def.\ 2.1]{AJP}).
	If $f$ is formally smooth (resp.\ formally \'{e}tale), it automatically satisfies the respective infinitesimal lifting property for any homeomorphic closed immersions of locally noetherian formal schemes (\cite[Prop.\ 2.3]{AJP}, \cite[Prop.\ 3.4.1]{AJP2005}).
	Such an $f$ is called smooth (resp.\ \'{e}tale) if it is moreover of pseudo-finite type in the sense of \cite[Def.\ 1.5]{AJP}.
	
	Note that, the completion of any morphism of weak formal schemes is pseudo-finite type, because we consider only morphisms between weak formal schemes over the same base $(R,I)$.
\end{remark}

The statements in the following proposition are immediate.

\begin{proposition}\label{prop: easy properties}
	For morphisms of weak formal schemes with respect to $(R,I)$, we have the following:
	\begin{enumerate}
	\item\label{item: easy property 1} An open immersion is strongly \'{e}tale.
	\item\label{item: easy property 2} Composition of smooth (resp.\ \'{e}tale, strongly smooth, strongly \'{e}tale) morphisms is smooth (resp.\ \'{e}tale, strongly smooth, strongly \'{e}tale).
	\item\label{item: easy property 3} Smoothness, \'{e}taleness, strong smoothness, and strong \'{e}taleness are stable under base change.
	\item\label{item: easy property 4} If $f$ is \'{e}tale and $f\circ g$ is smooth (resp.\ \'{e}tale), then $g$ is smooth (resp.\ \'{e}tale).
	If $f$ is strongly \'{e}tale and $f\circ g$ is strongly smooth (resp.\ strongly \'{e}tale), then $g$ is strongly smooth (resp.\ strongly \'{e}tale).
	\end{enumerate}
\end{proposition}

\begin{proposition}\label{prop: fundamental properties of Omega} $\quad$
	\begin{enumerate}
	\item\label{item: Omega locally free} If $f\colon\cZ'\rightarrow\cZ$ is a smooth (resp.\ \'{e}tale) morphism of weak formal schemes, then $\Omega^1_{\cZ'/\cZ}$ is locally free (resp.\ zero).
	\item\label{item: Omega exact sequence} For any morphisms of weak formal schemes $f\colon\cZ''\rightarrow\cZ'$ and $g\colon\cZ'\rightarrow\cZ$, we have an exact sequence
		\begin{equation}\label{eq: fund seq}
		f^*\Omega^1_{\cZ'/\cZ}\rightarrow\Omega^1_{\cZ''/\cZ}\rightarrow\Omega^1_{\cZ''/\cZ'}\rightarrow 0.\end{equation}
		Moreover we have the following:
		\begin{enumerate}
		\item\label{item: split 1} If $f$ is smooth, then the first map of \eqref{eq: fund seq} is injective and locally split. If $f$ is \'{e}tale, then the first map of \eqref{eq: fund seq} is an isomorphism.
		\item\label{item: split 2} Suppose that  $g\circ f$ is smooth.
			If the first map of \eqref{eq: fund seq} is injective and locally split, then $f$ is smooth.
		If the first map of \eqref{eq: fund seq} is an isomorphism, then $f$ is \'{e}tale.
		\end{enumerate}
	\item\label{item: Omega affine case} Consider a commutative diagram as in \eqref{eq: test diagram} where $i$ is a first order thickening.
		Suppose that $f$ is smooth, $\cY,\cY'$ are affine, and $\theta'^*\Omega^1_{\cZ'/\cZ}$ is isomorphic to the sheaf associated to a finite module over $\Gamma(\cY',\cO_{\cY'})$.
		Then there exists a morphism $g\colon\cY\rightarrow\cZ'$ such that $g\circ i=\theta'$ and $f\circ g=\theta$.
		The third condition on $\theta'^*\Omega^1_{\cZ'/\cZ}$ is satisfied for example if $\cZ'$ and $\cZ$ are affine.
	\item\label{item: Omega product}
	Let $f\colon \cX'\rightarrow \cX$ and $g\colon \cZ\rightarrow\cX$ be morphisms of weak formal schemes, and denote the canonical projection $\cZ':=\cX'\times_\cX\cZ\rightarrow\cX'$ by $g'$.
	Then we have a canonical isomorphism $g'^\ast\Omega^1_{\cX'/\cX}\xrightarrow{\cong}\Omega^1_{\cZ'/\cZ}$.
	\end{enumerate}
\end{proposition}

\begin{proof}
	We note that, for a pseudo-wcfg algebra $A$ and a finite $A$-module $M$, the $A$-algebra $A\oplus M$ defined by $(a,m)(a',m'):=(aa',am'+a'm)$ is also pseudo-wcfg.
	Indeed, if we take elements $m_1,\ldots, m_r\in M$ generating $M$ over $A$, then the homomorphism
	\[A\otimes^\dagger R[X_1,\ldots,X_r]^\dagger\rightarrow A\oplus M;\, a\otimes X_1^{m_1}\cdots X_r^{m_r}\mapsto
	\begin{cases}(a,0)& m_1=\cdots m_r=0\\
	(0,am_i)&m_i=1, m_j=0 \,(\forall j\neq i)\\
	(0,0)&\text{otherwise}
	\end{cases}
	\]
	is surjective.
	The natural injection $A\rightarrow A\oplus M$ and the natural surjection $A\oplus M\rightarrow A$ are morphisms of pseudo-wcfg algebras.
	
	Therefore, for any coherent sheaf $\cE$ on a weak formal scheme $\cZ$, we may construct a weak formal scheme $\cZ\oplus\cE$ with a first order thickening 
	$\iota\colon\cZ\hookrightarrow\cZ\oplus\cE$ satisfying 
	$\cC_{\cZ\oplus\cE/\cZ}=\cE$ and a natural retraction $\rho\colon\cZ\oplus\cE\rightarrow\cZ$ by gluing the above construction.

	We first prove \eqref{item: Omega locally free}.
	For any point $z\in\cZ'$ we can take an open neighbourhood $\cU$ of $z$ and a surjection $\cO_\cU^{\oplus r}\rightarrow\Omega^1_{\cZ'/\cZ}|_\cU$.
	Consider a commutative diagram
	\[\xymatrix{\cU\ar[r]\ar[d]&\cU\oplus\Omega^1_{\cZ'/\cZ}|_\cU\ar[r]&\cU\oplus\cO_\cU^{\oplus r}\ar[d]\\
	\cZ'\ar[rr]^-f&&\cZ}\]
	where the right vertical morphism is given by composing the retraction $\rho$ with the inclusion $\cU\hookrightarrow\cZ'$ and $f$.
	Then by Lemma \ref{lem: pseudo torsor}, the smoothness $f$ implies that $\Hom_{\cO_{\cZ'}}(\Omega^1_{\cZ'/\cZ}|_\cU,\cO^{\oplus r}_\cU)\rightarrow\Hom_{\cO_{\cZ'}}(\Omega^1_{\cZ'/\cZ}|_\cU,\Omega^1_{\cZ'/\cZ}|_\cU)$ is surjective.
	Thus $\Omega^1_{\cZ'/\cZ}|_\cU$ is a direct summand of a finite free module, and hence locally free (\cite[\href{https://stacks.math.columbia.edu/tag/0BCI}{Lem.\,0BCI}]{stacks}).
	
	Next we prove \eqref{item: Omega exact sequence}. 
	For a coherent sheaf $\cE$ on $\cZ''$, consider the commutative diagram
	\begin{equation}\label{eq: diag chase}
	\xymatrix{\cZ''\ar@{=}[d]\ar[rr]^-\iota&&\cZ''\oplus\cE\ar[d]^-{g\circ f\circ \rho}\\
	\cZ''\ar[r]^-f&\cZ'\ar[r]^-g&\cZ.}\end{equation}
	Let $\cF_1$, $\cF_2$, and $\cF_3$ be the sheaves of sets on $\cZ''\oplus\cE$ whose sections on an open subset $\cU\subset\cZ\oplus\cE$ are defined by
	\begin{align*}
	\Gamma(\cU,\cF_1)&:=\{h\colon\cU\rightarrow\cZ'\mid g\circ h=(g\circ f\circ\rho)|_\cU,\, h\circ\iota|_\cU=f|_\cU\},\\
	\Gamma(\cU,\cF_2)&:=\{h\colon\cU\rightarrow\cZ''\mid g\circ f\circ  h=(g\circ f\circ\rho)|_\cU,\, h\circ\iota|_\cU=\id_\cU\},\\
	\Gamma(\cU,\cF_3)&:=\{h\colon\cU\rightarrow\cZ''\mid f\circ h=(f\circ\rho)|_\cU,\, h\circ\iota|_\cU=\id_\cU\}.
	\end{align*}
	Then by Lemma \ref{lem: pseudo torsor} we have isomorphisms
	\begin{align*}
	&\cH om_{\cO_{\cZ''}}(f^\ast\Omega^1_{\cZ'/\cZ},\cE)\xrightarrow{\cong}\cF_1;\, \varphi\mapsto (f\circ\rho)^\varphi,\\
	&\cH om_{\cO_{\cZ''}}(\Omega^1_{\cZ''/\cZ},\cE)\xrightarrow{\cong}\cF_2;\, \varphi\mapsto \rho^\varphi,\\
	&\cH om_{\cO_{\cZ''}}(\Omega^1_{\cZ''/\cZ'},\cE)\xrightarrow{\cong}\cF_3;\, \varphi\mapsto \rho^\varphi.	
	\end{align*}
	Using them, we see that the sequence
	\begin{equation}\label{eq: exact for H}
	0\rightarrow \cH om_{\cO_{\cZ''}}(\Omega^1_{\cZ''/\cZ'},\cE)\rightarrow \cH om_{\cO_{\cZ''}}(\Omega^1_{\cZ''/\cZ},\cE)\rightarrow \cH om_{\cO_{\cZ''}}(f^\ast\Omega^1_{\cZ'/\cZ},\cE)
	\end{equation}
	is exact for any coherent sheaf $\cE$, by a diagram chase for \eqref{eq: diag chase}.
	This implies the first assertion.
	
	When $f$ is smooth, one also sees that the third morphism of \eqref{eq: exact for H} is surjective.
	Applying this to the case $\cE=f^\ast\Omega^1_{\cZ'/\cZ}$, 
	we may locally lift the identity morphism on $f^\ast\Omega^1_{\cZ'/\cZ}$ to a morphism $\Omega^1_{\cZ''/\cZ}\rightarrow f^\ast\Omega^1_{\cZ'/\cZ}$.
	If $f$ is \'{e}tale, then the third map of \eqref{eq: exact for H} is an isomorphism, hence $f^\ast\Omega^1_{\cZ'/\cZ}\rightarrow\Omega^1_{\cZ''/\cZ}$ is also an isomorphism.
	This finishes the proof of (\ref{item: split 1}).
	
	To show (\ref{item: split 2}), we consider a commutative diagram
	\[\xymatrix{
	\cY'\ar[r]^-i\ar[d]^-{\theta'}&\cY\ar[d]^-\theta\ar[rd]^-{g\circ\theta}\\
	\cZ''\ar[r]^-f&\cZ'\ar[r]^-g&\cZ.}\]
	By assumption we may suppose that there exist a retraction $\Omega^1_{\cZ''/\cZ}\rightarrow f^\ast\Omega^1_{\cZ'/\cZ}$ and a morphism $h\colon\cY\rightarrow\cZ''$ with $g\circ f\circ h=g\circ\theta$ and $h\circ i=\theta'$.
	By Lemma \ref{lem: pseudo torsor} there exists $\varphi\in\Hom(\theta'^\ast f^\ast\Omega^1_{\cZ'/\cZ},\cC_{\cY'/\cY})$ such that $(f\circ h)^\varphi=\theta$.
	Let $\varphi':=\varphi\circ \theta'^\ast\gamma\in \Hom(\theta'^\ast\Omega^1_{\cZ''/\cZ},\cC_{\cY'/\cY})$.
	Then we have $f\circ h^{\varphi'}=\theta$, which shows that $f$ is smooth.
	The statement for the \'{e}taleness follows similarly.
	
	Next we show \eqref{item: Omega affine case}.
	Since $f$ is smooth, the stalks of the sheaf $\cF$ in Lemma \ref{lem: pseudo torsor} are non-empty, i.e.\ $\cF$ is an $\cH$-torsor.
	We write $\cY'=\Spwf B$, $\cY=\Spwf A$, and $N:=\Ker(A\rightarrow B)$.
	Then we have $\Ker(\cO_\cY\rightarrow i_\ast\cO_{\cY'})=N^\sim$ by Proposition-Definition \ref{prop-def: associated sheaf}\eqref{item: sub or quotient}.
	It follows that $\cC_{\cY'/\cY}=N^\sim$ where $N$ is regarded as a $B$-module. 
	Suppose that $\theta'^*\Omega^1_{\cZ'/\cZ}$ corresponds to a finite $B$-module $M$.
	In this case, $\cH$ is isomorphic to the sheaf associated to the finite $B$-module $\Hom_B(M,N)$.
	Therefore $H^1(\cY',\cH)$ vanishes by \cite[\S 2, Thm.\ 14]{Me}, hence $\cF$ is the trivial $\cH$-torsor by \cite[\href{https://stacks.math.columbia.edu/tag/02FQ}{Lem.\,02FQ}]{stacks}.
	This means that $\Gamma(\cY',\cF)$ is non-empty.
	
	We finally prove \eqref{item: Omega product}.
	We denote the natural projection $\cZ'\rightarrow\cZ$ by $f'$.
	For any coherent sheaf $\cE$ on $\cZ'$, consider the first order thickening $\iota\colon\cZ'\hookrightarrow\cZ'\oplus\cE$ with retraction $\rho\colon\cZ'\oplus\cE\rightarrow\cZ'$ introduced in the beginning of the proof. We define sheaves $\cG_1$ and $\cG_2$ on $\cZ'\oplus\cE$ by setting
	\begin{align*}
	&\Gamma(\cU,\cG_1):=\{h\colon \cU\rightarrow\cZ'\mid f'\circ h=(f'\circ\rho)|_\cU,\,h\circ\iota|_\cU=\id_\cU\},\\
	&\Gamma(\cU,\cG_2):= \{h\colon\cU\rightarrow\cX'\mid f\circ h=(g\circ f'\circ\rho)|_\cU,\,h\circ\iota|_\cU=g'|_\cU\}
	\end{align*}
	for any open subset $\cU\subset\cZ'\oplus\cE$.
	The universal property of the fiber product shows that the natural morphism $\cG_1\rightarrow\cG_2$ is an isomorphism.
	This with arguments similar to above shows that $\cH om_{\cO_{\cZ'}}(\omega^1_{\cZ'/\cZ},\cE)\rightarrow\cH om_{\cO_{\cZ'}}(g'^\ast\omega^1_{\cX'/\cX},\cE)$ is an isomorphism, and the assertion follows.
\end{proof}

\begin{proposition}\label{prop: completion of smooth}
	If $f\colon\cZ'\rightarrow\cZ$ is a smooth (resp.\ \'{e}tale) morphism of weak formal schemes with respect to $(R,I)$, its completion $\wh f\colon\wh\cZ'\rightarrow\wh\cZ$ is also smooth (resp.\ \'{e}tale).
\end{proposition}

\begin{proof}
	We will first prove the assertion for smoothness.
	Since the smoothness is a local condition, we may suppose $\cZ'=\Spwf B$ and $\cZ=\Spwf A$ are affine.
	According to Remark \ref{rem: formal smoothness}, it suffices to show $\wh f$ is formally smooth.
	Consider a commutative diagram of topological rings
	\begin{equation}\label{eq: completion diagram}
	\xymatrix{
	D&C\ar[l]^-{i^\sharp}\\
	\wh B\ar[u]^-{\theta'^\sharp}&\wh A\ar[l]^-{\wh{f}^\sharp}\ar[u]^-{\theta^\sharp}
	}\end{equation}
	where $C$ and $D$ are discrete and $i^\sharp$ is surjective with square zero kernel.
	Let $J$ be an ideal of definition of $A$.
	As $C$ is discrete, $\theta^\sharp$ factors through $A/J^k=\widehat{A}/J^k\widehat{A}$ for some $k$.
	Thus $\Im\theta^\sharp$ is finitely generated over $R$.
	Similarly $\Im\theta'^\sharp$ is finitely generated over $R$.
	Thus there exists a finitely generated $A/J^k$-subalgebra $C'\subset C$ such that $\Im \theta^\sharp\subset C'$ and $\Im\theta'^\sharp\subset D':=\Im(C'\xrightarrow{i^\sharp}D)$.
	Then we obtain a commutative diagram
	\[\xymatrix{
	D'&C'\ar[l]\\
	B\ar[u]&A\ar[l]\ar[u]
	}\]
	where the vertical maps in the diagram are given by composing $\theta^\sharp$ and $\theta'^\sharp$ with the canonical injections $A\rightarrow\widehat{A}$ and $B\rightarrow\widehat{B}$, respectively.
	Note that $C'$ and $D'$ are wcfg with respect to $(R,I)$, and $C'\rightarrow D'$ is surjective with square zero kernel.
	As $f$ is smooth there exists a morphism $B\rightarrow C'$ keeping the above diagram commutative by Proposition \ref{prop: fundamental properties of Omega} \eqref{item: Omega affine case}.
	Then the completion of the composite $B\rightarrow C'\rightarrow C$ induces a morphism $\wh B\rightarrow C$ which fit into the diagram \eqref{eq: completion diagram}, as desired.
	
	Note that, if a homomorphism $\wh B\rightarrow C$ fitting into \eqref{eq: completion diagram} is given, it factors through $C'$.
	If $\wh f^\sharp$ is \'{e}tale, we see that such homomorphism is unique since $B$ is dense in $\wh B$.
	Thus we obtain the assertion for \'{e}taleness. 
\end{proof}

\begin{proposition}\label{prop: Jacobian}
	Let $f\colon\cZ' \rightarrow \cZ$ be a morphism of weak formal schemes with respect to $(R,I)$ and $i\colon\cZ''\hookrightarrow\cZ'$ a closed immersion.
	Then there exists an exact sequence
		\begin{equation}\label{eq: Jacobian}
		\cC_{\cZ''/\cZ'} \rightarrow  i^* \Omega^1_{\cZ'/\cZ} \rightarrow  \Omega^1_{\cZ''/\cZ} \rightarrow  0.
		\end{equation}
	Moreover, the first map in \eqref{eq: Jacobian} is injective if the composite $f\circ i\colon\cZ'' \rightarrow \cZ$ is smooth.
\end{proposition}

\begin{proof}
	The construction of the morphisms in \eqref{eq: Jacobian} is obvious.
	By Lemma \ref{lem: wf omega}\eqref{item: omega completion} and Lemma \ref{lem: faithfully flat}, the exactness and the injectivity of the first map follow from similar results for formal schemes proved in \cite[Prop.\,2.3.4]{AJP2005} and \cite[Prop.\ 2.6.8]{LNS}.
\end{proof}

Next we prove a weak formal version of the strong fibration theorem \cite[Thm.~1.3.7]{Ber}.

\begin{proposition}[Strong fibration theorem]\label{prop: strong fibration}
	Let $\iota\colon Y\hookrightarrow \cZ$ and $\iota'\colon Y\hookrightarrow\cZ'$ be homeomorphic closed immersions of weak formal schemes with respect to $(R,I)$.
	Let $f\colon\cZ' \rightarrow \cZ$ be a smooth morphism with $f\circ\iota' = \iota$.
	Let $\cJ$ be the ideal of $\iota'\colon Y\hookrightarrow \cZ'$, and set $\tau: = (\mathrm{id},\iota')\colon Y\hookrightarrow Y': = Y\times_{\cZ}\cZ'$.
	If there exist sections $t_1,\ldots,t_d\in\Gamma(\cZ',\cJ)$ such that their images in $\Gamma(Y,\cC_{Y/Y'})$ form an $\cO_Y$-basis, then a morphism $g\colon \cZ' \rightarrow \cZ'': = \cZ\times_{\Spwf \widehat{R}}\Spwf \widehat{R}\llbracket s_1,\ldots,s_d\rrbracket$ over $\cZ$ defined by $s_i\mapsto t_i$ is an isomorphism.
\end{proposition}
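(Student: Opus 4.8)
The plan is to show that $g$ is \'etale and then to conclude by Lemma~\ref{lem: adic etale}.

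First I would set up the objects. Since $\Spwf W\llbracket s_1,\dots,s_d\rrbracket$ is the weak formal affine space over $\Spwf W$, the fibre product $\cZ''$ exists and is a weak formal scheme with respect to $(R,I)$, and $\cZ''\to\cZ$ is smooth. By Lemma~\ref{lem: ideal} the ideal $\cJ$ of the homeomorphic closed immersion $\iota'$ is an ideal of definition of $\cZ'$; in particular $t_1,\dots,t_d$ lie in an ideal of definition of $\cZ'$, which is exactly what is needed for the assignment $s_i\mapsto t_i$ (together with $f$) to define a morphism $g\colon\cZ'\to\cZ''$ over $\cZ$. I would then produce the homeomorphic closed immersion $\iota''\colon Y\hookrightarrow\cZ''$ as the composite of $\iota$ with the zero section $\cZ\hookrightarrow\cZ''$ ($s_i\mapsto 0$), which is a homeomorphic closed immersion because $s_1,\dots,s_d$ lie in an ideal of definition of $\Spwf W\llbracket s_1,\dots,s_d\rrbracket$. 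The identity $g\circ\iota'=\iota''$ is then immediate: on the $\cZ$-factor it reads $f\circ\iota'=\iota$, and $g^*s_i=t_i$ restricts to $0$ along $\iota'$ since $t_i\in\cJ$.

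The heart of the matter is to prove that $g$ is \'etale. Both $f\colon\cZ'\to\cZ$ and $\cZ''\to\cZ$ are smooth, so $\Omega^1_{\cZ''/\cZ}$ is free of rank $d$ with basis $ds_1,\dots,ds_d$, while $\Omega^1_{\cZ'/\cZ}$ is locally free; its rank equals $d$ because $\iota'^*\Omega^1_{\cZ'/\cZ}\cong\cC_{Y/Y'}$ --- the conormal sheaf of the section $\tau$ of the smooth morphism $Y'\to Y$ --- is locally free of rank $d$ by hypothesis, and $\iota'$ is surjective on underlying spaces. Under this identification, the hypothesis says precisely that the classes of $dt_1,\dots,dt_d$ form an $\cO_Y$-basis of $\iota'^*\Omega^1_{\cZ'/\cZ}$. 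Since the canonical map $\alpha\colon g^*\Omega^1_{\cZ''/\cZ}\to\Omega^1_{\cZ'/\cZ}$ carries $ds_i$ to $dt_i$, and $\cJ$, being an ideal of definition, lies in the Jacobson radical, Nakayama's lemma forces $\alpha$ to be surjective; as a surjection between locally free sheaves of the same finite rank, it is an isomorphism. In particular $\Omega^1_{\cZ'/\cZ''}=\Coker\alpha=0$, so $g$ is unramified. For smoothness I would factor $g$ as $\cZ'\xrightarrow{\Gamma_g}\cZ'\times_\cZ\cZ''\to\cZ''$ with $\Gamma_g$ the graph of $g$ (a closed immersion, as $\cZ''\to\cZ$ is affine, hence separated) followed by the smooth projection, and apply the Jacobian criterion (Proposition~\ref{prop: Jacobian}) to $\Gamma_g$: since $\Gamma_g$ is a section of the smooth projection $\cZ'\times_\cZ\cZ''\to\cZ'$, its conormal sheaf is $g^*\Omega^1_{\cZ''/\cZ}$, while $\Gamma_g^*\Omega^1_{(\cZ'\times_\cZ\cZ'')/\cZ''}\cong\Omega^1_{\cZ'/\cZ}$, and the third term of the resulting sequence is $\Omega^1_{\cZ'/\cZ''}=0$; a map with vanishing cokernel between locally free sheaves of rank $d$ is an isomorphism, so the sequence is exact and (trivially) locally split. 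Hence $g$ is smooth, and being also unramified, \'etale.

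Finally, $\iota'$ and $\iota''$ are homeomorphic closed immersions of $Y$ into weak formal schemes with respect to $(R,I)$, $g$ satisfies $g\circ\iota'=\iota''$, and $g$ is \'etale, so Lemma~\ref{lem: adic etale} yields that $g$ is an isomorphism. I expect the middle paragraph to be the main obstacle: specifically, extracting smoothness of $g$ from the smoothness of its source and target over the common base $\cZ$ together with the isomorphism $\alpha$ --- the device of applying the Jacobian criterion to the graph of $g$, viewed as a section of a smooth projection, is what makes this clean. The identification $\cC_{Y/Y'}\cong\iota'^*\Omega^1_{\cZ'/\cZ}$ and the bookkeeping of ranks along $Y$ (using that $\iota'$ is surjective on spaces) are the other points that require some care.
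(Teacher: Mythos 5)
Your proposal is correct and follows essentially the same route as the paper's proof: identify $\cC_{Y/Y'}\cong\tau^*\Omega^1_{Y'/Y}\cong\iota'^*\Omega^1_{\cZ'/\cZ}$ via Proposition~\ref{prop: Jacobian}, deduce that $dt_1,\ldots,dt_d$ give a basis of $\Omega^1_{\cZ'/\cZ}$, conclude that $g$ is \'etale, and finish with Lemma~\ref{lem: adic etale}. The only difference is that you spell out details the paper leaves implicit (Nakayama via the ideal of definition, the graph plus Jacobian-criterion argument for smoothness of $g$, and the homeomorphic closed immersion $\iota''\colon Y\hookrightarrow\cZ''$ needed to invoke Lemma~\ref{lem: adic etale}), all of which are sound.
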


\begin{proof}
	By Proposition \ref{prop: Jacobian} there is a natural isomorphism $\cC_{Y/Y'} \xrightarrow{\cong}\tau^* \Omega^1_{Y'/Y}\cong\iota'^* \Omega^1_{\cZ'/\cZ}$.
	Since the images $d\overline{t}_1,\ldots,d\overline{t}_d$ of $t_1,\ldots,t_d$ in $\iota'^* \Omega^1_{\cZ'/\cZ}$ form a basis, the same is true for  $dt_1,\ldots,dt_d$ in $ \Omega^1_{\cZ'/\cZ}$.
	Therefore $g$ is \'{e}tale by Proposition \ref{prop: fundamental properties of Omega} \eqref{item: Omega exact sequence}.
	Thus the claim follows from the following Lemma \ref{lem: adic etale}.
\end{proof}

\begin{lemma}\label{lem: adic etale} 
	Let $\iota\colon Y\hookrightarrow \cZ$ and $\iota'\colon Y\hookrightarrow\cZ'$ be homeomorphic 
closed immersions of weak formal schemes with respect to $(R,I)$.
	Let $f\colon\cZ' \rightarrow \cZ$ be an \'{e}tale morphism satisfying $f\circ\iota' = \iota$.
	Then $f$ is an isomorphism.
\end{lemma}

\begin{proof}
	By \cite[Cor.\ 3.4.2]{AJP2005}, $\wh f$ is an isomorphism.
	Thus $f$ is also an isomorphism by Proposition \ref{prop: conservative}.
\end{proof}

\begin{remark}
	Locally one can find sections $t_1,\ldots,t_d$ as in Proposition \ref{prop: strong fibration}, because $\cC_{Y/Y'}$ is locally free.
\end{remark}

Next we will give sufficient conditions for smoothness/\'{e}taleness and strong smoothness/strong \'{e}taleness.
We first give a few statements for weak formal schemes with respect to an arbitrary noetherian ring  $R$.
To discuss lifting properties, we often consider as a local description the following commutative diagram of pseudo-wcfg algebras
\begin{equation}\label{eq: test diagram affine}
\xymatrix{
	D&C\ar[l]^-{i^\sharp}\\
	B\ar[u]^-{\theta'^\sharp}&A\ar[l]^-{f^\sharp}\ar[u]^-{\theta^\sharp},
}\end{equation}
where $i^\sharp$ is a surjection with kernel $N$, such that $\Spwf D\hookrightarrow\Spwf C$ is a homeomorphism.

\begin{proposition}\label{prop: weak completion is etale}
	Let $j\colon Z\hookrightarrow\cZ$ be an immersion of weak formal schemes and $\cZ_Z$ the weak completion of $\cZ$ along $Z$.
	Consider a commutative diagram
	\[\xymatrix{
	\cY'\ar[r]^-i\ar[d]^-{\theta'}&\cY\ar[d]^-\theta\\
\cZ_Z\ar[r]^-f&\cZ
	}\]
	where $f$ is the canonical morphism induced by adjointness, and $i$ is a homeomorphic closed immersion.
	Then there exists a unique morphism $g\colon\cY\rightarrow\cZ_Z$ such that $g\circ i=\theta'$ and $f\circ g=\theta$.
	In particular, $f\colon\cZ_Z\rightarrow\cZ$ is strongly \'{e}tale. 
\end{proposition}

\begin{proof}
	Set $Y':=Z\times_{\cZ_Z}\cY'$ and let $i'\colon Y'\hookrightarrow\cY'$ be the canonical morphism, which is a homeomorphic closed immersion.
	Applying the universal property of $\cZ_Z$ to $i\circ i'\rightarrow j$, we obtain a unique morphism $g\colon\cY\rightarrow\cZ_Z$ such that the following diagram commutes:
	\[\xymatrix{
	Y'\ar[rr]^{i\circ i'}\ar[d]&&\cY\ar[d]\ar[ld]_-g\\
	Z\ar[r]&\cZ_Z\ar[r]^-f&\cZ.
	}\]
	Since $g\circ i$ and $\theta'$ both make the diagram
	\[\xymatrix{
	Y'\ar[r]\ar[d]&\cY'\ar[d]\ar[rd]&\\
	Z\ar[r]&\cZ_Z\ar[r]&\cZ
	}\]
	commutative, the uniqueness of such a morphism implies $g\circ i=\theta'$.
\end{proof}

\begin{corollary}\label{cor: weak completion preserve smoothness}
	Consider a commutative diagram of weak formal schemes
	\[\xymatrix{
	Z'\ar@{^(->}[r]\ar[d]&\cZ'\ar[d]^-f\\
	Z\ar@{^(->}[r]&\cZ
	}\]
	whose horizontal morphisms are immersions.
	If $f$ is smooth (resp.\ \'{e}tale, strongly smooth, strongly \'{e}tale), then the induced morphism $\cZ'_{Z'}\rightarrow\cZ_Z$ between the weak completions is also smooth (resp.\ \'{e}tale, strongly smooth, strongly \'{e}tale).
\end{corollary}

\begin{proof}
	This statement follows from Proposition \ref{prop: easy properties} \eqref{item: easy property 4} and Proposition \ref{prop: weak completion is etale}.
\end{proof}

\begin{lemma}\label{lem: disc smooth}
	The weak formal scheme $\Spwf \wh R\llbracket s_1,\ldots,s_m\rrbracket [t_1,\ldots,t_n]^\dagger$ is strongly smooth over $\Spwf \wh R$.
\end{lemma}

\begin{proof}
	It is easy to verify the lifting property.
\end{proof}

Next we consider the special case $(R,I)=(V,\frm)$, that is a complete discrete valuation ring of mixed characteristic and the maximal ideal.

\begin{proposition}\label{prop: p-adic strong smooth}
	Let $f\colon\cZ'\rightarrow\cZ$ be a morphism between weak formal schemes which are adic over $\Spwf V$.
	Then $f$ is strongly smooth (resp.\ strongly \'{e}tale) if $\wh f$ is smooth (resp.\ \'{e}tale).
\end{proposition}

\begin{proof}
	We will first prove the statement concerning strong smoothness.
	As a local description, consider the commutative diagram \eqref{eq: test diagram affine} such that $A$ and $B$ are wcfg with respect to $(V,\frm)$ and $\wh f^\sharp$ is smooth.
	It suffices to show the existence of $g^\sharp\colon B\rightarrow C$ with $i^\sharp\circ g^\sharp=\theta'^\sharp$ and $g^\sharp\circ f^\sharp=\theta^\sharp$ in the following cases:
	\begin{enumerate}
	\item\label{item: case 1} $C$ and $D$ are wcfg,
	\item\label{item: case 2} $N^2=0$ (where $N=\Ker i$).
	\end{enumerate}
	
	We first discuss the case \eqref{item: case 1}.
	As in \cite[Prop.\ 3.4.1]{AJP2005} there exists a morphism $h^\sharp\colon\wh B\rightarrow\wh C$ such that $\wh i^\sharp\circ h^\sharp=\wh\theta'^\sharp$ and $h^\sharp\circ\wh f^\sharp=\wh\theta^\sharp$.
	We will construct $g^\sharp\colon B\rightarrow C$ from $h^\sharp$ by the variant of Artin approximation theorem given in \cite[Prop.\ 2.4.1]{vdP} and \cite[Thm.\ 2]{Bos}.
	Choose representations
	\begin{align*}
	&A=V[X_1,\ldots,X_l]^\dagger/(f_1,\ldots,f_a),&&B=V[Y_1,\ldots,Y_m]^\dagger/(g_1,\ldots,g_b),\\
	&C=V[Z_1,\ldots,Z_n]^\dagger/(h_1,\ldots,h_c),&&D=V[Z_1,\ldots,Z_n]^\dagger/(q_1,\ldots,q_d)
	\end{align*}
	and series
	\begin{align*}
	&x_i=x_i(Y_1,\ldots,Y_m)\in V[Y_1,\ldots,Y_m]^\dagger&&\text{which lifts }f^\sharp(X_i)\in B,\\
	&x'_i=x'_i(Z_1,\ldots,Z_n)\in V[Z_1,\ldots,Z_n]^\dagger&&\text{which lifts }\theta^\sharp(X_i)\in C,\\
	&\wt y_j=\wt y_j(Z_1,\ldots,Z_n)\in V\langle Z_1,\ldots,Z_n\rangle&&\text{which lifts }h^\sharp(Y_j)\in\wh C,\\
	&y'_j=y'_j(Z_1,\ldots,Z_n)\in V[Z_1,\ldots,Z_n]^\dagger&&\text{which lifts }\theta'^\sharp(Y_j)\in D
	\end{align*}
	for each $1\leq i\leq l$ and $1\leq j\leq m$.
	Here the angle brackets in  ``$V\langle Z_1,\ldots,Z_n\rangle$'' denote the restricted power series ring.
	Then the well-definedness of $h^\sharp$ and the condiitons $\wh i^\sharp\circ h^\sharp=\wh\theta'^\sharp$, $h^\sharp\circ \wh f^\sharp=\wh\theta^\sharp$ imply equations
	\begin{align*}
	&g_k(\wt y_1,\ldots,\wt y_m)=\sum_{s=1}^c\wt\alpha_{k,s}h_s,&
	&\wt y_j-y'_j=\sum_{t=1}^d\wt\beta_{j,t}q_t,&
	&x'_i-x_i(\wt y_1,\ldots,\wt y_m)=\sum_{s=1}^c\wt\gamma_{i,s}h_s
	\end{align*}
	for some elements $\wt\alpha_{k s},\wt\beta_{j,t},\wt\gamma_{i,s}\in V\langle Z_1,\ldots,Z_n\rangle$.
	Applying Artin approximation to these equations, one can replace $\wt y_j,\wt\alpha_{\nu s},\wt\beta_{j,t},\wt\gamma_{i,s}$ by some elements $y_j,\alpha_{\nu s},\beta_{j,t},\gamma_{i,s}\in V[Z_1,\ldots,Z_n]^\dagger$.
	Then by associating $Y_j\mapsto y_j$ we obtain a well-defined homomorphism $g^\sharp\colon B\rightarrow C$ satisfying $i^\sharp\circ g^\sharp=\theta'^\sharp$ and $g^\sharp\circ f^\sharp=\theta^\sharp$ as desired.

	Next we discuss the case \eqref{item: case 2}.
	Take a surjection $f'^\sharp\colon A':=A\otimes^\dagger_VV[t_1,\ldots,t_n]^\dagger\rightarrow B$ over $A$.
	Choose elements $x_1,\ldots,x_n\in C$ which lift $\theta'^\sharp\circ f'^\sharp(t_1),\ldots,\theta'^\sharp\circ f'^\sharp(t_n)\in D$, and extend $\theta^\sharp$ to $\wt\theta^\sharp\colon A'\rightarrow C$ by $\wt\theta^\sharp(t_i):=x_i$.
	Then $C':=\Im\,\wt\theta^\sharp$ and $D':=\Im\,\theta'^\sharp$ are wcfg algebras, and $i$ induces a surjection $i'\colon C'\rightarrow D'$ whose kernel is clearly square zero.
	By the above argument for the case \eqref{item: case 1} there exists a morphism $g'^\sharp\colon B\rightarrow C'$ such that the following diagram commutes:
	\[\xymatrix{
	D'&C'\ar[l]_-{i'^\sharp}\\
	B\ar[ur]^-{g'^\sharp}\ar[u]^-{\theta'^\sharp}&A\ar[l]^-{f^\sharp}\ar[u]_-{\theta^\sharp}.
	}\]
	Composing $g'^\sharp$ with the natural inclusion $C'\hookrightarrow C$, we obtain $g^\sharp\colon B\rightarrow C$ satisfying the desired commutativity.
	
	Finally, the statement for strong \'{e}taleness follows from the fact that the functor $\cZ\mapsto\wh\cZ$ is faithful.
\end{proof}

\begin{proposition}\label{prop: lift strong smooth}
	Consider a commutative diagram \eqref{eq: test diagram} of weak formal schemes with respect to $(V,\frm)$, and let $\cJ$ be an ideal of definition of $\cY$.
	If $f$ is strongly smooth, then for any $k\in\bbN$ there exists locally on $\cY[\frac{\cJ^k}p]^\dagger$ a morphism $g_k\colon \cY[\frac{\cJ^k}p]^\dagger\rightarrow\cZ'$ such that the diagram
	\[\xymatrix{
	\cY'\times_\cY\cY[\frac{\cJ^k}p]^\dagger\ar@{^(->}[r]\ar[d]&\cY[\frac{\cJ^k}p]^\dagger\ar[d]\ar[ld]^-{g_k}\\
	\cZ'\ar[r]_-f&\cZ
	}\]
	commutes.
	If $f$ is strongly \'{e}tale, such a $g_k$ exists globally and uniquely.
\end{proposition}

\begin{proof}
	Immediate from the lifting property of $f$.
\end{proof}

\begin{corollary}\label{cor: dagger lifting property}
	Consider a commutative diagram \eqref{eq: test diagram} of weak formal schemes with respect to $(V,\frm)$, and let
	\[\xymatrix{
	\frY'\ar@{^(->}[r]^-{\wt i}\ar[d]^-{\wt\theta'}&\frY\ar[d]^-{\wt\theta}\\
	\frZ'\ar[r]^-{\wt f}&\frZ}\]
	be the induced diagram of the associated dagger spaces.
	If $f$ is strongly smooth, then there exists locally on $\frY$ a morphism of dagger spaces $g\colon \frY\rightarrow\frZ'$ such that $g\circ\wt i=\wt\theta'$ and $\wt f\circ g=\wt \theta$.
	If $f$ is strongly \'{e}tale, such a $g$ exists globally and uniquely.
\end{corollary}

\begin{proof}
	The local existence of $g$ immediately follows from Proposition \ref{prop: lift strong smooth}. 
	To show uniqueness, assume that $f$ is strongly \'{e}tale and that there are two morphisms $g^{(1)}$ and $g^{(2)}$ satisfying the property as in the statement.
	To show that $g^{(1)}=g^{(2)}$, it suffices to show that their restrictions to an affinoid subspace $\frU\subset\frY$ coincide with each other.
	Let $\cK$ and $\cK'$ be ideals of definition of $\cZ$ and $\cZ'$, respectively, such that $\cK'\subset f^\ast\cK$.
	For $k\geq 0$, let $\frZ_k$ and $\frZ'_k$ be the dagger spaces associated to $\cZ[\frac{\cK^k}p]^\dagger$ and $\cZ'[\frac{\cK'^k}p]^\dagger$, respectively, and let $f_k\colon\cZ'[\frac{\cK'^k}p]^\dagger\rightarrow\cZ[\frac{\cK^k}p]^\dagger$ be the natural morphism induced from $f$.
	Since an affinoid space is quasi-compact, $g^{(1)}|_\frU$ and $g^{(2)}|_\frU$ factor through $\frZ'_k$ for some $k$.
	Then by Raynaud's theorem for dagger spaces \cite{LM}, there exist a weak formal model $\cU$ of $\frU$ which is adic over $\Spwf V$ and morphisms $h^{(j)}\colon\cU\rightarrow \cZ'[\frac{\cJ'^k}p]^\dagger$, for $j=1,2$, which induce $g^{(j)}|_\frU$.
	Let $\iota\colon\cU'\hookrightarrow\cU$ be the closed immersion which induces $\frU':=\frU\times_\frY\frY'\hookrightarrow\frU$.
	Then we have $\Theta':=h^{(1)}\circ\iota=h^{(2)}\circ\iota$ because both sides induce the same morphism on dagger spaces.
	Similarly we have $\Theta:=f_k\circ h^{(1)}=f_k\circ h^{(2)}$.
	Now we have a commutative diagram
	\[\xymatrix{
	\cU'\ar@{^(->}[r]^-\iota\ar[d]_-{\Theta'}&\cU\ar[d]^-\Theta\ar[ld]_-{h^{(j)}}\\
	\cZ'\ar[r]_-f&\cZ
	}\]
	where we denote the composition of $h^{(j)}$ with the canonical morphism $\cZ'[\frac{\cJ'^k}p]^\dagger\rightarrow\cZ'$ by the same symbol, and apply the same rule to $\Theta$ and $\Theta'$.
	Then Proposition \ref{prop: lift strong smooth} implies $h^{(1)}=h^{(2)}$, and hence $g^{(1)}|_\frU=g^{(2)}|_\frU$ as desired.
\end{proof}

Let $\cZ$ be a weak formal scheme with respect to $(V,\frm)$ and $\frZ$ the dagger space associated to $\cZ$.
For any $\cO_\cZ$-module $\cF$, we associate an $\cO_\frZ$-module $\cF_\bbQ:=\mathrm{sp}^*\cF=\mathrm{sp}^{-1}\cF\otimes_{\mathrm{sp}^{-1}\cO_\cZ}\cO_\frZ$.
We note that, for a morphism $\cZ'\rightarrow\cZ$ of weak formal schemes, the derivation $\cO_{\cZ'}\rightarrow\Omega^1_{\cZ'/\cZ}$ is not $\cO_{\cZ'}$-linear.
Therefore we cannot directly apply $\mathrm{sp}^*$ to obtain the derivation on $\Omega^1_{\cZ'/\cZ,\bbQ}$.

However, as we will see in the following, it is isomorphic to the sheaf of differentials $\Omega^1_{\frZ'/\frZ}$ with a derivation $d_{\frZ'/\frZ}$ of the associated morphism between dagger spaces $\frZ'\rightarrow\frZ$, which is defined by gluing local constructions. 
The compatibility with restriction is implied from that for rigid spaces via completion, similarly to Lemma \ref{lem: wf omega}.
Consequently, there is a natural derivation on $\Omega^1_{\cZ'/\cZ,\bbQ}$ induced by that of $\Omega^1_{\frZ'/\frZ}$.

\begin{proposition}\label{prop: Omega on generic fiber}
	Let $f\colon \cZ'\rightarrow\cZ$ be a morphism of weak formal schemes with respect to $(V,\frm)$, $\cJ$ an ideal of definition of $\cZ'$, and $k$ a positive integer.
	Let $\frZ$, $\frZ'$, and $\frZ'_k$ be the dagger space associated to $\cZ$, $\cZ'$, and $\cZ'_k:=\cZ'[\frac{\cJ^k}p]^\dagger$, respectively.
	Then there exist canonical isomorphisms
	\begin{equation}\label{eq: compare Omega}
	\Omega^1_{\cZ'/\cZ,\bbQ}|_{\frZ'_k}\xrightarrow{\cong}\Omega^1_{\cZ'_k/\cZ}\otimes_{V}K\xrightarrow{\cong}\Omega^1_{\frZ'_k/\frZ}
	\end{equation}
	and hence
	\[\Omega^1_{\cZ'/\cZ,\bbQ}\xrightarrow{\cong}\Omega^1_{\frZ'/\frZ}.\]	
	Here the middle term of \eqref{eq: compare Omega} is defined locally as follows: 
	For any affine open subset $\cU=\Spwf C_\cU$ of $\cZ'_k$, the restriction $\Omega^1_{\cZ'_k/\cZ}|_\cU$ is isomorphic to the sheaf associated to a finite $C_\cU$-module $M_\cU$.
	The sheaves on $\Sp(C_\cU\otimes_VK)$ associated to $M_\cU\otimes_VK$ glue to a sheaf $\Omega^1_{\cZ'_k/\cZ}\otimes_{V}K$.
\end{proposition}

\begin{proof}
Working locally, assume that $\cZ=\Spwf A$ and $\cZ'=\Spwf B$ are affine.
Take ideals of definition $L\subset A$ and $J\subset B$ such that $LB\subset J$, and let $A_k:=A[\frac{L^k}p]^\dagger$ and $B_k:=B[\frac{J^k}p]^\dagger$.
Now the sheaves in \eqref{eq: compare Omega} correspond to the finite $B_k\otimes_VK$-modules $\Omega^1_{B/A}\otimes_B B_k\otimes_VK$, $\Omega^1_{B_k/A}\otimes_VK$, and $\Omega^1_{B_k\otimes_VK/A_k\otimes_VK}$, respectively.

Take generators $f_1,\ldots,f_n$ of $J$ and identify $B_k$ with
	\[B':=B[x_{\mathbf{k}}\mid \mathbf{k}=(k_i)_i\in\bbN^n,\ \lvert\mathbf{k}\rvert=k]^\dagger/(px_{\mathbf{k}}-f_1^{k_1}\cdots f_n^{k_n})\]
	modulo $p$-torsion.
	Then by the Propositions \ref{prop: Omega affine},  \ref{prop: fundamental properties of Omega} and  \ref{prop: Jacobian},
	$\Omega^1_{B_k/A}$ is generated by generators of $\Omega^1_{B/A}$ and symbols $dx_{\mathbf{k}}$ with relations $pdx_{\mathbf{k}}=d(f_1^{k_1}\cdots f_n^{k_n})$ and $d\alpha=0$ for $p$-torsion elements $\alpha\in B'$.
Thus we see that the natural map $\Omega^1_{B/A}\otimes_{B}B_k\otimes_VK\rightarrow\Omega^1_{B_k/A}\otimes_VK$ is an isomorphism, and this shows the first isomorphism in \eqref{eq: compare Omega}.
By a similar observation one can see that $\Omega^1_{A_k/A}\otimes_VK=0$.
Thus, again by Proposition \ref{prop: fundamental properties of Omega}, we have $\Omega^1_{B_k/A}\otimes_VK\rightarrow\Omega^1_{B_k/A_k}\otimes_VK$.
Composing this with $\Omega^1_{B_k/A_k}\otimes_VK\xrightarrow{\cong}\Omega^1_{B_k\otimes_VK/A_k\otimes_VK}$, we obtain the second isomorphism in \eqref{eq: compare Omega}.
\end{proof}

\subsection{Weak formal log schemes}
\label{subsection: weak formal log schemes}

The notion of log schemes in the sense of Fontaine, Illusie and Kato \cite{Ka} extends to weak formal log schemes in the obvious way,
and we freely use terminologies concerning the log structure on schemes or (weak) formal schemes in this sense.
Throughout, log structures of all (weak formal) log schemes are defined as  sheaves with respect to the Zariski topology.
Note that fine log schemes with respect to the Zariski topology correspond to fine log schemes with respect to \'{e}tale topology which Zariski locally admit charts via the equivalence of categories in \cite[Cor.~1.1.11]{Sh1}.
For a weak formal log scheme $\cZ$, we denote its log structure by $\cM_\cZ$.
We say $\cZ$ is \textit{fine} if it has locally a chart by a fine monoid.
We will always work in the category of fine weak formal log schemes.
Note that the category of fine weak formal log schemes with respect to a base $(R,I)$ has fibre products, which can be seen similarly to \cite[(2.8)]{Ka}.

\begin{definition}
	Let $f\colon\cZ'\rightarrow\cZ$ be a morphism of fine weak formal log schemes.
	\begin{enumerate}
	\item We say $f$ is \textit{strict} if $f^*\cM_\cZ\rightarrow\cM_{\cZ'}$ is an isomorphism.
	\item We say $f$ is a \textit{strict open immersion} if $f$ is strict and the underlying morphism of weak formal schemes is an open immersion.
	\item We say $f$ is a \textit{closed immersion} (resp.\ an \textit{exact closed immersion}) if the underlying morphism of weak formal schemes is a closed immersion and $f^*\cM_\cZ\rightarrow\cM_{\cZ'}$ is surjective (resp.\ an isomorphism).
		We say $f$ is a \textit{first order thickening} if it is an exact closed immersion defined by a square zero ideal.
	\item We say $f$ is an \textit{immersion} if it can be written as a composition of a closed immersion and a strict open immersion.
	\item We say $f$ is \textit{adic} if the underlying morphism of weak formal schemes is adic.
	\item Let $\omega^1_{\cZ'/\cZ}$ be the quotient of $\Omega^1_{\cZ'/\cZ}\oplus(\cO_{\cZ'}\otimes \cM_{\cZ'}^{\mathrm{gp}})$ divided by the $\cO_{\cZ'}$-submodule locally generated by sections of the form $(d\alpha(a),0)-(0,\alpha(a)\otimes a)$ for $a\in\cM_{\cZ'}$ and $(0,1\otimes a)$ for $a\in\Im(f^{-1}\cM_\cZ\rightarrow\cM_{\cZ'})$.
		Here $\alpha\colon\cM_{\cZ'}\rightarrow\cO_{\cZ'}$ denotes the structure morphism. It immediately follows from Proposition-Definition \ref{prop-def: Omega} that $\omega^1_{\cZ'/\cZ}$ is coherent.
	\item For a fine weak formal log scheme $\cZ=(\cZ,\cM_\cZ)$, we let $\cM_{\widehat{\cZ}}$ be the log structure on $\widehat{\cZ}$ associated with the pre-log structure $\cM_{\cZ}\rightarrow\cO_{\cZ}\rightarrow\cO_{\widehat{\cZ}}$.
		We call $(\widehat{\cZ},\cM_{\widehat{\cZ}})$ the \textit{completion} of $\cZ$, and often denote it again by $\widehat{\cZ}$.
	\end{enumerate}
\end{definition}

\begin{proposition}
The functor $\cZ\mapsto\wh\cZ$ from the category of fine weak formal log schemes with respect to $(R,I)$ to the category of fine formal log schemes is faithful and conservative.
\end{proposition}

\begin{proof}
	Recall that we already know this statement if we forget the log structures by Proposition \ref{prop: conservative}. 
	For a fine weak formal log scheme $\cZ$ and a point $z\in\cZ$, we have $\cM_{\wh{\cZ},z}=\cM_{\cZ,z}\oplus_{\cO_{\cZ,z}^\times}\cO_{\wh{\cZ},z}^\times$.
	Thus $\cM_{\cZ,z}\rightarrow\cM_{\wh\cZ,z}$ is injective, which implies that the functor $\cZ\mapsto\wh\cZ$ is faithful.
	Moreover we have $\cM_{\wh\cZ,z}/\cO_{\wh\cZ,z}^\times\cong \cM_{\cZ,z}/\cO_{\cZ,z}^\times$, which implies that the functor $\cZ\mapsto\wh\cZ$ is conservative.
\end{proof}

Since we have a natural definition of weak completion along locally closed weak formal subschemes (c.f.\,Definition \ref{def: weak formal completion along closed}), 
we may define the exactification of an immersion of fine weak formal log schemes in the same manner as in the case of formal log schemes treated in \cite{Sh2} by Shiho.

\begin{proposition-definition}\label{def: exactification}
	The natural inclusion from the category of homeomorphic exact closed immersions of fine weak formal 
log schemes with respect to $(R,I)$ to the category of immersions of fine weak formal log schemes with respect to $(R,I)$ has a right adjoint of the form $(Z\hookrightarrow\cZ)\mapsto(Z\hookrightarrow\cZ_Z^{\mathrm{ex}})$.
	We call this right adjoint functor the {\it exactification}.
\end{proposition-definition}
\begin{proof}
This is the weak formal analogon of  \cite[Prop.-Def.~2.10]{Sh2} and \cite[Cor.~2.11]{Sh2}.
The proof can be carried out mutatis mutandis by working in the category of fine weak formal log schemes instead of in the category of fine formal log schemes.
It is an adaption of \cite[Prop.\,4.10(1)]{Ka} to the weak formal setting. 

For the benefit of the reader we give a construction of $\cZ_Z^{\mathrm{ex}}$.
By a similar argument to the proof of Prop.-Def.\ \ref{prop-def: weak completion}, we may assume that $i:Z\hookrightarrow \cZ$ is a closed immersion with a chart $\left(P_Z\rightarrow \cM_Z, Q_{\cZ}\rightarrow \cM_{\cZ}, Q\xrightarrow{\alpha} P\right)$ such that the groupification $\alpha^{\mathrm{gp}}: Q^{\mathrm{gp}} \rightarrow P^{\mathrm{gp}}$ is surjective.
Here $P_Z$ and $Q_\cZ$ denote the constant sheaves associated to $P$ and $Q$, respectively.
By taking weak completion, we may also assume that $i$ is a homeomorphism.

Now we define a weak formal log scheme $\cZ_Z^{\mathrm{ex}}$ in the following way: 
set $Q':=\alpha^{\mathrm{gp},-1} (P)$ and let $\cZ'$ be the weak formal log scheme whose underlying scheme is $\cZ\times_{\Spwf \wh R[Q]^\dagger} \Spwf \wh R[Q']^\dagger$ with the log structure associated  to $Q'_{\cZ'} \rightarrow \cO_{\cZ'}$.
This produces a commutative diagram of weak formal log schemes
$$
\xymatrix{
Z \ar[dr]^{i'} \ar[rr] \ar@/_1pc/[ddr]_i && \Spwf\wh R[P]^\dagger\ar[d]\\
& \cZ' \ar[r] \ar[d]^{f} & \Spwf\wh R[Q']^\dagger \ar[d]^\alpha\\
& \cZ \ar[r] & \Spwf\wh R[Q]^\dagger
}
$$
where $i'$ is an exact closed immersion. 
Now we define $\cZ^{\mathrm{ex}}_Z$ to be the weak completion of $\cZ'$ along $Z$ with the pull-back log structure. 
Then $Z\hookrightarrow\cZ_Z^{\mathrm{ex}}$ is a homeomorphic exact closed immersion, and satisfies the universality by construction.
\end{proof}

As before, we will often consider a commutative diagram of fine weak formal log schemes
\begin{equation}\label{eq: test diagram log}\xymatrix{
\cY'\ar[r]^-i\ar[d]^-{\theta'}&\cY\ar[d]^-\theta\\
\cZ'\ar[r]^-f&\cZ
}\end{equation}
where $i$ is a homeomorphic exact closed immersion defined by an ideal $\cN\subset\cO_\cY$.

We may provide the following statements and definitions in a similar manner to the non-log case (see Definition \ref{def: non-log-lifting-properties}, Propositions \ref{prop: easy properties} and \ref{prop: fundamental properties of Omega}).

\begin{lemma}\label{lem: pseudo torsor log}
	Consider the commutative diagram \eqref{eq: test diagram log} and suppose that $i$ is a first order thickening.
	Let $\cF_{\log}$ be the sheaf of sets on $\cY$ defined by $\Gamma(\cU,\cF_{\log}):=\{g\colon \cU\rightarrow\cZ'\mid f\circ g=\theta|_{\cU},\ g\circ i|_{\cU'}=\theta'|_{\cU'}\}$ for each open weak formal subscheme $\cU\subset\cY$.
	Here $\cU'$ denotes $\cU\times_\cY\cY'$.
	Then the sheaf $\cH_{\log}:=\cH om_{\cO_{\cY'}}(\theta'^*\omega^1_{\cZ'/\cZ},\cC_{\cY'/\cY})$ naturally acts on $\cF_{\log}$, and it turns $\cF_{\log}$ a pseudo-$\cH_{\log}$-torsor.
	This means that the action of $\Gamma(\cU,\cH_{\log})$ is simply transitive if $\Gamma(\cU,\cF_{\log})$ is non-empty.
\end{lemma}

\begin{definition}
	Let $f\colon \cZ' \rightarrow \cZ$ be a morphism of fine weak formal log schemes with respect to $(R,I)$.
	\begin{enumerate}
	\item We say $f$ is {\it log smooth} if for any commutative diagram as in \eqref{eq: test diagram log} where $i$ is a first order thickening, there exists locally on $\cY$ a morphism $g\colon\cY\rightarrow\cZ'$ such that $g\circ i=\theta'$ and $f\circ g=\theta$.
	\item We say $f$ is {\it log \'{e}tale} if for any commutative diagram as in \eqref{eq: test diagram log} where $i$ is a first order thickening, there exists a unique morphism $g\colon\cY\rightarrow\cZ'$ such that $g\circ i=\theta'$ and $f\circ g=\theta$.
	\item We say $f$ is {\it strongly log smooth} if it is smooth and for any commutative diagram as in \eqref{eq: test diagram log} where $\cY$ is adic over $\Spwf R$, there exists locally on $\cY$ a morphism $g\colon\cY\rightarrow\cZ'$ such that $g\circ i=\theta'$ and $f\circ g=\theta$.
	\item We say $f$ is  {\it strongly log \'{e}tale} if it is \'{e}tale and for any commutative diagram as in \eqref{eq: test diagram log} where $\cY$ is adic over $\Spwf R$, there exists a unique morphism $g\colon\cY\rightarrow\cZ'$ such that $g\circ i=\theta'$ and $f\circ g=\theta$.
	\end{enumerate}
\end{definition}

\begin{proposition}\label{prop: easy properties log}
	For morphisms of fine weak formal log schemes with respect to $(R,I)$, we have the following:
	\begin{enumerate}
	\item\label{item: easy property log 1} A strict open immersion is strongly log \'{e}tale.
	\item\label{item: easy property log 2} Composition of log smooth (resp.\ log \'{e}tale, strongly log smooth, strongly log \'{e}tale) morphisms is log smooth (resp.\ log \'{e}tale, log strongly smooth, strongly log \'{e}tale).
	\item\label{item: easy property log 3} Log smoothness, log \'{e}taleness, strong log smoothness, and strong log \'{e}taleness are stable under base change.
	\item\label{item: easy property log 4} If $f$ is log \'{e}tale and $f\circ g$ is log smooth (resp.\ log \'{e}tale), then $g$ is log smooth (resp.\ log \'{e}tale).
	If $f$ is strongly log \'{e}tale and $f\circ g$ is strongly log smooth (resp.\ strongly log \'{e}tale), then $g$ is strongly log smooth (resp.\ strongly log \'{e}tale).
	\end{enumerate}
\end{proposition}

\begin{proposition}\label{prop: fundamental properties of omega} $\quad$
	\begin{enumerate}
	\item\label{item: omega locally free} If $f\colon\cZ'\rightarrow\cZ$ is a log smooth (resp.\ log \'{e}tale) morphism of fine weak formal log schemes, then $\omega^1_{\cZ'/\cZ}$ is locally free (resp.\ zero).
	\item\label{item: omega exact sequence} For any morphisms of fine weak formal log schemes $f\colon\cZ''\rightarrow\cZ'$ and $g\colon\cZ'\rightarrow\cZ$, we have an exact sequence
		\begin{equation}\label{eq: omega exact sequence}f^*\omega^1_{\cZ'/\cZ}\rightarrow\omega^1_{\cZ''/\cZ}\rightarrow\omega^1_{\cZ''/\cZ'}\rightarrow 0.\end{equation}
		Moreover we have the following:
		\begin{enumerate}
		\item  If $f$ is log smooth, then the first map of \eqref{eq: omega exact sequence} is injective and locally split.
			If $f$ is log \'{e}tale, then the first map of \eqref{eq: omega exact sequence} is an isomorphism.
		\item Suppose that $g\circ f$ is log smooth.
			If the first map of \eqref{eq: omega exact sequence} is injective and locally split, then $f$ is smooth.
			If the first map of \eqref{eq: omega exact sequence} is an isomorphism, then $f$ is \'{e}tale.
		\end{enumerate}
		\item\label{item: omega affine case} Consider a commutative diagram as in \eqref{eq: test diagram log} where $i$ is a first order thickening.
		Suppose that $f$ is log smooth, $\cY,\cY'$ are affine, and $\theta'^*\omega^1_{\cZ'/\cZ}$ is isomorphic to the sheaf associated to a finite module over $\Gamma(\cY',\cO_{\cY'})$.
		Then there exists a morphism $g\colon\cY\rightarrow\cZ'$ such that $g\circ i=\theta'$ and $f\circ g=\theta$.
		The third condition on $\theta'^*\omega^1_{\cZ'/\cZ}$ is satisfied for example if $\cZ'$ and $\cZ$ are affine weak formal log schemes with global chart.
	\item\label{item: omega product}
	Let $\cX'\rightarrow\cX$ and $g\colon\cZ\rightarrow\cX$ be morphisms of weak formal log schemes, and denote the canonical projection $\cZ':=\cX'\times_\cX\cZ\rightarrow\cX'$ by $g'$.
	Then we have a canonical isomorphism $g'^\ast\omega^1_{\cX'/\cX}\xrightarrow{\cong}\omega^1_{\cZ'/\cZ}$.
	\end{enumerate}
\end{proposition}

\begin{proposition}\label{prop: completion of log smooth}
	If $f\colon\cZ'\rightarrow\cZ$ is a log smooth (resp.\ log \'{e}tale) morphism of fine weak formal log schemes with respect to $(R,I)$, its completion $\wh f\colon\wh\cZ'\rightarrow\wh\cZ$ is also log smooth (resp.\ log \'{e}tale).
\end{proposition}

\begin{proof}
	It suffices to show the existence and uniqueness part of the infinitesimal lifting property for a commutative diagram
	\begin{equation}\label{eq: diag completion of log smooth}\xymatrix{
	\cY'\ar[r]^-i\ar[d]^-{\theta'}&\cY\ar[d]^-\theta\\
	\widehat{\cZ'}\ar[r]^-{\widehat{f}}&\wh\cZ}\end{equation}
	where $i$ is a first order thickening between fine log schemes.
	Assume first, that $f$ is log smooth.
	Let us show that there exists a morphism $\cY\rightarrow\widehat{\cZ'}$ that lifts $\theta$ in the sense that the two resulting triangles in the diagram commute. This will show that $\widehat{f}$ is also log smooth. 
	For a point $y\in\cY$, we denote again by $y$ the point of $\cY'$ corresponding to $y$.
	We take integers $a,b,c\geq 0$ and surjective homomorphisms
	\begin{align*}
	\alpha\colon\bbZ^a\rightarrow\cM^{\mathrm{gp}}_{\cZ,\theta(y)},&&\beta\colon\bbZ^b\rightarrow\cM^{\mathrm{gp}}_{\cZ',\theta'(y)},&&\gamma\colon\bbZ^c\rightarrow\cM^{\mathrm{gp}}_{\cY,y}.
	\end{align*}
	Let $\wt\beta\colon\bbZ^{a+b}\rightarrow\cM^{\mathrm{gp}}_{\cZ',\theta'(y)}$ be the map induced from $\beta$ and the composition $\bbZ^a\xrightarrow{\alpha}\cM^{\mathrm{gp}}_{\cZ,\theta(y)}\rightarrow\cM^{\mathrm{gp}}_{\cZ',\theta'(y)}$.
	We lift the composition $\bbZ^b\xrightarrow{\beta}\cM^{\mathrm{gp}}_{\cZ',\theta'(y)}\rightarrow\cM^{\mathrm{gp}}_{\wh\cZ',\theta'(y)}\rightarrow\cM^{\mathrm{gp}}_{\cY',y}$ to a homomorphism $\beta'\colon\bbZ^b\rightarrow\cM^{\mathrm{gp}}_{\cY,y}$ via the surjection $\cM^{\mathrm{gp}}_{\cY,y}\rightarrow\cM^{\mathrm{gp}}_{\cY',y}$.
	Let $\wt\gamma\colon \bbZ^{a+b+c}\rightarrow\cM^{\mathrm{gp}}_{\cY,y}$ be the map induced from $\gamma$, $\beta'$, and the composition $\bbZ^a\rightarrow\cM_{\cZ,\theta(y)}^{\mathrm{gp}}\rightarrow\cM^{\mathrm{gp}}_{\wh\cZ,\theta(y)}\rightarrow\cM^{\mathrm{gp}}_{\cY,y}$.
	Let
	\begin{align*}
	P:=\alpha^{-1}(\cM_{\cZ,\theta(y)}),&&Q:=\wt\beta^{-1}(\cM_{\cZ',\theta'(y)}),&&S:=\wt\gamma^{-1}(\cM_{\cY,y}).
	\end{align*}
	Then the natural injections $\bbZ^a\rightarrow\bbZ^{a+b}$ and $\bbZ^{a+b}\rightarrow\bbZ^{a+b+c}$ induce homomorphisms $\varphi\colon P\rightarrow Q$ and $\psi\colon Q\rightarrow S$, respectively.
	(For the latter, we use the fact that $\cM^{\mathrm{gp}}_{\cY,y}\rightarrow\cM^{\mathrm{gp}}_{\cY',y}$ is exact.)
	Note that $P$, $Q$, and $S$ extend to charts of $\cM_{\cZ}$, $\cM_{\cZ'}$, and $\cM_\cY$ on some open neighbourhoods of $\theta(y)$, $\theta'(y)$, and $y$, respectively.
	Since log smoothness is a local condition, we may suppose that $\varphi$, $\psi$, and $\psi\circ\varphi$ give charts of $f$, $\theta'$, and $\theta$, respectively.
	Moreover we may assume that $\cZ=\Spwf A$, $\cZ'=\Spwf B$, $\cY=\Spec C$, and $\cY'=\Spec D$ are affine.
	
	Let $J$ be an ideal of definition of $A$.
	Then the map $\wh A\rightarrow C$ induced by $\theta$ factors through $A/J^k$ for some $k\geq 0$.
	As in the proof of Proposition \ref{prop: completion of smooth}, we may construct finitely generated $A/J^k$-subalgebras $C'\subset C$, containing the image of $\wh A\rightarrow C$, and $D'\subset D$, containing the images of $\wh B\rightarrow D$ and $C'\rightarrow D$, such that the map $C'\rightarrow D'$ induced by $i$ is surjective.
	Take elements $s_1,\ldots,s_n\in S$ generating $S$, and denote their images in $C$ by $s'_1,\ldots,s'_n$.
	Let $C'':=C'[s'_1,\ldots,s'_n]\subset C$ and $D'':=\Im(C''\rightarrow C\rightarrow D)$.
	Then $C''$ and $D''$ are finitely generated over $A/J^k$, and hence wcfg with respect to $(R,I)$.
	Then we have a commutative diagram of log rings (i.e.\, each entry is a monoid homomorphism from a monoid to the multiplicative monoid of a ring)
	\[\xymatrix{
	(S\rightarrow D'')&(S\rightarrow C'')\ar[l]\\
	(Q\rightarrow B)\ar[u]&(P\rightarrow A).\ar[l]\ar[u]
	}\]
	Since $f$ is log smooth, we may apply Proposition \ref{prop: fundamental properties of omega}\eqref{item: omega affine case} to the diagram of weak formal log schemes induced from the above diagram, to obtain a morphism $(\Spec C'',\cM)\rightarrow \cZ'$ where $\cM$ denotes the log structure associated with $S\rightarrow C''$.
	The completion of the composition $\cY\rightarrow(\Spec C'',\cM)\rightarrow \cZ'$ gives a morphism $\cY\rightarrow\wh\cZ'$ fitting into the diagram \eqref{eq: diag completion of log smooth}. 
	This shows that $\widehat{f}$ is log smooth as well.
	
	Next, suppose that a morphism $g\colon \cY\rightarrow\wh\cZ'$ fitting into \eqref{eq: diag completion of log smooth} is given.
	Take elements $q_1,\ldots,q_m\in Q$ generating $Q$, and denote their images under $Q\rightarrow \Gamma(\wh\cZ',\cM_{\wh\cZ'})\rightarrow\Gamma(\cY,\cM_\cY)\rightarrow C$ by $q'_1,\ldots,q'_m$.
	Let $C''':=C''[q'_1,\ldots,q'_m]\subset C$.
	Then $g$ factors through a morphism $h\colon (\Spec C''',\cM)\rightarrow\wh\cZ'$, and induces a morphism $h'\colon (\Spec C''',\cM)\rightarrow\cZ'$.
	Here we denote again by $\cM$ the log structure associated with $S\rightarrow C''\rightarrow C'''$.
	Since $h=\wh{h'}$, we see that such a $g$ is unique if $f$ is log \'{e}tale.
\end{proof}

\begin{proposition}\label{prop: exactification is log etale}
	Let $j\colon Z\hookrightarrow\cZ$ be an immersion of fine weak formal log schemes and $\cZ^{\mathrm{ex}}_Z$ its exactification.
	Consider a commutative diagram
	\[\xymatrix{
	\cY'\ar[r]^-i\ar[d]^-{\theta'}&\cY\ar[d]^-\theta\\
\cZ_Z^{\mathrm{ex}}\ar[r]^-f&\cZ
	}\]
	where $f$ is the canonical morphism induced by adjointness, and $i$ is a homeomorphic exact closed immersion.
	Then there exists a unique morphism $g\colon\cY\rightarrow\cZ_Z^{\mathrm{ex}}$ such that $g\circ i=\theta'$ and $f\circ g=\theta$.
	In particular, $f\colon\cZ_Z^{\mathrm{ex}}\rightarrow\cZ$ is strongly log \'{e}tale. 
\end{proposition}

\begin{corollary}\label{cor: exactification smooth}
	Consider a commutative diagram of fine weak formal log schemes
	\[\xymatrix{
	Z'\ar@{^(->}[r]\ar[d]&\cZ'\ar[d]^-f\\
	Z\ar@{^(->}[r]&\cZ
	}\]
	whose horizontal morphisms are immersions.
	If $f$ is log smooth (resp.\ log \'{e}tale, strongly log smooth, strongly log \'{e}tale), then the induced morphism $\cZ'^{\mathrm{ex}}_{Z'}\rightarrow\cZ_Z^{\mathrm{ex}}$ between the exactifications is also log smooth (resp.\ log \'{e}tale, strongly log smooth, strongly log \'{e}tale).
\end{corollary}

Next we will give sufficient conditions for log smoothness and strong log smoothness.
The following statements are given similarly to the classical case for fine log schemes \cite[Prop.\,3.4]{Ka}.

\begin{lemma}\label{lem: monoid rings}
	For fine monoids $P$ and $Q$, let $\Spwf \wh R[P]^\dagger$ and $\Spwf\wh R[Q]^\dagger$ be the weak formal schemes endowed with the log structures associated to the canonical maps $P\rightarrow\wh R[P]^\dagger$ and $Q\rightarrow\wh R[Q]^\dagger$, respectively.
	Let $\alpha\colon Q\rightarrow P$ be a homomorphism and $f\colon \Spwf\wh R[P]^\dagger\rightarrow\Spwf\wh R[Q]^\dagger$ the morphism induced by $\alpha$.
	Consider the following conditions for $\alpha^{\mathrm{gp}}\colon Q^{\mathrm{gp}}\rightarrow P^{\mathrm{gp}}$:
	\begin{enumerate}
	\item\label{item: monoid ring 1} $\Ker\alpha^{\mathrm{gp}}$ and $(\Coker\,\alpha^{\mathrm{gp}})_{\mathrm{tors}}$ are finite groups whose orders are invertible in $\wh R$.
	\item\label{item: monoid ring 2} $\Ker\alpha^{\mathrm{gp}}$ and $\Coker\,\alpha^{\mathrm{gp}}$ are finite groups whose orders are invertible in $\wh R$.
	\item\label{item: monoid ring 3} $\alpha^{\mathrm{gp}}$ is injective and $\Coker\,\alpha^{\mathrm{gp}}$ is free.
	\item\label{item: monoid ring 4} $\alpha^{\mathrm{gp}}$ is bijective.
	\end{enumerate}
	Then the conditions \eqref{item: monoid ring 1}, \eqref{item: monoid ring 2}, \eqref{item: monoid ring 3}, \eqref{item: monoid ring  4}, respectively, imply that $f$ is log smooth, log \'{e}tale, strongly log smooth, and strongly log \'{e}tale.
\end{lemma}

\begin{proof}
	A commutative diagram as in \eqref{eq: test diagram log} with $\cZ'=\Spwf\wh R[P]^\dagger$ and $\cZ=\Spwf\wh R[Q]^\dagger$ corresponds to a commutative diagram
	\begin{equation}\label{eq: monoid rings diagram}
	\xymatrix{
	\Gamma(\cY',\cM_{\cY'})&\Gamma(\cY,\cM_\cY)\ar[l]\\
	P\ar[u]&Q\ar[u]\ar[l]
	}\end{equation}
	of monoids, and the lifting property of $f$ corresponds to the existence of a homomorphism $P\rightarrow\Gamma(\cY,\cM_\cY)$ fitting into \eqref{eq: monoid rings diagram}.
	Since $\cM_{\cY}\rightarrow\cM_{\cY}^{\mathrm{gp}}\times_{\cM_{\cY'}^{\mathrm{gp}}}\cM_{\cY'}$ is an isomorphism, it suffices to consider the groupification of \eqref{eq: monoid rings diagram} and discuss the existence of $P^{\mathrm{gp}}\rightarrow\Gamma(\cY,\cM_\cY^{\mathrm{gp}})$.
	Such a homomorphism clearly exists if $\alpha^{\mathrm{gp}}$ is injective and $\Coker\,\alpha^{\mathrm{gp}}$ is free, hence \eqref{item: monoid ring 3} implies that $f$ is strongly log smooth.
	Similarly one proves that \eqref{item: monoid ring 4} implies the strong log \'{e}taleness of $f$.
	That \eqref{item: monoid ring 1} and \eqref{item: monoid ring 2} imply log smoothness and log \'{e}taleness, respectively, is proved by the same argument as in \cite[Prop.\ 3.4]{Ka} or \cite[Thm.\ 3.1.8]{Og}.
\end{proof}

\begin{lemma}\label{lem: strict}
	Let $f\colon \cZ'\rightarrow\cZ$ be a morphism of fine weak formal log schemes and suppose that $f^*\cM_\cZ\rightarrow\cM_{\cZ'}$ is an isomorphism.
	Then $f$ is log smooth (resp.\ log \'{e}tale, strongly log smooth, strongly log \'{e}tale) if and only if its underlying morphism of weak formal schemes is smooth (resp.\ \'{e}tale, strongly smooth, strongly \'{e}tale).
\end{lemma}

\begin{proof}
	This follows similarly to the classical case (see \cite[Prop.\ 3.8]{Ka} or \cite[Prop.\ 3.1.6]{Og}).
\end{proof}

\begin{proposition}\label{prop: condition for log smooth}
	Let $f\colon \cZ'\rightarrow\cZ$ be a morphism of fine weak formal log schemes, and assume that a chart $Q_\cZ\rightarrow\cM_\cZ$ is given.
	Suppose there exists locally on $\cZ'$ a chart $(P_{\cZ'}\rightarrow\cM_{\cZ'},Q_{\cZ}\rightarrow\cM_\cZ,Q\xrightarrow{\alpha} P)$ extending the given chart of $\cZ$ satisfying the following conditions:
	\begin{enumerate}
	\item $\alpha^{\mathrm{gp}}\colon Q^{\mathrm{gp}}\rightarrow P^{\mathrm{gp}}$ satisfies the condition \eqref{item: monoid ring 1} (resp.\  \eqref{item: monoid ring 2}, \eqref{item: monoid ring 3}, \eqref{item: monoid ring  4}) in Lemma \ref{lem: monoid rings}, replacing ``invertible over $\wh R$'' by ``invertible over $\cZ'$''. 
	\item The induced morphism $\cZ'\rightarrow\cZ\times_{\Spwf \wh{R}[Q]^\dagger}\Spwf \wh{R}[P]^\dagger$ is smooth (resp.\ \'{e}tale, strongly smooth, strongly \'{e}tale).
	\end{enumerate}
	Then $f$ is log smooth (resp.\ log \'{e}tale, strongly log smooth, strongly log \'{e}tale).
\end{proposition}

\begin{proof}
	This follows by Proposition \ref{prop: easy properties log} \eqref{item: easy property 2} and \eqref{item: easy property 3}, Lemma \ref{lem: monoid rings} and Lemma \ref{lem: strict} similarly to \cite[Thm.\ 3.5]{Ka}.
\end{proof}

Finally, consider the case $(R,I)=(V,\frm)$.
We obtain the following proposition similarly to \ref{prop: lift strong smooth}
\begin{proposition}
	Consider a commutative diagram \eqref{eq: test diagram log} of fine weak formal log schemes with respect to $(V,\frm)$, and let $\cJ$ be an ideal of definition of $\cY$.
	For $k\in\bbN$ we endow $\cY[\frac{\cJ^k}p]^\dagger$ with the log structure given by the pull-back from $\cY$.
	If $f$ is strongly log smooth, then there exists locally on $\cY[\frac{\cJ^k}p]^\dagger$ a morphism $g_k\colon \cY[\frac{\cJ^k}p]^\dagger\rightarrow\cZ'$ such that the diagram
	\[\xymatrix{
	\cY'\times_\cY\cY[\frac{\cJ^k}p]^\dagger\ar@{^(->}[r]\ar[d]&\cY[\frac{\cJ^k}p]^\dagger\ar[d]\ar[ld]^-{g_k}\\
	\cZ'\ar[r]_-f&\cZ
	}\]
	commutes.
	If $f$ is strongly log \'{e}tale, such a $g_k$ exist globally and uniquely.
\end{proposition}
The following is given similarly to Proposition \ref{prop: Omega on generic fiber}:

\begin{proposition}\label{prop: omega compare}
	Let $f\colon\cZ'\rightarrow\cZ$ be a morphism of fine weak formal log schemes with respect to $(V,\frm)$, $\cJ$ an ideal of definition of $\cZ'$, and $k$ a positive integer.
	Endow $\cZ'_k:=\cZ'[\frac{\cJ^k}p]^\dagger$ with the log structure given by pull-back from $\cZ'$.
	Let $\frZ$, $\frZ'$, and $\frZ'_k$ be the dagger spaces associated to $\cZ$, $\cZ'$, and $\cZ'_k$, respectively.
	Then there exists a canonical isomorphism
	\begin{equation}\label{eq: omega on generic fiber}\omega^1_{\cZ'/\cZ,\bbQ}|_{\frZ'_k}\xrightarrow{\cong}\omega^1_{\cZ'_k/\cZ}\otimes_VK.\end{equation}
	Here the right hand side is defined in the same way as Proposition \ref{prop: Omega on generic fiber}.
\end{proposition}

In the situation in Proposition \ref{prop: omega compare}, we have an isomorphism
\[\omega^m_{\cZ'/\cZ,\bbQ}|_{\frZ'_k}=\Bigl(\bigwedge^m_{\cO_{\frZ'}}\omega^1_{\cZ'/\cZ,\bbQ}\Bigr)|_{\frZ'_k}\cong \bigwedge^m_{\cO_{\cZ'_k}\otimes_VK}(\omega^1_{\cZ'_k/\cZ}\otimes_VK)=\Bigl(\bigwedge^m_{\cO_{\cZ'_k}}\omega^1_{\cZ'_k/\cZ}\Bigr)\otimes_VK=\omega^m_{\cZ'_k/\cZ}\otimes_VK,\]
where $\bigwedge^m$ denotes the respective exterior power.
Since the differential $\omega^m_{\cZ'_k/\cZ}\rightarrow\omega^{m+1}_{\cZ'_k/\cZ}$ is $V$-linear, it induces a morphism $\omega^m_{\cZ'/\cZ,\bbQ}|_{\frZ'_k}\rightarrow\omega^{m+1}_{\cZ'/\cZ,\bbQ}|_{\frZ'_k}$.
Varying the positive integer $k$, we obtain a morphism $\omega^m_{\cZ'/\cZ,\bbQ}\rightarrow\omega^{m+1}_{\cZ'/\cZ,\bbQ}$.
This makes $\omega^\bullet_{\cZ'/\cZ,\bbQ}=\{\omega^m_{\cZ'/\cZ,\bbQ}\}_{m\in\bbN}$ into a complex.

%
\section{Log rigid cohomology}\label{Sec: Log rig coh}
%

In this section we discuss the theory of log rigid cohomology using weak formal log schemes.
Let $k$ be a perfect field of characteristic $p>0$.
Denote by $W = W(k)$  the ring of Witt vectors of $k$, by $F: = \mathrm{Frac}(W)$ 
its fraction field and by $\sigma\colon W  \rightarrow  W$ the Frobenius morphism. 

In what follows, we always consider $(W,pW)$ as the base $(R,I)$, so all weak formal schemes are taken with respect to $(W,pW)$.
If a morphism $\cZ'\rightarrow\cZ$ of weak formal (log) schemes with respect to $(W,pW)$ is given, we sometimes say that ``$\cZ'$ is a weak formal (log) scheme over $\cZ$''.
Moreover, for a fine log scheme over $k$ we always assume that the underlying scheme is separated and locally of finite type over $k$.

\begin{definition}
	Let $T$ be a fine log scheme over $k$, $\cT$ be a fine weak formal log scheme over $W$, and $\iota\colon T\hookrightarrow\cT$ be a homeomorphic exact closed immersion.
	Assume that the underlying weak formal scheme of $\cT$ is flat over $W$.
	A {\it log rigid triple} over $\iota\colon T\hookrightarrow\cT$ is a triple $(Y,\cZ,i)$ consisting of a fine log scheme $Y$, a fine weak formal log scheme $\cZ$ which is strongly log smooth over $\cT$, and a homeomorphic exact closed immersion $i\colon Y\hookrightarrow\cZ$ over $\cT$.
	In this situation, we also call $(\cZ,i)$ a {\it log rigid datum} for $Y$ over $\iota\colon T\hookrightarrow\cT$.
	A morphism of log rigid triples $f = (f_T,f_{\cT})\colon(Y',\cZ',i') \rightarrow (Y,\cZ,i)$ is a pair consisting of morphisms $f_T\colon Y' \rightarrow  Y$ over $T$ and $f_\cT\colon\cZ' \rightarrow \cZ$ such that $f_\cT\circ i = i'\circ f_T$.
	A morphism of log rigid data for $Y$ is a morphism of log rigid triples of the form $(\mathrm{id}_Y,f_\cT)$.
	
	Furthermore assume that a lift $\cT \rightarrow \cT$ of the $p$\textsuperscript{th} power Frobenius on $\cT\times_{W^\varnothing}k^\varnothing$ compatible with $\sigma\colon W^\varnothing\rightarrow W^\varnothing$ is given, and denote it again by $\sigma$.
	A {\it log rigid $F$-quadruple} over $(\iota\colon T\hookrightarrow\cT,\sigma)$ is a quadruple $(Y,\cZ,i,\phi)$ consisting of a log rigid triple $(Y,\cZ,i)$ on $\iota\colon T\hookrightarrow\cT$ and a lift $\phi\colon\cZ \rightarrow \cZ$ of the $p$\textsuperscript{th} power Frobenius on $\cZ\times_{W^\varnothing}k^\varnothing$ which is compatible with $\sigma$ on $\cT$.
	In this situation, we also call $(\cZ,i,\phi)$ a {\it log rigid $F$-datum} over $(\iota\colon T\hookrightarrow\cT,\sigma)$ for $Y$.
	A morphism of log rigid $F$-quadruples (respectively log rigid $F$-data) is a morphism of log rigid triples (respectively log rigid data) which is compatible with the Frobenius lifts.
	
	We often omit the expressions ``over $\iota\colon T\hookrightarrow\cT$" and ``over $(\iota\colon T\hookrightarrow\cT,\sigma)$" if there is no ambiguity.
\end{definition}

\begin{proposition}
	Let $Y$ be a fine log scheme over $T$.
	\begin{enumerate}
	\item
	The category of log rigid data for $Y$ and the category of log rigid $F$-data for $Y$ have direct products.
	\item
	If $(\cZ'',i'')=(\cZ,i)\times(\cZ',i')$ in the category of log rigid data for $Y$, then the projections $\cZ''\rightarrow\cZ$ and $\cZ''\rightarrow\cZ'$ are strongly log smooth.
	A similar statement holds for products of log rigid $F$-data.
	\end{enumerate}
\end{proposition}

\begin{proof}
	For two log rigid data $(\cZ,i)$ and $(\cZ',i')$, let $i''\colon Y\hookrightarrow\cZ''$ be the 
canonical  
exactification of the diagonal embedding $Y\hookrightarrow\cZ\times_{\cT}\cZ'$ 
(cf.\,Proposition-Definition \ref{def: exactification}).
	Then $(\cZ'',i'')$ is the direct product of $(\cZ,i)$ and $(\cZ',i')$ in the category of log rigid data for $Y$.
	
	If $\phi$ and $\phi'$ are Frobenius lifts on $\cZ$ and $\cZ'$, then $(\phi,\phi')\colon\cZ\times_{\cT}\cZ' \rightarrow \cZ\times_{\cT}\cZ'$ is compatible with the $p$\textsuperscript{th} power Frobenius on $Y$, and hence it extends to a morphism $\phi''\colon\cZ'' \rightarrow \cZ''$ by functoriality of exactifications.
	Then $(\cZ'',i'',\phi'')$ is the direct product of $(\cZ,i,\phi)$ and $(\cZ',i',\phi')$ in the category of log rigid $F$-data for $Y$.
	
	The strong log smoothness of projections follows from Proposition \ref{prop: easy properties log} \eqref{item: easy property 2}, \eqref{item: easy property 3} and Proposition \ref{prop: exactification is log etale}.
\end{proof}

\begin{definition}\label{def: rig cohomology / complex}
	Let $(Y,\cZ,i)$ be a log rigid triple and $\frZ$ the dagger space associated to $\cZ$.
	We denote by $\omega^\bullet_{\cZ/\cT,\bbQ}$ the complex on $\frZ$ given at the end of the previous section.
	We define log rigid cohomology of $Y$ over $T\hookrightarrow\cT$ with respect to $(\cZ,i)$ to be
		\[R\Gamma_\rig(Y/\cT)_\cZ: = R\Gamma(\frZ,\omega^\bullet_{\cZ/\cT,\bbQ}).\]
Note that since the underlying Grothendieck topological space of a
dagger space is defined to be that of a rigid analytic space, 
we may consider $R\Gamma_\rig(Y/\cT)_\cZ$ as a complex via generalised Godement resolution (c.f.\,\cite[\S3]{CCM}).
\end{definition}

\begin{remark}
	Let $Y$ be a fine log scheme over $T$, $\cZ$ a weak formal log scheme which is strongly log smooth over $\cT$.
	Let  $Y\hookrightarrow\cZ$ be an exact closed immersion (not necessarily homeomorphic) over $\cT$, 
	and $Y\hookrightarrow\cZ'$ its exactification given in Proposition-Definition \ref{def: exactification}.
	Note that in this case, this is just the weak completion of $\cZ$ along $Y$.
	Denote by $\frZ$ and $\frZ'$ the dagger spaces associated to $\cZ$ and $\cZ'$ respectively.
	Then the canonical morphism $\cZ'\rightarrow\cZ$ induces isomorphisms $\frZ'\xrightarrow{\cong}]Y[_\frZ$ by \cite[(0.2.7)]{Ber} and \cite[Thm.\,2.19 (4)]{GK1}.
	Moreover we have $\omega^\bullet_{\cZ/\cT,\bbQ}|_{]Y[_\frZ}\xrightarrow{\cong}\omega^\bullet_{\cZ'/\cT,\bbQ}$ and hence $R\Gamma(]Y[_\frZ,\omega^\bullet_{\cZ/\cT,\bbQ})\xrightarrow{\cong}R\Gamma(\frZ',\omega^\bullet_{\cZ'/\cT,\bbQ})$.
	This shows that our definition of log rigid cohomology coincides with that of Gro\ss{}e-Kl\"{o}nne in \cite[\S 1]{GK3}.
\end{remark}

The following result appeared in the proof of \cite[Lem.\,1.4]{GK3}, but we give a detailed proof for the benefit of the reader.

\begin{proposition}\label{prop: log rigid coh}
	Let $Y$ be a fine log scheme over $T$.
	If a log rigid datum for $Y$ exists, the log rigid cohomology $R\Gamma_\rig(Y/\cT)_\cZ$ is independent of the choice of log rigid datum up to canonical quasi-isomorphisms.
\end{proposition}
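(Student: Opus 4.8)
The plan is to reduce the statement to a single comparison: if $f=(\mathrm{id}_Y,f_\cT)\colon(\cZ'',i'')\to(\cZ,i)$ is a morphism of log rigid data for $Y$, then the map $R\Gamma_\rig(Y/\cT)_\cZ\to R\Gamma_\rig(Y/\cT)_{\cZ''}$ induced by pullback of log differential forms along $f_\cT$ is a quasi-isomorphism. Granting this, one argues as follows. Given two log rigid data $(\cZ_1,i_1)$, $(\cZ_2,i_2)$ for $Y$, the proposition asserting that the category of log rigid data for $Y$ has direct products yields a log rigid datum $(\cZ_{12},i_{12})$ --- with $\cZ_{12}$ the exactification (Proposition-Definition \ref{def: exactification}) of the diagonal immersion $Y\hookrightarrow\cZ_1\times_\cT\cZ_2$ --- together with morphisms of log rigid data $p_j\colon(\cZ_{12},i_{12})\to(\cZ_j,i_j)$. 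These produce a zig-zag of quasi-isomorphisms
\[R\Gamma_\rig(Y/\cT)_{\cZ_1}\xrightarrow{\sim}R\Gamma_\rig(Y/\cT)_{\cZ_{12}}\xleftarrow{\sim}R\Gamma_\rig(Y/\cT)_{\cZ_2},\]
hence a canonical isomorphism in the derived category. That these comparisons are compatible (a cocycle condition for three data $\cZ_1,\cZ_2,\cZ_3$) follows by applying the same construction to the triple product $\cZ_{123}$ and using functoriality of the exactification and of direct products, so that all arrows in sight commute; this standard argument is exactly what makes the comparison canonical.

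It remains to prove the comparison. The morphism $f_\cT$ induces a morphism $g\colon\frZ''\to\frZ$ of associated dagger spaces together with a morphism $u\colon\omega^\bullet_{\cZ/\cT,\bbQ}\to Rg_*\,\omega^\bullet_{\cZ''/\cT,\bbQ}$ of complexes of sheaves on $\frZ$ which computes the map in question after applying $R\Gamma(\frZ,-)$; since being a quasi-isomorphism is local, we may assume $\cZ$ and $\cZ''$ affine. The key observation is that $f_\cT$ is automatically strict: as $i$ and $i''$ are exact closed immersions with $f_\cT\circ i''=i$ and $i''$ is homeomorphic, the transition map $f_\cT^*M_\cZ\to M_{\cZ''}$ is an isomorphism after pulling back along $i''$, and by homeomorphicity this already forces it to be an isomorphism on characteristic monoids. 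A strict log smooth morphism of weak formal log schemes is smooth on the underlying weak formal schemes (cf.\ Remark \ref{rem: smooth} and the classical case of formal schemes), so the Strong Fibration Theorem (Proposition \ref{prop: strong fibration}) applies: after shrinking further there is an isomorphism $\cZ''\cong\cZ\times_{\Spwf W}\Spwf W\llbracket s_1,\ldots,s_d\rrbracket$ over $\cZ$, and it is moreover an isomorphism of weak formal log schemes once the right-hand side is equipped with the log structure pulled back from $\cZ$, i.e.\ once the relative log structure of $f_\cT$ is taken trivial. Passing to dagger spaces, $g\colon\frZ''\to\frZ$ becomes the projection of the relative open unit polydisc of dimension $d$ over $\frZ$.

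The remaining input is the relative log de Rham computation. Under the identification above, $\omega^\bullet_{\cZ''/\cZ,\bbQ}$ is the de Rham complex $\cO_{\frZ''}\to\bigoplus_i\cO_{\frZ''}\,ds_i\to\cdots$ in the $s_i$-directions, and the overconvergent Poincar\'e lemma for the open unit polydisc gives $Rg_*\,\omega^\bullet_{\cZ''/\cZ,\bbQ}\cong\cO_\frZ$ concentrated in degree $0$. Equipping $\omega^\bullet_{\cZ''/\cT,\bbQ}$ with its canonical filtration, whose graded pieces are $\Gr^p\cong g^*\omega^p_{\cZ/\cT,\bbQ}\otimes_{\cO_{\frZ''}}\omega^\bullet_{\cZ''/\cZ,\bbQ}[-p]$, the projection formula gives $Rg_*\Gr^p\cong\omega^p_{\cZ/\cT,\bbQ}[-p]$, and the associated spectral sequence degenerates with $E_1$-differential the log de Rham differential of $\cZ$ over $\cT$; hence $Rg_*\,\omega^\bullet_{\cZ''/\cT,\bbQ}\cong\omega^\bullet_{\cZ/\cT,\bbQ}$ compatibly with $u$, so $u$ is a quasi-isomorphism. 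I expect the main obstacle to be precisely this last input: the overconvergent Poincar\'e lemma for the (relative) open unit polydisc in the present not-necessarily-$p$-adic weak formal framework --- that termwise integration of an overconvergent power series in $s_1,\ldots,s_d$ with overconvergent coefficients again converges overconvergently --- together with the bookkeeping needed to check that the isomorphism of dagger spaces extracted from Proposition \ref{prop: strong fibration} is the one carrying $\omega^\bullet_{\cZ''/\cT,\bbQ}$ to the external tensor product used above.
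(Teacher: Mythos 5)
Your proposal is correct and follows essentially the same route as the paper: form the product of two log rigid data via the exactification of the diagonal, apply the strong fibration theorem (Proposition \ref{prop: strong fibration}) to see that the projections are relative open polydisks, and conclude by the (overconvergent) Poincar\'e lemma. The extra details you supply — the strictness of the comparison morphism forced by the homeomorphic exact closed immersions, and the filtration/spectral-sequence bookkeeping — are points the paper leaves implicit, and they are argued correctly.
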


\begin{proof}
	For two log rigid data $(\cZ_1,i_1)$ and $(\cZ_2,i_2)$ for $Y$, let $(\cZ',i')$ be their product, and denote by $\frZ_1$, $\frZ_2$, and $\frZ'$ the associated dagger spaces.
	Then by Proposition \ref{prop: strong fibration}, the morphisms of dagger spaces $\frZ'\rightarrow \frZ_l$ ($l=1,2$) induced by the natural projections $\cZ'\rightarrow\cZ_l$ ($l=1,2$) are relative open polydisks.
	Thus as in the proof of \cite[Prop.\,2.2.14]{Sh1} one can use the Poincar\'e lemma to show that the morphisms 
	\begin{equation}\label{independent}
	R\Gamma(\frZ_l,\omega^\bullet_{\cZ_l/\cT,\bbQ})\rightarrow R\Gamma(\frZ',\omega^\bullet_{\cZ'/\cT,\bbQ}),\quad l=1,2
	\end{equation}
	  induced by the projections are quasi-isomorphisms (see also \cite[Lem.\,1.4]{GK3}): 

Fix $l=1$ or $2$.
We can find an open covering $\{\cU_\lambda\}_\lambda$ of $\cZ_l$ with isomorphisms
\[\cU'_\lambda:=\cZ'\times_{\cZ_l}\cU_\lambda\cong\cU_\lambda\times\Spwf W\llbracket s_1,\ldots,s_d\rrbracket\]
such that the composition $\cU_\lambda\hookrightarrow \cZ_l\rightarrow\cT$ factors through an affine open subset $\cT_\lambda\subset\cT$.
Using the spectral sequence associated with this covering, we reduce to show that 
$R\Gamma(\frZ,\omega^\bullet_{\cZ/\cT_\lambda,\bbQ})\rightarrow R\Gamma(\frZ',\omega^\bullet_{\cZ'/\cT_\lambda,\bbQ})$
is a quasi-isomorphism for $\cZ'\cong \cZ\times W\llbracket s_1,\ldots,s_d\rrbracket$ and $\frZ$, $\frZ'$ the associated dagger spaces.

By induction on $d$, we may further suppose that $d=1$.
Moreover, by restricting to an affinoid subspace of $\frZ$, we may assume that $\cT=\Spwf A$ and $\cZ=\Spwf B$ are $p$-adic and affine, and hence $\cZ'$ is of the form $\Spwf B'$ with $B':=B\otimes^\dagger W\llbracket s\rrbracket$. 
Note that, this reduction preserves the local freeness of $\omega^1_{\cZ/\cT,\bbQ}$, but it breaks the log smoothness of $\cZ$ over $\cT$.

Then the left hand side of \eqref{independent} is computed by $\omega^\bullet_{B/A}\otimes\bbQ$ by \cite[Prop.\,3.1]{GK1}.
For $r\geq 1$, set $B'_r:=B\otimes^\dagger W[s,\frac{s^r}p]^\dagger$ and $\frZ'_r:=\Sp (B'_r\otimes\bbQ)$.
Since $\frZ'=\bigcup_r\frZ'_r$, the right hand side of \eqref{independent} is $R\varprojlim_r(\omega^\bullet_{B'_r/A}\otimes\bbQ)$.
Thus, to prove that \eqref{independent} is a quasi-isomorphism, it suffices to show that
\begin{equation}\label{eq: independent 2}\omega^\bullet_{B/A}\otimes\bbQ\rightarrow\omega^\bullet_{B'_r/A}\otimes\bbQ\end{equation}
is a quasi-isomorphism for any $r$.

By Proposition \ref{prop: fundamental properties of omega} \eqref{item: omega exact sequence},\eqref{item: omega product} we have $\omega^1_{B'_r/A}=B'_r\otimes_B\omega^1_{B/A}\oplus B'_rds$, and this shows that
\[\omega^\bullet_{B'_r/A}\otimes\bbQ=\Cone(B'_r\otimes_B\omega^\bullet_{B/A}\otimes\bbQ\xrightarrow{\partial_s} B'_rds \otimes_B\omega^\bullet_{B/A}\otimes\bbQ)[-1]\]
where the map $\partial_s$ is defined by differentiating an element of $B'_r$ with respect to $s$.
Thus to show that \eqref{eq: independent 2} is a quasi-isomorphism, it is enough to prove that for fixed $i\geq 0$ the sequence
\[0\rightarrow\omega^i_{B/A}\otimes\bbQ\rightarrow B'_r\otimes_B\omega^i_{B/A}\otimes\bbQ\xrightarrow{\partial_s}B'_r ds\otimes_B\omega^i_{B/A}\otimes\bbQ\rightarrow 0\]
is exact.
Since $\omega^i_{B/A}\otimes\bbQ$ is locally free over $B\otimes\bbQ$, it suffices to show that
\begin{equation}\label{eq: relative disk}
0\rightarrow B\otimes\bbQ\rightarrow B'_r\otimes\bbQ\xrightarrow{\partial_s} B'_rds\otimes\bbQ\rightarrow 0
\end{equation}
is exact.
Note that $B'_r\otimes\bbQ$ is a subring of $(B\otimes\bbQ)\llbracket s\rrbracket$, and we have
\[\Ker\partial_s=(B'_r\otimes\bbQ)\cap\Ker((B\otimes\bbQ)\llbracket s\rrbracket\rightarrow (B\otimes\bbQ)\llbracket s\rrbracket ds)=B\otimes\bbQ.\]
To show that $\partial_s$ is surjective, take a surjection $W[t_1,\ldots,t_m]^\dagger\rightarrow B$.
Then an element $f\in B'_r\otimes\bbQ$ can be represented as
\[f=\sum_{\alpha=(\alpha_1,\ldots,\alpha_m)\in\bbN^m}\sum_{n\in\bbN}c_{\alpha,n}t_1^{\alpha_1}\cdots t_m^{\alpha_m}s^n\]
with $c_{\alpha,n}\in F$, such that
\[\lvert c_{\alpha,n}\rvert\rho^{\lvert\alpha\rvert+n}p^{-\frac nr}\rightarrow 0\hspace{15pt}(\lvert\alpha\rvert+n\rightarrow\infty)\]
for some real number $\rho>1$, where we denote $\lvert\alpha\rvert:=\alpha_1+\cdots+\alpha_m$.
Take $\rho'\in\bbR$ with $1<\rho'<\rho$.
Then the series
\[g:=\sum_{\alpha=(\alpha_1,\ldots,\alpha_m)\in\bbN^m}\sum_{n\in\bbN}\frac{c_{\alpha,n}}{n+1}t_1^{\alpha_1}\cdots t_m^{\alpha_m}s^{n}\]
defines an element of $B'_r\otimes\bbQ$, too, since we have
\[\left\lvert\frac{c_{\alpha,n}}{n+1}\right\rvert\rho'^{\lvert\alpha\rvert+n}p^{-\frac nr}=\lvert c_{\alpha,n}\rvert\rho^{\lvert\alpha\rvert+n}p^{-\frac nr}\cdot \frac{1}{\lvert n+1\rvert}\left(\frac{\rho'}{\rho}\right)^{n+1}\cdot \left(\frac{\rho'}{\rho}\right)^{\lvert\alpha\rvert-1}\rightarrow 0\hspace{15pt}(\lvert\alpha\rvert+n\rightarrow\infty)\]
where we use that $\frac{1}{\lvert n+1\rvert}(\frac{\rho'}{\rho})^{n+1}$ is bounded.
As $g$ was chosen such that $\partial_s(sg)=f$, we proved that $\partial_s$ is surjective.
\end{proof}

In the case that $\cT$ is affine and admits a chart, the above result allows us to globalise our construction:

\begin{lemma}\label{lem: local embedding}
	Suppose that $\cT$ is affine and admits a chart.
	Then for any fine log scheme $Y$ over $T$, there exists locally on $Y$ a log rigid datum for $Y$ over $\iota\colon T\hookrightarrow\cT$.
\end{lemma}

\begin{proof}
	We arrange the proof of \cite[Prop.\ 2.2.11]{Sh1}.
	Since the statement is local, we may assume that $Y=\Spec C$ is affine and there exists a chart $(P_Y\rightarrow\cM_Y,Q\rightarrow\cM_{\cT},Q\xrightarrow{\alpha}P)$ of the composite $Y\rightarrow T\hookrightarrow\cT$.
	We set $A:=\Gamma(\cT,\cO_{\cT})$ and $B:=\Gamma(T,\cO_T)$.
	Let $f\colon A\rightarrow C$, $\beta\colon P\rightarrow C$ and $\gamma\colon Q\rightarrow A$ the natural maps.
	Take surjections
	\begin{align*}
	\delta\colon B[\bbN^n]\rightarrow C,&&\epsilon\colon \bbN^m\rightarrow P.
	\end{align*}
	
	Let $\cZ':=\Spwf A[\bbN^m\oplus\bbN^n]^\dagger$ and endow it with the log structure associated to the map $\eta\colon\bbN^m\oplus Q\rightarrow A[\bbN^m\oplus\bbN^n]^\dagger$ defined by $\eta(x,q):=x\cdot\gamma(q)$.
	Then the natural injection $Q\rightarrow \bbN^m\oplus Q$ defines a morphism $\cZ'\rightarrow\cT$.
	The underlying weak formal scheme of $\cZ'$ is strongly smooth over $\cT\times_{\Spwf \bbZ_p[Q]^\dagger}\Spwf\bbZ_p[\bbN^m\oplus Q]^\dagger=\Spwf A[\bbN^m]^\dagger$ by Lemma \ref{lem: disc smooth}.
	Thus by Proposition \ref{prop: condition for log smooth} we see that $\cZ'$ is strongly log smooth over $\cT$.
	
	We have a commutative diagram
	\[\xymatrix{
	C&A[\bbN^m\oplus\bbN^n]^\dagger\ar[l]\\
	P\ar[u]^-\beta&\bbN^m\oplus Q\ar[u]^-\eta\ar[l]
	}\]
	where the upper horizontal arrow is defined by $a(x,y)\mapsto f(a)\cdot \beta\circ\epsilon(x)\cdot\delta(y)$ for any $a\in A$, $x\in\bbN^m$, $y\in\bbN^n$, and the lower horizontal arrow is defined by $(x,q)\mapsto \epsilon(x)\cdot \alpha(q)$.
	This induces a closed immersion $Y\hookrightarrow\cZ'$.
	
	Let $\cZ$ be the exactification of $Y\hookrightarrow\cZ'$, then it is strongly log smooth over $\cT$ by Corollary \ref{cor: exactification smooth}, and hence give a log rigid datum for $Y$.
\end{proof}

\begin{definition}\label{def: simplicial rig coh}
	Suppose that $\cT$ is affine and admits a chart.
	For a fine log scheme $Y$ over $T$, we may find an affine cover $\{U_j\rightarrow Y\}_{j\in J}$, such that a log rigid datum $(\cZ_j,i_j)$ for $U_j$ exists. 
	Then 
	$$
	(U_m,\cZ_m, i_m):= \coprod_{\underline{j}\in J^{m+1}} (U_{\underline{j}},\cZ_{\underline{j}}, i_{\underline{j}})
	$$
where for $\underline{j}=(j_0,\ldots,j_{m})$ we set $U_{\underline{j}}:= U_{j_0}\cap\cdots\cap U_{j_{m}}$, 
and let $i_{\underline{j}}:U_{\underline{j}} \hookrightarrow \cZ_{\underline{j}}$ be the exactification of the diagonal embedding
$U_{\underline{j}} \hookrightarrow \cZ_{j_0}\times_{\cT} \cdots\times_{\cT} \cZ_{j_m}$,
provides a simplicial log rigid triple $(U_\bullet,\cZ_\bullet,i_\bullet)$ over $\iota:T\hookrightarrow \cT$ such that $U_\bullet$ is a Zariski hypercovering of $Y$.
We define $R\Gamma_\rig(Y/\cT)$ to be the complex associated to the cosimplicial complex $R\Gamma_\rig(U_\bullet/\cT)_{\cZ_\bullet}$ where we regard $R\Gamma_\rig(U_m/\cT)_{\cZ_m}$ for each $m\geq 0$ as a complex by using Godement resolution.
\end{definition}

\begin{proposition}\label{prop: independence of global log rigid cohomology}
	The log rigid cohomology $R\Gamma_\rig(Y/\cT)$ is in the derived category independent of the choice of the covering $\{U_j\}$ and log rigid data $(\cZ_j,i_j)$.
\end{proposition}

\begin{proof}
	Suppose that another choice of a covering $\{U'_{j'}\}_{j'\in J'}$ of $Y$ and log rigid data $(\cZ'_{j'},i'_{j'})$ for $U'_{j'}$ are given.
	For $m,n\geq 0$ let
	\[(U_{m,n},\cZ_{m,n},i_{m,n}):=\coprod_{(\underline{j},\underline{j'})\in J^{m+1}\times J'^{n+1}}(U_{\underline{j},\underline{j'}},\cZ_{\underline{j},\underline{j'}},i_{\underline{j},\underline{j'}})\]
	where we set $U_{\underline{j},\underline{j'}}:=U_{j_0}\cap\cdots U_{j_m}\cap U'_{j'_0}\cap\cdots\cap U'_{j'_n}$, and let $i_{\underline{j},\underline{j'}}\colon U_{\underline{j},\underline{j'}}\hookrightarrow\cZ_{\underline{j},\underline{j'}}$ be the exactification of the diagonal embedding $U_{\underline{j},\underline{j'}}\hookrightarrow\cZ_{j_0}\times_\cT\cdots\times_\cT\cZ_{j_m}\times_\cT\cZ'_{j'_0}\times_\cT\cdots\times_\cT\cZ'_{j'_n}$.
	Then we get a bisimplicial log rigid triple $(U_{\bullet,\bullet},\cZ_{\bullet,\bullet},i_{\bullet,\bullet})$ with canonical morphisms
	\begin{equation}\label{eq: bicosimplicial}
	s(R\Gamma_\rig(U_\bullet/\cT)_{\cZ_\bullet})\rightarrow s(R\Gamma_\rig(U_{\bullet,\bullet}/\cT)_{\cZ_{\bullet,\bullet}})\leftarrow s(R\Gamma_\rig(U'_\bullet/\cT)_{\cZ'_\bullet})
	\end{equation}
	where $s(-)$ denotes the single complex associated with a cosimplicial or a bicosimplicial complex.
	To see that the first morphism in \eqref{eq: bicosimplicial} is a quasi-isomorphism, it suffices to prove that
	\begin{equation}\label{eq: bicosimplicial 2}
	R\Gamma_\rig(U_m/\cT)_{\cZ_m}\rightarrow s(R\Gamma_\rig(U_{m,\bullet}/\cT)_{\cZ_{m,\bullet}})
	\end{equation}
	is a quasi-isomorphism for any $m$.
	For $\underline{j}\in J^{m+1}$ and $n\geq 0$, let
	\[(U_{\underline{j},n},\cZ_{\underline{j},n},i_{\underline{j},n}):=\coprod_{\underline{j'}\in J'^{n+1}}(U_{\underline{j},\underline{j'}},\cZ_{\underline{j},\underline{j'}},i_{\underline{j},\underline{j'}}).\]
	Then \eqref{eq: bicosimplicial 2} is written as the direct product of morphisms
	\begin{equation}\label{eq: bicosimplicial 3}
	R\Gamma_\rig(U_{\underline{j}}/\cT)_{\cZ_{\underline{j}}}\rightarrow s(R\Gamma_\rig(U_{\underline{j},\bullet}/\cT)_{\cZ_{\underline{j},\bullet}}),
	\end{equation}
	hence it suffices to show that \eqref{eq: bicosimplicial 3} is a quasi-isomorphism.
	For each $n\geq 0$ and $\underline{j'}\in J'^{n+1}$, let $U_{\underline{j},\underline{j'}}\hookrightarrow\cU_{\underline{j},\underline{j'}}$ be the exactification of the composition $U_{\underline{j},\underline{j'}}\hookrightarrow U_{\underline{j}}\hookrightarrow\cZ_{\underline{j}}$, and set $\cU_{\underline{j},n}:=\coprod_{\underline{j'}\in J'^{n+1}}\cU_{\underline{j},\underline{j'}}$.
	Note that $\cU_{\underline{j},\underline{j}'}$ is just the open weak formal log subsecheme of $\cZ_{\underline{j}}$ whose underlying topological set is $U_{\underline{j},\underline{j'}}$.
	Then there are canonical morphisms $\cZ_{\underline{j},n}\rightarrow\cU_{\underline{j},n}$, and \eqref{eq: bicosimplicial 3} factors into the composition
	\[R\Gamma_\rig(U_{\underline{j}}/\cT)_{\cZ_{\underline{j}}}\rightarrow s(R\Gamma_\rig(U_{\underline{j},\bullet}/\cT)_{\cU_{\underline{j},\bullet}})\rightarrow
	s(R\Gamma_\rig(U_{\underline{j},\bullet}/\cT)_{\cZ_{\underline{j},\bullet}}).\]
	The first morphism is a quasi-isomorphism by the cohomological descent for an admissible open covering of a dagger space, and the second morphism is also a quasi-isomorphism by Proposition \ref{prop: log rigid coh}.
	This finishes the proof that the first map of \eqref{eq: bicosimplicial} is a quasi-isomorphism, and similarly the second morphism is also a quasi-isomorphism.
	Thus \eqref{eq: bicosimplicial} gives an isomorphism in the derived category.
	One can see that this construction of an isomorphism satisfies the cocycle condition by passing through the complex associated with a trisimplicial log rigid triple.
\end{proof}

\begin{remark}
	By using Besser's construction as in \cite[\S 2]{EY} with a slight modification, one can define $R\Gamma_\rig(Y/\cT)$ as a canonical complex (not only on the level of cohomology).
\end{remark}

\begin{lemma}\label{lem: product cohomology}
	Let $\frZ$ be a dagger space and $\{\cF_\lambda\}_{\lambda\in\Lambda}$ a family of coherent sheaves on $\frZ$.
	Then for any $n\in\bbZ$ we have
	\[H^n\bigl(\frZ,\prod_{\lambda\in\Lambda}\cF_\lambda\bigr)\cong \prod_{\lambda\in\Lambda}H^n(\frZ,\cF_\lambda),\]
	where the product $\prod_{\lambda\in\Lambda}\cF_\lambda$ is given in the category of sheaves of abelian groups. 
\end{lemma}

\begin{proof}
     Taking into account the vanishing theorem for coherent sheaves on affinoid dagger spaces \cite[Prop.\ 3.1]{GK1}, the statement can be proved similarly as \cite[Lem.\ 3.8]{BHYY} using dagger spaces instead of rigid spaces.
\end{proof}

\begin{proposition}\label{prop: Mayer-Vietoris}
	Let $Y$ be a fine log scheme over $T$ and $\{Y_\lambda\}_{\lambda\in\Lambda}$ be an open covering of $Y$.
	Then there exists a spectral sequence
	\[E_1^{p,q}=\prod_{(\lambda_0,\ldots,\lambda_p)\in\Lambda^{p+1}}H^q_\rig(Y_{\lambda_0}\cap\cdots\cap Y_{\lambda_p}/\cT)\Rightarrow H^{p+q}_\rig(Y/\cT),\]
	where we endow each $Y_{\lambda_0}\cap\cdots\cap Y_{\lambda_p}$ with a log structure by restriction from $Y$.
\end{proposition}

\begin{proof}
Let $\{U_j\rightarrow Y\}_{j\in J}$ be an affine cover with log rigid data $(\cZ_j,i_j)$, and define the log rigid triples $(U_{\underline{j}},\cZ_{\underline{j}},i_{\underline{j}})$ for $m\in\bbN$ and $\underline{j}\in J^{m+1}$ as in Definition \ref{def: simplicial rig coh}.
	For $\underline{j}=(j_0,\ldots,j_m)\in J^{m+1}$ and $\underline{\lambda}=(\lambda_0,\ldots,\lambda_n)\in\Lambda^n$, set $U_{\underline{j},\underline{\lambda}}:=U_{j_0}\cap\cdots\cap U_{j_m}\cap Y_{\lambda_0}\cap\cdots\cap Y_{\lambda_n}$ and let $\cZ_{\underline{j},\underline{\lambda}}$ be the weak formal open log subscheme of $\cZ_{\underline{j}}$ whose underlying open subset is $U_{\underline{j},\underline{\lambda}}$.
	Then $i_{\underline{j}}$ induces a homeomorphic exact closed immersion $i_{\underline{j},\underline{\lambda}}\colon U_{\underline{j},\underline{\lambda}}\hookrightarrow \cZ_{\underline{j},\underline{\lambda}}$.
	For $m,n\in\bbN$ let $(U_{m,n},\cZ_{m,n},i_{m,n}):=\coprod_{\underline{j}\in J^{m+1}}\coprod_{\underline{\lambda}\in\Lambda^{n+1}}(U_{\underline{j},\underline{\lambda}},\cZ_{\underline{j},\underline{\lambda}},i_{\underline{j},\underline{\lambda}})$.
	
	Then the log rigid cohomology $R\Gamma(Y/\cT)$ can be computed as the complex associated to the bicosimplicial complex $R\Gamma_\rig(U_{\bullet,\bullet}/\cT)_{\cZ_{\bullet,\bullet}}$.
	Thus we obtain a spectral sequence 
$$
E_1^{p,q}\Rightarrow H^{p+q}_\rig(Y/\cT)
$$ 
where $E_1^{p,q}$ is the $q$\textsuperscript{th}cohomology group of the complex associated to the cosimplicial complex $R\Gamma_\rig(U_{\bullet,p}/\cT)_{\cZ_{\bullet,p}}$.
	However we have 
	\[E_1^{p,q}\cong H^q_\rig\left(\coprod_{(\lambda_0,\ldots,\lambda_p)\in\Lambda^{p+1}}Y_{\lambda_0}\cap\cdots\cap Y_{\lambda_p}/\cT\right)
	\cong\prod_{(\lambda_0,\ldots,\lambda_p)\in\Lambda^{p+1}}H^q_\rig(Y_{\lambda_0}\cap\cdots\cap Y_{\lambda_p}/\cT),\]
	where the first isomorphism follows from the definition of the log rigid cohomology and the second isomorphism follows from Lemma \ref{lem: product cohomology}.
\end{proof}

%
\section{Rigid Hyodo--Kato cohomology}\label{Sec: Rig HK coh}
%

In this section we define rigid Hyodo--Kato cohomology for any fine log scheme over $k^0$, and study it more deeply in the semistable case.
Continuing from the previous section, we always suppose that any fine log scheme over $k$ is separated and locally of finite type.
From now on we will use the following conventions:

\begin{notation}\label{main notations}
Let $K$ be a $p$-adic field with perfect residue field $k$. Denote by $V$ the ring of integers of $K$ and by $\frm$ the maximal ideal of $V$.  Let $e$ be the ramification index of $K$ over $\bbQ_p$. 
Denote by $W = W(k)$  the ring of Witt vectors of $k$, by $F: = \mathrm{Frac}(W)$ its fraction field and by $\sigma\colon W  \rightarrow  W$ the Frobenius morphism. 

Consider the weak formal scheme $\mathcal{S}: = \Spwf W\llbracket s\rrbracket$ with log structure associated to the map $\bbN \rightarrow  W\llbracket s\rrbracket;\ 1\mapsto s$.
Let $\tau\colon k^0\hookrightarrow\cS$ be the exact closed immersion defined by the ideal $(p,s)$.
Let $V^\sharp$ be $\Spwf V$ endowed with the canonical log structure, that is $\Gamma(\Spwf V,\cM_{V^\sharp}) = V\setminus\{0\}$. 
For any pseudo-wcfg $W$-algebra $A$, let $A^\varnothing$ be the weak formal log scheme whose underlying space is $\Spwf A$ and whose log structure is trivial.
Let furthermore  $\sigma\colon W^\varnothing \rightarrow  W^\varnothing$ be the Frobenius endomorphism, and extend it to $\sigma\colon\cS \rightarrow \cS$ by $s\mapsto s^p$.
\end{notation}

To define the rigid Hyodo--Kato complex, we use and enhance a construction due to Kim and Hain \cite[pp.~1259--1260]{KH}.

\begin{definition}\label{def: HK coh local}
	Let $(Y,\cZ,i,\phi)$ be a log rigid $F$-quadruple over $(k^0\hookrightarrow \cS,\sigma)$.
	We also regard $(Y,\cZ,i,\phi)$ as a log rigid $F$-quadruple over $(k^\varnothing\hookrightarrow W^\varnothing,\sigma)$.
	Let $\mathcal{Y}: = \mathcal{Z}\times_{\mathcal{S}}W^0$ be the fibre of $s = 0$, and denote by $\mathfrak{Z}$ and $\mathfrak{Y}$ the dagger spaces associated to $\cZ$ and $\cY$ respectively.
	We set $\widetilde{\omega}^\bullet_{\cY,\bbQ}: = \omega^\bullet_{\cZ/W^\varnothing,\bbQ}\otimes_{\cO_\frZ}\mathcal{O}_{\mathfrak{Y}}$.
	
	Let $\omega^\bullet_{\cZ/W^\varnothing,\bbQ}[u]$ (respectively $\widetilde{\omega}^\bullet_{\cY,\bbQ}[u]$) be the CDGA generated by $\omega^\bullet_{\cZ/W^\varnothing,\bbQ}$ (respectively $\widetilde{\omega}_{\cY,\bbQ}^\bullet$) and degree zero elements $u^{[i]}$ for $i\geq 0$ with the relations
	\[
		du^{[i + 1]} = -d\log s\cdot u^{[i]} \qquad \text{ and } \qquad u^{[0]} = 1,
	\]
and the multiplication defined by
	\begin{align*}
		u^{[i]}\wedge u^{[j]} = \frac{(i + j)!}{i!j!}u^{[i + j]}.
	\end{align*}
	A Frobenius action on $\omega_{\cZ/W^\varnothing,\bbQ}^\bullet[u]$ (respectively $\widetilde{\omega}_{\cY,\bbQ}^\bullet[u]$) is defined by the Frobenius action on $\omega_{\cZ/W^\varnothing,\bbQ}^\bullet$ (respectively $\widetilde{\omega}_{\cY,\bbQ}^\bullet$) and $\phi(u^{[i]}): = p^iu^{[i]}$.
	The monodromy operator $N$ is the $\cO_\frZ$-linear (respectively $\cO_\frY$-linear) endomorphism on $\omega_{\cZ/W^\varnothing,\bbQ}^\bullet[u]$ (respectively $\widetilde{\omega}_{\cY,\bbQ}^\bullet[u]$) defined by $u^{[i]}\mapsto u^{[i-1]}$.
	We call $\omega^\bullet_{\cZ/W^\varnothing,\bbQ}[u]$ (respectively $\widetilde{\omega}^\bullet_{\cY,\bbQ}[u]$) the {\it Kim--Hain complex} over $\frZ$ (respectively $\frY$).
	
	For $m\geq 0$, we let $\omega^\bullet_{\cZ/W^\varnothing,\bbQ}[u]_m$ (respectively $\widetilde{\omega}_{\cY,\bbQ}^\bullet[u]_m$) be the subcomplex of $\omega^\bullet_{\cZ/W^\varnothing,\bbQ}[u]$ (respectively $\widetilde{\omega}_{\cY,\bbQ}^\bullet[u]$) whose degree $n$ component consists of sections of the form $\sum_{i=0}^m\eta_iu^{[i]}$ with $\eta_i\in\omega^n_{\cZ/W^\varnothing,\bbQ}$ (respectively $\eta_i\in\widetilde{\omega}^n_{\cY,\bbQ}$).
	Note that they form a directed system and we have 
	\begin{align}\label{equ:directed}
	\omega^\bullet_{\cZ/W^\varnothing,\bbQ}[u]=\varinjlim_m\omega^\bullet_{\cZ/W^\varnothing,\bbQ}[u]_m, && \widetilde{\omega}_{\cY,\bbQ}^\bullet[u]=\varinjlim_m\widetilde{\omega}_{\cY,\bbQ}^\bullet[u]_m.
	\end{align}	
	We define {\it completed Kim--Hain complexes} $\omega^\bullet_{\cZ/W^\varnothing,\bbQ}\llbracket u\rrbracket$ and $\widetilde{\omega}^\bullet_{\cY,\bbQ}\llbracket u\rrbracket$ as the inverse limits of quotient algebras
		\begin{align*}
		\omega^\bullet_{\cZ/W^\varnothing,\bbQ}\llbracket u\rrbracket: = \varprojlim_i\omega^\bullet_{\cZ/W^\varnothing,\bbQ}[u]/(u^{[i]}) & &  \text{and} & &  \widetilde{\omega}^\bullet_{\cY,\bbQ}\llbracket u\rrbracket: = \varprojlim_i \widetilde{\omega}^\bullet_{\cY,\bbQ}[u]/(u^{[i]}),
		\end{align*}
	which are also CDGAs with Frobenius and monodromy actions.
\end{definition}

\begin{definition}\label{def: HK coh}
	Let $(Y,\cZ,i,\phi)$ be a log rigid $F$-quadruple over $(k^0\hookrightarrow\cS,\sigma)$ and $\frZ$ the dagger space associated to $\cZ$.
	We define the {\it rigid Hyodo--Kato cohomology} of $Y$ with respect to $(\cZ,i,\phi)$ to be
	\[R\Gamma_\HK(Y)_\cZ:=R\Gamma(\frZ,\omega^\bullet_{\cZ/W^\varnothing,\bbQ}[u]).\]
	The endomorphisms on $\omega^\bullet_{\cZ/W^\varnothing,\bbQ}[u]$ induced by $\phi$ and $N$ respectively define endomorphisms $\varphi$ and $N$ on $R\Gamma_\HK(Y)_\cZ$, which we call the Frobenius operator and the monodromy operator respectively.
	Clearly they satisfy the relation $N\varphi = p\varphi N$.
\end{definition}

\begin{remark}
	Note that this definition differs from the one in \cite{EY}. We will show however, that on the level of cohomology they coincide.
	
	In a previous version of this paper, we had defined the Hyodo--Kato cohomology as the homotopy colimit of the cohomology of $\omega^\bullet_{\cZ/W^\varnothing,\bbQ}[u]_m$.
	It is not clear that the current and previous definitions coincide, because the colimit and cohomology do not commute with each other ingeneral.
	However we will see at Proposition \ref{prop: HK vs colim} that the two definitions in fact coincide in the strictly semistable case.
\end{remark}

\begin{lemma}\label{lem: colim coh}
	Let $(Y,\cZ,i,\phi)$, $\frZ$ and $\frY$ be as in Definition \ref{def: HK coh}.
	Let $\frU\subset\frZ$ be an admissible open subset.
	If $\frU$ is quasi-compact, the natural map
	\[\varinjlim_mH^i(\frU,\omega^\bullet_{\cZ/W^\varnothing,\bbQ}[u]_m)\rightarrow H^i(\frU,\omega^\bullet_{\cZ/W^\varnothing,\bbQ}[u])\]
	is an isomorphism for any $i$.
	If $\frU$ is an affinoid space, the natural morphism
	\[\Gamma(\frU,\omega^\bullet_{\cZ/W^\varnothing,\bbQ}[u])\rightarrow R\Gamma(\frU,\omega^\bullet_{\cZ/W^\varnothing,\bbQ}[u])\]
	is a quasi-isomorphism.
\end{lemma}

\begin{proof}
	The first assertion follows from the equality \eqref{equ:directed} by \cite[\href{https://stacks.math.columbia.edu/tag/0739}{Lem.\,0739}]{stacks}.
	This with \cite[Prop.\,3.1]{GK1} implies the second assertion.
\end{proof}

\begin{proposition}\label{prop: HK indep local}
	Let $Y$ be a fine log scheme over $k^0$.
	If a log rigid $F$-datum over $(k^0\hookrightarrow\cS,\sigma)$ for $Y$ exists, the rigid Hyodo--Kato cohomology $R\Gamma_\HK(Y)_\cZ$ is independent of the choice of log rigid $F$-datum up to canonical quasi-isomorphisms.
\end{proposition}

\begin{proof}
	Taking Lemma \ref{lem: colim coh} into account, the assertion follows by the same proof as Proposition \ref{prop: log rigid coh}.
	Note that the assertion is reduced to showing that the sequence
	\[0\rightarrow B\otimes\bbQ[u]\rightarrow B'_r\otimes\bbQ[u]\xrightarrow{\partial_s} B'_rds\otimes\bbQ[u]\rightarrow 0\]
	is exact, and this follows by \eqref{eq: relative disk}.
\end{proof}

\begin{definition}\label{def: HK coh global}
	According to the above proposition, we may define the rigid Hyodo--Kato cohomology  $R\Gamma_\HK(Y)$ for any fine log scheme $Y$ over $k^0$ by gluing the local construction:
	By Lemma \ref{lem: local embedding} we may find an open covering $\{U_j\rightarrow Y\}_{j\in J}$, such that a log rigid $F$-datum $(\cZ_j,i_j,\phi_j)$ over $(k^0\hookrightarrow \cS,\sigma)$ for $U_j$ exists.
	The same construction as in Definition \ref{def: simplicial rig coh}
provides a simplicial log rigid $F$-quadruple $(U_\bullet,\cZ_\bullet,i_\bullet)$ over $(k^0\hookrightarrow \cS,\sigma)$.
	We define $R\Gamma_\HK(Y)$ to be the complex associated to the cosimplicial complex $R\Gamma_\HK(U_\bullet)_{\cZ_\bullet}$.
\end{definition}

\begin{proposition}\label{prop: HK coh independence}
	The Hyodo--Kato cohomology $R\Gamma_\HK(Y)$ is in the derived category independent of the choice of the covering $\{U_j\}$ and log rigid $F$-data $(\cZ_j,i_j,\phi_j)$.
\end{proposition}

\begin{proof}
	This is proved by similarly as Proposition \ref{prop: independence of global log rigid cohomology} using Proposition \ref{prop: HK indep local}.
\end{proof}

\begin{remark}
	Again using Besser's construction as in \cite[\S 2]{EY} with a slight modification, one can define $R\Gamma_\HK(Y)$ as a canonical complex (not only in the derived category).
\end{remark}

To compare our constructions with more classical ones, it will be of advantage to regard the complexes in questions as double complexes.
The following conventions and statements will be used at key steps of the proof.
To avoid confusion, we point out that all complexes in this paper are cochain complexes. 

\begin{definition}\label{def: double complexes}
	Let $\cA$ be an abelian category which admits countable direct sums and countable direct products.
	Let $C^{\bullet,\bullet}$ be a double complex of objects of $\cA$ with differentials $\partial_1^{i,j}\colon C^{i,j} \rightarrow  C^{i + 1,j}$ and $\partial_2^{i,j}\colon C^{i,j} \rightarrow  C^{i,j + 1}$.
	\begin{enumerate}
	\item We say $C^{\bullet,\bullet}$ is in the \textit{eighth octant} if the implication
	\[C^{i,j}\neq 0\Rightarrow j\leq0, i+j\geq 0\]
	holds. 
	\item The {\it product-total complex} $\Tot^\times C^{\bullet,\bullet}$ of $C^{\bullet,\bullet}$ is the complex whose $m$\textsuperscript{th} component is $\prod_{k\in\bbZ}C^{k,m-k}$ and whose differential is given by the product of $(\partial_1^{i,j},(-1)^i\partial_2^{i,j})\colon C^{i,j} \rightarrow  C^{i + 1,j}\times C^{i,j + 1}$.
	\item The {\it sum-total complex} $\Tot^+ C^{\bullet,\bullet}$ of $C^{\bullet,\bullet}$ is the complex whose $m$\textsuperscript{th} component is $\bigoplus_{k\in\bbZ}C^{k,m-k}$ and whose differential is given by the sum of $\partial_1^{i,j} + (-1)^i\partial_2^{i,j}\colon C^{i,j} \rightarrow  C^{i + 1,j}\oplus C^{i,j + 1}$.
	\end{enumerate}
	Note that there is a natural morphism $\Tot^+C^{\bullet,\bullet} \rightarrow \Tot^\times C^{\bullet,\bullet}$.
	This is an equality if for any $k\in\bbZ$ there are only finitely many $(i,j)$ satisfying $i+j=k$ and $C^{i,j}\neq 0$.
	In that case we identify them and use the notation $\Tot C^{\bullet,\bullet}=\Tot^+C^{\bullet,\bullet}=\Tot^\times C^{\bullet,\bullet}$.
	
	We may also define the sum-total complex and the product-total complex of an $n$-tuple complex for any $n$ in a similar manner.
\end{definition}

\begin{lemma}\label{lem: assembly}
	Let $A$ be a commutative ring.
	Let $B^{\bullet,\bullet} \rightarrow  C^{\bullet,\bullet}$ be a map of double complexes of $A$-modules.
	Suppose that for any integer $k\in\bbZ$ there exists an integer $i_0\in\bbZ$ such that $B^{i,k-i}=C^{i,k-i}=0$ for any $i<i_0$.
	\begin{enumerate}
	\item If $B^{i,\bullet} \rightarrow  C^{i,\bullet}$ is a quasi-isomorphism for any $i$, then $\Tot^\times(B^{\bullet,\bullet}) \rightarrow \Tot^\times (C^{\bullet,\bullet})$  is also a quasi-isomorphism.
	\item If $B^{\bullet,j} \rightarrow  C^{\bullet,j}$ is a quasi-isomorphism for any $j$, then $\Tot^+ (B^{\bullet,\bullet}) \rightarrow \Tot^+ (C^{\bullet,\bullet})$ is also a quasi-isomorphism.
	\end{enumerate}
\end{lemma}

\begin{proof}
	This statement follows immediately from the acyclic assembly lemma (see \cite[Lem.~2.7.3]{We} and the subsequent remark).
\end{proof}

\begin{lemma}\label{lem: ss of double complex}
	Let $A$ be a commutative ring and $C^{\bullet,\bullet}$ be a double complex of $A$-modules.
	Suppose that there exist integers $i_0,j_0\in\bbZ$ such that $C^{i,j}=0$ for any $i\leq i_0$ and $j\geq j_0$.
	Then there exist a weakly convergent spectral sequence
		\[{}^IE_2^{p,q}\Rightarrow H^{p+q}(\mathrm{Tot}^\times(C^{\bullet,\bullet}))\]
	where $^IE_2^{p,q}$ is the $p$\textsuperscript{th} cohomology of
		\[\cdots\rightarrow H^q(C^{p-1,\bullet})\rightarrow H^q(C^{p,\bullet})\rightarrow H^q(C^{p+1,\bullet})\rightarrow\cdots,\]
	and a convergent spectral sequence
		\[{}^{II}E_2^{p,q}\Rightarrow H^{p+q}(\mathrm{Tot}^+(C^{\bullet,\bullet}))\]
	where $^{II}E_2^{p,q}$ is the $p$\textsuperscript{th} cohomology of
		\[\cdots\rightarrow H^q(C^{\bullet,p-1})\rightarrow H^q(C^{\bullet,p})\rightarrow H^q(C^{\bullet,p+1})\rightarrow\cdots.\]
\end{lemma}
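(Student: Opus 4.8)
The plan is to realise both assertions as instances of the two classical spectral sequences attached to the two filtrations of a double complex, so that once the $E_2$-pages are identified by unwinding definitions, everything reduces to the convergence bookkeeping handled by the convergence theorems of \cite{We} (in the same spirit as the use of the acyclic assembly lemma for Lemma \ref{lem: assembly}).

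First I would treat ${}^{II}E$. Equip $\Tot^+(C^{\bullet,\bullet})$ with the decreasing filtration by the second (vertical) degree, $F^p\Tot^+(C^{\bullet,\bullet})^n=\bigoplus_{i+j=n,\ j\geq p}C^{i,j}$; this is a subcomplex because $\partial_1$ preserves $j$ and $\partial_2$ raises it. The associated graded in filtration degree $p$ is the $p$-th row $(C^{\bullet,p},\partial_1)$, so the $E_0$-differential is $\partial_1$, whence ${}^{II}E_1^{p,q}=H^q(C^{\bullet,p})$, the $E_1$-differential is the map induced by $\partial_2$, and ${}^{II}E_2^{p,q}$ is the $p$-th cohomology of $\cdots\to H^q(C^{\bullet,p-1})\to H^q(C^{\bullet,p})\to H^q(C^{\bullet,p+1})\to\cdots$, as claimed. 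Since $C^{\bullet,\bullet}$ is supported in the fourth quadrant ($C^{i,j}=0$ unless $i\geq 0$ and $j\leq 0$), for each $n$ the sum defining $F^p\Tot^+(C^{\bullet,\bullet})^n$ is finite and vanishes for $p>\min(0,n)$; hence this filtration is bounded below and exhaustive, and the classical convergence theorem \cite[\S 5.5]{We} yields a convergent spectral sequence ${}^{II}E_2^{p,q}\Rightarrow H^{p+q}(\Tot^+(C^{\bullet,\bullet}))$.

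For ${}^IE$ I would instead use $\Tot^\times(C^{\bullet,\bullet})$ with the decreasing filtration by the first (horizontal) degree, $F^p\Tot^\times(C^{\bullet,\bullet})^n=\prod_{i+j=n,\ i\geq p}C^{i,j}$. The same unwinding, with the roles of $\partial_1$ and $\partial_2$ exchanged, identifies ${}^IE_1^{p,q}=H^q(C^{p,\bullet})$ and ${}^IE_2^{p,q}$ with the $p$-th cohomology of the complex of column cohomologies, as in the statement. Here the point of passing to the \emph{product} total complex is that this filtration is complete and Hausdorff: in total degree $n$ one has $\Tot^\times(C^{\bullet,\bullet})^n/F^p=\prod_{\max(0,n)\leq i<p}C^{i,n-i}$, a finite product, and $\varprojlim_p$ of these recovers $\Tot^\times(C^{\bullet,\bullet})^n$, while $\bigcap_pF^p=0$ and $\bigcup_pF^p$ is everything. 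The complete convergence theorem \cite[\S 5.5]{We} then applies and gives a weakly convergent spectral sequence ${}^IE_2^{p,q}\Rightarrow H^{p+q}(\Tot^\times(C^{\bullet,\bullet}))$. The only real content beyond this formal unwinding is the convergence bookkeeping, namely checking that for a fourth-quadrant complex the vertical filtration on $\Tot^+$ is bounded below whereas the horizontal filtration on $\Tot^\times$ is merely complete and Hausdorff; being careful that the latter situation gives only weak, not strong, convergence is exactly the asymmetry recorded in the statement, and is the one place where a naive argument could go wrong.
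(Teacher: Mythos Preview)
Your argument is correct and is essentially the same as the paper's: the paper simply writes ``This is given in \cite[\S 5.6]{We}'', and what you have done is unpack precisely that reference, identifying the two filtrations on the total complexes and invoking the convergence theorems from \cite[\S 5.5]{We}. The asymmetry you highlight between bounded-below/exhaustive and complete/Hausdorff is exactly the point, and your care with $\Tot^+$ versus $\Tot^\times$ matches the statement.
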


\begin{proof}
	This is given in \cite[\S 5.6]{We}.
\end{proof}

\begin{definition}\label{def: eighth octant}
	Let $\frZ$ be a dagger space and $\cC^{\bullet,\bullet}$ be a double complex 
	of sheaves of abelian groups on $\frZ$. 
	For any integer $k\in\bbZ$ set $R\Gamma_\times(\frZ,\cC^{\bullet,\bullet}):=R\Gamma(\frZ,\Tot^\times\cC^{\bullet,\bullet})$.
\end{definition}

For the sake of several computations that we have to carry out in the course of this section, we would like to be able to commute the derived global section functor and the total complex functor in the above definition at least in the relevant situations.

\begin{lemma}\label{lem: product total representation}
Let $\frZ$ be a dagger space and $\cC^{\bullet,\bullet}$ be a double complex in the eighth octant of sheaves of abelian groups on $\frZ$, such that each $\cC^{i,j}$ are coherent sheaves.
	Then $R\Gamma_\times(\frZ,\cC^{\bullet,\bullet})$ is represented by the product total complex of the triple complex $\Gamma(\frZ,\mathrm{Gd}^\bullet\cC^{\bullet,\bullet})$.
\end{lemma}

\begin{proof}
	We define a double complex $C_\times^{\bullet,\bullet}$ 
	by setting for each $n\in\bbZ$
	\[C_\times^{n,\bullet}:=\Gamma(\frZ,\mathrm{Gd}^\bullet\prod_{{i+j}=n}\cC^{i,j}).\]
	We remark that by the hypothesis $C_\times^{\bullet,\bullet}$ is a first quadrant double complex.
	Hence by the remark at the end of Definition \ref{def: double complexes}, $\Tot^\times(C_\times^{\bullet,\bullet})=\Tot(C_\times^{\bullet,\bullet})$.
	Moreover, by definition this complex represents $R\Gamma_\times(\frZ,\cC^{\bullet,\bullet})$.
	
	Then there exist natural maps
	\begin{equation}\label{eq: prod total 1}
	C_\times^{n,\bullet}\rightarrow\Gamma\bigl(\frZ,\prod_{i+j=n}\mathrm{Gd}^\bullet\cC^{i,j}\bigr)=\prod_{i+j=n}\Gamma(\frZ,\mathrm{Gd}^\bullet\cC^{i,j}).
	\end{equation}
	Taking cohomology groups, this induces maps
	\[H^m\bigl(\frZ,\prod_{i+j=n}\cC^{i,j}\bigr)\rightarrow\prod_{i+j=n}H^m(\frZ,\cC^{i,j}),\]
	which are isomorphisms by Lemma \ref{lem: product cohomology}. 
	Consequently, the maps \eqref{eq: prod total 1} are quasi-isomorphisms.
	
	Varying $n$, the maps \eqref{eq: prod total 1} induce a map of first quadrant double complexes.
	The total complex of its codomain coincides with the product total complex of the triple complex $\Gamma(\frZ,\mathrm{Gd}^\bullet\cC^{\bullet,\bullet})$.
	Therefore
	we obtain a map
	\[\Tot C_\times^{\bullet,\bullet}\rightarrow\Tot^\times\Gamma(\frZ,\mathrm{Gd}^\bullet\cC^{\bullet,\bullet}),\]
	which is a quasi-isomorphism by the first part of Lemma \ref{lem: assembly}.
	Since $R\Gamma_\times(\frZ,\cC^{\bullet,\bullet})$ is represented by $\Tot C_\times^{\bullet,\bullet}$, we obtain the assertion.
\end{proof}

We would like to mimic Definition \ref{def: eighth octant} 
for the sum total complex in place of the product total complex. 
However if we do this, the equivalent statement to Lemma \ref{lem: product total representation} does not hold.
Thus we have to modify the definition slightly to obtain $R\Gamma_+(\frZ,\cC^{\bullet,\bullet})$.

\begin{definition}
Let $\frZ$ be a dagger space and $\cC^{\bullet,\bullet}$ be a double complex of sheaves of abelian groups on $\frZ$. 
For any integer $k\in\bbZ$ we denote by $\cC^{\bullet,\bullet\geq k}$ the subcomplex of $\cC^{\bullet,\bullet}$ whose $(i,j)$-component is $\cC^{i,j}$ if $j\geq k$ and $0$ if $j<k$.
	We define $R\Gamma_+(\frZ,\cC^{\bullet,\bullet})$ as the homotopy colimit of the $R\Gamma(\frZ,\Tot^+\,\cC^{\bullet,\bullet\geq k})$, that is
	$$R\Gamma_+(\frZ,\cC^{\bullet,\bullet}):=\hocolim R\Gamma(\frZ,\Tot^+\,\cC^{\bullet,\bullet\geq k}).$$
\end{definition}

\begin{lemma}\label{lem: sum total representation}
Let $\frZ$ be a dagger space and $\cC^{\bullet,\bullet}$ be a double complex in the eighth octant of sheaves of abelian groups on $\frZ$.
	Then $R\Gamma_+(\frZ,\cC^{\bullet,\bullet})$ is represented by the sum total complex of the triple complex $\Gamma(\frZ,\mathrm{Gd}^\bullet\cC^{\bullet,\bullet})$.
\end{lemma}

\begin{proof}
	By the hypothesis, for any $k\in\bbZ$ there exist only finitely many $j\in\bbZ$ satisfying $j\geq k$ and $\cC^{k-j,j}\neq 0$.
	Since the Godement resolution commutes with finite sum, $R\Gamma(\frZ,\Tot\cC^{\bullet,\bullet\geq k})$ is 
	represented by 
	$\Tot\Gamma(\frZ,\mathrm{Gd}^\bullet\cC^{\bullet,\bullet\geq k})$.
	
	Now we observe that 
	$\Tot^+ \Gamma(\frZ,\mathrm{Gd}^\bullet\cC^{\bullet,\bullet})$
	is given as a colimit
	$$
	\Tot^+ \Gamma(\frZ,\mathrm{Gd}^\bullet\cC^{\bullet,\bullet}) =\varinjlim \Tot\Gamma(\frZ,\mathrm{Gd}^\bullet\cC^{\bullet,\bullet\geq k})
	$$
	as complexes in the category of abelian groups.
	This category is in fact a Grothendieck abelian category (or an AB5-category),
	and hence colimits are exact.
	Passing to the derived category, we thus obtain a quasi-isomorphism
	$$
	\Tot^+ \Gamma(\frZ,\mathrm{Gd}^\bullet\cC^{\bullet,\bullet}) \cong \hocolim \Tot \Gamma(\frZ,\mathrm{Gd}^\bullet\cC^{\bullet,\bullet\geq k}).
	$$
	As $\Tot\Gamma(\frZ,\mathrm{Gd}^\bullet\cC^{\bullet,\bullet\geq k})$
	represents $R\Gamma(\frZ,\Tot\cC^{\bullet,\bullet\geq k})$
	by what we have seen above, this means that $\Tot^+ \Gamma(\frZ,\mathrm{Gd}^\bullet\cC^{\bullet,\bullet})$ represents
	$R\Gamma_+(\frZ,\cC^{\bullet,\bullet}):=\hocolim R\Gamma(\frZ,\Tot\cC^{\bullet,\bullet\geq k})$
	as claimed.
\end{proof}

\begin{lemma}\label{lem: acyclic assembly lemma for sheaf}
	Let $\frZ$ be a dagger space. Let $\cB^{\bullet,\bullet}\rightarrow\cC^{\bullet,\bullet}$ be a map of double complexes in the eighth octant of sheaves of abelian groups on $\frZ$.
	Suppose that $\cB^{i,j}$ and $\cC^{i,j}$ are coherent sheaves for each $i$ and $j$.
	\begin{enumerate}
	\item\label{item: sheaf assembly column}
	If $R\Gamma(\frZ,\cB^{i,\bullet})\rightarrow R\Gamma(\frZ,\cC^{i,\bullet})$
	is a quasi-isomorphism for any $i$, then 
	$R\Gamma_\times(\frZ,\cB^{\bullet,\bullet})\rightarrow R\Gamma_\times(\frZ,\cC^{\bullet,\bullet})$ is a quasi-isomorphism.
	\item\label{item: sheaf assembly row}
	If $R\Gamma(\frZ,\cB^{\bullet,j})\rightarrow R\Gamma(\frZ,\cC^{\bullet,j})$ 
	is a quasi-isomorphism for any $j$, then $R\Gamma_+(\frZ,\cB^{\bullet,\bullet})\rightarrow R\Gamma_+(\frZ,\cC^{\bullet,\bullet})$ is a quasi-isomorphism.
	\end{enumerate}
\end{lemma}

\begin{proof}
	To see \eqref{item: sheaf assembly column}, define double complexes of abelian groups $K^{\bullet,\bullet}$ and $L^{\bullet,\bullet}$ by
	\begin{align}\label{eq: double complex K and L}
	K^{i,\bullet}&:=R\Gamma(\frZ,\cB^{i,\bullet})=\Tot\Gamma(\frZ,\mathrm{Gd}^\bullet\cB^{i,\bullet}),\\
	L^{i,\bullet}&:=R\Gamma(\frZ,\cC^{i,\bullet})=\Tot\Gamma(\frZ,\mathrm{Gd}^\bullet\cC^{i,\bullet}).\nonumber
	\end{align}
	Here note that the product total complexes and total complexes coincide with each other, since $\cB^{\bullet,\bullet}$ and $\cC^{\bullet,\bullet}$ are in the eighth octant.
	Then $K^{\bullet,\bullet}$ and $L^{\bullet,\bullet}$ lie in the right half plane and thus satisfy the condition of Lemma \ref{lem: assembly}.
	Moreover, $\Tot^\times K^{\bullet,\bullet}=\Tot^\times\Gamma(\frZ,\mathrm{Gd}^\bullet\cB^{\bullet,\bullet})$ 
	represents $R\Gamma_\times(\frZ,\cB^{\bullet,\bullet})$ by Lemma \ref{lem: product total representation}, and likewise 
	$\Tot^\times L^{\bullet,\bullet}=\Tot^\times\Gamma(\frZ,\mathrm{Gd}^\bullet\cC^{\bullet,\bullet})$ 
	represents $R\Gamma_\times(\frZ,\cC^{\bullet,\bullet})$.
	By hypothesis $K^{i,\bullet}\rightarrow L^{i,\bullet}$ is a quasi-isomorphism for any $i$,
	hence $R\Gamma_\times(\frZ,\cB^{\bullet,\bullet})\rightarrow R\Gamma_\times(\frZ,\cC^{\bullet,\bullet})$ is a quasi-isomorphism by Lemma \ref{lem: assembly}.

	To see \eqref{item: sheaf assembly row}, define double complexes of abelian groups $M^{\bullet,\bullet}$ and $N^{\bullet,\bullet}$ by
	\begin{align}\label{eq: double complex M and N}
	M^{\bullet,j}&:=R\Gamma(\frZ,\cB^{\bullet,j})=\Tot \Gamma(\frZ,\mathrm{Gd}^\bullet\cB^{\bullet,j}), \\
	N^{\bullet,j}&:=R\Gamma(\frZ,\cC^{\bullet,j})=\Tot \Gamma(\frZ,\mathrm{Gd}^\bullet\cC^{\bullet,j}).\nonumber\end{align} 
	Then again $M^{\bullet,\bullet}$ and $N^{\bullet,\bullet}$ satisfy the condition of Lemma \ref{lem: assembly} as they lie in the fourth quadrant.
	Moreover, $\Tot^+ M^{\bullet,\bullet}=\Tot^+\Gamma(\frZ,\mathrm{Gd}^\bullet\cB^{\bullet,\bullet})$ represents $R\Gamma_+(\frZ,\cB^{\bullet,\bullet})$ by Lemma \ref{lem: sum total representation}, and likewise $\Tot^+ N^{\bullet,\bullet}=\Tot^+\Gamma(\frZ,\mathrm{Gd}^\bullet\cC^{\bullet,\bullet})$ represents $R\Gamma_+(\frZ,\cC^{\bullet,\bullet})$.
	 By hypothesis $M^{\bullet,j}\rightarrow N^{\bullet,j}$ is a quasi-isomorphism for each $j$,
	and hence $R\Gamma_+(\frZ,\cB^{\bullet,\bullet})\rightarrow R\Gamma_+(\frZ,\cC^{\bullet,\bullet})$ is a quasi-isomorphism again by Lemma \ref{lem: assembly}.
\end{proof}

\begin{lemma}\label{lem: sheaf spectral sequence}
	Let $\frZ$ be a dagger space and $\cC^{\bullet,\bullet}$ a double complex in the eighth octant of sheaves of abelian groups on $\frZ$.
	Suppose that $\cC^{i,j}$ is a coherent sheaf for each $i$ and $j$.
	Then there exist a weakly convergent spectral sequence
	\[{}^IE_2^{p,q}\Rightarrow H^{p+q}_\times(\frZ,\cC^{\bullet,\bullet})\]
	where ${}^IE_2^{p,q}$ is the $p$\textsuperscript{th} cohomology of
	\[\cdots\rightarrow H^q(\frZ,\cC^{p-1,\bullet})\rightarrow H^q(\frZ,\cC^{p,\bullet})\rightarrow H^q(\frZ,\cC^{p+1,\bullet})\cdots,\]
	and a convergent spectral sequence
	\[{}^{II}E_2^{p,q}\Rightarrow H^{p+q}_+(\frZ,\cC^{\bullet,\bullet})\]
	where ${}^{II}E_2^{p,q}$ is the $p$\textsuperscript{th} cohomology of
	\[\cdots\rightarrow H^q(\frZ,\cC^{\bullet,p-1})\rightarrow H^q(\frZ,\cC^{\bullet,p})\rightarrow H^q(\frZ,\cC^{\bullet,p+1})\cdots,\]
\end{lemma}

\begin{proof}
     Define double complexes 
     $L^{\bullet,\bullet}$ and $N^{\bullet,\bullet}$ 
     as in \eqref{eq: double complex K and L} and \eqref{eq: double complex M and N}, respectively, by
     \begin{align*}
     L^{i,\bullet}&:=R\Gamma(\frZ,\cC^{i,\bullet})=\Tot \Gamma(\frZ,\mathrm{Gd}^\bullet\cC^{i,\bullet}),\\
     N^{\bullet,j}&:=R\Gamma(\frZ,\cC^{\bullet,j})=\Tot \Gamma(\frZ,\mathrm{Gd}^\bullet\cC^{\bullet,j}).
     \end{align*}
    By Lemmas \ref{lem: product total representation} and \ref{lem: sum total representation}, $\Tot^\times L^{\bullet,\bullet}=\Tot^\times\Gamma(\frZ,\mathrm{Gd}^\bullet\cC^{\bullet,\bullet})$ represents $R\Gamma_\times(\frZ,\cC^{\bullet,\bullet})$ and $\Tot^+ N^{\bullet,\bullet}=\Tot^+\Gamma(\frZ,\mathrm{Gd}^\bullet\cC^{\bullet,\bullet})$ represents $R\Gamma_+(\frZ,\cC^{\bullet,\bullet})$.
     Thus one obtains the first spectral sequence by applying the first part of Lemma \ref{lem: ss of double complex} to the double complex $L^{\bullet,\bullet}$,
     and the second spectral sequence by applying the second part of Lemma \ref{lem: ss of double complex} to the double complex $N^{\bullet,\bullet}$.
\end{proof}

The following description of the above Kim--Hain complexes by double complexes is useful for computations.

\begin{lemma}\label{lem: total complex}
	Let $(Y,\cZ,i,\phi)$ be a log rigid $F$-quadruple over $(k^0\hookrightarrow\cS,\sigma)$ and set $\cY:=\cZ\times_{\cS}W^0$.
	Let $\frZ$ and $\frY$ be the dagger spaces associated to $\cZ$ and $\cY$, respectively.
	Let $\cC_\frZ^{\bullet,\bullet}$ be the double complex in the eighth octant of sheaves on $\frZ$ whose $(i,j)$-component is $\cC_\frZ^{i,j}: = \omega_{\cZ/W^\varnothing,\bbQ}^{i + j}u^{[-j]}$ if $i + j\geq 0$ and $j\leq 0$, and $\cC_\frZ^{i,j}: = 0$ otherwise, with differential maps
		\begin{align*}
		&\partial_1^{i,j}\colon \cC_\frZ^{i,j} \rightarrow  \cC_\frZ^{i + 1,j}; & & 
		\alpha u^{[-j]}\mapsto \nabla(\alpha)u^{[-j]}, & & (\alpha\in\omega^{i + j}_{\cZ/W^\varnothing,\bbQ}),\\
		&\partial_2^{i,j}\colon \cC_\frZ^{i,j} \rightarrow \cC_\frZ^{i,j + 1}; & & 
		\alpha u^{[-j]}\mapsto (-1)^{i + 1}d\log s\wedge\alpha u^{[-j-1]}, & & (\alpha\in\omega^{i + j}_{\cZ/W^\varnothing,\bbQ}).
		\end{align*}
	We also define a double complex $\cC^{\bullet,\bullet}_\frY$ in the eighth octant of sheaves on $\frY$ in a similar manner by using $\cC^{i,j}_\frY: = \widetilde{\omega}_{\cY,\bbQ}^{i + j}u^{[-j]}$.
	
	Then we have equalities
	\begin{align*}
	&\omega^\bullet_{\cZ/W^\varnothing,\bbQ}[u] = \Tot^+ \cC_\frZ^{\bullet,\bullet}, &&  \widetilde{\omega}^\bullet_{\cY,\bbQ}[u] = \Tot^+ \cC_\frY^{\bullet,\bullet},\\
	&\omega^\bullet_{\cZ/W^\varnothing,\bbQ}[u]_m = \Tot \cC_\frZ^{\bullet,\bullet\geq -m}, &&   \widetilde{\omega}^\bullet_{\cY,\bbQ}[u]_m = \Tot \cC_\frY^{\bullet,\bullet\geq -m},\\
	&\omega^\bullet_{\cZ/W^\varnothing,\bbQ}\llbracket u\rrbracket = \Tot^\times\cC_\frZ^{\bullet,\bullet},  && \widetilde{\omega}^\bullet_{\cY,\bbQ}\llbracket u\rrbracket = \Tot^\times\cC_\frY^{\bullet,\bullet}.
	\end{align*}
	We also have
	\begin{align}\label{eq: HK coh as double complex}
	&R\Gamma_+(\frZ,\cC^{\bullet,\bullet}_\frZ)=\hocolim_mR\Gamma(\frZ,\omega^\bullet_{\cZ/W^\varnothing,\bbQ}[u]_m),&&R\Gamma(\frZ,\cC_\frZ^{\bullet,-m})=R\Gamma_\rig(Y/W^\varnothing)_{\cZ}[-m],\\
	\nonumber&R\Gamma_+(\frY,\cC^{\bullet,\bullet}_\frY)=\hocolim_mR\Gamma(\frY,\wt{\omega}^\bullet_{\cY,\bbQ}[u]_m),&&R\Gamma(\frY,\cC_\frY^{\bullet,-m})=R\Gamma(\frY,\wt{\omega}^\bullet_{\cY,\bbQ})[-m]
	\end{align}
	for each $m\geq 0$.
\end{lemma}

\begin{proof}
	This follows directly from the definition.
\end{proof}

We will pay particular attention to strictly semistable log schemes over $k^0$.
Here we allow log structures arising not only from degeneration but also from a ``horizontal divisor".

\begin{definition}\label{def: ss quadruple} $\quad$
	\begin{enumerate}
	\item \label{item: ss log scheme}
	Let $(\cT,\sO_\cT,t)$ be one of the triples $(k^0,k,0)$, $(\cS,W\llbracket s\rrbracket,s)$, or $(V^\sharp,V,\pi)$.
		For integers $n\geq 1$ and $m\geq 0$, let $\cT(n,m)\rightarrow \cT$ be the morphism of fine weak formal log schemes induced by the diagram
		\[\xymatrix{
		\bbN^n\oplus\bbN^m\ar[d]_-\alpha & \bbN\ar[d]^-{\alpha_t}\ar[l]_-\beta\\
		\sO_\cT[\bbN^n,\bbN^m]^\dagger/(\alpha\circ\beta(1)-t) & \sO_\cT\ar[l],
		}\]
	where $\alpha_t$ maps $1$ to $t$, $\alpha$ is the natural inclusion,
	and $\beta$ is the composition of the diagonal map $\bbN\rightarrow\bbN^n$ and the canonical injection $\bbN^n\rightarrow\bbN^n\oplus\bbN^m$.
	
	\item For $1\leq r\leq n$ and $0\leq m$, the underlying scheme of $k^0(n,m)$ can be given by
	\[\Spec k[s_1,\ldots,s_n,t_1,\ldots,t_m]/(s_1\cdots s_n).\]
Let $k^0(n,m)_r$ be the exact closed  log subscheme of $k^0(n,m)$ defined by $s_1=\cdots =s_r=0$, i.e.\,with underlying scheme $\Spec k[s_{r+1},\ldots,s_n,t_1,\ldots,t_m]$.
	
	\item Let $\cZ$ be a fine log scheme over $\cT=k^0$ or a fine weak formal log scheme over $\cT=\cS$ or $V^\sharp$.
	 We say that $\cZ$ is {\it strictly semistable} if Zariski locally on $\cZ$ there exists a strict and strongly log smooth morphism $\cZ \rightarrow \cT(n,m)$ over $\cT$.
	
	By the local description above, one can easily see that
	\[\{z\in \cZ\mid \text{$\forall x\in\cN_{\cZ,z}$ $\exists y\in\cN_{\cZ,z}$ s.t.\,$xy$ is contained in the image of $\Gamma(\cT,\cN_{\cT})$}\}\]
	is an open subset of $\cZ$.
	We call its complement equipped with the reduced structure the {\it horizontal divisor} of $\cZ$.
	It is locally defined by $(1,\ldots,1)\in\bbN^m$.
	The horizontal divisor can be empty because we allow the case $m=0$.

	\item A log rigid $F$-quadruple $(Y,\cZ,i,\phi)$ (or a log rigid $F$-datum $(\cZ,i,\phi)$ for $Y$) over $(k^0\hookrightarrow\cS,\sigma)$ is said to be {\it strictly semistable} if Zariski locally there exist a commutative diagram
			\begin{equation}\label{eq: ss quadruple}\xymatrix{
			Y \ar[r]^i \ar[d] & \cZ \ar[d]\\
			k^0(n,m) \ar[r] & \cS(n,m),
			}\end{equation}
		where the vertical morphisms are strict and strongly log smooth.
	\end{enumerate}
\end{definition}

\begin{remark}\label{rem: ss log scheme} $\quad$
\begin{enumerate}
\item\label{item: ss lift} For any strictly semistable log scheme $Y$ over $k^0$, a strictly semistable log rigid $F$-datum $(\cZ,i,\phi)$ for $Y$ over $(k^0\hookrightarrow\cS,\sigma)$ exists locally on $Y$.
Moreover the morphism $\cZ\rightarrow\cS(n,m)$ can be taken to be adic. (See the construction in \cite[\S 5.1]{GK3}.)
\item To establish rigid Hyodo--Kato theory, it is enough to consider the case $D=\emptyset$.
We will use strictly semistable log scheme as above for comparison with crystalline Hyodo--Kato 
theory.
\end{enumerate}
\end{remark}

A standard technique to investigate log rigid cohomologies and the Hyodo--Kato cohomology of a strictly semistable log scheme is to reduce to the study of cohomologies on intersections of irreducible components. 
For such a log scheme with decomposition into irreducible components given by $Y=\bigcup_{i\in\Upsilon}Y_i$, we denote for a non-empty subset $I\subset\Upsilon$ the intersection $Y_I:=\bigcap_{i\in I}Y_i$ and equip it with the pull-back log structure from $Y$.
Locally, an intersection of irreducible components is strict log smooth over $k^0(n,m)_r$ for some $1\leq r\leq n$ and $0\leq m$, and the following construction of a log rigid datum will be applied to it.

\begin{construction}\label{const}
Let $Y$ be a fine log scheme which is affine and strict log smooth over $k^0(n,m)_r$ for some $1\leq r\leq n$ and $0\leq m$.
We may lift the underlying scheme to a $p$-adic weak formal scheme $\cV$ over $W$ with a (strongly) smooth morphism
\begin{equation*}
\cV\rightarrow \Spwf W[s_{r+1},\ldots,s_n,t_1,\ldots,t_m]^\dagger.
\end{equation*}
	Set
\begin{equation*}
\cZ:=\Spwf W\llbracket s_1,\ldots,s_r\rrbracket\times\cV
\end{equation*}
and equip it with the log structure associated to the monoid homomorphism $\bbN^n\oplus \bbN^m\rightarrow \cO_{\cZ}$ sending $1_i\in\bbN^n$ to $s_i$ and $1_j\in\bbN^m$ to $t_j$.
We regard $\cZ$ as a weak formal log scheme over $\cS$ by the association $s\mapsto s_1\cdots s_n$. 
There is a homeomorphic exact closed immersion $i\colon Y\hookrightarrow\cZ$, and we may take Frobenius lifts $\phi$ on $\cZ$. 
	Thus we get a log rigid $F$-datum $(\cZ,i,\phi)$ over $(k^0\hookrightarrow\cS,\sigma)$ for $Y$.
\end{construction}

\begin{proposition}\label{prop: HK vs colim}
	Let $(Y,\cZ,i,\phi)$, $\cY$, $\frZ$, and $\frY$ be as in Lemma \ref{lem: total complex}, and assume that $Y$ is quasi-compact.
	Then the natural morphism
	\begin{align}\label{eq: hocolim compare}
	R\Gamma_+(\frZ,\cC^{\bullet,\bullet}_\frZ)\rightarrow R\Gamma_\HK(Y)_\cZ
	\end{align}
	is a quasi-isomorphism in the following two cases:
	\begin{enumerate}
	\item\label{item: comp hocolim} if $Y$ is strict log smooth over $k^0(n,m)_r$, or
	\item\label{item: ss hocolim}if $Y$ is strictly semistable.
	\end{enumerate}
\end{proposition}

\begin{proof}
	The morphism \eqref{eq: hocolim compare} is immediately induced from Lemma \ref{lem: total complex}.
	We first prove that \eqref{eq: hocolim compare} is a quasi-isomorphism for the case \eqref{item: comp hocolim}.
	For each $m\in\bbN$, the complex $R\Gamma(\frZ,\cC_\frZ^{\bullet,-m})=R\Gamma_\rig(Y/W^\varnothing)_\cZ[-m]$ is independent in the derived category from the log rigid $F$-datum $(\cZ,i,\phi)$ for $Y$, and satisfies the cohomological descent with respect to admissible coverings of $\frZ$.
	Thus, by Lemma \ref{lem: acyclic assembly lemma for sheaf} we see that $R\Gamma_+(\frZ,\cC^{\bullet,\bullet}_\frZ)$ is also independent from $(\cZ,i,\phi)$ and satisfies cohomological descent with respect to finite coverings of $\frZ$.
	(Note that we use the fact that homotopy colimits commute with finite products.)
	
	As a consequence, we may assume that $Y$ is affine and that we have $\cZ=\Spwf W\llbracket s_1,\ldots,s_r\rrbracket\times\cV$ as in Construction \ref{const}, by shrinking $Y$ and changing $\cZ$.
	Let $\frV$ and $\frZ$ be the dagger spaces associated to $\cV$ and $\cZ$, respectively.
	For $\ell_1,\ldots,\ell_r\geq 1$, let $\cZ_{\ell_1,\ldots,\ell_r}:=\Spwf W[s_1,\ldots,s_r,\frac{s_1^{\ell_1}}p,\ldots,\frac{s_r^{\ell_r}}p]^\dagger\times\cV$ and let $\frZ_{\ell_1,\ldots,\ell_r}$ be the associated dagger space.
	
	Next we claim that $R\Gamma(\frZ_{\ell_1,\ldots,\ell_r},\omega^\bullet_{\cZ/W^\varnothing,\bbQ})$ is independent of the choice of tuple $\ell_1,\ldots,\ell_r$  up to canonical quasi-isomrophisms.
	By induction we may assume that $r=1$.
	We write $\cV=\Spwf B$ and $\cZ_{\ell}=\Spwf B_{\ell}$.
	Similarly to the proof of Proposition \ref{prop: log rigid coh}, $R\Gamma(\frZ_{\ell},\omega^\bullet_{\cZ/W^\varnothing,\bbQ})$ is represented by the complex
	\begin{equation}\label{eq: cone}\Cone(B_\ell\otimes_B\omega^\bullet_B\otimes\bbQ\xrightarrow{\partial_s}B_\ell d\log s\otimes_B\omega_B^\bullet\otimes\bbQ)[-1]\end{equation}
	where we set $\omega^\bullet_B:=\Gamma(\cV,\omega^\bullet_{\cV/W^\varnothing,\bbQ})$.
	By the proof of Proposition \ref{prop: log rigid coh} we see that
	\begin{align*}	&\Ker(B_\ell\otimes_B\omega^\bullet_B\otimes\bbQ\xrightarrow{\partial_s}B_\ell d\log s\otimes_B\omega_B^\bullet\otimes\bbQ)=\omega_B^\bullet\otimes\bbQ,\\
	&\Coker(B_\ell\otimes_B\omega^\bullet_B\otimes\bbQ\xrightarrow{\partial_s}B_\ell d\log s\otimes_B\omega_B^\bullet\otimes\bbQ)=B_\ell d\log s\otimes_B\omega_B^\bullet\otimes\bbQ/B_\ell d s\otimes_B\omega_B^\bullet\otimes\bbQ\cong \omega_B^\bullet\otimes\bbQ.
	\end{align*}
	They are independent of $\ell$, hence the cone \eqref{eq: cone} is also independent and we proved the claim.
	
	Then by the claim the natural projection
	\[R\Gamma(\frZ,\omega^\bullet_{\cZ/W^\varnothing,\bbQ})=\holim_\ell R\Gamma(\frZ_{\ell,\ldots,\ell},\omega^\bullet_{\cZ/W^\varnothing,\bbQ})\rightarrow R\Gamma(\frZ_{\ell',\ldots,\ell'},\omega^\bullet_{\cZ/W^\varnothing,\bbQ})\]
	is a quasi-isomorphism for any $\ell'$.
	Thus by the Lemmas \ref{lem: acyclic assembly lemma for sheaf} and \ref{lem: total complex} we see that
	\begin{equation}\label{eq: + shrink}R\Gamma_+(\frZ,\cC^{\bullet,\bullet}_\frZ)\rightarrow R\Gamma_+(\frZ_{\ell',\ldots,\ell'},\cC^{\bullet,\bullet}_\frZ)\end{equation}
	is also a quasi-isomorphism. 
	Moreover, since we have
	\begin{equation}\label{eq: affinoid colim}
	R\Gamma_+(\frZ_{\ell,\ldots,\ell},\cC^{\bullet,\bullet}_\frZ)
	=\hocolim_mR\Gamma(\frZ_{\ell,\ldots,\ell},\omega^\bullet_{\cZ/W^\varnothing,\bbQ}[u]_m)
	\cong R\Gamma(\frZ_{\ell,\ldots,\ell},\omega^\bullet_{\cZ/W^\varnothing,\bbQ}[u])
	\end{equation}
	by Lemma \ref{lem: colim coh}, $R\Gamma(\frZ_{\ell,\ldots,\ell},\omega^\bullet_{\cZ/W^\varnothing,\bbQ}[u])$ is also independent of $\ell$.
	Thus
	\begin{equation}\label{eq: [u] shrink}R\Gamma(\frZ,\omega^\bullet_{\cZ/W^\varnothing,\bbQ}[u])=\holim_\ell R\Gamma(\frZ_{\ell,\ldots,\ell},\omega^\bullet_{\cZ/W^\varnothing,\bbQ}[u])\rightarrow R\Gamma(\frZ_{\ell',\ldots,\ell'},\omega^\bullet_{\cZ/W^\varnothing,\bbQ}[u])\end{equation}
	is also a quasi-isomorphism.
	Now we have a commutative diagram
	\[\xymatrix{
	R\Gamma_+(\frZ,\cC^{\bullet,\bullet}_\frZ)\ar[r]_-\sim^-{\eqref{eq: + shrink}}\ar[d]
	&R\Gamma(\frZ_{\ell',\ldots,\ell'},\omega^\bullet_{\cZ/W^\varnothing,\bbQ})\ar[d]_-\sim^-{\eqref{eq: affinoid colim}}\\
	R\Gamma(\frZ,\omega^\bullet_{\cZ/W^\varnothing,\bbQ}[u])\ar[r]_-\sim^-{\eqref{eq: [u] shrink}}& R\Gamma(\frZ_{\ell',\ldots,\ell'},\omega^\bullet_{\cZ/W^\varnothing,\bbQ}[u]),
	}\]
	hence the left vertical morphism is also a quasi-isomorphism.
	This shows the assertion for \eqref{item: comp hocolim}.
	
	For the case \eqref{item: ss hocolim}, let $Y=\bigcup_{i\in\Upsilon}Y_i$ be the decomposition into irreducible components.
For the moment, fix a non-empty subset $I\subset\Upsilon$.
Set $Y_I:=\bigcap_{i\in I}Y_i$ and equip it with the pull-back log structure from $Y$.
	Let $Y_I\hookrightarrow \cZ_I$ be the exactification of $Y_I\hookrightarrow Y\hookrightarrow \cZ$.
	Then the dagger spaces $\frZ_I$ associated to $\cZ_I$ cover $\frZ$.
	Thus it suffices to show that $R\Gamma_+(\frZ_I,\cC^{\bullet,\bullet}_\frZ)\rightarrow R\Gamma(\frZ_I,\omega^\bullet_{\cZ/W^\varnothing,\bbQ}[u])$ is a quasi-isomorphism.
	But this follows from \eqref{item: comp hocolim}, since locally on $Y_I$ there exists a strict log smooth morphism $Y_I\rightarrow k^0(n,m)_r$ for some $1\leq r\leq n$ and $0\leq m$.
\end{proof}

In Corollary \ref{cor: coh of Y_I} we will describe cohomologies of intersections $Y_I$ of irreducible components of a strictly semistable log scheme $Y$ which will play an important role in our Hyodo--Kato theory.
To prepare the proof, we start with calculations of cohomologies on polydisks.

For a dagger algebra $A$ with a surjection $F[t_1,\ldots,t_n]^\dagger\rightarrow A$, the quotient norm of the Gauss norm defines a topology on $A$.
This is independent of the choice of the surjection.

Let $\frX$ be a Stein space with a Stein covering $\{\frU_i\}$ \cite[2.24]{GK1}.
Let $A_i:=\Gamma(\frU_i,\cO_{\frU_i})$ and $B:=\Gamma(\frX,\cO_\frX)=\varprojlim_iA_i$.
We equip $B$ with the coarsest topology such that all projections $B\rightarrow A_i$ are continuous with respect to the norm topology on $A_i$.
	For a finite $B$-module $M$, we choose a surjection $B^r\rightarrow M$ and equip $M$ with the quotient topology of the product topology.
	Note that these topologies on $B$ and $M$ are independent of the choice of a Stein covering.

\begin{lemma}\label{lem: convex}
	Let $\frX$ be a Stein space and $M$ a finite module over $B:=\Gamma(\frX,\cO_\frX)$.
	Then $B$ and $M$ equipped with the topology as above are Fr\'{e}chet spaces.
\end{lemma}

\begin{proof}
	For $\rho\in\lvert F^\times\rvert\otimes\bbQ$ and an integer $n\geq 0$, let $T_n(\rho)$ be the ring of formal power series $\sum_{\alpha=(\alpha_1,\ldots,\alpha_n)\in\bbN^n}a_\alpha t_1^{\alpha_1}\cdots t_n^{\alpha_n}$ converging in $\lvert t_i\rvert\leq\rho$.
	For $\rho<\rho'$, the natural inclusion $T_n(\rho')\rightarrow T_n(\rho)$ is compact \cite[p.98]{Sch}.
	Then it follows that for a Stein covering $\{\frU_i\}$ of $\frX$ the natural inclusions $\Gamma(\frU_{i+1},\cO_{\frU_{i+1}})\rightarrow\Gamma(\frU_i,\cO_{\frU_i})$ are compact.
	Thus the statement follows from \cite[Cor.\,16.6]{Sch}.
\end{proof}

For any dagger space $\frX$ over $F$, a complex of sheaves $\cF^\bullet$ on $\frX$, and a subspace $\frU\subset\frX$, we set $H^\ast(\frU,\cF^\bullet):=\bigoplus_{j\geq 0}H^j(\frU,\cF^\bullet|_\frU)$ and regard it as a graded $F$-vector space.

For an integer $n\geq 1$, let $\cS^n$ be the $n$-fold product of $\cS$ over $W^\varnothing$ regarded as a weak formal log scheme over $\cS$ via the morphism $\cS^n=\Spwf W\llbracket s_1,\ldots,s_n\rrbracket\rightarrow\Spwf W\llbracket s\rrbracket=\cS$ defined by $s\mapsto s_1\cdots s_n$, and set $\cS^n_0:=\cS^n\times_\cS W^0$.
	
For a real number $\eta>0$, let $\bbD_{<\eta}^n$ and $\bbD^n_{\leq\eta}$ be the $n$-dimensional open and closed dagger polydisks of radius $\eta$, respectively.
Let $\bbD_{<\eta,0}^n\subset\bbD^n_{<\eta}$ and  $\bbD_{\leq\eta,0}^n\subset\bbD^n_{\leq\eta}$ be the closed subspaces defined by the equation $s_1\cdots s_n=0$, where $s_i$ denotes the canonical coordinates of the polydisks.
Note that the dagger space associated to $\cS^n$ and $\cS^n_0$ are $\bbD^n_{<1}$ and $\bbD_{<1,0}^n$, respectively.
We set $F\{\eta^{-1}s_1,\ldots,\eta^{-1}s_n\}:=\Gamma(\bbD^n_{<\eta},\cO_{\bbD^n_{<\eta}})$ and $F[\eta^{-1}s_1,\ldots,\eta^{-1}s_n]^\dagger:=\Gamma(\bbD^n_{\leq\eta},\cO_{\bbD^n_{\leq\eta}})$.

\begin{lemma}\label{lem: polydisk}	
	For $n\geq 1$, let $\bsH^\ast_n=\bigoplus_{j\geq 0}\bsH^j_n$ be the exterior algebra of the $F$-vector space freely generated by symbols $d\log s_1,\ldots, d\log s_n$, and let $\overline{\bsH}_n^\ast=\bigoplus_{j\geq 0}\overline{\bsH}^j_n$ be its quotient algebra defined by the relation $\sum_{i=1}^nd\log s_i=0$.
	\begin{enumerate}
	\item\label{item: polydisk1} For $\eta\leq 1$, there exist canonical isomorphisms displayed as the horizontal maps in the following diagram
	\begin{equation}\label{eq: diag polydisk}
	\xymatrix{
	\bsH^\ast_n\ar[r]^-{\cong}\ar[d]&
	H^\ast(\bbD_{<\eta}^n,\omega^\bullet_{\cS^n/W^\varnothing,\bbQ})\ar[d]\\
	\overline{\bsH}^{\ast}_n\ar[r]^-{\cong}\ar[d]&
	\varinjlim_mH^\ast(\bbD_{<\eta}^n,\omega^\bullet_{\cS^n/W^\varnothing,\bbQ}[u]_m)\ar[d]\\
	F\{\eta^{-n}s\}\otimes_F\overline{\bsH}_n^{\ast}\ar[r]^-{\cong}\ar[d]&
	H^\ast(\bbD_{<\eta}^n,\omega^\bullet_{\cS^n/\cS,\bbQ})\ar[d]\\
	\overline{\bsH}^{\ast}_n\ar[r]^-{\cong}&
	H^\ast(\bbD_{<\eta,0}^n,\omega^\bullet_{\cS^n_0/W^0,\bbQ}),}
	\end{equation}
	where the left vertical maps are induced by the canonical projection $\bsH^\ast_n\rightarrow\overline{\bsH}_n^\ast$, the canonical injection $\overline{\bsH}_n^\ast\rightarrow F\{s\}\otimes_F\overline{\bsH}_n^\ast$, and the map $F\{s\}\otimes_F\overline{\bsH}_n^\ast\rightarrow\overline{\bsH}_n^\ast$ defined by $s\mapsto 0$, respectively.
	Moreover this diagram is commutative, and compatible with the restriction maps with respect to the immersion $\bbD^n_{<\eta}\hookrightarrow\bbD^n_{<\eta'}$ for $\eta<\eta'\leq 1$.
	\item\label{item: polydisk2} The differentials on $\Gamma(\bbD^n_{<\eta},\omega^\bullet_{\cS^n/W^\varnothing,\bbQ})$, $\Gamma(\bbD^n_{<\eta},\omega^\bullet_{\cS^n/\cS,\bbQ})$, and $\Gamma(\bbD^n_{<\eta,0},\omega^\bullet_{\cS^n_0/W^0,\bbQ})$ are strict and have closed images.
	The quotient topologies on $H^i(\bbD_{<\eta}^n,\omega^\bullet_{\cS^n/W^\varnothing,\bbQ})$ and $H^i(\bbD^n_{<\eta,0},\omega^\bullet_{\cS_0^n/W^0,\bbQ})$ induced from the topologies on the kernels of the differentials coincide with the topologies defined as finite $F$-modules.
	Similarly, the quotient topology on $H^i(\bbD^n_{<\eta},\omega^\bullet_{\cS^n/\cS,\bbQ})$ coincides with the topology defined as a finite $F\{\eta^{-n}s\}$-module.
	\end{enumerate}
\end{lemma}

\begin{proof}
	We first note that $\bbD^n_{<\eta}$ and $\bbD^n_{<\eta,0}$ are Stein spaces, hence the cohomology groups appearing in the diagram \eqref{eq: diag polydisk} can be computed by the global sections.
	The existence and the commutativity of the maps in \eqref{eq: diag polydisk} are obvious.
	We first prove that the third map of \eqref{eq: diag polydisk} is an isomorphism, by constructing its inverse.
	
	Noting that $d\log s_1=-\sum_{i=2}^nd\log s_i$ in $\omega^1_{\cS^n/\cS,\bbQ}$, any element $\theta\in\Gamma(\bbD_{<\eta}^n,\omega^j_{\cS^n/\cS,\bbQ})$ can be uniquely written as
	\begin{equation}\label{eq: theta}
	\theta=\sum_{\substack{P\subset\{2,\ldots,n\}\\ \lvert P\rvert=j}}\sum_{\alpha\in\bbN^n}c_{\alpha,P}\underline{s}^\alpha d\log\underline{s}_P
	\end{equation}
	for some $c_{\alpha,P}\in F$, where we set $\underline{s}^\alpha:=s_1^{\alpha_1}s_2^{\alpha_2}\cdots s_n^{\alpha_n}$ and $d\log\underline{s}_P:=d\log s_{i_1}\wedge\cdots \wedge d\log s_{i_j}$ for $P=\{i_1,\ldots,i_j\}\subset\{2,\ldots,n\}$.
	For any $\ell=1,\ldots,n$ we set
	\[\omega^j_\ell:=\left\{\theta\in\Gamma(\bbD_{<\eta}^n,\omega^j_{\cS^n/\cS,\bbQ})\middle| \text{$c_{\alpha,P}=0$ if $\ell<\max P$ or $\alpha_r\neq\alpha_1$ for some $r>\ell$}\right\}.\]
	Namely, an element $\theta\in\omega^j_\ell$ can be written as
	\[\theta=\sum_{\substack{P\subset\{2,\ldots,\ell\}\\ \lvert P\rvert =j}}\sum_{\substack{\alpha\in\bbN^n\\ \alpha_r=\alpha_1\ \forall r>\ell}}c_{\alpha,P}\underline{s}^\alpha d\log\underline{s}_P.\]
	
	For $\theta\in\omega^j_\ell$ satisfying $d\theta=0$, put
	\begin{equation}\label{eq: eta definition}\eta:=\sum_{\substack{P\subset\{2,\ldots,\ell\}\\ \lvert P\rvert=j,\, P\ni\ell}}\sum_{\substack{\alpha\in\bbN^n\\ \alpha_r=\alpha_1\ \forall r>\ell\\ \alpha_\ell\neq\alpha_1}}\frac{(-1)^{\lvert P\rvert-1}}{\alpha_\ell-\alpha_1}c_{\alpha,P}\underline{s}^\alpha d\log\underline{s}_{P\setminus\{\ell\}},
	\end{equation}
	which defines an element of $\Gamma(\bbD_{<\eta}^n,\omega^{j-1}_{\cS^n/\cS,\bbQ})$.
	Indeed, since $\theta$ is an element of $\Gamma(\bbD^n_{<\eta},\omega^j_{\cS^n/\cS,\bbQ})$, we have
	\begin{equation}\label{eq: theta converge}
	\lvert c_{\alpha,P}\rvert\rho^{\lvert\alpha\rvert}\rightarrow 0\hspace{15pt}(\lvert\alpha\rvert\rightarrow\infty)
	\end{equation}
	for any $\rho<\eta$.
	
	Fix any $\rho_0$ with $\rho_0<\eta$, and take $\rho$ such that $\rho_0<\rho<\eta$.
	Since the set $\{\lvert a\rvert^{-1}(\rho_0\rho^{-1})^a\mid a\in\bbN\}$ is bounded and we have $\lvert\alpha\rvert\geq\max\{\alpha_\ell-\alpha_1,\alpha_1-\alpha_\ell\}$, it follows that the set
	\[\{\lvert\alpha_\ell-\alpha_1\rvert^{-1}(\rho_0\rho^{-1})^{\lvert\alpha\rvert}\mid\text{$\alpha=(\alpha_1,\ldots,\alpha_n)\in\bbN^n$ with $\alpha_\ell\neq\alpha_1$}\}\]
	is also bounded.
	This together with \eqref{eq: theta converge} implies that
	\begin{align*}
	\left\lvert\frac{(-1)^{\lvert P\cap\{2,\ldots,\ell-1\}\rvert}c_{\alpha,P}}{\alpha_\ell-\alpha_1}\right\rvert \rho_0^{\lvert\alpha\rvert}&=\lvert c_{\alpha,P}\rvert\rho^{\lvert\alpha\rvert}\cdot\lvert\alpha_\ell-\alpha_1\rvert^{-1}(\rho_0\rho^{-1})^{\lvert\alpha\rvert}\rightarrow 0\\
	&\hspace{110pt}(\alpha_\ell\neq\alpha_1,\hspace{10pt}\lvert\alpha\rvert\rightarrow\infty)
	\end{align*}
	for any $P$.
	Consequently \eqref{eq: eta definition} converges as an element of $\Gamma(\bbD_{<\eta}^n,\omega^{j-1}_{\cS^n/\cS,\bbQ})$.
	
	Noting that $d\underline{s}^\alpha=\underline{s}^\alpha\sum_{i=2}^n(\alpha_i-\alpha_1)d\log s_i$, we have
	\[d\theta=\sum_{\substack{P\subset\{2,\ldots,\ell\}\\ \lvert P\rvert=j}}\sum_{\substack{\alpha\in\bbN^n\\ \alpha_r=\alpha_1\ \forall r>\ell}}\sum_{i=2}^\ell c_{\alpha,P}(\alpha_i-\alpha_1)d\log s_i\wedge d\log\underline{s}_P.\]
	 Thus the equation $d\theta=0$ is equivalent to that
	 \begin{equation}\label{eq: theta is cocycle}\sum_{i\in P}(-1)^{\lvert Q\cap\{2,\ldots,i-1\}\rvert}c_{\alpha,Q\setminus\{i\}}(\alpha_i-\alpha_1)=0
	 \end{equation}
	 for every $Q\subset\{2,\ldots,\ell\}$ with $\lvert Q\rvert=j+1$ and $\alpha\in\bbN^n$.
	 Thus we have
	 \begin{align*}
	 d\eta&=\sum_{\substack{P\subset\{2,\ldots,\ell\}\\ \lvert P\rvert=j\\ P\ni\ell}}\sum_{\substack{\alpha\in\bbN^n\\ \alpha_r=\alpha_1\ \forall r>\ell\\ \alpha_\ell\neq\alpha_1}}c_{\alpha,P}\underline{s}^\alpha d\log\underline{s}_P
	 +\sum_{\substack{P\subset\{2,\ldots,\ell\}\\ \lvert P\rvert=j\\ P\reflectbox{$\notin$}\ell}}\sum_{\substack{\alpha\in\bbN^n\\ \alpha_r=\alpha_1\ \forall r>\ell\\ \alpha_\ell\neq\alpha_1}}\sum_{i\in P}(-1)^{\lvert P\cap\{i,\ldots,\ell\}\rvert +1}\frac{\alpha_i-\alpha_1}{\alpha_\ell-\alpha_1}c_{\alpha,P\cup\{\ell\}\setminus\{i\}}\underline{s}^\alpha d\log\underline{s}_P\\
	 &=\sum_{\substack{P\subset\{2,\ldots,\ell\}\\ \lvert P\rvert=j\\ P\ni\ell}}\sum_{\substack{\alpha\in\bbN^n\\ \alpha_r=\alpha_1\ \forall r>\ell\\ \alpha_\ell\neq\alpha_1}}c_{\alpha,P}\underline{s}^\alpha d\log\underline{s}_P
	 +\sum_{\substack{P\subset\{2,\ldots,\ell\}\\ \lvert P\rvert=j\\ P\reflectbox{$\notin$}\ell}}\sum_{\substack{\alpha\in\bbN^n\\ \alpha_r=\alpha_1\ \forall r>\ell\\ \alpha_\ell\neq\alpha_1}}c_{\alpha,P}\underline{s}^\alpha d\log\underline{s}_P\\
	 &=\sum_{\substack{P\subset\{2,\ldots,\ell\}\\ \lvert P\rvert=j}}\sum_{\substack{\alpha\in\bbN^n\\ \alpha_r=\alpha_1\ \forall r>\ell\\ \alpha_\ell\neq\alpha_1}}c_{\alpha,P}\underline{s}^\alpha d\log\underline{s}_P,
	 \end{align*}
	 where we used \eqref{eq: theta is cocycle} with $Q=P\cup\{\ell\}$ for the second equality.
	Now we have
	\[\theta-d\eta=\sum_{\substack{P\subset\{2,\ldots,\ell\}\\ \lvert P\rvert=j}}\sum_{\substack{\alpha\in\bbN^n\\ \alpha_r=\alpha_1\ \forall r\geq\ell}}c_{\alpha,P}\underline{s}^\alpha d\log\underline{s}_P,\]
	and this belongs to $\omega^{j-1}_{\ell-1}d\log s_\ell+\omega^j_{\ell-1}$.
	Repeating this construction, we see that the cocycle $\theta\in\Gamma(\bbD_{<\eta}^n,\omega^j_{\cS^n/\cS,\bbQ})$ is cohomologous to the cocycle  
	\begin{equation}\label{eq: theta'}\theta'=\sum_{\substack{P\subset\{2,\ldots,n\}\\ \lvert P\rvert=j}}\sum_{m\in\bbN}c_{m,P}s_1^m\cdots s_n^md\log\underline{s}_P
	\end{equation}
	where $c_{m,P}=c_{(m,\ldots,m),P}$.
	Since $\theta'$ belongs to $\Gamma(\bbD_{<\eta}^n,\omega^j_{\cS^n/\cS,\bbQ})$, for any $P$ and $\rho<\eta$ we have $\lvert c_{m,P}\rvert\rho^{mn}\rightarrow 0$ when $m\rightarrow \infty$.
	This is equivalent to the statement that $\sum_{m\in\bbN}c_{m,P}s^m$ belongs to $F\{\eta^{-n}s\}$.
	Thus the association
	\[\theta=\sum_{\substack{P\subset\{2,\ldots,n\}\\ \lvert P\rvert=j}}\sum_{\alpha\in\bbN^n}c_{\alpha,P}\underline{s}^\alpha d\log\underline{s}_P\mapsto \sum_{\substack{P\subset\{2,\ldots,n\}\\ \lvert P\rvert=j}}\left(\sum_{m\in\bbN}c_{m,P}s^m\right)\otimes d\log\underline{s}_P\]
	defines a map $H^\ast(\bbD^n_{<\eta},\omega^\bullet_{\cS^n/\cS,\bbQ})\rightarrow F\{\eta^{-n}s\}\otimes_F\overline{\bsH}_n^\ast$, which is clearly the inverse of the third map in \eqref{eq: diag polydisk}.	
	
	That the first and the fourth maps in \eqref{eq: diag polydisk} are isomorphisms is proved similarly.
	In particular, the fourth isomorphism is essentially given by Gro\ss e-Kl\"{o}nne in the proof of \cite[Lem.\ 4.8]{GK3}.
	
	To see that the second map in \eqref{eq: diag polydisk} is an isomorphism, let $\cC^{\bullet,\bullet}_{\bbD^n_{<1}}$ be the double complex of sheaves on the open unit polydisk defined as in Lemma \ref{lem: total complex} for the situation $\cZ=\cS^n$.
	Lemma \ref{lem: sheaf spectral sequence} applied to $\cC^{\bullet,\bullet}|_{\bbD^n_{< \eta}}$ induces a spectral sequence
	\[E_2^{p,q}\Rightarrow\varinjlim_m H^{p+q}(\bbD^n_{<\eta},\omega^\bullet_{\cS^n/W^\varnothing,\bbQ}[u]_m)\]
	where $E_2^{p,q}$ is the $(p+q)$\textsuperscript{th} cohomology group of the complex
	\begin{align*}
	M_q^\bullet&:=\left[H^0(\bbD^n_{<\eta},\omega^\bullet_{\cS^n/W^\varnothing,\bbQ})\xrightarrow{\wedge d\log s}H^1(\bbD^n_{<\eta},\omega^\bullet_{\cS^n/W^\varnothing,\bbQ})\xrightarrow{\wedge d\log s}\cdots\xrightarrow{\wedge d\log s}H^q(\bbD^n_{<\eta},\omega^\bullet_{\cS^n/W^\varnothing,\bbQ})\right]\\
	&\cong\left[\bsH_n^0\xrightarrow{\wedge d\log s}\bsH_n^1\xrightarrow{\wedge d\log s}\cdots\xrightarrow{\wedge d\log s}\bsH_n^q\right].
	\end{align*}
	Thus we see that $E_2^{p,q}$ is $\overline{\bsH}_n^q$ if $p=0$ and zero if $p\neq 0$.
	Consequently the spectral sequence degenerates and we obtain $\overline{\bsH}_n^q\cong \varinjlim_mH^q(\bbD^n_{<\eta},\omega^\bullet_{\cS^n/W^\varnothing,\bbQ}[u]_m)$.
	This finishes the proof of \eqref{item: polydisk1}.
	
	We prove \eqref{item: polydisk2} for $\omega^\bullet_{\cS^n/\cS,\bbQ}$.
	We denote the $i$\textsuperscript{th} differential map on $\Gamma(\bbD_{<\eta}^n,\omega^\bullet_{\cS^n/\cS,\bbQ})$ by $d^i$ and the surjection
	\[\Ker d^i\rightarrow H^i(\bbD_{<\eta}^n,\omega^\bullet_{\cS^n/\cS,\bbQ})\cong F\{\eta^{-n}s\}\otimes_F\overline{\bsH}^i_n\cong\bigoplus_{\substack{P\subset\{2,\ldots,n\}\\ \lvert P\rvert=i}}F\{\eta^{-n}s\}d\log \underline{s}_P\] 
	by $q$.
	The subspace topology on $\Ker d^i$ is the convex topology defined by the open lattices of the form
	\[L_{\rho,a}:=\Biggl\{\delta=\sum_{\substack{P\subset\{2,\ldots,n\}\\ \lvert P\rvert=i}}\sum_{\alpha\in\bbN^n}c_{\alpha,P}\underline{s}^\alpha d\log\underline{s}_P\in\Gamma(\bbD_{<\eta}^n,\omega^i_{\cS^n/\cS,\bbQ})\mid \lvert c_{\alpha,P}\rvert\rho^{\lvert\alpha\rvert}\leq a,\ d\delta=0\Biggr\}\]
	for real numbers $\eta$ and $a$ with $0<\rho<\eta$ and $0<a$.
	Thus the quotient topology on $H^i(\bbD^n_{<\eta},\omega^\bullet_{\cS^n/\cS,\bbQ})$ is defined by the open lattices
	\[q(L_{\rho,a})=\Biggl\{\sum_{\substack{P\subset\{2,\ldots,n\}\\ \lvert P\rvert=i}}\sum_{m\in\bbN}c_{m,P}s^m d\log\underline{s}_P\in\bigoplus_{\substack{P\subset\{2,\ldots,n\}\\ \lvert P\rvert=i}}F\{\eta^{-n}s\}d\log \underline{s}_P\mid \lvert c_{\alpha,P}\rvert\rho^{mn}\leq a\Biggr\}.\]
	But the topology defines as a finite $F\{\eta^{-n}s\}$-module is also defined by these lattices, hence the two topologies coincide with each other.
	
	As this topology is separated, it follows that $\Im d^{i-1}$ is closed subset of $\Gamma(\bbD^n_{<\eta},\omega^i_{\cS^n/\cS,\bbQ})$.
	By Lemma \ref{lem: convex}, $\Gamma(\bbD^n_{<\eta},\omega^i_{\cS^n/\cS,\bbQ})$ and hence $\Im d^{i-1}$ are Fr\'{e}chet spaces.
	Thus by the open mapping theorem \cite[Prop.\,8.6]{Sch} we see that $d^{i-1}\colon\Gamma(\bbD^n_{<\eta},\omega^{i-1}_{\cS^n/\cS,\bbQ})\rightarrow \Im d^{i-1}$ is an open map, namely $d^{i-1}$ is strict.
	The statements for $\omega^{\bullet}_{\cS^n/W^\varnothing,\bbQ}$ and $\omega^{\bullet}_{\cS^n_0/W_0,\bbQ}$ can be proved similarly.
\end{proof}

\begin{lemma}\label{lem: ML}
	Let $I$ be a directed set, $\bbF$ a field, and $\{M_i\}_{i\in I}=(\{M_i\}_{i\in I},\{f_{ij}\colon M_j\rightarrow M_i\}_{i\leq j})$ an inverse system of $\bbF$-vector spaces.
	\begin{enumerate}
	\item\label{item: ML1}If $M_i$ is finite dimensional for each $i\in I$, then $\{M_i\}_{i\in I}$ is a Mittag-Leffler system.
	\item\label{item: ML2}If $M:=\varprojlim_i M_i$ is finite dimensional, then the natural map $L\otimes_\bbF M\rightarrow\varprojlim_i(L\otimes_\bbF M_i)$ is an isomorphism for any $\bbF$-vector space $L$.
	\end{enumerate}
\end{lemma}

\begin{proof}
	\eqref{item: ML1} This is a well-known fact. Indeed, for each $i\in I$, $\{f_{ij}(M_j)\}_{j\geq i}$ is a decreasing system of subspaces of $M_i$, hence it eventually stabilizes if $M_i$ is finite dimensional.
	
	\eqref{item: ML2} For $i\in I$, let $\pi_i\colon M\rightarrow M_i$ be the natural projection.
	Then $\{\Ker\pi_i\}_{i\in I}$ is a decreasing system of subspaces of $M$, hence it eventually stabilizes.
	Namely we have $\Ker \pi_{j_0}=\bigcap_{i\in I} \Ker\pi_i$ for some $j_0\in I$, which however must be zero by definition of the inverse limit.
	Thus $\pi_{j_0}$ is injective.
	
	 Let $(e_\lambda)_{\lambda\in\Lambda}$ be a base of $L$.
	Then any element $x$ of $\varprojlim_i(L\otimes_\bbF M_i)$ can be uniquely written as a system of elements $\sum_\lambda e_\lambda\otimes y_{\lambda,i}\in L\otimes_\bbF M_i$ with $f_{ij}(y_{\lambda,j})=y_{\lambda,i}$.
	Let $y_\lambda:=(y_{\lambda,i})_i\in M$.
	Since the set $\{\lambda\in \Lambda\mid y_{\lambda,j_0}\neq 0\}$ is finite, the injectivity of $\pi_{j_0}$ shows that the set $\{\lambda\in \Lambda\mid y_{\lambda}\neq 0\}$ is also finite.
	Now the association $x\mapsto\sum_\lambda e_\lambda\otimes y_\lambda$ gives the inverse of $L\otimes_\bbF M\rightarrow\varprojlim_i(L\otimes_\bbF M_i)$.
\end{proof}

\begin{proposition}\label{prop: Kunneth 1}
	Let $n\geq 1$ be an integer, $\eta$ a real number with $0<\eta\leq 1$, and $\cX$ a weak formal scheme equipped with the trivial log structure.
	Suppose that the following conditions for the dagger space $\frX$ associated to $\cX$ hold.
	\begin{enumerate}\renewcommand{\labelenumi}{(\roman{enumi})}
	\item $\frX$ is separated, smooth, and admits an affinoid open covering indexed by a countable set,
	\item $H^m(\frX,\Omega^\bullet_{\frX/F})$ is finite dimensional for any $m$.
	\end{enumerate}
	We regard $\cS^n\times_{W^\varnothing}\cX$ as a weak formal log scheme over $\cS$ via the composite of the projection $\cS^n\times_{W^\varnothing}\cX\rightarrow \cS^n$ and the morphism $\cS^n\rightarrow \cS$ defined as in Lemma \ref{lem: polydisk}.
	
	Then there exist canonical isomorphisms displayed as the horizontal maps in the following diagram
	\begin{equation}\label{eq: diag Kunneth}
	\xymatrix{
	\bsH_n^\ast\otimes_FH^\ast(\frX,\Omega^\bullet_{\frX/F})\ar[r]^-{\cong}\ar[d]&
	H^\ast(\bbD^n_{<\eta}\times\frX,\omega^\bullet_{\cS^n\times\cX/W^\varnothing,\bbQ})\ar[d]\\
	\overline{\bsH}_n^{\ast}\otimes_FH^\ast(\frX,\Omega^\bullet_{\frX/F})\ar[r]^-{\cong}\ar[d]&
	H^\ast(\bbD^n_{<\eta}\times\frX,\omega^\bullet_{\cS^n\times\cX/W^\varnothing,\bbQ}[u])\ar[d]\\
	F\{\eta^{-n}s\}\otimes_F\overline{\bsH}_n^{\ast}\otimes_F H^\ast(\frX,\Omega^\bullet_{\frX/F})\ar[r]^-{\cong}\ar[d]&
	H^\ast(\bbD^n_{<\eta}\times\frX,\omega^\bullet_{\cS^n\times\cX/\cS,\bbQ})\ar[d]\\
	\overline{\bsH}^{\ast}_n\otimes_FH^\ast(\frX,\Omega^\bullet_{\frX/F})\ar[r]^-{\cong}&
	H^\ast(\bbD_{<\eta,0}^n\times\frX,\omega^\bullet_{\cS^n_0\times\cX/W^0,\bbQ}),}
	\end{equation}
	where the left vertical maps are defined similarly to Lemma \ref{lem: polydisk}.
	This diagram is commutative, and compatible with the restriction maps with respect to the immersion $\bbD^n_{\eta}\times\frX\hookrightarrow\bbD_{<\eta'}^n\times\frX$ for $\eta<\eta'\leq 1$.
	
	Moreover, similar statements hold for $\eta<1$ even if we replace $\bbD^n_{<\eta}$, $\bbD^n_{<\eta,0}$, and $F\{\eta^{-n}s\}$ by $\bbD^n_{\leq\eta}$, $\bbD^n_{\leq\eta,0}$, and $F[\eta^{-n}s]^\dagger$, respectively.
\end{proposition}

\begin{proof}	
	We will first give a decomposition
	\begin{equation}\label{eq: Kunneth closed}
	H^i(\bbD^n_{\leq\eta}\times\frX,\omega^\bullet_{\cS^n\times\cX/\cS,\bbQ})\cong \bigoplus_{j\in\bbZ}F[\eta^{-n}s]^\dagger\otimes_F\overline{\bsH}_n^j\otimes_FH^{i-j}(\frX,\Omega^\bullet_{\frX/F})
	\end{equation}
	for $\eta<1$, assuming that $\frX$ is a smooth affinoid space.
	Take an isomorphism $\Gamma(\frX,\cO_\frX)\cong F[t_1,\ldots,t_m]^\dagger/(g_1,\ldots,g_r)$.
	Then there exists a real number $\lambda_0\in\lvert F^\times\rvert\otimes\bbQ$ with $1<\lambda_0<\eta^{-1}$, such that all $g_i$ belong to the subring $F[\lambda_0^{-1}t_1,\ldots,\lambda_0^{-1}t_m]^\dagger\subset F[t_1,\ldots,t_m]^\dagger$. 
	For any $\lambda\in\lvert F^\times\rvert\otimes\bbQ$ with $1<\lambda\leq\lambda_0$, consider the affinoid dagger algebra
	\begin{align*}
	&A_\lambda:=F[\lambda^{-1}t_1,\ldots,\lambda^{-1}t_m]^\dagger/(g_1,\ldots,g_r).
	\end{align*}
	Let $\frX_{\leq\lambda}:=\Sp A_\lambda$ and $\frX_{<\lambda}:=\bigcup_{\lambda'<\lambda}\frX_{\leq\lambda'}$, which is a Stein space by construction.
	We take a weak formal model $\cX_{\leq\lambda_0}$ of $\frX_{\leq\lambda_0}$ and consider the cohomology groups $H^i(\bbD^n_{<\eta\lambda}\times\frX_{<\lambda},\omega^\bullet_{\cS^n\times\cX_{\leq\lambda_0}/\cS,\bbQ})$.
	Then we have canonical isomorphisms
	\begin{align*}
	H^i(\bbD^n_{<\eta\lambda}\times\frX_{<\lambda},\omega^\bullet_{\cS^n\times\cX_{\leq\lambda_0}/\cS,\bbQ})
	&\cong\bigoplus_{j\in\bbZ}H^j(\bbD_{<\eta\lambda}^n\times,\omega^\bullet_{\cS^n/\cS,\bbQ})\otimes_FH^{i-j}(\frX_{<\lambda},\Omega^\bullet_{\frX_{<\lambda}/F})\\
	&\cong \bigoplus_{j\in\bbZ}F\{(\eta\lambda)^{-n}s\}\otimes_F\overline{\bsH}_n^j\otimes_FH^{i-j}(\frX_{<\lambda},\Omega^\bullet_{\frX_{<\lambda}/F}).
	\end{align*}
	Indeed, the first isomorphism is proved by the same argument as the proof of \cite[Thm.\,4.12]{GK1} using Lemma \ref{lem: polydisk} \eqref{item: polydisk2}.
	The second isomorphism follows by Lemma \ref{lem: polydisk} \eqref{item: polydisk1}.
	Thus we obtain
	\begin{align*}
	H^i(\bbD^n_{\leq\eta}\times\frX,\omega^\bullet_{\cS^n\times\cX/\cS,\bbQ})
	&\cong \varinjlim_{\lambda\rightarrow 1}H^i(\bbD^n_{<\eta\lambda}\times\frX_{<\lambda},\omega^\bullet_{\cS^n\times\cX_{\leq\lambda_0}/\cS,\bbQ})\\
	&\cong \varinjlim_{\lambda\rightarrow 1}\bigoplus_{j\in\bbZ}F\{(\eta\lambda)^{-1}s\}\otimes_F\overline{\bsH}_n^j\otimes_FH^{i-j}(\frX_{<\lambda},\Omega^\bullet_{\frX_{<\lambda}/F})\\
	&\cong \bigoplus_{j\in\bbZ}F[\eta^{-n}s]^\dagger\otimes_F\overline{\bsH}_n^j\otimes_FH^{i-j}(\frX,\Omega^\bullet_{\frX/F}).
	\end{align*}
	
	When $\frX$ is quasi-compact, separated, and smooth but not necessarily an affinoid, take a finite admissible covering $\{\frX_\lambda\}_\lambda$ by affinoid open subsets.
	 Then we may obtain the decomposition \eqref{eq: Kunneth closed} for $\frX$ from those for the $\frX_\lambda$'s by using the spectral sequence associated to the open covering $\{\bbD_{\leq\eta}^n\times\frX_\lambda\}_\lambda$ of $\bbD_{\leq\eta}^n\times\frX$.
	 
	 For a general $\frX$ which satisfies the conditions (i) and (ii), we may by condition (i) take an admissible open covering $\{\frU_m\}_{m\in\bbN}$ with $\frU_m\subset\frU_{m+1}$ such that each $\frU_m$ is quasi-compact, separated, and smooth.
	  Since $H^i(\frU_m,\Omega^\bullet_{\frU_m/F})$ is finite dimensional for each $i$ and $m$ \cite[Cor.\,3.5]{GK2}, the inverse system $\{H^i(\frU_m,\Omega^\bullet_{\frU_m/F})\}_m$ for fixed $i$ is Mittag-Leffler by Lemma \ref{lem: ML} \eqref{item: ML1}.
	  Thus the system $\{F[\eta^{-n}s]^\dagger\otimes_F\overline{\bsH}^j_n\otimes_FH^i(\frU_m,\Omega^\bullet_{\frU_m/F})\}_m$ for fixed $i$, $j$ and $n$ is also Mittag-Leffler.
	 
	 Thus we have $H^i(\frX,\Omega^\bullet_{\frX/F})\cong\varprojlim_mH^i(\frU_m,\Omega^\bullet_{\frU_m/F})$ and
	 \begin{align*}
	 R^l\varprojlim_mH^i(\bbD_{\leq\eta}^n\times\frU_m,\omega^\bullet_{\cS^n\times\cX/\cS,\bbQ})&\cong R^l\varprojlim_m\bigoplus_{j\in\bbZ}F[\eta^{-n}s]^\dagger\otimes_F\overline{\bsH}^j_n\otimes_FH^{i-j}(\frU_m,\Omega^\bullet_{\frU_m/F})\\
	 &\cong
	 \begin{cases}
	 \displaystyle\bigoplus_{j\in\bbZ}F[\eta^{-n}s]^\dagger\otimes_F\overline{\bsH}^j_n\otimes_FH^{i-j}(\frX,\Omega^\bullet_{\frX/F})&(l=0)\\
	 0&(l\neq 0),
	 \end{cases}\end{align*}
	 where we use the condition (ii) and Lemma \ref{lem: ML} \eqref{item: ML2} for the second isomorphism for $l=0$.
	 This together with the short exact sequence
	 \begin{align*}0\rightarrow R^1\varprojlim_mH^{i-1}(\bbD_{\leq\eta}^n\times\frU_m,\omega^\bullet_{\cS^n\times\cX/\cS,\bbQ})
	 &\rightarrow H^i(\bbD_{\leq\eta}^n\times\frX,\omega^\bullet_{\cS^n\times\cX/\cS,\bbQ})\\
	&\rightarrow \varprojlim_mH^i(\bbD_{\leq\eta}^n\times\frU_m,\omega^\bullet_{\cS^n\times\cX/\cS,\bbQ})\rightarrow 0
	\end{align*}
	of \cite[\href{https://stacks.math.columbia.edu/tag/0CQE}{Lem.\,0CQE}]{stacks} gives an isomorphism \eqref{eq: Kunneth closed} as desired.
	
	Next we provide the decomposition of $H^i(\bbD^n_{<\eta}\times\frX,\omega^\bullet_{\cS^n\times\cX/\cS,\bbQ})$ for $\eta\leq 1$.
	We first note that
	\begin{equation}\label{eq: Stein vanishing}
	R^l\varprojlim_{\rho<\eta} F[\rho^{-n}s]^\dagger=H^l(\bbD^1_{<\eta^n},\cO_{\bbD^1_{<\eta^n}})=\begin{cases}F\{\eta^{-n}s\}^\dagger &(l=0)\\ 0&(l\neq 0)\end{cases}
	\end{equation}
	by \cite[Prop.\,3.1]{GK1}.
	Then we have isomorphisms
	\begin{align*}
	R^l\varprojlim_{\rho<\eta} H^i(\bbD^n_{\leq\rho}\times\frX,\omega^\bullet_{\cS^n\times\cX/\cS,\bbQ})
	&\cong R^l\varprojlim_{\rho<\eta}\bigoplus_{j\in\bbZ}F[\eta^{-n}s]^\dagger\otimes_F\overline{\bsH}_n^j\otimes_FH^{i-j}(\frX,\Omega^\bullet_{\frX/F})\\
	&\cong\begin{cases}
	\displaystyle\bigoplus_{j\in\bbZ}F\{\eta^{-n}s\}\otimes_F\overline{\bsH}_n^j\otimes_FH^{i-j}(\frX,\Omega^\bullet_{\frX/F})&(l=0)\\
	0&(l\neq 0),\end{cases}
	\end{align*}
	where the first isomorphism is \eqref{eq: Kunneth closed} and the second isomorphism follows from \eqref{eq: Stein vanishing} with the fact that $\overline{\bsH}_n^j\otimes_FH^{i-j}(\frX,\Omega^\bullet_{\frX/F})$ is finite dimensional.
	Thus, again by the short exact sequence \cite[\href{https://stacks.math.columbia.edu/tag/0CQE}{Lem.\,0CQE}]{stacks}, we obtain the isomorphism
	\begin{equation}\label{eq: Kunneth open}
	H^i(\bbD^n_{<\eta}\times\frX,\omega^\bullet_{\cS^n\times\cX/\cS,\bbQ})\cong \bigoplus_{j\in\bbZ}F\{\eta^{-n}s\}\otimes_F\overline{\bsH}_n^i\otimes_FH^{i-j}(\frX,\Omega^\bullet_{\frX/F}).
	\end{equation}
	
	The decompositions of the cohomlogy groups of $\omega^\bullet_{\cS^n\times\cX/W^\varnothing,\bbQ}$ and $\omega^\bullet_{\cS^n_0\times\cX/W^0,\bbQ}$ are given by the same method.
	
	By the first isomorphism in \eqref{eq: diag Kunneth}, $H^i(\bbD^n_{\leq\eta}\times\frX,\omega^\bullet_{\cS^n\times\cX/W^\varnothing,\bbQ}[u]_m)$ is independent of $\eta$.
	Then we see that the morphisms
	\begin{align*}
	&\varinjlim_mH^i(\bbD^n_{<\eta}\times\frX,\omega^\bullet_{\cS^n\times\cX/W^\varnothing,\bbQ}[u]_m)\rightarrow H^i(\bbD^n_{<\eta}\times\frX,\omega^\bullet_{\cS^n\times\cX/W^\varnothing,\bbQ}[u]),\\
	&\varinjlim_mH^i(\bbD^n_{\leq\eta}\times\frX,\omega^\bullet_{\cS^n\times\cX/W^\varnothing,\bbQ}[u]_m)\rightarrow H^i(\bbD^n_{\leq\eta}\times\frX,\omega^\bullet_{\cS^n\times\cX/W^\varnothing,\bbQ}[u])
	\end{align*}
	are quasi-isomorphisms by the same arguments as used to prove Proposition \ref{prop: HK vs colim}.
	Thus the second isomorphism in \eqref{eq: diag Kunneth} is proved by the same arguments as Lemma \ref{lem: polydisk}.
		
	Finally, when $\frX$ is a smooth affinoid space, the isomorphisms constructed above can be described explicitly.
	For example, the first isomorphism in \eqref{eq: diag Kunneth} is given by sending
	\[d\log s_{k_1}\wedge\cdots\wedge d\log s_{k_j}\otimes [\xi]\mapsto [d\log s_{k_1}\wedge\cdots\wedge d\log s_{k_j}\wedge \xi]\]
	where $\xi$ is any cocycle $\xi$ of $\Gamma(\frX,\Omega^\bullet_{\frX/F})$ and $[-]$ indicates the cohomology class of a cocycle.
	With these descriptions one can verify the commutativity of the diagram \eqref{eq: diag Kunneth} for the case of smooth affinoid, and the general case   can be deduced by the arguments as above.
\end{proof}

\begin{proposition}\label{prop: coh of Y_I}
	Let $Y$ be a strictly semistable log scheme over $k^0$ with horizontal divisor $D$, and let $Y=\bigcup_{i\in\Upsilon}Y_i$ be the decomposition into irreducible components.
	For a nonempty subset $I\subset\Upsilon$, set $Y_I := \bigcap_{i\in I}Y_i$ and $U_I^\heartsuit:=Y_I\setminus (D\cup \bigcup_{j\in\Upsilon\setminus I}Y_j)$ and equip them with the pull-back log structures from $Y$.
	Then the natural morphisms
	\begin{align*}
	&R\Gamma_\rig(Y_I/\cT)\rightarrow R\Gamma_\rig(U_I^\heartsuit/\cT)\hspace{10pt}(\text{$\cT=W^\varnothing$, $W^0$, or $\cS$}),\\
	&R\Gamma_\HK(Y_I)\rightarrow R\Gamma_\HK(U_I^\heartsuit)&
	\end{align*}
	are quasi-isomorphisms.
\end{proposition}

\begin{proof}
We focus on the proof for the log rigid cohomology over $\cS$ which is more subtle.
The proofs for the log rigid cohomology over $W^\varnothing$ and $W^0$ are similar, and then the proof for the Hyodo--Kato cohomology follows by Lemma \ref{lem: acyclic assembly lemma for sheaf} \eqref{item: sheaf assembly row} and Proposition \ref{prop: HK vs colim}.

By Proposition \ref{prop: Mayer-Vietoris} we may work locally.
Thus we may assume that $Y_I$ is strictly log smooth over $k^0(n,m)_r$ for some $1\leq r\leq n$ and $0\leq m$, and we may take a homeomorphic exact closed immersion $i\colon Y_I\hookrightarrow \cZ_I=\Spwf W\llbracket s_1,\ldots,s_r\rrbracket\times\cV_I$ where $\cV_I$ and $\cZ_I$ are constructed as in Construction \ref{const}, respectively.
Let $\cW_I\subset\cV_I$ and $\cU_I\subset\cZ_I$ be the open subsets given by inverting $s_{r+1},\ldots,s_n,t_1,\ldots,t_m$.
Let $\frZ_I$, $\frV_I$, $\frU_I$ and $\frW_I$ be the dagger spaces associated to $\cZ_I$, $\cV_I$, $\cU_I$ and $\cW_I$, respectively.
Then we have
\begin{align*}
\frZ_I=\bbD_{<1}^r\times\frV_I,&&\frU_I=\bbD_{<1}^r\times\frW_I.
\end{align*}
For $\eta<1$, let
\begin{align*}
\frZ_{I,\eta}:=\bbD^r_{\leq\eta}\times\frV_I,&&\frU_{I,\eta}:=\bbD^r_{\leq\eta}\times\frW_I.
\end{align*}

Because of the short exact sequences
\begin{align*}
&0\rightarrow R^1\varprojlim_{\eta\rightarrow 1}H^{i-1}(\frZ_{I,\eta},\omega^\bullet_{\cZ_I/\cS,\bbQ})\rightarrow H^i_\rig(Y_I/\cS)\rightarrow 
\varprojlim_{\eta\rightarrow 1}H^{i}(\frZ_{I,\eta},\omega^\bullet_{\cZ_I/\cS,\bbQ})
\rightarrow 0,\\
&0\rightarrow R^1\varprojlim_{\eta\rightarrow 1}H^{i-1}(\frU_{I,\eta},\omega^\bullet_{\cZ_I/\cS,\bbQ})\rightarrow H^i_\rig(U_I^\heartsuit/\cS)\rightarrow\varprojlim_{\eta\rightarrow 1}H^i(\frU_{I,\eta},\omega^\bullet_{\cZ_I/\cS,\bbQ})\rightarrow 0,
\end{align*}
it suffices to show that the map
\begin{equation}\label{eq: eta isom}
	H^i(\frZ_{I,\eta},\omega^\bullet_{\cZ_I/\cS,\bbQ})\rightarrow H^i(\frU_{I,\eta},\omega^\bullet_{\cZ_I/\cS,\bbQ})
\end{equation}
is an isomorphism for any $i\in\bbZ$ and $\eta<1$.

Let $\frW'_I$ and $\frW''_I$ be the open subspaces of $\frV_I$ given by inverting $s_{r+1}$ and $t_1$, respectively, and set
\begin{align*}
	\frU'_{I,\eta}:=\bbD^r_{\leq\eta}\times\frW'_I,
	&&\frU''_{I,\eta}:=\bbD^r_{\leq\eta}\times\frW''_I.
\end{align*}
Then the map \eqref{eq: eta isom} can be written as the composition of maps of the following two types:
\begin{align}
\label{eq: vertical isom}H^i(\frZ_{I,\eta},\omega^\bullet_{\cZ_I/\cS,\bbQ})\rightarrow H^i(\frU'_{I,\eta},\omega^\bullet_{\cZ_I/\cS,\bbQ}),\\
\label{eq: horizontal isom}H^i(\frZ_{I,\eta},\omega^\bullet_{\cZ_I/\cS,\bbQ})\rightarrow H^i(\frU''_{I,\eta},\omega^\bullet_{\cZ_I/\cS,\bbQ}).
\end{align}
Hence it suffices to prove that these two types of maps are isomorphisms.
We will only prove that \eqref{eq: vertical isom} is an isomorphism, following the methods of \cite{BC} and \cite[Thm.\ 2.4.4]{Sh1}.
That \eqref{eq: horizontal isom} is an isomorphism is proved in the same way.

Since \eqref{eq: vertical isom} is clearly an isomorphism when $s_{r+1}$ is invertible in $\cZ_I$, we may assume that $s_{r+1}$ is not invertible in $\cZ_I$.
Let $\cV_I^\circ$ be the weak completion of $\cV_I$ along the closed weak formal subscheme $\cX:=\cV_{I\cup\{r+1\}}$ defined by the ideal $(s_{r+1})$.
Let $\frV_I^\circ$ and $\frX$ be the dagger spaces associated to $\cV_I^\circ$ and $\cX$, respectively.
Then similar to above we have $\frV_I^\circ\cong\bbD^1_{<1}\times\frX$.
Let
\begin{align*}
	\frZ_{I,\eta}^\circ:=\bbD^r_{\leq\eta}\times\frV_I^\circ\cong \bbD^r_{\leq\eta}\times\bbD^1_{<1}\times\frX.
\end{align*}

For $\delta=p^{-\frac ab}$ with $a,b\in\bbN$, let
\begin{align*}
\frV_I^{(\delta)}:=\Sp\left(\Gamma(\frV_I,\cO_{\frV_I})\left[\frac{p^a}{s_{r+1}^b}\right]^\dagger\right),&&\frZ_{I,\eta}^{(\delta)}:=\bbD^r_{\leq\eta}\times\frV_I^{(\delta)}.
\end{align*}
In other words, $\frZ_{I,\eta}^{(\delta)}$ is the subset of $\frZ_{I,\eta}$ consisting of the points $z$ satisfying $\lvert s_{r+1}(z)\rvert\geq\delta$.
Then $\{\frZ_{I,\eta}^\circ,\frZ_{I,\eta}^{(\delta)}\}$ is an admissible covering of $\frZ_{I,\eta}$, hence we have a long exact sequence
\begin{align}
	\label{eq: MV seq}\cdots\rightarrow H^i(\frZ_{I,\eta},\omega^\bullet_{\cZ_I/\cS,\bbQ})&\rightarrow H^i(\frZ_{I,\eta}^{(\delta)},\omega^\bullet_{\cZ_I/\cS,\bbQ})\oplus H^i(\frZ_{I,\eta}^\circ,\omega^\bullet_{\cZ_I/\cS,\bbQ})\\
	\nonumber&\rightarrow H^i(\frZ_{I,\eta}^{(\delta)}\cap\frZ_{I,\eta}^\circ,\omega^\bullet_{\cZ_I/\cS,\bbQ})\rightarrow H^{i+1}(\frZ_{I,\eta},\omega^\bullet_{\cZ_I/\cS,\bbQ})\rightarrow\cdots.
\end{align}

We will prove later that the map
\begin{equation}\label{eq: BC isom}
H^i(\frZ_{I,\eta}^\circ,\omega^\bullet_{\cZ_I/\cS,\bbQ})\rightarrow H^i(\frZ_{I,\eta}^{(\delta)}\cap\frZ_{I,\eta}^\circ,\omega^\bullet_{\cZ_I/\cS,\bbQ})
\end{equation}
is an isomorphism for any $i$.
If this is true, the exactness of \eqref{eq: MV seq} implies that the map
\begin{equation}\label{eq: delta independent}
	H^i(\frZ_{I,\eta},\omega^\bullet_{\cZ_I/\cS,\bbQ})\rightarrow H^i(\frZ_{I,\eta}^{(\delta)},\omega^\bullet_{\cZ_I/\cS,\bbQ})
\end{equation}
is an isomorphism.

Noting that $\frU'_{I,\eta}$ and $\frZ_{I,\eta}^{(\delta)}$ are affinoid spaces, the cohomology groups of $\omega^\bullet_{\cZ_I/\cS,\bbQ}$ on them can be computed by global sections.
Since $\Gamma(\frU'_{I,\eta},\cO_{\frU_{I,\eta}})=\varinjlim_{\delta\rightarrow 1}\Gamma(\frZ_{I,\eta}^{(\delta)},\cO_{\frZ_{I,\eta}^{(\delta)}})$, we have
	\begin{equation*}\label{eq: direct limit cohomology}
	H^i(\frU'_{I,\eta},\omega^\bullet_{\cZ_I/\cS,\bbQ})=\varinjlim_{\delta\rightarrow 1}H^i(\frZ_{I,\eta}^{(\delta)},\omega^\bullet_{\cZ_I/\cS,\bbQ}).
	\end{equation*}
This together with \eqref{eq: delta independent} shows that \eqref{eq: vertical isom} and hence \eqref{eq: eta isom} are isomorphisms, as claimed.

It remains to show that \eqref{eq: BC isom} is an isomorphism.
For $\rho\in\bbR$ with $0<\rho<1$, let
\[\frZ_{I,\eta,\rho}:=\bbD^r_{\leq\eta}\times\bbD^1_{\leq\rho}\times\frX.\]
Then it suffices to show that the map
\[H^i(\frZ_{I,\eta,\rho},\omega^\bullet_{\cZ_I/\cS,\bbQ})\rightarrow H^i(\frZ_{I,\eta}^{(\delta)}\cap\frZ_{I,\eta,\rho},\omega^\bullet_{\cZ_I/\cS,\bbQ})\]
is an isomorphism for any $i\in\bbZ$ and $\eta,\delta,\rho\in\bbR$ with $0<\eta<1$ and $0<\delta<\rho<1$, by the same argument as when we showed \eqref{eq: eta isom} is an isomorphism.
For this, we will construct a homotopy inverse of the natural map
\[\varrho\colon \Gamma(\frZ_{I,\eta,\rho},\omega^\bullet_{\cZ_I/\cS,\bbQ})\rightarrow\Gamma(\frZ_{I,\eta}^{(\delta)}\cap\frZ_{I,\eta,\rho},\omega^\bullet_{\cZ_I/\cS,\bbQ}).\]
Note that we have
\[\frZ_{I,\eta}^{(\delta)}\cap\frZ_{I,\eta,\rho}=\bbD_{\leq\eta}^r\times\bbD^1_{[\delta,\rho]}\times\frX\]
where $\bbD^1_{[\delta,\rho]}$ denotes the closed annulus of radius $[\delta,\rho]$. 
For any subset $P=\{ i_1,\ldots,i_a\}\subset\{ 2,\ldots,r\}$ we let
\[d\log\underline{s}_P:=d\log s_{i_1}\wedge d\log s_{i_2}\wedge \cdots\wedge d\log s_{i_a}.\]
We may suppose that $\omega^1_{\cX/W^\varnothing,\bbQ}$ is free.
Choose a basis $\tau_1,\ldots,\tau_q$ of $\omega^1_{\cX/W^\varnothing,\bbQ}$, and set $\underline{\tau}_Q:=\tau_{i_1}\wedge\cdots\wedge\tau_{i_\nu}$ for each subset $Q=\{i_1,\ldots,i_\nu\}\subset\{1,\ldots,q\}$.
Take a surjection
	\begin{equation}\label{eq: Y representation}
	W[y_1,\ldots,y_h]^\dagger\rightarrow \Gamma(\cX,\cO_{\cX}).
	\end{equation}
Since $d\log s_1=-\sum_{j=2}^nd\log s_j$, any element $\theta\in\Gamma(\frZ_{I,\eta}^{(\delta)}\cap\frZ_{I,\eta,\rho},\omega^i_{\cZ_I/\cS,\bbQ})$ can be represented as
	\begin{align}\label{eq: theta expansion}
	\theta=&\sum_{P\subset\{2,\ldots,r\}}\sum_{\substack{Q\subset\{1,\ldots,q\}\\ \lvert Q\rvert=i-\lvert P\rvert}}\sum_{\alpha\in\bbN^r}\sum_{\beta\in\bbZ}\sum_{\gamma\in\bbN^h}a_{\alpha,\beta,\gamma,P,Q}\underline{s}^\alpha s_{r+1}^\beta\underline{y}^\gamma d\log\underline{s}_P\wedge \underline{\tau}_Q\\
	\nonumber&+\sum_{P\subset\{2,\ldots,r\}}\sum_{\substack{Q\subset\{1,\ldots,q\}\\ \lvert Q\rvert=i-\lvert P\rvert-1}}\sum_{\alpha\in\bbN^r}\sum_{\beta\in\bbZ}\sum_{\gamma\in\bbN^h}b_{\alpha,\beta,\gamma,P,Q}\underline{s}^\alpha s_{r+1}^\beta\underline{y}^\gamma d\log\underline{s}_P\wedge d\log s_{r+1}\wedge \underline{\tau}_Q,
	\end{align}
with $\underline{s}^\alpha:=s_1^{\alpha_1}\cdots s_r^{\alpha_r}$ and $\underline{y}^\gamma:=y_1^{\gamma_1}\cdots y_h^{\gamma_h}$, such that there exists $\lambda>1$ satisfying
\begin{align}
	\label{eq: convergence outside}&\lvert a_{\alpha,\beta,\gamma,P,Q}\rvert \eta^{\lvert\alpha\rvert}\rho^\beta\lambda^{\lvert\alpha\rvert+\beta+\lvert\gamma\rvert}\rightarrow 0,
	&&\lvert b_{\alpha,\beta,\gamma,P,Q}\rvert \eta^{\lvert\alpha\rvert}\rho^\beta\lambda^{\lvert\alpha\rvert+\beta+\lvert\gamma\rvert}\rightarrow 0&&
	(\lvert\alpha\rvert+\beta+\lvert\gamma\rvert\rightarrow \infty),\\
	\label{eq: convergence inside}&\lvert a_{\alpha,\beta,\gamma,P,Q}\rvert \eta^{\lvert\alpha\rvert}\delta^\beta\lambda^{\lvert\alpha\rvert-\beta+\lvert\gamma\rvert}\rightarrow 0,
	&&\lvert b_{\alpha,\beta,\gamma,P,Q}\rvert \eta^{\lvert\alpha\rvert}\delta^\beta\lambda^{\lvert\alpha\rvert-\beta+\lvert\gamma\rvert}\rightarrow 0
	&&(\lvert\alpha\rvert-\beta+\lvert\gamma\rvert\rightarrow \infty).
\end{align}
Let
	\begin{align*}
	\sigma_{\alpha,\beta,P}:=\sum_{\substack{Q\subset\{1,\ldots,q\}\\ \lvert Q\rvert=i-\lvert P\rvert}}\sum_{\gamma\in\bbN^h}a_{\alpha,\beta,\gamma,P,Q}\underline{y}^\gamma\underline{\tau}_Q,
	&&\xi_{\alpha,\beta,P}:=\sum_{\substack{Q\subset\{1,\ldots,q\}\\ \lvert Q\rvert=i-\lvert P\rvert-1}}\sum_{\gamma\in\bbN^h}b_{\alpha,\beta,\gamma,P,Q}\underline{y}^\gamma\underline{\tau}_Q,
	\end{align*}
which are well-defined as elements of $\Gamma(\frX,\omega^{i-\lvert P\rvert}_{\cX/W^\varnothing,\bbQ})$ and $\Gamma(\frX,\omega^{i-\lvert P\rvert-1}_{\cX/W^\varnothing,\bbQ})$, respectively.
Thus we may write as
\begin{equation}\label{eq: theta}
\theta=\sum_{\alpha,\beta,P}\underline{s}^\alpha s_{r+1}^\beta d\log\underline{s}_P\wedge \sigma_{\alpha,\beta,P}+\sum_{\alpha,\beta,P}\underline{s}^\alpha s_{r+1}^\beta d\log\underline{s}_P\wedge d\log s_{r+1}\wedge\xi_{\alpha,\beta,P}
\end{equation}
where $\alpha$, $\beta$, and $P$ run through the same range as in \eqref{eq: theta expansion}, and the differential forms $\sigma_{\alpha,\beta,P}$ and $\xi_{\alpha,\beta,P}$ are determined uniquely from $\theta$.

Now we define
\begin{align}
	&\cR(\theta):=\sum_{\substack{\alpha,\beta,P\\ \beta\geq 0}}\underline{s}^\alpha s_{r+1}^\beta d\log\underline{s}_P\wedge \sigma_{\alpha,\beta,P}+\sum_{\substack{\alpha,\beta,P\\ \beta\geq 0}}\underline{s}^\alpha s_{r+1}^\beta d\log\underline{s}_P\wedge d\log s_{r+1}\wedge\xi_{\alpha,\beta,P},\\
	\label{eq: definition of H}&\cH(\theta):=\sum_{\substack{\alpha,\beta,P\\ \beta<0}}\frac{(-1)^{\lvert P\rvert}}{\beta-\alpha_1}\underline{s}^\alpha s_{r+1}^\beta d\log\underline{s}_P\wedge\xi_{\alpha,\beta,P}.
\end{align}
It is clear that $\cR$ defines a map of complexes $\cR\colon\Gamma(\frZ_{I,\eta}^{(\delta)}\cap\frZ_{I,\eta,\rho},\omega^\bullet_{\cZ_I/\cS,\bbQ})\rightarrow\Gamma(\frZ_{I,\eta,\rho},\omega^\bullet_{\cZ_I/\cS,\bbQ})$.

Moreover one can see that $\cH$ is well-defined as an endomorphism of degree $-1$ on $\Gamma(\frZ_{I,\eta}^{(\delta)}\cap\frZ_{I,\eta,\rho},\omega^\bullet_{\cZ_I/\cS,\bbQ})$ by Lemma \ref{lem: H is well defined} stated below.
By explicit computation  $d\circ\cH+\cH\circ d=1-\varrho\circ\cR$, in other words $\cR$ is a homotopy inverse of $\varrho$, as desired.
\end{proof}

\begin{lemma}\label{lem: H is well defined}
	The infinite sum $\cH(\theta)$ in \eqref{eq: definition of H} is a well-defined element of $\Gamma(\frZ_{I,\eta}^{(\delta)}\cap\frZ_{I,\eta,\rho},\omega^{i-1}_{\cZ_I/\cS,\bbQ})$.
\end{lemma}

\begin{proof}
	We first note that
	\begin{equation}\label{eq: H theta}\cH(\theta)=\sum_{P\subset\{2,\ldots,r\}}\sum_{\substack{Q\subset\{1,\ldots,q\}\\ \lvert Q\rvert=i-\lvert P\rvert-1}}\sum_{\alpha\in\bbN^r}\sum_{\beta<0}\sum_{\gamma\in\bbN}\frac{(-1)^{\lvert P\rvert}}{\beta-\alpha_1} b_{\alpha,\beta,\gamma,P,Q}\underline{s}^\alpha s_{r+1}^\beta \underline{y}^\gamma d\log\underline{s}_P\wedge d\log s_{r+1}\wedge\underline{\tau}_Q.
	\end{equation}
	Take real numbers 
	$1<\lambda'<\lambda$ such that $\lambda$ satisfies \eqref{eq: convergence outside} and \eqref{eq: convergence inside}.
	For $\alpha$, $\beta$, $\gamma$, $P$ and $Q$ as in the range of the sum in \eqref{eq: H theta}, we have $\alpha_1-\beta>0$ and hence
	\begin{align}
	\label{eq: convergence inside 2}
	&\left\lvert \frac{(-1)^{\lvert P\rvert}}{\beta-\alpha_1}b_{\alpha,\beta,\gamma,P,Q}\right\rvert \eta^{\lvert\alpha\rvert}\delta^\beta\lambda'^{\lvert\alpha\rvert-\beta+\lvert\gamma\rvert}\\
	\nonumber&=\lvert b_{\alpha,\beta,\gamma,P,Q}\rvert \eta^{\lvert\alpha\rvert}\delta^\beta\lambda^{\lvert\alpha\rvert-\beta+\lvert\gamma\rvert}\cdot \left(\frac{\lambda'}\lambda\right)^{\alpha_2+\cdots+\alpha_r+\lvert\gamma\rvert}\cdot \frac{1}{\lvert\alpha_1-\beta\rvert}\left(\frac{\lambda'}\lambda\right)^{\alpha_1-\beta}\\
	\nonumber&\xrightarrow[\lvert\alpha\rvert-\beta+\lvert\gamma\rvert\rightarrow\infty]{} 0,
	\end{align}
	noting that the set 
	\[\left\{\frac{1}{\lvert \nu\rvert}\left(\frac{\lambda'}\lambda\right)^\nu\mid \nu\in\bbN\right\}\]
	is bounded.
	Moreover we have
	\begin{align}
	\label{eq: convergence outside 2}&\left\lvert \frac{(-1)^{\lvert P\rvert}}{\beta-\alpha_1}b_{\alpha,\beta,\gamma,P,Q}\right\rvert \eta^{\lvert\alpha\rvert}\rho^\beta\lambda'^{\lvert\alpha\rvert+\beta+\lvert\gamma\rvert}\\
	\nonumber&=\lvert b_{\alpha,\beta,\gamma,P,Q}\rvert\eta^{\lvert\alpha\rvert}\delta^\beta\lambda^{\lvert\alpha\rvert-\beta+\lvert\gamma\rvert}\cdot \left(\frac{\lambda'^2\rho}{\delta}\right)^\beta\left(\frac{\lambda'}\lambda\right)^{\alpha_2+\cdots+\alpha_r+\lvert\gamma\rvert}\cdot \frac{1}{\lvert\alpha_1-\beta\rvert}\left(\frac{\lambda'}\lambda\right)^{\alpha_1-\beta}\\
	\nonumber&\xrightarrow[\lvert\alpha\rvert+\beta+\lvert\gamma\rvert\rightarrow\infty]{} 0,
	\end{align}
	since $\lvert\alpha\rvert-\beta+\lvert\gamma\rvert\rightarrow\infty$ when $\lvert\alpha\rvert+\beta+\lvert\gamma\rvert\rightarrow\infty$.
	
	The convergence of \eqref{eq: convergence inside 2} and \eqref{eq: convergence outside 2} show that the series
	\[\sum_{\alpha\in\bbN^r}\sum_{\beta<0}\sum_{\gamma\in\bbN}\frac{(-1)^{\lvert P\rvert}}{\beta-\alpha_1} b_{\alpha,\beta,\gamma,P,Q}\underline{s}^\alpha s_{r+1}^\beta \underline{y}^\gamma \]
	convergents on $\bbD^r_{\leq\eta\lambda'}\times\bbD^1_{[\delta\lambda'^{-1},\rho\lambda']}\times\bbD^h_{\leq\lambda'}$ for any $P$ and $Q$, and hence define an element of $\Gamma(\frZ^{(\delta)}_{I,\eta}\cap\frZ_{I,\eta,\rho},\cO_{\frZ_I})$.
	Thus $\cH(\theta)$ is well-defined as an element of $\Gamma(\frZ_{I,\eta}^{(\delta)}\cap\frZ_{I,\eta,\rho},\omega^{i-1}_{\cZ_I/\cS,\bbQ})$.
\end{proof}

\begin{corollary}\label{cor: coh of Y_I}
	Let $Y$ be a strictly semistable log scheme over $k^0$ with horizontal divisor $D$, and let $Y=\bigcup_{i\in\Upsilon}Y_i$ be the decomposition into irreducible components.
	For a non-empty subset $I\subset\Upsilon$, we define $Y_I$ and $U_I^\heartsuit$ as in Proposition \ref{prop: coh of Y_I}.
	Let $U_I^{\heartsuit,\varnothing}$ be the scheme $U_I^\heartsuit$ endowed with the trivial log structure.
	Let $\bsH^\ast_I=\bigoplus_{j\geq 0}\bsH_I^j$ be the exterior algebra of the $F$-vector space freely generated by symbols $d\log x_i$ for $i\in I$, and let $\overline{\bsH}_I^\ast=\bigoplus_{j\geq 0}\overline{\bsH}_I^j$ be its quotient algebra defined by the relation $\sum_{i\in I}d\log x_i=0$.
	
	We suppose that
	\begin{enumerate}\renewcommand{\labelenumi}{(\roman{enumi})}
	\item $Y$ admits an affine open covering indexed by a countable set,
	\item $H^m_\rig(U_I^{\heartsuit,\varnothing}/W^\varnothing)$ is finite dimensional for any $I$ and $m$,
	\item a strict log smooth morphism $Y\rightarrow k^0(n,m)$ exists globally.
	\end{enumerate}
	Then there exists the following commutative diagram
	\begin{equation}\label{eq: diag Y and U}\xymatrix{
	H^\ast_\rig(U_I^{\heartsuit,\varnothing}/W^\varnothing)\otimes_F \bsH_I^\ast\ar[r]^-\cong\ar[d]
	&H^\ast_\rig(U_I^\heartsuit/W^\varnothing)\ar[d]
	&H^\ast_\rig(Y_I/W^\varnothing)\ar[d]\ar[l]_-\cong\\
	H^\ast_\rig(U_I^{\heartsuit,\varnothing}/W^\varnothing)\otimes_F \overline{\bsH}_I^\ast \ar[r]^-{\cong}\ar[d]
	&H^\ast_\HK(U_I^\heartsuit)\ar[d]
	&H^\ast_\HK(Y_I)\ar[d]\ar[l]_-\cong\\
	H^\ast_\rig(U_I^{\heartsuit,\varnothing}/W^\varnothing)\otimes_F \overline{\bsH}_I^\ast\otimes_F F\{s\}\ar[r]^-{\cong}\ar[d]
	&H^\ast_\rig(U_I^\heartsuit/\cS)\ar[d]
	&H^\ast_\rig(Y_I/\cS)\ar[d]\ar[l]_-\cong\\
	H^\ast_\rig(U_I^{\heartsuit,\varnothing}/W^\varnothing)\otimes_F \overline{\bsH}_I^\ast\ar[r]^-{\cong}
	&H^\ast_\rig(U_I^\heartsuit/W^0)
	&H^\ast_\rig(Y_I/W^0)\ar[l]_-\cong
	}\end{equation}
	where all horizontal maps are isomorphisms.
\end{corollary}
\begin{proof}
	The isomorphisms on the right are given by Proposition \ref{prop: coh of Y_I}.
	We will prove the existence of the isomorphisms on the left.
	For $i=1,\ldots,n$, we denote by $1_i\in\bbN^n$ the element whose $i$\textsuperscript{th} component is one and all other components are zero. 
	Denote by $s_i$ the image of $1_i$ under the composition $\bbN^n\rightarrow \Gamma(k^0(n,m),\cO_{k^0(n,m)})\rightarrow\Gamma(Y,\cO_Y)$.
	By permutation we may assume that $s_1,\ldots,s_r$ correspond to elements of $I$ and $s_{r+1},\ldots,s_n$ are invertible in $U_I^\heartsuit$.
	Set $x_i:=s_i$ for $i=1,\ldots,r-1$ and $x_r:=s_rs_{r+1}\cdots s_n$. 
	Then the map $\bbN^r\rightarrow\Gamma(U_I^\heartsuit,\cO_{U_I^\heartsuit});\ 1_i\mapsto x_i$ gives a chart of the log structure of $U_I^\heartsuit$.
	
	When $U_I^{\heartsuit,\varnothing}$ is affine, we can take a $p$-adic smooth weak formal scheme $\cU$ over $W$ which lifts $U_I^{\heartsuit,\varnothing}$.
	We endow $\cU$ with the trivial log structure.
	Then the de Rham cohomology of the dagger space associated to $\cU$ computes $R\Gamma_\rig(U^{\heartsuit,\varnothing}/W^\varnothing)$.
	Moreover, the cohomologies of $U_I^\heartsuit$ appearing in the diagram \eqref{eq: diag Y and U} are computed as the cohomologies on the dagger space associated to $\cS^r\times\cU$.
	Thus by Proposition \ref{prop: Kunneth 1} we obtain the left horizontal isomorphisms in \eqref{eq: diag Y and U}.
	
	For the case that $U_I^{\heartsuit,\varnothing}$ is not necessarily affine, we may give the left horizontal isomorphisms in \eqref{eq: diag Y and U} by the same arguments as Proposition \ref{prop: Kunneth 1}.
\end{proof}

\begin{remark}
The isomorphisms in the bottom row in the diagram \eqref{eq: diag Y and U} for the case that the horizontal divisor is empty has been proved by Gro\ss e-Kl\"{o}nne \cite[Lem.\ 4.4]{GK3}.
\end{remark}

Let $Y$ be a strictly semistable log scheme over $k^0$ with horizontal divisor $D$. 
Then $U:=Y\backslash D$ is strictly semistable with empty horizontal divisor.
The pair $(U,Y)$ is a strictly semistable $k^0$-log scheme with boundary in the sense of \cite[Def.~2.43]{EY}.
Intuitively, the different rigid cohohomology theories for $U$ and $Y$ should coincide. 
Indeed, we have the following.

\begin{proposition}\label{prop: horizontal divisor}
Let $Y$ and $U$ be as above.
The natural morphisms
\begin{align*}
 R\Gamma_{\rig}(Y/V^\sharp)& \rightarrow R\Gamma_{\rig}(U/V^\sharp)  \\
 R\Gamma_{\rig}(Y/W^0) &\rightarrow R\Gamma_{\rig}(U/W^0)  \\
 R\Gamma_{\rig}(Y/W^\varnothing) &\rightarrow R\Gamma_{\rig}(U/W^\varnothing) \\
 R\Gamma_{\rig}(Y/\cS) &\rightarrow R\Gamma_{\rig}(U/\cS)\\
 R\Gamma_{\HK}(Y) &\rightarrow R\Gamma_{\HK}(U) 
\end{align*}
are quasi-isomorphisms.
\end{proposition}

\begin{proof}
Similarly to \cite[Lem.~4.4]{GK3} overconvergence is crucial for the statement to be true. 
The first two quasi-isomorphisms were already proved in \cite[Cor.~3.4]{EY}. 

To prove the third quasi-isomorphism, we may assume that there exists a 
log rigid datum $(\cZ,i)$ for $Y$ over $k^\varnothing\hookrightarrow W^\varnothing$ due to Proposition \ref{prop: Mayer-Vietoris}.
By Propoistion \ref{prop: coh of Y_I}, we obtain quasi-isomorphisms
	\begin{equation}\label{eq: Y_I and U_I}
	R\Gamma_\rig(Y_I/W^\varnothing)\xrightarrow{\cong} R\Gamma_\rig(U_I/W^\varnothing)\xrightarrow{\cong} R\Gamma_\rig(U^\heartsuit_I/W^\varnothing).
	\end{equation}
We denote by $\frZ$ the dagger space associated to $\cZ$.
Let $\frU$, $\frU_I$, and $\frZ_I$ be the dagger spaces associated to the  
exactifications of $\cZ$ along $U$, $U_I$, and $Y_I$, respectively.
Then the $\frU_I$'s and $\frZ_I$'s give admissible coverings of $\frU$ and $\frZ$, respectively.
Moreover, the restriction of $\omega^\bullet_{\cZ/W^\varnothing,\bbQ}$ to $\frZ_I$ coincides with $\omega^\bullet_{\cZ_I/W^\varnothing,\bbQ}$ by Propositions \ref{prop: fundamental properties of omega} and \ref{prop: exactification is log etale}. 
Thus $R\Gamma_\rig(Y/W^\varnothing)\rightarrow R\Gamma_\rig(U/W^\varnothing)$ is a quasi-isomorphism by \eqref{eq: Y_I and U_I} and cohomological descent for admissible coverings of dagger spaces.
The fourth and fifth quasi-isomorphisms also follow by the same arguments.
\end{proof}

\begin{remark}
	To be precise, the log rigid cohomology $R\Gamma_\rig(Y/V^\sharp)$ of a log scheme $Y$ over $k^0$ depends on the choice of a unifomrizer $\pi\in V$, because the inclusion $k^0\hookrightarrow V^\sharp$ depends on that choice.
	However it does not play an important role here.
\end{remark}

We will now define the monodromy operator on the log rigid cohomology over $\cS$ and $W^0$.
Let $(Y,\cZ,i)$ be a log rigid triple over $k^0\hookrightarrow\cS$.
By Proposition \ref{prop: fundamental properties of omega} we have an exact sequence of locally free sheaves
\[0\rightarrow \cO_\cZ d\log s\rightarrow \omega^1_{\cZ/W^\varnothing}\rightarrow\omega^1_{\cZ/\cS}\rightarrow 0,\]
which locally splits.
Thus we have a short exact sequence
\begin{equation}\label{eq: ses S}
0\rightarrow \omega^\bullet_{\cZ/\cS,\bbQ}[-1]\xrightarrow{\wedge d\log s}\omega^\bullet_{\cZ /W^\varnothing,\bbQ}\rightarrow\omega^\bullet_{\cZ /\cS,\bbQ}\rightarrow 0.
\end{equation}
If we set $\cY:=\cZ\times_\cS W^0$, then we have $\omega^1_{\cZ/\cS}\otimes_{\cO_\cZ}\cO_\cY=\omega^1_{\cY/W^0}$ again by Proposition \ref{prop: fundamental properties of omega}.
Thus\eqref{eq: ses S} induces a short exact sequence
\begin{equation}\label{eq: ses W} 0\rightarrow \omega^\bullet_{\cY/W^0,\bbQ}[-1]\xrightarrow{\wedge d\log s}\wt\omega^\bullet_{\cY,\bbQ}\rightarrow\omega^\bullet_{\cY/W^0,\bbQ}\rightarrow 0.
\end{equation}

\begin{definition}
Let $Y$ be a fine log scheme over $k^0$ and consider a simplicial system of log rigid triples $(U_\bullet,\cZ_\bullet,i_\bullet)$ over $k^0\hookrightarrow\cS$ as in Definition \ref{def: HK coh global}.
Then the exactness of \eqref{eq: ses S} for $\cZ_m$  and \eqref{eq: ses W} for $\cY_m:=\cZ_m\times_\cS W^0$ induce distinguished triangles
\begin{align*}
&R\Gamma_\rig(Y/\cS)[-1]\rightarrow R\Gamma_\rig(Y/W^\varnothing)\rightarrow R\Gamma_\rig(Y/\cS)\rightarrow R\Gamma_\rig(Y/\cS),\\
&R\Gamma_\rig(Y/W^0)[-1]\rightarrow s(R\Gamma(\frY_\bullet,\wt\omega_{\cY,\bbQ}^\bullet))\rightarrow R\Gamma_\rig(Y/W^0)\rightarrow R\Gamma_\rig(Y/W^0)
\end{align*}
where $\frY_m$ denotes the dagger space associated to $\cY_m$ for any $m\in\bbN$.
The connecting homomorphisms respectively induce endomorphisms in the derived category on $R\Gamma_\rig(Y/\cS)$ and $R\Gamma_\rig(Y/W^0)$, 
which we call the \textit{monodromy operators}.
\end{definition}

Next we give relations between the log rigid cohomology, the Hyodo--Kato cohomology, and the cohomologies of some variants of the Kim--Hain complex.

\begin{proposition}\label{prop: HK and rig} 
	Let $(Y,\cZ,i,\phi)$ be a  log rigid $F$-quadruple on $(k^0\hookrightarrow\cS,\sigma)$, and suppose that $Y$ is strictly semistable.
	Let $\cY: = \cZ\times_\cS W^0$ and denote its associated dagger space by $\frY$.
	Let $\tau\colon\frY\hookrightarrow\frZ$ be the canonical closed immersion.
	\begin{enumerate}
	\item\label{item: HK rig 1} The natural map $\omega^\bullet_{\cZ/W^\varnothing,\bbQ} \rightarrow  \tau_*\widetilde{\omega}^\bullet_{\cY,\bbQ}$
		induces a quasi-isomorphism
			\[R\Gamma_\rig(Y/W^\varnothing) = R\Gamma(\frZ,\omega^\bullet_{\cZ/W^\varnothing,\bbQ}) \xrightarrow{\cong} R\Gamma(\frY,\widetilde{\omega}^\bullet_{\cY,\bbQ}),\]
		which is compatible with Frobenius operators.
	\item\label{item: HK rig 2} We have canonical quasi-isomorphisms
			\[R\Gamma_\HK(Y)\xleftarrow{\cong} \hocolim_mR\Gamma(\frZ,\omega^\bullet_{\cZ/ W^\varnothing,\bbQ}[u]_m) \xrightarrow{\cong} \hocolim_mR\Gamma(\frY,\widetilde{\omega}^\bullet_{\cY,\bbQ}[u]_m),\]
		which are compatible with Frobenius and monodromy operators.
		Here the second quasi-isomorphism is induced by tnatural map $\omega^\bullet_{\cZ/W^\varnothing,\bbQ}[u] \rightarrow  \tau_*\widetilde{\omega}^\bullet_{\cY,\bbQ}[u]$.
	\item\label{item: HK rig 3} The maps $\omega^\bullet_{\cZ/W^\varnothing,\bbQ}\llbracket u\rrbracket \rightarrow \omega^\bullet_{\cZ/\cS,\bbQ}$ and $\widetilde{\omega}_{\cY,\bbQ}^\bullet\llbracket u\rrbracket \rightarrow \omega^\bullet_{\cY/W^0,\bbQ}$ defined by $u^{[i]}\mapsto 0$ for $i>0$ 
		induce quasi-isomorphisms
			\begin{eqnarray*}
			 & & R\Gamma(\frZ,\omega^\bullet_{\cZ/W^\varnothing,\bbQ}\llbracket u\rrbracket) \xrightarrow{\cong}R\Gamma(\frZ,\omega^\bullet_{\cZ/\cS,\bbQ}) = R\Gamma_\rig(Y/\cS),\\
			 & & R\Gamma(\frY,\widetilde{\omega}^\bullet_{\cY,\bbQ}\llbracket u\rrbracket) \xrightarrow{\cong}R\Gamma(\mathfrak{Y},\omega^\bullet_{\cY/W^0,\bbQ}) = R\Gamma_\rig(Y/W^0),
			\end{eqnarray*}
		which are compatible with Frobenius and monodromy operators.
	\item\label{item: HK rig 4} 
	We have canonical quasi-isomorphisms
			\[R\Gamma_\HK(Y) \cong
			\hocolim_mR\Gamma(\frY,\widetilde{\omega}^\bullet_{\cY,\bbQ}[u]_m) \xrightarrow{\cong}R\Gamma(\frY,\widetilde{\omega}^\bullet_{\cY,\bbQ}\llbracket u\rrbracket)
			\xrightarrow{\cong}R\Gamma_\rig(Y/W^0),\]
		which are compatible with Frobenius and monodromy operators.
	\end{enumerate}
\end{proposition}

\begin{proof}
	\eqref{item: HK rig 1} The compatibility with Frobenius operators is obvious. To prove the quasi-isomorphism, it suffices to prove that the cohomology groups of $s\cdot \omega^\bullet_{\cZ/W^\varnothing,\bbQ}=\Ker (\omega^\bullet_{\cZ/W^\varnothing,\bbQ}\rightarrow\tau_*\widetilde{\omega}^\bullet_{\cY,\bbQ})$ vanish.
	Since we may work locally on $\frZ$, it suffices to show the assertion with $Y$ replaced by $Y_I$ for the same reason as in the proof of Proposition \ref{prop: HK vs colim} \eqref{item: ss hocolim}. 
	Moreover, that $R\Gamma(\frZ,s\cdot\omega^\bullet_{\cZ/W^\varnothing,\bbQ})$ with Frobenius action is independent of the choice of the log rigid $F$-datum $(\cZ,\phi)$ can be proved by the same argument as Proposition \ref{prop: log rigid coh}.
	Thus, similarly to the proof of Proposition \ref{prop: coh of Y_I}, we may assume that we have a homeomorphic exact closed immersion $i\colon Y_I\hookrightarrow\cZ_I=\Spwf W\llbracket s_1,\ldots,s_r\rrbracket\times\cV_I$ where $\cV_I$ and $\cZ_I$ are constructed as in Construction \ref{const}, respectively.
	Let $\frZ_I$ and $\frV_I$ be the dagger spaces associated to $\cZ_I$ and $\cV_I$, respectively.
	Recall that we have $\frZ_I=\frV_I\times\bbD^r_{<1}$, and let $\frZ_{I,\eta}=\frV_I\times\bbD^r_{\leq\eta}$ for $\eta<1$.
Since the $\frZ_I$'s form an admissible covering of $\frZ$ and the $\frZ_{I,\eta}$'s form an admissible covering of $\frZ_I$, it suffices to show that $H^n(\frZ_{I,\eta},s\cdot\omega^\bullet_{\cZ_I/W^\varnothing,\bbQ})=0$.
	
	Let $A:=\Gamma(\frV_I,\cO_{\frV_I})$ and $B:=\Gamma(\frZ_{I,\eta},\cO_{\frZ_{I,\eta}})$.
	Note that $B$ is an $A$-subalgebra of $A\llbracket s_1,\ldots,s_r\rrbracket$, and we have
	\begin{equation}\label{eq: decomp omega}
	\Gamma(\frZ_{I,\eta},\omega^1_{\cZ_I/W^\varnothing,\bbQ})=\bigoplus_{1\leq a\leq n}Bd\log s_a\oplus \bigoplus_{1\leq b\leq m}Bd\log t_b \oplus\bigoplus_{1\leq c\leq\ell}Bd\log x_c.
	\end{equation}
	Let $\cV'_I$ be the weak formal log scheme whose underlying weak formal scheme is $\cV_I$ and whose log structure is defined by $s_{r+1},\ldots,s_n,t_1,\ldots,t_m$.
	
	For a non-empty subset $J\subset\{1,\ldots,r\}$, we define $d\log\underline{s}_J:=d\log s_{j_1}\wedge\cdots\wedge d\log s_{j_r}$ where we write $J=\{j_1,\ldots,j_r\}$ with $j_1<\cdots<j_r$.
	We set $d\log \underline{s}_\emptyset:=0$.
	For $J\subset\{1,\ldots,r\}$ and $\mu\in\{1,\ldots,r\}\setminus J$, the composition
	\[Bd\log\underline{s}_J\hookrightarrow\Gamma(\frZ_{I,\eta},\omega^{\lvert J\rvert}_{\cZ_I/W^\varnothing,\bbQ})\xrightarrow{d}\Gamma(\frZ_{I,\eta},\omega^{\lvert J\rvert+1}_{\cZ_I/W^\varnothing,\bbQ})\rightarrow Bd\log\underline{s}_{J\cup\{\mu\}}\]
	induces an $A$-linear map $\partial_\mu\colon Bs_1\cdots s_rd\log\underline{s}_J\rightarrow Bs_1\cdots s_rd\log\underline{s}_{J\cup\{\mu\}}$.
	For $(i_1,\ldots,i_r)\in\bbZ^r$, we let
	\[J(i_1,\ldots,i_r):=\begin{cases}\{j\in\{1,\ldots,r\}\mid i_j=1\}&(\text{if $i_\nu\in\{0,1\}$ for any $\nu=1,\ldots,r$})\\
	\emptyset&(\text{otherwise}).
	\end{cases}\]

	Then $R\Gamma(\frZ_{I,\eta},s\cdot\omega^\bullet_{\cZ_I/W^\varnothing,\bbQ})=\Gamma(\frZ_{I,\eta},s\cdot\omega^\bullet_{\cZ_I/W^\varnothing,\bbQ})$ is given as the total complex of the $(r+1)$-ple complex $\Gamma^{\bullet,\ldots,\bullet}$ whose $(i_0,i_1,\ldots,i_r)$-component is
	\[\Gamma^{i_1,\ldots,i_{r+1}}:=\Gamma(\frV_I,s_{r+1}\cdots s_n\cdot\omega^{i_{r+1}}_{\cV'_I/W^\varnothing,\bbQ})\otimes_ABs_1\cdots s_rd\log \underline{s}_{J(i_1,\ldots,i_r)},\]
	The $\mu$\textsuperscript{th} differential map $\Gamma^{i_1,\ldots,0,\ldots,i_{r+1}}\rightarrow \Gamma^{i_1,\ldots,1,\ldots,i_{r+1}}$ for $\mu=1,\ldots,r$ is induced by the $A$-linear map $\partial_\mu$.
	The $(r+1)$\textsuperscript{st} differential map $\Gamma^{i_1,\ldots,i_{r+1}}\rightarrow\Gamma^{i_1,\ldots,i_{r+1}+1}$ is defined by
	\[\alpha\otimes s_1^{\nu_1}\cdots s_r^{\nu_r}d\log\underline{s}_{J(i_1,\ldots,i_r)}\mapsto d\alpha\otimes s_1^{\nu_1}\cdots s_r^{\nu_r}d\log\underline{s}_{J(i_1,\ldots,i_r)} \]
	for $\alpha\in\Gamma(\frV_I,s_{r+1}\cdots s_n\cdot\omega^{i_{r+1}}_{\cV'_I/W^\varnothing,\bbQ})$ and integers $\nu_1,\ldots,\nu_r\geq 1$.
	
	The proof of the exactness of \eqref{eq: relative disk} shows that $\partial_1\colon Bs_1\cdots s_rd\log\underline{s}_J\rightarrow Bs_1\cdots s_rd\log\underline{s}_{J\cup\{1\}}$ is an isomorphism for any $J\subset \{2,\ldots,r\}$.
	This implies that $\Gamma^{0,i_2,\ldots,i_{r+1}}\rightarrow\Gamma^{1,i_2,\ldots,i_{r+1}}$ is an isomorphism, and hence that $R\Gamma(\frZ_{I,\eta},s\cdot\omega^\bullet_{\cZ_I/W^\varnothing,\bbQ})$ is acyclic.
	This finishes the proof of \eqref{item: HK rig 1}.
	
	\eqref{item: HK rig 2} Recall that we have
	\begin{align*}
	R\Gamma_+(\frZ,\cC^{\bullet,\bullet}_\frZ)=\hocolim_mR\Gamma(\frZ,\omega^\bullet_{\cZ/W^\varnothing,\bbQ}[u]_m)&&\text{and}&&R\Gamma(\frZ,\cC^{\bullet,-m}_\frZ)=R\Gamma(\frZ,\omega^\bullet_{\cZ/W^\varnothing,\bbQ})[-m],
	\end{align*}
	and the former is moreover quasi-isomorphic to $R\Gamma_\HK(Y)$ by Proposition \ref{prop: HK vs colim}.
	Similarly, we also have
	\begin{align*}
	R\Gamma_+(\frZ,\tau_\ast\cC_\frY^{\bullet,\bullet})=\hocolim_mR\Gamma(\frY,\wt\omega^\bullet_{\cY,\bbQ}[u]_m)&&\text{and}&&R\Gamma(\frZ,\tau_\ast\cC^{\bullet,-m}_\frY)=R\Gamma(\frY,\wt\omega^\bullet_{\cY,\bbQ})[-m].
	\end{align*}
	Therefore we get the assertion by \eqref{item: HK rig 1}  and Lemma \ref{lem: acyclic assembly lemma for sheaf}\eqref{item: sheaf assembly row}.
	The compatibilities with the Frobenius and monodromy operators are obvious.
	
	\eqref{item: HK rig 3}  From the exact sequences \eqref{eq: ses S} and \eqref{eq: ses W}
	we obtain exact sequences
		\begin{eqnarray*}
		 & & 0 \rightarrow \omega^0_{\cZ/W^\varnothing,\bbQ}u^{[i]} \xrightarrow{d\log s\wedge}\omega^1_{\cZ/W^\varnothing,\bbQ}u^{[i-1]} \xrightarrow{d\log s\wedge}\cdots \xrightarrow{d\log s\wedge}\omega^i_{\cZ/W^\varnothing,\bbQ}u^{[0]} \rightarrow \omega^i_{\cZ/\cS,\bbQ} \rightarrow  0,\\
		 & & 0 \rightarrow \widetilde{\omega}^0_{\cY,\bbQ}u^{[i]} \xrightarrow{d\log s\wedge}\widetilde{\omega}^1_{\cY,\bbQ}u^{[i-1]} \xrightarrow{d\log s\wedge}\cdots \xrightarrow{d\log s\wedge}\widetilde{\omega}^i_{\cY,\bbQ}u^{[0]} \rightarrow \omega^i_{\cY/W^0,\bbQ} \rightarrow  0
		\end{eqnarray*}
	for any $i\geq 0$.
	In other words, we have quasi-isomorphisms $\cC_\frZ^{i,\bullet} \xrightarrow{\cong}\omega^i_{\cZ/\cS,\bbQ}$ and $\cC_\frY^{i,\bullet} \xrightarrow{\cong}\omega^i_{\cY/W^0,\bbQ}$.
	In particular $R\Gamma(\frZ,\cC_\frZ^{i,\bullet})\rightarrow R\Gamma(\frZ,\omega^i_{\cZ/\cS,\bbQ})$ and $R\Gamma(\frY,\cC_\frY^{i,\bullet})\rightarrow R\Gamma(\frY,\omega^i_{\cY/W^0,\bbQ})$ are quasi-isomorphisms.
	Since we have
	\begin{align*}
	R\Gamma_\times(\frZ,\cC_\frZ^{\bullet,\bullet})=R\Gamma(\frZ,\omega^\bullet_{\cZ/W^\varnothing,\bbQ}\llbracket u\rrbracket)
	&&\text{and}&&
	R\Gamma_\times(\frY,\cC_\frY^{\bullet,\bullet})=R\Gamma(\frY,\wt\omega_{\cY,\bbQ}^\bullet\llbracket u\rrbracket),
	\end{align*}
	we obtain the statement by Lemma \ref{lem: acyclic assembly lemma for sheaf} \eqref{item: sheaf assembly column}.
	
	The compatibility with Frobenius operators is obvious.
	To see the compatibility with the monodromy, consider a morphism $\omega^\bullet_{\cZ/W^\varnothing,\bbQ}\llbracket u\rrbracket[-1]\rightarrow\omega^\bullet_{\cZ/W^\varnothing,\bbQ}$ defined by $u^{[0]}\mapsto d\log s$ and $u^{[i]}\mapsto 0$ for $i>0$.
	Then the maps
	$\omega^m_{\cZ/W^\varnothing,\bbQ}\llbracket u\rrbracket\oplus\omega^m_{\cZ/W^\varnothing,\bbQ}\rightarrow\omega^m_{\cZ/W^\varnothing,\bbQ}\llbracket u\rrbracket$ for $m\geq 0$ defined by
	$(\sum_{i\geq 0}\eta_iu^{[i]},\xi)\mapsto \xi+\sum_{i\geq 0}\eta_iu^{[i+1]}$ for $\eta_i,\xi\in\omega^m_{\cZ/W^\varnothing,\bbQ}\llbracket u\rrbracket$
	induce an isomorphism
	\[\Cone(\omega^\bullet_{\cZ/W^\varnothing,\bbQ}\llbracket u\rrbracket[-1]\rightarrow\omega^\bullet_{\cZ/W^\varnothing,\bbQ})\xrightarrow{\cong}\omega^\bullet_{\cZ/W^\varnothing,\bbQ}\llbracket u\rrbracket.\]
	Therefore we have a morphism of distinguished triangles
	\[\xymatrix{
	\omega^\bullet_{\cZ/W^\varnothing,\bbQ}\llbracket u\rrbracket[-1]\ar[r]\ar[d]&\omega^\bullet_{\cZ/W^\varnothing,\bbQ}\ar[r]\ar[d]&\omega^\bullet_{\cZ/W^\varnothing,\bbQ}\llbracket u\rrbracket\ar[d]\ar[r]&\\
	\omega^\bullet_{\cZ/\cS,\bbQ}[-1]\ar[r]&\omega^\bullet_{\cZ/W^\varnothing,\bbQ}\ar[r]&\omega^\bullet_{\cZ/\cS,\bbQ}\ar[r]&.
	}\]
	The connecting homomorphism induced by the upper triangle is computed by $N\colon u^{[i]}\mapsto u^{[i-1]}$, and that induced by the lower triangle defines the monodromy operator on $R\Gamma_\rig(Y/\cS)$.
	Thus we obtain the desired compatibility.

	\eqref{item: HK rig 4} The first and third quasi-isomorphisms and their compatibility with the Frobenius and monodromy operators are given by \eqref{item: HK rig 2} and \eqref{item: HK rig 3}.
	Moreover it is clear that the second morphism is compatible with the Frobenius and monodromy operators.
	Thus it suffices to show that the composition $R\Gamma_\HK(Y)\rightarrow R\Gamma_\rig(Y/W^0)$ is a quasi-isomorphism.
	This can be reduced to showing that $R\Gamma_\HK(Y_I)\rightarrow R\Gamma_\rig(Y_I/W^0)$ is a quasi-isomorphism for each $I$, similarly to the proof of Proposition \ref{prop: horizontal divisor}.
	But this immediately follows from Corollary \ref{cor: coh of Y_I}.
\end{proof}

\begin{remark}\label{rem: HK coh} $\quad$
	\begin{enumerate}
	\item\label{item: HK coh of Y_I} As in Proposition \ref{prop: coh of Y_I}, let $Y_I$ be an intersection of irreducible components of $Y$ endowed with the pull-back log structure.
		Let $\cZ_I$ be the exactification of $\cZ$ along $Y_I$ and set $\cY_I:=\cZ_I\times_{\cS}W^0$.
		Denote by $\frZ_I$ and $\frY_I$ the dagger spaces associated to $\cZ_I$ and $\cY_I$, respectively.
		Then the statements of Proposition \ref{prop: HK and rig} remains valid if we replace $Y$, $\frZ$, $\frY$ by $Y_I$, $\frZ_I$, $\frY_I$, respectively.
		Indeed, \eqref{item: HK rig 1} and \eqref{item: HK rig 4} are proved by reducing to this case, and the proofs of \eqref{item: HK rig 2} and \eqref{item: HK rig 3} work for this case exactly in the same way.
	\item In the original paper of Kim and Hain \cite{KH}, they introduced a de Rham--Witt counterpart of $\widetilde{\omega}^\bullet_{\cY,\bbQ}[u]$ and stated that its cohomology computes the log crystalline cohomology over $W^0$.
		However the proof has a gap, and works if one replaces their Kim--Hain complex by the completed version.
		The comparison isomorphism $R\Gamma_\rig(Y/W^0)\cong R\Gamma_\cris(Y/W^0)$ and the exact sequence \eqref{eq: ses W} together imply the comparison isomorphisms of $R\Gamma(\frY,\wt\omega^\bullet_{\cY,\bbQ})$ and $\hocolim_mR\Gamma(\frY,\wt\omega^\bullet_{\cY,\bbQ}[u]_m)$ with their respective de Rham--Witt counterparts. 
		Then Proposition \ref{prop: HK and rig} \eqref{item: HK rig 4} fills the gap of \cite{KH}.
	\item It is worth to point out that the map $\omega^\bullet_{\cZ/W^\varnothing,\bbQ}[u] \rightarrow  \omega^\bullet_{\cZ/W^\varnothing,\bbQ}\llbracket u\rrbracket$ is not a quasi-isomorphism.
		For example, for any $k\geq 0$, $(k^is^ku^{[i]})_i$ is a $0$-cocycle in $\omega^\bullet_{\cZ/W^\varnothing,\bbQ}\llbracket u\rrbracket$ and not coming from $\omega^\bullet_{\cZ/W^\varnothing,\bbQ}[u]$ if $k>0$.
		Indeed for $Y = k^0$ and $\cZ = \cS$ we have $H^0_\HK(k^0) = F$ and $H^0_\rig(k^0/\cS) = F\{s\}$, the ring of formal power series converging on $\lvert s\rvert<1$.
	\end{enumerate}
\end{remark}

The morphisms in Proposition \ref{prop: HK and rig} naturally glue and we obtain the following:

\begin{corollary}\label{cor: natural diagram}
Let $Y$ be strictly semistable.
Then there is a commutative diagram of natural morphisms
	\begin{equation}\label{eq: diag HK S}
	\xymatrix{R\Gamma_{\HK}(Y) \ar[r] \ar[d]^\sim & R\Gamma_{\rig}(Y/\cS)\ar[dl]_{j_0^\ast} \\
	R\Gamma_{\rig}(Y/W^0) &}
	\end{equation}
compatible with Frobenius operators and monodromy operators, such that the left vertical map is a quasi-isomorphism.
Moreover the upper horizontal map induces a quasi-isomorphism
\[R\Gamma_\HK(Y)\otimes_FF\{s\}\xrightarrow{\cong}R\Gamma_\rig(Y/\cS).\]
If $Y$ is quasi-compact, then $H^i_\rig(Y/W^0)$, $H^i_\rig(Y/W^\varnothing)$, and $H^i_\HK(Y)$ are finite dimensional $F$-vector spaces, and $H^i_\rig(Y/\cS)$ is a finite free $F\{s\}$-module for any $i\in\bbZ$.
\end{corollary}

\begin{proof}
The diagram, including the fact that the vertical morphism is a quasi-isomorphism, constitutes a summary of the gloabl version of Proposition \ref{prop: HK and rig} and thus follows immediately.

Next we will show that the horizontal morphism induces a quasi-isomorphism after tensoring with $F\{s\}$.
Working locally, we may assume that there exists a strictly semistable log rigid datum $(\cZ,i)$ for $Y$ over $k^0\hookrightarrow \cS$.
As before, let $Y_I$ be an intersection of irreducible components of $Y$ and $\cZ_I$ the exactification of $\cZ$ along $Y_I$.
Denote by $\frZ$ and $\frZ_I$ the dagger spaces associated to $\cZ$ and $\cZ_I$.
Since the $\frZ_I$'s cover $\frZ$, it suffices to show that
\[R\Gamma_\HK(Y_I)\otimes_FF\{s\}\xrightarrow{\cong}R\Gamma_\rig(Y_I/\cS)\]
is a quasi-isomorphism.
There exists a commutative diagram \eqref{eq: diag HK S} with $Y$ replaced by $Y_I$, and $R\Gamma_\HK(Y_I)\rightarrow R\Gamma_\rig(Y_I/W^0)$ is also a quasi-isomorphism as mentioned in Remark \ref{rem: HK coh}\eqref{item: HK coh of Y_I}.
Thus the claim follows from the computation of log rigid cohomology groups in Corollary \ref{cor: coh of Y_I}.

It remains to prove the finiteness of cohomology groups for a quasi-compact $Y$.
To prove the finiteness of $H^i_\rig(Y/W^\varnothing)$, we may suppose by Proposition \ref{prop: Mayer-Vietoris} that a strict smooth morphism $Y\rightarrow k^0(n,m)$ and a log rigid datum $(\cZ,i)$ for $Y$ over $\iota\colon k^\varnothing\hookrightarrow W^\varnothing$ exist.
Moreover, since the dagger space associated to $\cZ$ is covered by those associated to the exactifications along irreducible components, it suffices to show the finiteness of $H^i_\rig(Y_I/W^\varnothing)$, similarly to above.
By the finiteness of the non-logarithmic rigid cohomology \cite[Cor.\,3.8]{GK2}, all conditions (i)--(iii) of Corollary \ref{cor: coh of Y_I} are satisfied, and we deduce the finiteness of the cohomology by the corollary.
The finiteness of $H^i_\rig(Y/W^0)$ and $H^i_\HK(Y)$ are proved similarly, and it follows that $H^i_\rig(Y/\cS)$ is finite free over $F\{s\}$ by the above quasi-isomorphism.
(Note that the finiteness of $H^i_\rig(Y/W^0)$ has been proved in \cite[Thm.\,5.3]{GK3}.)
\end{proof}

For a uniformiser $\pi\in V$, let $j_\pi\colon V^\sharp\hookrightarrow\cS$ be the exact closed immersion defined by $s\mapsto\pi$.

\begin{lemma}\label{lem: base change from S}
	Let $Y$ be a strictly semistable log scheme over $k^0$ and let $\pi\in K$ be a uniformiser.
	We suppose that
	\begin{enumerate}\renewcommand{\labelenumi}{(\roman{enumi})}
	\item $Y$ admits an affine open covering indexed by a countable set,
	\item $H_\HK^m(Y)$ is finite dimensional for any $m$.
	\end{enumerate}
	Then the natural morphisms
		\begin{align*}
		j_0^*\colon R\Gamma_\rig(Y/\cS)\rightarrow R\Gamma_\rig(Y/W^0) &&\text{and}&&
		j_\pi^*\colon R\Gamma_\rig(Y/\cS)\rightarrow R\Gamma_\rig(Y/V^\sharp)
		\end{align*}
	induce isomorphisms
		\begin{align*}
		H_\rig^k(Y/\cS)\otimes_{F\{s\}}F \xrightarrow{\cong} H_\rig^k(Y/W^0) &&\text{and}&&
		H_\rig^k(Y/\cS)\otimes_{F\{s\}}K \xrightarrow{\cong} H_\rig^k(Y/V^\sharp)
		\end{align*}
	for any $k$.
\end{lemma}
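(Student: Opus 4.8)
The plan is to realise $H^k_\rig(Y/\cS)$ as a finitely generated, torsion-free $F\{s\}$-module — here $F\{s\}$ is the ring of global functions on the dagger space $\frS$ associated to $\cS$, the open unit disc over $F$ — and to deduce both isomorphisms from short exact sequences of de Rham complexes together with the vanishing of the relevant first Tor-groups. Recall that $F\{s\}/(s)=F$ and $F\{s\}/(E(s))=K$, where $E(s)\in W[s]$ is the Eisenstein polynomial of $\pi$ over $F$, so that $-\otimes_{F\{s\}}F$ and $-\otimes_{F\{s\}}K$ are reduction modulo $s$ and modulo $E(s)$ respectively.

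First I would reduce, by cohomological descent, to the case where a strictly semistable log rigid datum $(\cZ,i)$ for $Y$ exists with $\cZ$ affine, so that $R\Gamma_\rig(Y/\cS)=R\Gamma(\frZ,\omega^\bullet_{\cZ/\cS,\bbQ})$; all maps in sight are natural over $\frS$, so this is harmless. Since $(\cZ,i)$ is strictly semistable, $\cZ\to\cS$ is flat, hence $\cO_\frZ$, and therefore each $\omega^i_{\cZ/\cS,\bbQ}$ (locally free over $\cO_\frZ$ by log smoothness), is flat over $F\{s\}$, in particular $s$- and $E(s)$-torsion-free. Because $W^0\hookrightarrow\cS$ ($s\mapsto 0$) and $V^\sharp\hookrightarrow\cS$ ($s\mapsto\pi$) are closed immersions, base change of log differential complexes identifies $\omega^\bullet_{\cY/W^0,\bbQ}=\omega^\bullet_{\cZ/\cS,\bbQ}/(s)$ and $\omega^\bullet_{\cZ_\pi/V^\sharp,\bbQ}=\omega^\bullet_{\cZ/\cS,\bbQ}/(E(s))$, where $\cZ_\pi:=\cZ\times_\cS V^\sharp$. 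Hence on $\frZ$ there are short exact sequences of complexes $0\to\omega^\bullet_{\cZ/\cS,\bbQ}\xrightarrow{\cdot s}\omega^\bullet_{\cZ/\cS,\bbQ}\to\omega^\bullet_{\cY/W^0,\bbQ}\to 0$ and the same with $E(s)$ in place of $s$, as well as $0\to\omega^\bullet_{\cZ/\cS,\bbQ}[-1]\xrightarrow{\wedge d\log s}\omega^\bullet_{\cZ/W^\varnothing,\bbQ}\to\omega^\bullet_{\cZ/\cS,\bbQ}\to 0$; applying $R\Gamma(\frZ,-)$ yields long exact cohomology sequences.

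For the statement about $j_0^*$, Corollary \ref{cor: natural diagram} (available since $Y$ is strictly semistable) shows that $j_0^*$ factors as $R\Gamma_\HK(Y)\to R\Gamma_\rig(Y/\cS)\xrightarrow{j_0^*}R\Gamma_\rig(Y/W^0)$ with the composite a quasi-isomorphism; hence $H^k(j_0^*)$ is split surjective for every $k$. Feeding this into the long exact sequence of $0\to\omega^\bullet_{\cZ/\cS,\bbQ}\xrightarrow{\cdot s}\omega^\bullet_{\cZ/\cS,\bbQ}\to\omega^\bullet_{\cY/W^0,\bbQ}\to 0$ forces all connecting maps to vanish, i.e.\ multiplication by $s$ is injective on every $H^k_\rig(Y/\cS)$, so $0\to H^k_\rig(Y/\cS)\xrightarrow{s}H^k_\rig(Y/\cS)\xrightarrow{j_0^*}H^k_\rig(Y/W^0)\to 0$ is exact, which is precisely $H^k_\rig(Y/\cS)\otimes_{F\{s\}}F\xrightarrow{\cong}H^k_\rig(Y/W^0)$. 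For $j_\pi^*$ the same mechanism applies once multiplication by $E(s)$ is known to be injective on every $H^k_\rig(Y/\cS)$, for which it suffices that no nonzero section of $H^k_\rig(Y/\cS)$ is supported at $\pi$. I would deduce this from a vector-bundle argument: by the local K\"unneth description underlying Lemma \ref{lem: coh of Y_I} (applied to $\cZ/\cS$ via the decomposition $\cZ_I\cong\cV'_I\times_{W^\varnothing}\cS_I$) and cohomological descent, $H^k_\rig(Y/\cS)$ is finitely generated over $F\{s\}$, hence a coherent sheaf on $\frS$; the connecting homomorphism of the third short exact sequence above equips it with an operator obeying the Leibniz rule, i.e.\ a logarithmic connection whose only pole is at $s=0$. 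Restricting to a small affinoid disc $D\subset\frS\setminus\{s=0\}$ containing $\pi$, on which $\cO(D)$ is a principal ideal domain carrying the ordinary derivation $\partial_s$ and has no $\partial_s$-stable ideal other than $0$ and $\cO(D)$, the finite $\cO(D)$-module with integrable connection $H^k_\rig(Y/\cS)|_D$ has trivial Fitting ideals, hence is free, hence torsion-free; so $H^k_\rig(Y/\cS)$ has no nonzero section supported at $\pi$. This produces the exact sequence $0\to H^k_\rig(Y/\cS)\xrightarrow{E(s)}H^k_\rig(Y/\cS)\xrightarrow{j_\pi^*}H^k_\rig(Y/V^\sharp)\to 0$, i.e.\ $H^k_\rig(Y/\cS)\otimes_{F\{s\}}K\xrightarrow{\cong}H^k_\rig(Y/V^\sharp)$.

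The $j_0^*$ half is thus immediate from Corollary \ref{cor: natural diagram}, and the hard part is the torsion-freeness of $H^k_\rig(Y/\cS)$ at $\pi$: one must verify that the connecting homomorphism genuinely defines a connection and that the ``coherent module with integrable connection is locally free'' argument is legitimate in the dagger/rigid-analytic framework, and one must carefully justify both the finite generation of $H^k_\rig(Y/\cS)$ over $F\{s\}$ and the base-change identifications $\omega^\bullet_{\cY/W^0,\bbQ}=\omega^\bullet_{\cZ/\cS,\bbQ}/(s)$ and $\omega^\bullet_{\cZ_\pi/V^\sharp,\bbQ}=\omega^\bullet_{\cZ/\cS,\bbQ}/(E(s))$.
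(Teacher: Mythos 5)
Your starting point coincides with the paper's: choose (a simplicial family of) strictly semistable data, use flatness of $\cZ$ over $\cS$ to get short exact sequences $0\to\omega^\bullet_{\cZ/\cS,\bbQ}\xrightarrow{\cdot s}\omega^\bullet_{\cZ/\cS,\bbQ}\to\omega^\bullet_{\cY/W^0,\bbQ}\to 0$ (resp.\ with $E(s)$), and identify $R\Gamma_\rig(Y/W^0)$ and $R\Gamma_\rig(Y/V^\sharp)$ with the reductions of the complex computing $R\Gamma_\rig(Y/\cS)$ modulo $s$ and $E(s)$. The paper's proof essentially stops there: it forms the total complex $M^\bullet$ of a simplicial datum, notes its terms are flat over $F\{s\}$ (flatness of $\cZ_m/\cS$ by \cite[Cor.~4.5]{Ka}), identifies the three cohomologies with $M^\bullet$, $M^\bullet\otimes_{s\mapsto 0}F$, $M^\bullet\otimes_{s\mapsto\pi}K$, and declares the lemma to follow. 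You go further and isolate the actual obstruction, namely $s$- and $E(s)$-torsion in $H^{k+1}_\rig(Y/\cS)$, which flatness of the terms alone does not exclude. For the fibre at $0$ your solution is clean and non-circular: Corollary \ref{cor: natural diagram} (which rests only on Proposition \ref{prop: HK and rig}) gives surjectivity of $H^k(j_0^*)$ for all $k$, hence vanishing of the connecting maps, hence injectivity of multiplication by $s$ and the first isomorphism. That half is complete and in fact more explicit than the paper.

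The genuine gap is in the $j_\pi^*$ half. Your torsion-freeness at $\pi$ hinges on $H^k_\rig(Y/\cS)$ being a finitely generated (coherent) $F\{s\}$-module, asserted via ``local K\"unneth plus cohomological descent'' but not proved. Since $F\{s\}$ is not noetherian (it is only a coherent B\'ezout domain), finite generation does not pass through the descent spectral sequence for free: you would need finite presentation of the cohomology of the local pieces over $F\{s\}$ and an appeal to coherence of the ring, neither of which is established here or anywhere earlier in the paper — freeness of $H^k_\rig(Y/\cS)$ only appears afterwards in Lemma \ref{lem: free module}, whose proof quotes the present lemma, so it cannot be invoked. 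In addition, to transfer torsion-freeness of $H^k_\rig(Y/\cS)\otimes_{F\{s\}}\cO(D)$ back to $H^k_\rig(Y/\cS)$ you implicitly need flatness of $F\{s\}\to\cO(D)$ (true, since $\cO(D)$ is torsion-free over the Pr\"ufer domain $F\{s\}$, but it must be argued). The Fitting-ideal/connection argument on the subdisc is fine in characteristic $0$, and the Gau\ss--Manin connection you use is constructed independently of this lemma, so there is no circularity there; but as written the finiteness input is a missing step, whereas the paper's route sidesteps any analysis of the cohomology modules by arguing with the complex $M^\bullet$ itself (at the price of being very terse about why flatness of its terms suffices).
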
	

\begin{proof}
	We first assume that $Y$ is quasi-compact and let $\{U_\lambda\}_{\lambda\in\Lambda}$ be a finite covering of $Y$ by affine open subsets with strict log smooth morphisms $U_\lambda\rightarrow k^0(n_\lambda,m_\lambda)$.
	By Proposition \ref{prop: Mayer-Vietoris} there are spectral sequences
	\begin{align}
	\label{eq: MVss W}&E_1^{p,q}=\bigoplus_{(\lambda_0,\ldots,\lambda_p)\in\Lambda^p}H_\rig^q(U_{\lambda_0}\cap\cdots\cap U_{\lambda_p}/W^0)\Rightarrow H^{p+q}_\rig(Y/W^0),\\
	\label{eq: MVss V}&E_1^{p,q}=\bigoplus_{(\lambda_0,\ldots,\lambda_p)\in\Lambda^p}H_\rig^q(U_{\lambda_0}\cap\cdots\cap U_{\lambda_p}/V^\sharp)\Rightarrow H^{p+q}_\rig(Y/V^\sharp).
	\end{align}
	Moreover, by similar arguments as in Proposition \ref{prop: Mayer-Vietoris} or by combining \eqref{eq: MVss W} with the isomorphisms $H^i_\HK(Y)\cong H^i_\rig(Y/W^0)$,  we have a spectral sequence
	\begin{equation}\label{eq: ss HK}
	E_1^{p,q}=\bigoplus_{(\lambda_0,\ldots,\lambda_p)\in\Lambda^p}H_\HK^q(U_{\lambda_0}\cap\cdots\cap U_{\lambda_p})\Rightarrow H^{p+q}_\HK(Y).
	\end{equation}
	Let $\bbF$ be one of $F$ or $K$.
	By Corollary \ref{cor: natural diagram}, we have $H^i_\HK(Y)\otimes_F\bbF=H^i_\rig(Y/\cS)\otimes_{F\{s\}}\bbF$.
	Thus, by tensoring \eqref{eq: ss HK} with $\bbF$ we obtain a spectral sequence
	\[E_1^{p,q}=\bigoplus_{(\lambda_0,\ldots,\lambda_p)\in\Lambda^p}H_\rig^q(U_{\lambda_0}\cap\cdots\cap U_{\lambda_p}/\cS)\otimes_{F\{s\}}\bbF\Rightarrow H^{p+q}_\rig(Y/\cS)\otimes_{F\{s\}}F.\]
	Using this with \eqref{eq: MVss W} and \eqref{eq: MVss V}, we may assume that $Y$ is affine and there exists a strict log smooth morphism $Y\rightarrow k^0(n,m)$.
	In addition, by similar arguments we may replace $Y$ by an intersection of irreducible components $Y_I$ .
	Then the assertion follows by Corollary \ref{cor: coh of Y_I}.
	
	We next discuss the general case.
	By assumption (i) we may take an open covering $\{U'_i\}_{i\in\bbN}$ with $U'_i\subset U'_{i+1}$ such that each $U_i$ is quasi-compact.
	Then we have canonical isomorphisms
	\begin{align*}
	H^m_\rig(Y/\cS)\otimes_{F\{s\}}\bbF
	&\cong H^m_\HK(Y)\otimes_F\bbF
	\cong(\varprojlim_iH^m_\HK(U'_i))\otimes_F\bbF
	\cong\varprojlim_i(H^m_\HK(U'_i)\otimes_F\bbF)\\
	&\cong\varprojlim_i(H^m_\rig(U'_i/\cS)\otimes_{F\{s\}}\bbF)\\
	&\cong\begin{cases}
	\varprojlim_iH^m_\rig(U'_i/W^0)\cong H^m_\rig(Y/W^0)&(\bbF=F)\\
	\varprojlim_iH^m_\rig(U'_i/V^\sharp)\cong H^m_\rig(Y/V^\sharp)&(\bbF=K),
	\end{cases}
	\end{align*} 
	where the first and fourth isomorphisms are given by Corollary \ref{cor: natural diagram}, the second and final isomorphisms are given by Lemma \ref{lem: ML} \eqref{item: ML1}, the third isomorphism is given by the assumption (ii) and Lemma \ref{lem: ML} \eqref{item: ML2}, and the fifth isomorphism is proved in the former half of the proof.
\end{proof}

%
\section{Rigid Hyodo--Kato map}\label{Sec: Rig HK map}
%
We finally are ready to discuss the rigid Hyodo--Kato map.
From now on, we always suppose that any fine log scheme over $k$ is separated and locally of finite type as in previous sections, and in addition admits an affine open covering indexed by a countable set.
Moreover, when we consider a weak formal log scheme $\cX$ adic over $V^\sharp$, we always suppose that its special fiber satisfies the above conditions.
In addition, we assume that $k$ is algebraic over $\bbF_p$.

For the choice of a uniformiser $\pi\in K$, let $j_\pi\colon V^\sharp\hookrightarrow \cS$ be the exact closed immersion defined by $s\mapsto\pi$, and let $i_\pi\colon k^0\hookrightarrow V^\sharp$ be the unique exact closed immersion such that $\tau = j_\pi\circ i_\pi$.
Let $\boldsymbol{\mu}\subset W^\times$ be the image of $k^\times$ under the Teichm\"uller map.
Then there is a decomposition $V^\times = \boldsymbol{\mu}(1 + \frm)$.

\begin{definition}
	Let $\log\colon V^\times \rightarrow  K$ be the $p$-adic logarithm function defined by
		\begin{eqnarray*}
		\log(v): = -\sum_{n\geq 1}\frac{(1-v)^n}{n} & & \text{for }v\in(1 + \frm),\\
		\log(u): = 0 & & \text{for }u\in\boldsymbol{\mu}.
		\end{eqnarray*}
	A {\it branch of the $p$-adic logarithm} on $K$ is a group homomorphism from $K^\times$ to (the additive group of) $K$ whose restriction to $V^\times$ coincides with $\log$ as above.
	
	For $q\in\frm\setminus\{0\}$, let $\log_q\colon K^\times \rightarrow  K$ be the unique branch of the $p$-adic logarithm which satisfies $\log_q(q) = 0$.
	More precisely, for any uniformiser $\pi$ the element $q$ can be written as $q=\pi^m v$, for some $m\geq 1$ and $v\in V^\times$. 
	Then $\log_q$ is defined by $\log_q(\pi): = -m^{-1}\log(v)$.
\end{definition}

\begin{remark}
\begin{enumerate}
\item 
One could use the same definition of the $p$-adic logarithm in the case that $k$ is not algebraic over $\bbF_p$. 
However, in that case an element $u\in\boldsymbol{\mu}$ is not necessarily a root of unity, 
so there is no intrinsic reason to have $\log(u)=0$. 
That is why we restrict ourselves to the case where $k$ is algebraic over $\bbF_p$ in this and the subsequent section.
\item Note that any branch of the $p$-adic logarithm can be written in the form $\log_q$ for some $q\in\frm\setminus\{0\}$.
	Indeed, if $L\colon K^\times\rightarrow K$ is a branch of the $p$-adic logarithm, $\exp(-p^k L(p))$ is well defined as an element of $1+\frm$ for a sufficiently large integer $k$.
	Then by setting $q:=p^{p^k}\exp(-p^kL(p))$ one can see that $L=\log_q$.
\end{enumerate}
\end{remark}

\begin{definition}
	Let $\cX$ be a weak formal log scheme which is log smooth and adic over $V^\sharp$,  $\pi\in V$ be a uniformiser, and $Y: =  \cX \times_{V^\sharp,i_\pi}k^0$.
	\begin{enumerate}
	\item We define the rigid Hyodo--Kato cohomology of $\cX$ with respect to $\pi$ to be
			\[R\Gamma_\HK(\cX,\pi): = R\Gamma_\HK(Y)\]
		as a complex, and endow it with the Frobenius operator $\varphi$ and the normalized monodromy operator $\mathbf{N}: = e^{-1}N$.
		Here $\varphi$ and $N$ are as in Definition \ref{def: HK coh}. 
		Note that  $Y \hookrightarrow \cX$ is induced by the exact closed immersion of log schemes $i_\pi:k^0\hookrightarrow V^\sharp$. 
		This means it depends on the choice of a uniformiser $\pi$, and hence so does the complex defined here, which we indicate by the notation.
		For simplicity, we often write $R\Gamma_\HK(\cX,\pi)_K: = R\Gamma_\HK(\cX,\pi)\otimes_FK$.
	\item Let $\frX$ be the dagger space associated to $\cX$.
		For $q\in\frm\setminus\{0\}$, we define the {\it rigid Hyodo--Kato map}
			\[\Psi_{\pi,q}\colon R\Gamma_\HK(\cX,\pi) \rightarrow R\Gamma_\dR(\cX):= R\Gamma(\frX,\omega^\bullet_{\cX/V^\sharp,\bbQ})\]
		with respect to $\pi$ and $\log_q$ as follows:
		
		In case there exists a log rigid $F$-datum $(\cZ,i,\phi)$ over $(k^0\hookrightarrow\cS,\sigma)$ for $Y: = \cX\times_{V^\sharp,i_\pi}k^0$, set $\cX': = \cZ\times_{\cS,j_\pi}V^\sharp$ and let $\frX'$ be the associated dagger space.
		Then we define $\Psi_{\pi,q}$ as the composite
			\[R\Gamma_\HK(\cX,\pi) = R\Gamma(\frZ,\omega^\bullet_{\cZ/W^\varnothing,\bbQ}[u]) \xrightarrow{\psi_{\pi,q}}R\Gamma(\frX',\omega^\bullet_{\cX'/V^\sharp,\bbQ}) \xrightarrow[\cong]{\theta} R\Gamma(\frX,\omega^\bullet_{\cX/V^\sharp,\bbQ}),\]
		where $\psi_{\pi,q}$ is defined by the natural maps $\omega^\bullet_{\cZ/W^\varnothing,\bbQ} \rightarrow  \omega^\bullet_{\cZ/\cS,\bbQ} \rightarrow \omega^\bullet_{\cX'/V^\sharp,\bbQ}$ and by
			\[\psi_{\pi,q}(u^{[i]}): = \frac{(-\log_q(\pi))^i}{i!},\]
		and $\theta$ is the canonical quasi-isomorphism given by Proposition \ref{prop: log rigid coh}.
		In the general case, we may extend the above construction by taking a simplicial log rigid $F$-quadruple.
		
		We often write
			\[\Psi_{\pi,q,K}: = \Psi_{\pi,q}\otimes 1\colon R\Gamma_\HK(\cX,\pi)_K \rightarrow  R\Gamma_\dR(\cX).\]
	\end{enumerate}
\end{definition}

\begin{remark}
Recall that $R\Gamma_\HK(\cX,\pi)=R\Gamma_\HK(Y)$ is defined as the complex associated to the cosimplicial complex $R\Gamma(\frZ_\bullet,\omega^\bullet_{\cZ_\bullet/W^\varnothing,\bbQ}[u])$ by using a simplicial log rigid $F$-quadruple $(U_\bullet,\cZ_\bullet,i_\bullet,\phi_\bullet)$ over $(k^0\hookrightarrow\cS,\sigma)$ such that $U_\bullet\rightarrow Y$ is a Zariski hypercovering.
With this setting, $R\Gamma_\HK(\cX,\pi)_K$ is associated to the cosimplicial complex $R\Gamma(\frZ_\bullet,\omega^\bullet_{\cZ_\bullet/W^\varnothing,\bbQ}[u]\otimes_FK).$

As any section of $\omega^\bullet_{\cZ_m/W^\varnothing,\bbQ}[u]$ vanishes under a power of the monodromy operator $N$, it follows that for any $f\in K\llbracket T\rrbracket$, the endomorphism $f(N)$ on $\omega^\bullet_{\cZ_m/W^\varnothing,\bbQ}[u]\otimes_FK$ is well-defined.
Thus $f(N)$ induces an endomorphism on $R\Gamma_\HK(\cX,\pi)_K$.
\end{remark}

The following proposition explains the relation between different choices of 
$q$ for the rigid Hyodo--Kato map. 

\begin{proposition}\label{prop: choice of pi} 
For a uniformiser $\pi\in V$ and elements $q,q'\in\frm\setminus\{0\}$, we have
			\begin{equation}\label{eq: choice of log 2}
			\Psi_{\pi,q,K}=\Psi_{\pi,q',K}\circ\exp\left(-\frac{\log_q(q')}{\mathrm{ord}_p(q')}\cdot \mathbf{N}\right),
			\end{equation}
		and in particular	
			\begin{equation}\label{eq: choice of log}
			\Psi_{\pi,q,K} = \Psi_{\pi,\pi,K}\circ\exp(-e\log_q(\pi)\cdot\mathbf{N}).
			\end{equation}
\end{proposition}

\begin{proof}
	The equation \eqref{eq: choice of log 2} immediately follows from \eqref{eq: choice of log}.
	Note that we have $\Psi_{\pi,\pi}(u^{[i]}) = 0$ for $i>1$, since $\log_\pi(\pi) = 0$.
	The equation \eqref{eq: choice of log} follows from
		\[\Psi_{\pi,\pi}\circ\exp(-e\log_q(\pi)\cdot \mathbf{N})(u^{[i]}) = \Psi_{\pi,\pi} \left(\sum_{j = 0}^i\frac{(-\log_q(\pi))^j}{j!}u^{[i-j]}\right) = \frac{(-\log_q(\pi))^i}{i!} = \Psi_{\pi,q}(u^{[i]}).\]
\end{proof}

We study now the effects of extending the base field and different choices of uniformisers.  
For this consider the following situation: 
let $K'/K$ be a finite extension of ramification index $\ell=\frac{e'}{e}$. 
Let $V'$ be its ring of integers with maximal ideal $\frm'$ and  residue field $k'$. Denote $W'=W(k')$ with Frobenius $\sigma'$ and fraction field $F'$. 
Let $\boldsymbol{\mu}'\subset {W'}^\times$ be the image of ${k'}^\times$ under the Teichmüller map. 
Again there is a decomposition ${V'}^\times=\boldsymbol{\mu}'(1+\frm')$. 
Thus for a uniformiser $\pi'\in V'$, there uniquely exist
$\alpha\in 1+\frm'$ and $w\in \boldsymbol{\mu}'$ such that $\pi=\alpha w{\pi'}^\ell$. 
Let $k^0$, ${V'}^\sharp$,  $\cS'$, be the weak formal log schemes analogous to before. 
There is a commutative diagram
$$
\xymatrix{
{k'}^0 \ar[d]^{\overline{\rho}_{\pi,\pi'}} \ar[r]^{i_{\pi'}} & {V'}^\sharp \ar[d]^{\rho_{V'/V}} \\
k^0 \ar[r]^{i_\pi} & V^\sharp 
}
$$
where the right vertical morphism is induced by the natural inclusion $V\hookrightarrow V'$, and the left vertical morphism is defined by the natural inclusion $k\hookrightarrow k'$ on the level of rings and by $s\mapsto \overline{w}{s'}^\ell$ on the level of log structures (where $\overline{w}$ denotes the reduction of $w$ modulo $\pi'$ and $s$ and $s'$ are the canonical generators of the log structures of $k^0$ and ${k'}^0$ respectively.)
Consider a morphism
$f:\cX'\rightarrow \cX$ over $\rho_{V'/V}$ of fine weak formal log schemes which are log smooth and adic over $V'^\sharp$ and $V^\sharp$ respectively.
Set $Y'_{\pi'}:=\cX'\times_{{V'}^\sharp,i_{\pi'}} {k'}^0$ and 
$Y_{\pi}:=\cX\times_{{V}^\sharp,i_{\pi}} {k}^0$. 
Then $f$ indues a morphism $\overline{f}_{\pi,\pi'}: Y'_{\pi'}\rightarrow Y_\pi$. 
which is compatible with $\overline{\rho}_{\pi,\pi'}$.

\begin{proposition}\label{prop: base change and HK map}
In the situation above, we have the following statements:
\begin{enumerate}
\item
The morphism $f$ induces canonical morphisms
\begin{align*}
&f^\ast_{\HK}\colon R\Gamma_{\HK}(\cX,\pi) \rightarrow R\Gamma_{\HK}(\cX',\pi')\\
&f^\ast_{\dR}\colon R\Gamma_{\dR}(\cX)\rightarrow R\Gamma_{\dR}(\cX')
\end{align*}
such that $f^\ast_{\HK}$ is compatible with Frobenius and monodromy operators, and
such that for any $q\in\frm\setminus\{0\}$
\begin{equation}\label{eq: base change and HK map}
f_\dR^*\circ\Psi_{\pi,q} = \Psi_{\pi',q}\circ f_\HK^*.
\end{equation}
\item
Moreover, there is a commutative diagram
\begin{equation}\label{formula base change}
\xymatrix{
R\Gamma_{\HK}(\cX,\pi)_{K'} \ar[d]_{f^\ast_{\HK,K'}} \ar[rrrr]^{\Psi_{\pi,\pi,K'}} &&&& R\Gamma_{\dR}(\cX)_{K'} \ar[d]^{f_{\dR,K'}^\ast}\\
R\Gamma_{\HK}(\cX',\pi')_{K'} \ar[rr]^{\exp(e\log(\alpha)\cdot\mathbf{N})}&& R\Gamma_{\HK}(\cX',\pi')_{K'} \ar[rr]^{\Psi_{\pi',\pi',K'}} && R\Gamma_{\dR}(\cX')
.}
\end{equation}

\item
In addition, we have the following transitivity statement: 
Let $K_3\supset K_2\supset K_1$ be finite extensions with rings of integers $V_3\supset V_2\supset V_1$.
Let $\cX_3\xrightarrow{g}\cX_2\xrightarrow{f}\cX_1$ be morphisms of fine weak formal log schemes such that $\cX_l$ is log smooth and adic over $V_l^\sharp$ for each $l=1,2,3$.
Then for any choice of uniformisers of $V_1$, $V_2$, and $V_3$, 
we have $(g\circ f)_{\HK}^\ast=g_{\HK}^\ast\circ f^\ast_{\HK}$ 
and $(g\circ f)_{\dR}^\ast=g_{\dR}^\ast\circ f^\ast_{\dR}$.
\end{enumerate}
\end{proposition}

\begin{proof}
The map $f_\dR^*$ is the natural morphism 
$f_\dR^*\colon R\Gamma(\frX,\omega^\bullet_{\cX/V^\sharp,\bbQ}) \rightarrow  R\Gamma(\frX',\omega^\bullet_{\cX'/V'^\sharp,\bbQ})$
induced by $f$, where $\frX$ and $\frX'$ are the dagger spaces associated to $\cX$ and $\cX'$, respectively.
 
Next we discuss the construction of $f_\HK^\ast$.
We denote the canonical coordinate of $\cS'$ by $s'$. 
Note that there is a commutative diagram
$$
\xymatrix{
{k'}^0\ar[d]^{\overline{\rho}_{\pi,\pi'}} \ar@{^{(}->}[r] & \cS' \ar[d]^{\rho_{\pi,\pi'}}\\
k^0 \ar@{^{(}->}[r] & \cS
}
$$
where the right vertical map is defined by $s\mapsto w{s'}^\ell$.
We may assume that 
there exist log rigid $F$-data $(\cZ,i,\phi)$ for $Y_\pi$ over $(k^0\hookrightarrow\cS,\sigma)$ and $(\cZ',i',\phi')$ for $Y'_{\pi'}$ over $(k'^0\hookrightarrow\cS',\sigma')$.
(In the general case, we work with simplicial objects as before, 
which we decided to omit for the sake of a more straight forward exposition.) 
Then $Y'_{\pi'}\xrightarrow{\overline{f}_{\pi,\pi'}} Y_\pi \hookrightarrow \cZ$ and $Y'_{\pi'}\hookrightarrow \cZ'$ together induce a morphism
$Y'\hookrightarrow \cZ\times_{\cS}\cZ'$, where $\cZ'\rightarrow\cS$ is induced through $\rho_{\pi,\pi'}$. 
Let $\cZ''$ be its exactification. 
As usual, denote by $\frZ$, $\frZ'$ and $\frZ''$ the respective associated dagger spaces. 
Then the natural projections induce morphisms
$$
R\Gamma(\frZ,\omega^\bullet_{\cZ/W^\varnothing,\bbQ}) \rightarrow
R\Gamma(\frZ'',\omega^\bullet_{\cZ''/{W'}^\varnothing,\bbQ}) \xleftarrow{\sim}
R\Gamma(\frZ',\omega^\bullet_{\cZ'/{W'}^\varnothing,\bbQ}) 
$$
where the right one is a quasi-isomorphism by Proposition \ref{prop: log rigid coh}. 
We define CDGAs $\omega^\bullet_{\cZ''/W'^\varnothing,\bbQ}[u']$ and $\omega^\bullet_{\cZ'/W'^\varnothing,\bbQ}[u']$ similarly to Definition \ref{def: HK coh local} but we set $du'^{[i+1]}:=-d\log s'\cdot u'^{[i]}$.
Then the above morphisms extend to
\begin{align*}
R\Gamma(\frZ,\omega^\bullet_{\cZ/W^\varnothing,\bbQ}[u])&\xrightarrow {\beta}R\Gamma(\frZ'', \omega^\bullet_{\cZ''/{W'}^\varnothing,\bbQ}[u'])
\xleftarrow{\sim}R\Gamma(\frZ',\omega^\bullet_{\cZ'/{W'}^\varnothing,\bbQ}[u'])
\end{align*}
where $\beta$ is defined by $\beta(u^{[i]}):=\ell^i{u'}^{[i]}$ and the second morphism is a quasi-isomorphism.
Here we note that the compatibility of $\beta$ with the differential maps follows by
\[d\circ\beta(u^{[i]})=-\ell^id\log s'\cdot u'^{[i-1]}=-\ell^{i-1}d\log s\cdot u^{[i-1]}=\beta\circ d(u^{[i]}).\]
This gives a morphism in the derived category
\[f^\ast_\HK\colon R\Gamma_\HK(\cX,\pi)\rightarrow R\Gamma_\HK(\cX',\pi').\]
Clearly, $f^\ast_{\HK}$ commutes with the Frobenius operators. 
Since the normalized monodromy $\mathbf{N}$ on 
$R\Gamma_\HK(\cX,\pi)$ and $R\Gamma_\HK(\cX',\pi')$ are defined to be 
$e^{-1}N$ and $e'^{-1}N=(e\ell)^{-1}N$, respectively, we moreover see that $f_\HK^*$ commutes with the monodromy operators by
\[
\mathbf{N}\circ f_\HK^*(u^{[i]}) = \mathbf{N}(\ell^iu'^{[i]}) = (e\ell)^{-1}\ell^iu'^{[i-1]} = e^{-1}\ell^{i-1}u'^{[i-1]} = f_\HK^*(e^{-1}u^{[i-1]}) =  f_\HK^*\circ\mathbf{N}(u^{[i]}).
\]

To see the compatibility of $f_{\HK}^\ast$ and $f_{\dR}^\ast$ via the Hyodo--Kato map, let $\widetilde{\cS}$ be the exactification of ${k'}^0\hookrightarrow \cS\times_{W^\varnothing}\cS'$
(making use of the compositions ${k'}^0\xrightarrow{\overline{\rho}_{\pi,\pi'}} k^0\hookrightarrow \cS$ and $\cS'\rightarrow {W'}^\varnothing \rightarrow W^\varnothing$). 
Then there exists an exact closed immersion 
$j_{\pi,\pi'}:{V'}^\sharp\hookrightarrow \widetilde{\cS}$ such that the diagram
$$
\xymatrix{
 & &\cS'\\
{k'}^0 \ar[r]^-{i_{\pi'}} \ar[d]_-{\overline{\rho}_{\pi,\pi'}} \ar@/^1pc/[rru] & {V'}^\sharp  \ar[r]^-{j_{\pi,\pi'}} \ar[ru]^-{j_{\pi'}} \ar[d]_-{\rho_{V'/V}}& \widetilde{\cS} \ar[u] \ar[d]\\
k^0 \ar[r]^-{i_\pi} \ar@/_1pc/[rr] & V^\sharp \ar[r]^-{j_\pi} & \cS
}
$$
Let moreover be $\widetilde{\cZ}$ the exactification of $Y'_{\pi'}\hookrightarrow \cZ\times_{W^\varnothing}\cZ'$, $\widetilde{\cX}:=\widetilde{\cZ}\times_{\widetilde{\cS},j_{\pi,\pi'}}{V'}^\sharp$ with associated dagger spaces $\widetilde{\frZ}$ and $\widetilde{\frX}$. 
Noting that $\omega^\bullet_{\wt\cZ/W^\varnothing,\bbQ}=\omega^\bullet_{\wt\cZ/W'^\varnothing,\bbQ}$, we may define a quasi-isomorphism
$$
\gamma\colon R\Gamma(\widetilde{\frZ},\omega^\bullet_{\widetilde{\cZ}/W^\varnothing, \bbQ}[u]) \xrightarrow{\sim} R\Gamma(\widetilde{\frZ},\omega^\bullet_{\widetilde{\cZ}/{W'}^\varnothing, \bbQ}[u'])
$$
where we define CDGAs $\omega^\bullet_{\widetilde{\cZ}/W^\varnothing, \bbQ}[u]$ and $\omega^\bullet_{\widetilde{\cZ}/W'^\varnothing, \bbQ}[u']$ similarly to above,
by setting
$$
\gamma(u^{[i]}):= 
\exp\Bigl(\log\bigl(\frac{w{s'}^\ell}{s}\bigr)\cdot\frac{N}{\ell}\Bigr) (\ell^i{u'}^{[i]}) =
\sum_{j=0}^i\frac{\bigl(\log\bigl(\frac{w{s'}^\ell}{s}\bigr)\bigr)^j}{j!} \ell^{i-j}{u'}^{[i-j]}.
$$
Note that 
$$
\log\bigl(\frac{w{s'}^\ell}{s}\bigr) = 
\sum_{m=1}^\infty\frac{(-1)^{m-1}}{m}\bigl(\frac{w{s'}^\ell}{s}-1\bigr)^m
$$
is well defined as global functions on $\widetilde{\cZ}$ and $\widetilde{\cS}$, 
and the compatibility of $\gamma$ with the differentials is confirmed by the following calculation
\begin{align*}
d\circ\gamma(u^{[i]})&=\frac{N}{\ell}d\log\bigl(\frac{ws'^\ell}{s}\bigr)\exp\Bigl(\log\bigl(\frac{ws'^\ell}{s}\bigr)\cdot \frac{N}\ell\Bigr)(\ell^i u'^{[i]})-\exp\Bigl(\log\bigl(\frac{ws'^\ell}{s}\bigr)\cdot\frac N\ell\Bigr)(\ell^id\log s'\cdot u'^{[i-1]})\\
&=\exp\Bigl(\log\bigl(\frac{ws'^\ell}{s}\bigr)\cdot \frac{N}\ell\Bigr)\left(\left(d\log\bigl(\frac{ws'^\ell}{s}\bigr)-\ell d\log s'\right)\ell^{i-1}u'^{[i-1]}\right)\\
&=-\exp\Bigl(\log\bigl(\frac{ws'^\ell}{s}\bigr)\cdot \frac{N}\ell\Bigr)\left(\ell^{i-1}d\log s\cdot u'^{[i-1]}\right)=\gamma\circ d(u^{[i]}).
\end{align*}
Moreover we see that $\gamma$ is compatible with the Frobenius operators and the normalized monodromy operators by similar computation as above.

The morphism $\gamma$ fits into two diagrams, firstly,
$$\xymatrix{
R\Gamma(\frZ, \omega^\bullet_{\cZ/W^\varnothing}[u]) \ar[r] \ar[d] &  R\Gamma(\widetilde{\frZ},\omega^\bullet_{\widetilde{\cZ}/W^\varnothing, \bbQ}[u]) \ar[d]_\gamma^\sim \\
R\Gamma(\frZ'',\omega^\bullet_{\cZ''/{W'}^\varnothing}[u']) & 
R\Gamma(\widetilde{\frZ},\omega^\bullet_{\widetilde{\cZ}/{W'}^\varnothing, \bbQ}[u']) \ar[l]_\sim}$$
which commutes because on $\frZ''$ we have
$$\log\bigl(\frac{w{s'}^\ell}{s}\bigr)=\log\bigl(\frac{w{s'}^\ell}{\rho_{\pi,\pi'}(s)}\bigr)=\log(1)=0,$$
and secondly a triangle
\begin{equation}\label{eq: diagram base change HK}
\xymatrix{
 R\Gamma(\widetilde{\frZ},\omega^\bullet_{\widetilde{\cZ}/W^\varnothing, \bbQ}[u]) \ar[d]_-\gamma^-\sim \ar[dr]^-\psi & \\
 R\Gamma(\widetilde{\frZ},\omega^\bullet_{\widetilde{\cZ}/{W'}^\varnothing, \bbQ}[u']) \ar[r]^-{\psi'}&
R\Gamma(\widetilde{\frX},\omega^\bullet_{\widetilde{\cX}/{V'}^\sharp,\bbQ})}
\end{equation}
where $\psi$ and $\psi'$ are defined by
\begin{align*}
u^{[i]} \mapsto \frac{(-\log_q(\pi))^i}{i!} &&\text{and}&& {u'}^{[i]} \mapsto \frac{(-\log_q(\pi'))^i}{i!},
\end{align*}
This triangle commutes by the following calculation
\begin{align*}
\psi'\circ\gamma(u^{[i]}) &=
\psi'\left(\sum_{j=0}^i\frac{\bigl(\log\bigl(\frac{w{s'}^\ell}{s}\bigr)\bigr)^j}{j!} \ell^{i-j}{u'}^{[i-j]}\right)
=\sum_{j=0}^i\frac{\bigl(\log\bigl(\frac{w{\pi'}^\ell}{\pi}\bigr)\bigr)^j\ell^{i-j}(-\log_q(\pi'))^{i-j}}{j!(i-j)!}\\
&=\sum_{j=0}^i \frac{\binom{i}{j} (-\log(\alpha))^j(-\log_q(\pi'^\ell))^{i-j}}{i!}= \frac{(-\log_q(\alpha{\pi'}^\ell))^i}{i!} = \frac{(-\log_q(\pi))^i}{i!} = \psi(u^{[i]}).
\end{align*}
where we used in the last line that $\log(w)=0$ since $w\in\boldsymbol{\mu}'$. 
Recall that we have
\begin{align*}
&R\Gamma_{\HK}(\cX,\pi)= R\Gamma(\frZ,\omega^\bullet_{\cZ/W^\varnothing,\bbQ}[u]),&&
R\Gamma_{\dR}(\cX)=R\Gamma(\frX,\omega^\bullet_{\cX/{V}^\sharp,\bbQ}),\\
&R\Gamma_{\HK}(\cX',\pi')=R\Gamma(\frZ',\omega^\bullet_{\cZ'/{W'}^\varnothing,\bbQ}[u']),&&
R\Gamma_{\dR}(\cX') = R\Gamma(\frX',\omega^\bullet_{\cX'/{V'}^\sharp,\bbQ}).
\end{align*}
We also let
\begin{align*}
&R\Gamma_{\HK}(\cX'',\pi'):= R\Gamma(\frZ'',\omega^\bullet_{\cZ''/{W'}^\varnothing,\bbQ}[u']) \\
&R\Gamma_{\HK}(\widetilde{\cX},\pi,\pi'):= 
R\Gamma(\widetilde{\frZ},\omega^\bullet_{\widetilde{\cZ}/W^\varnothing, \bbQ}[u]) \overset{\gamma}{\cong} R\Gamma(\widetilde{\frZ},\omega^\bullet_{\widetilde{\cZ}/{W'}^\varnothing, \bbQ}[u'])\\
&R\Gamma_{\dR}(\widetilde{\cX}):=
R\Gamma(\widetilde{\frX},\omega^\bullet_{\widetilde{\cX}/{V'}^\sharp,\bbQ}).
\end{align*}
Then we obtain a commutative diagram
$$\xymatrix{
R\Gamma_{\HK}(\cX,\pi) \ar[dr] \ar[rr]^{\Psi_{\pi,q}} \ar[d] \ar@/_3pc/[dd]_{f^\ast_{\HK}} && R\Gamma_{\dR}(\cX/V^\sharp) \ar[d] \ar@/^3pc/[dd]^{f^\ast_{\dR}}\\
R\Gamma_{\HK}(\cX'',\pi') & R\Gamma_{\HK}(\widetilde{\cX},\pi,\pi') \ar[l]_\sim \ar[r] & R\Gamma_{\dR}(\widetilde{\cX}/{V'}^\sharp)\\
R\Gamma_{\HK}(\cX',\pi') \ar[u]^\sim \ar[ur]^\sim \ar[rr]_{\Psi_{\pi,q}}&& 
R\Gamma_{\dR}(\cX'/{V'}^\sharp) \ar[u]^\sim
}$$
where the commutativity of the upper trapezoid follows from that of \eqref{eq: diagram base change HK}.
This shows the desired compatibility of $f^\ast_{\HK}$ and $f^\ast_{\dR}$.

With $\pi=\alpha w{\pi'}^\ell$ and $e\ell=e'$, the commutativity of the diagram \eqref{formula base change} follows from the following calculation
\begin{align*}
f^\ast_{\dR,K'}\circ \Psi_{\pi,\pi,K'}&= 
\Psi_{\pi',\pi,K'}\circ f_{\HK,K'}^\ast =\Psi_{\pi',\pi',K'} \circ\exp(-e'\log_{\pi}(\pi')\cdot\mathbf{N}) \circ f_{\HK,K'}^\ast\\
&=\Psi_{\pi',\pi',K'} \circ\exp(e\log(\alpha)\cdot \mathbf{N}) \circ f_{\HK,K'}^\ast\\
&=\Psi_{\pi',\pi',K'} \circ f_{\HK,K'}^\ast\circ\exp(e\log(\alpha)\cdot \mathbf{N})
\end{align*}
where we used \eqref{eq: choice of log} and \eqref{eq: base change and HK map}.

For the last statement, suppose that morphisms $\cX_3\xrightarrow{g}\cX_2\xrightarrow{f}\cX_1$ as in the statement are given.
For $l=1,2,3$, let $\pi_l$ a uniformiser of $V_l$, $k_l$ the residue field of $V_l$, and $W_l=W(k_l)$ the ring of Witt vectors.
Let $i_{\pi_l}\colon k_l^0\hookrightarrow V^\sharp_l$ be the exact closed immersion sending $\pi_l$ to the canonical generator of the log structure of $k_l^0$.
Let $\cS_l:=\Spwf W_l\llbracket s_l\rrbracket$ with the log structure defined by $s_l$.
Assume again that there exist log rigid $F$-data $(\cZ_l,i_l,\phi_l)$ for $Y_{l}:=\cX_l\times_{V^\sharp_l,i_{\pi_l}}k^0_l$.
We define morphisms $\overline{f}_{\pi_1,\pi_2}\colon Y_1\rightarrow Y_2$ and $\overline{g}_{\pi_2,\pi_3}\colon Y_2\rightarrow Y_3$ similarly to above.
For $1\leq l<n\leq 3$, let $\cZ_{l,n}$ be the exactification of the diagonal embedding $Y_n\hookrightarrow \cZ_l\times_{\cS_l}\cZ_n$.
Let $\breve\cZ$ be the exactification of the diagonal embedding $Y_3\hookrightarrow\cZ_1\times_{\cS_1}\cZ_2\times_{\cS_2}\cZ_3$.
We denote by $\frZ_{l,n}$ and $\breve\frZ$ the dagger spaces associated to $\cZ_{l,n}$ and $\breve\cZ$, respectively.
Let
\begin{align*}
&R\Gamma_\HK(\cX_{l,n},\pi_n):=R\Gamma(\frZ_{l,n},\omega^\bullet_{\cZ_{l,n}/W_n^\varnothing,\bbQ}[u_n]),&
&R\Gamma_\HK(\breve\cX,\pi_3):=R\Gamma(\breve\frZ,\omega^\bullet_{\breve\cZ/W_3^\varnothing,\bbQ}[u_3]),
\end{align*}
where we define CDGAs $\omega^\bullet_{\cZ_{l,n}/W_n^\varnothing,\bbQ}[u_l]$ and $\omega^\bullet_{\breve\cZ/W_3^\varnothing,\bbQ}[u_3]$ by $du_n^{[i+1]}:=-d\log s_n\cdot u_n^{[i]}$ for $n=1,2,3$.
Then we have a diagram
\[\xymatrix{
&&R\Gamma_\HK(\cX_{1,3},\pi_3)\ar[d]^-\sim&&\\
&&R\Gamma_\HK(\breve\cX,\pi_3)&&\\
R\Gamma_\HK(\cX_1,\pi_1)\ar[r]\ar[rruu]
&R\Gamma_\HK(\cX_{1,2},\pi_2)\ar[ru]
&R\Gamma_\HK(\cX_2,\pi_2)\ar[l]^-\sim\ar[r]
&R\Gamma_\HK(\cX_{2,3},\pi_3)\ar[lu]^-\sim
&R\Gamma_\HK(\cX_3,\pi_3)\ar[l]^-\sim\ar[lluu]^-\sim
}\]
where the morphisms of type $R\Gamma_\HK(-,\pi_l)\rightarrow R\Gamma_\HK(-,\pi_n)$ are defined by $u_l^{[i]}\mapsto \ell_{l,n}^iu_n^{[i]}$ where $\ell_{l,n}$ denotes the ramification index of $K_n$ over $K_l$.
Since $\ell_{1,3}=\ell_{1,2}\cdot\ell_{2,3}$ the above diagram commutes and we proved $(g\circ f)_{\HK}^\ast=g_{\HK}^\ast\circ f^\ast_{\HK}$.
The general case can be treated similarly by simplicial construction, i.e., replacing $\cZ_l$, $\cZ_{l,n}$, and $\breve\cZ$ by simplicial, bisimiplicial, and trisimplicial weak formal log schemes.
\end{proof}

\begin{corollary}
Let $\cX$ be a fine weak formal log scheme which is log smooth and adic over $V^\sharp$.
For any uniformisers $\pi$ and $\pi'$  of $V$, there exists a canonical isomorphism in the derived category of $(\varphi,N)$-modules
\[\rho_{\pi,\pi'}\colon R\Gamma_\HK(\cX,\pi)\xrightarrow{\cong}R\Gamma_\HK(\cX,\pi')\]
such that $\rho_{\pi,\pi}=\id$ and $\rho_{\pi',\pi''}\circ\rho_{\pi,\pi'}=\rho_{\pi,\pi''}$.
Identifying $R\Gamma_\HK(\cX,\pi)$ for all uniformisers $\pi$ via these isomorphisms, we have
\begin{equation*}
	\Psi_{\pi,\pi,K} = \Psi_{\pi',\pi',K}\circ\exp(e\log(v)\cdot \mathbf{N}),
\end{equation*}
where $v:=\pi(\pi')^{-1}\in V^\times$ and
\begin{equation*}
	\Psi_{\pi,q} = \Psi_{\pi',q}
\end{equation*}
for any $q\in\frm\setminus\{0\}$.
\end{corollary}

\begin{proof}
	The isomorphism $\rho_{\pi,\pi'}$ is given as $f_\HK^\ast$ in Proposition \ref{prop: base change and HK map} for the case $f=\id_\cX$.
	The only assertion not yet proven is that $\rho_{\pi,\pi}=\id$, but this follows immediately from the construction in the proof of Proposition \ref{prop: base change and HK map}.
\end{proof}

For a fixed $q\in\frm\setminus\{0\}$, we may by the above corollary identify $R\Gamma_\HK(\cX,\pi)$ and $\Psi_{\pi,q}$ for all choices of $\pi$.
For simplicity, we often use the notation $R\Gamma_\HK(\cX)$ and $\Psi_q$ for them.

\begin{corollary}
	Let $\cX$ be a strictly semistable weak formal log scheme quasi-compact and adic over $V^\sharp$.
	For any finite extension $K'/K$ with ring of integers $V'$, there exists a strictly semistable weak formal log scheme $\cX'$ over $V'^\sharp$ with a morphism $f\colon \cX' \rightarrow \cX$ over $V^\sharp$ which induces an isomorphism $\frX' \xrightarrow{\cong}\frX\times_KK'$.
	Moreover for any $q\in\frm\setminus\{0\}$ we have a commutative diagram
		\[\xymatrix{
		R\Gamma_\HK(\cX) \ar[r]^{f_\HK^*} \ar[d]^{\Psi_q} & R\Gamma_\HK(\cX') \ar[d]^{\Psi_q}\\
		R\Gamma_\dR(\cX) \ar[r]^{f_\dR^*} & R\Gamma_\dR(\cX'),
		}\]
	whose morphisms are all quasi-isomorphisms after tensoring with appropriate fields.
\end{corollary}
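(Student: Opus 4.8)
The plan is to obtain $\cX'$ as a strictly semistable log modification of the naive base change $\cX\times_{V^\sharp}V'^\sharp$, and then to read off the commutative square and the quasi-isomorphism statements from the preceding proposition together with the comparison results of \S\ref{Sec: Rig HK coh}--\S\ref{Sec: Rig HK map}.

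First I would form $\cW:=\cX\times_{V^\sharp}V'^\sharp$, which exists in the category of weak formal log schemes, is log smooth and of finite type over $V'^\sharp$, and whose associated dagger space is canonically $\frX\times_KK'$. It is \emph{not} strictly semistable over $V'^\sharp$ when the ramification index $\ell$ exceeds $1$: Zariski-locally $\cX$ admits a strict log smooth morphism to $\cS(n,m)\times_{\cS,j_\pi}V^\sharp$, so, after absorbing the unit $(\pi')^\ell/\pi$ into a coordinate change, $\cW$ is locally strict log smooth over a chart whose underlying weak formal scheme has the form $\Spwf V'\langle x_1,\dots,x_n,y_1,\dots,y_m\rangle^\dagger/(x_1\cdots x_n-(\pi')^\ell)$, which has toric singularities along the special fibre. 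I would then invoke the standard combinatorial input for semistable reduction (a subdivision of the cone $\bbR_{\geq 0}^n$ adapted to the degeneration, that is, a log blow-up centred on the special fibre, chosen so that the resulting model is strictly semistable and not merely regular) to obtain strictly semistable models of these local charts; since the required subdivision is dictated by the intrinsic combinatorics of the strata, it is independent of choices and the local log modifications glue, yielding a projective log modification $\cX'\to\cW$ that is strictly semistable over $V'^\sharp$ and an isomorphism over the generic fibre. In particular $\frX'\xrightarrow{\cong}\frX\times_KK'$, and the composite $f\colon\cX'\to\cW\to\cX$ is a morphism over $V^\sharp$.

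Granting $f$, the commutative square and the compatibility of $f_\HK^*$ with Frobenius and monodromy follow directly from the preceding proposition: after choosing uniformisers $\pi\in V$, $\pi'\in V'$ one has $f_\dR^*\circ\Psi_{\pi,q}=\Psi_{\pi',q}\circ f_\HK^*$, and the identifications $R\Gamma_\HK(\cX)=R\Gamma_\HK(\cX,\pi)$, $\Psi_q=\Psi_{\pi,q}$ of Proposition~\ref{prop: choice of pi} (and their analogues over $K'$) turn this into the stated diagram. For the quasi-isomorphism claims I would argue as follows. Because $\cX'\to\cW$ is a log modification, hence log \'etale, and log differentials base change, $\omega^\bullet_{\cX'/V'^\sharp,\bbQ}$ is the $f$-pullback of $\omega^\bullet_{\cX/V^\sharp,\bbQ}$, so flat base change of coherent cohomology on dagger spaces gives $R\Gamma_\dR(\cX)\otimes_KK'\xrightarrow{\cong}R\Gamma_\dR(\cX')$; and $\Psi_q$ becomes a quasi-isomorphism after $-\otimes_FK$ (respectively $-\otimes_{F'}K'$) by the main isomorphism theorem of \S\ref{Sec: Rig HK map}. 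Tensoring the square over $K'$ then shows $f_\HK^*\otimes_FK'$ is a quasi-isomorphism, whence $f_\HK^*\otimes_FF'$ is one as well since $K'$ is faithfully flat over $F'$. (Alternatively, $f_\HK^*\otimes_FF'$ can be obtained from base change of log rigid cohomology along $\cS'\to\cS$, $s\mapsto s^\ell$, via Lemma~\ref{lem: free module} applied to the special fibres and specialisation at $s=0$.)

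The hard part is the construction of $\cX'$: producing a strictly semistable model over $V'^\sharp$ with the prescribed generic fibre requires the toroidal semistable-reduction combinatorics for the base-changed local models together with the (routine but necessary) gluing of the local log modifications. Everything else is a formal consequence of the preceding proposition and of the comparison and base-change results of \S\ref{Sec: Rig HK coh}--\S\ref{Sec: Rig HK map}.
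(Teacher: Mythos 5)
Your handling of the second assertion coincides with the paper's: once $f\colon\cX'\rightarrow\cX$ exists, the commutative square follows immediately from the base-change proposition proved just before (together with the identifications of Proposition \ref{prop: choice of pi}), and the quasi-isomorphism claims follow from Proposition \ref{prop: HK is qis}, the generic-fibre comparison for $f_\dR^*$, and flatness of the field extensions, exactly as you argue. Where you diverge is the existence of $\cX'$: the paper does not construct it at all but simply invokes \cite[Prop.~1.10, Rem.~1.12]{HL}, whereas you propose to re-derive it by toroidal semistable-reduction combinatorics applied to the naive base change $\cW=\cX\times_{V^\sharp}V'^\sharp$. Your route is self-contained in spirit, the paper's is a one-line citation; both are legitimate, but yours carries a real proof obligation that the citation discharges.

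And that obligation is not met as written. The local model after base change is $x_1\cdots x_n=(\pi')^\ell$, and producing a strictly semistable modification amounts to choosing a unimodular triangulation of the dilated simplex $\ell\Delta_{n-1}$; such triangulations are very far from unique, so the subdivision is \emph{not} ``dictated by the intrinsic combinatorics of the strata, hence independent of choices''. To make your argument work you must exhibit a specific regular unimodular triangulation that is equivariant under permutation of the branches (such choices exist, e.g.\ the alcove triangulation cut out by the hyperplanes $x_i-x_j\in\bbZ$), check that the induced local log blow-ups agree on overlaps of charts and are compatible with the horizontal-divisor coordinates, and verify that the whole construction (admissible blow-ups and their gluing) can be carried out in the category of weak formal log schemes used here, not just for formal or rigid spaces. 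None of this is false --- it is essentially the content of \cite[Prop.~1.10]{HL} --- but in your write-up it is precisely the step you flag as ``the hard part'' and it is left as an assertion; either supply the equivariant triangulation and the gluing/weak-formal verification, or do as the paper does and quote \cite{HL}.
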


\begin{proof}
   The desired weak formal log scheme $\cX'$ can be obtained from the base change $\cX\times V'$ by a finite sequence of blow-ups.
   The process is analogous to the classical case explained in \cite[Prop.\,2.2]{Ha}, and to the formal case explained locally in \cite[Prop.~1.10]{HL} which can be globalised as stated in \cite[Rem.\,1.11.1]{HL}.
	The second statement follows immediately from the last proposition. 
\end{proof}

A crucial property of the original Hyodo--Kato map is that it provides a quasi-isomorphism after extension by $K$. 
In the strictly semistable case, the same holds for the rigid analogue.

\begin{proposition}\label{prop: HK is qis}
	Let $\cX$ be a strictly semistable weak formal log scheme adic over $V^\sharp$.
	For any uniformiser $\pi$ and $q\in\frm\setminus\{0\}$, the map $\Psi_{\pi,q,K}$ is a quasi-isomorphism.
\end{proposition}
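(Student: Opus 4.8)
The plan is to reduce immediately to the case $q=\pi$ and then to identify $\Psi_{\pi,\pi,K}$ on cohomology with a composite of isomorphisms that have already been produced. For the reduction, Proposition~\ref{prop: choice of pi} (equation~\eqref{eq: choice of log}) gives $\Psi_{\pi,q,K}=\Psi_{\pi,\pi,K}\circ\exp(-e\log_q(\pi)\cdot\mathbf{N})$; here $\exp(-e\log_q(\pi)\cdot\mathbf{N})$ is induced by the endomorphism $u^{[i]}\mapsto\sum_{j=0}^{i}\frac{(-e\log_q(\pi))^j}{j!}u^{[i-j]}$ of the Kim--Hain complex over $K$, which is a finite sum for each $i$, hence an honest chain map, and it is invertible with inverse $\exp(e\log_q(\pi)\cdot\mathbf{N})$. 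So it is a quasi-isomorphism, and it suffices to prove that $\Psi_{\pi,\pi,K}$ is a quasi-isomorphism.

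Next I would unwind $\Psi_{\pi,\pi}$. Since $\log_\pi(\pi)=0$, the map $\psi_{\pi,\pi}$ in the definition of the rigid Hyodo--Kato map sends $u^{[i]}\mapsto 0$ for $i>0$ and $u^{[0]}\mapsto 1$, so it is the composite of the canonical maps $\omega^\bullet_{\cZ/W^\varnothing,\bbQ}[u]\to\omega^\bullet_{\cZ/\cS,\bbQ}\to\omega^\bullet_{\cX'/V^\sharp,\bbQ}$. These are natural, so the local constructions glue, and after composing with the canonical quasi-isomorphism $\theta$ of Proposition~\ref{prop: log rigid coh} and using $R\Gamma_\dR(\cX)=R\Gamma(\frX,\omega^\bullet_{\cX/V^\sharp,\bbQ})=R\Gamma_\rig(Y/V^\sharp)$ for $Y:=\cX\times_{V^\sharp,i_\pi}k^0$ (here $Y\hookrightarrow\cX$ is a homeomorphic exact closed immersion and $\cX$ is log smooth over $V^\sharp$, so $\cX$ is a log rigid datum for $Y$ over $V^\sharp$), one gets that $\Psi_{\pi,\pi}$ is, up to these identifications, the natural composite $R\Gamma_\HK(Y)\to R\Gamma_\rig(Y/\cS)\xrightarrow{j_\pi^*}R\Gamma_\rig(Y/V^\sharp)$.

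To conclude: the special fibre $Y$ is strictly semistable over $k^0$, so by Lemma~\ref{lem: free module} the group $H^k_\rig(Y/\cS)$ is free over $F\{s\}$ and the natural map induces an isomorphism $H^k_\HK(Y)\otimes_F F\{s\}\xrightarrow{\cong}H^k_\rig(Y/\cS)$ for every $k$. Applying $-\otimes_{F\{s\},\,s\mapsto\pi}K$ together with Lemma~\ref{lem: base change from S}, the composite $H^k_\HK(Y)\otimes_F K\to H^k_\rig(Y/\cS)\otimes_{F\{s\}}K\xrightarrow{\cong}H^k_\rig(Y/V^\sharp)$ is an isomorphism, and by the previous paragraph this is precisely the map induced on $H^k$ by $\Psi_{\pi,\pi,K}$. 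Hence $\Psi_{\pi,\pi,K}$ is a quasi-isomorphism, and so is $\Psi_{\pi,q,K}$ by the reduction step.

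The genuine content has already been absorbed into Lemmas~\ref{lem: coh of Y_I}, \ref{lem: free module}, \ref{lem: base change from S} and Proposition~\ref{prop: HK and rig}; the only delicate point left is the unwinding step, i.e.\ checking that the globally glued $\Psi_{\pi,\pi}$ really is the asserted natural composite and that $\theta$ is compatible with the identification of the target with $R\Gamma_\rig(Y/V^\sharp)$.
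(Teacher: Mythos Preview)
Your proposal is correct and follows essentially the same route as the paper's own proof: reduce to $q=\pi$ via Proposition~\ref{prop: choice of pi}, factor $\Psi_{\pi,\pi}$ through $R\Gamma_\rig(Y/\cS)$ (since $\log_\pi(\pi)=0$ kills the $u^{[i]}$ for $i>0$), and then invoke Lemma~\ref{lem: free module} together with Lemma~\ref{lem: base change from S}. The paper does this locally by assuming a strictly semistable log rigid $F$-datum exists, while you phrase it as a gluing of local constructions; both are fine, and your extra explanation of why $\exp(-e\log_q(\pi)\cdot\mathbf{N})$ is a genuine invertible chain map (finite sums in each $u^{[i]}$) is a welcome clarification that the paper leaves implicit.
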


\begin{proof}
	Working locally, we may suppsose that $\cX$ is affine and that there exists a strict log smooth morphism $\cX\rightarrow V^\sharp(n,m)$.
	Then $H^m_\HK(\cX)$ is finite dimensional for any $m$, by Corollary \ref{cor: coh of Y_I}.
	Moreover, by Proposition \ref{prop: choice of pi} (1), we may assume that $q=\pi$.
	In this case, $\Psi_{\pi,\pi}$ factors through the natural morphism $R\Gamma_\HK(Y)\rightarrow R\Gamma_\rig(Y/\cS)$.
	Thus the assertion follows from Corollary \ref{cor: natural diagram} and Lemma \ref{lem: base change from S}.
\end{proof}

Lastly, we note that in the smooth case our construction recovers the base change of (non-logarithmic) rigid cohomology.

\begin{proposition}
	Let $\cX^\varnothing$ be a smooth weak formal scheme adic over $V$ with the special fibre $Y^\varnothing$, and let $\cX$ be the weak formal log scheme whose underlying weak formal scheme is $\cX^\varnothing$ and whose log structure is the pull-back of that of $V^\sharp$.
	Then the monodromy operator on $H^k_\HK(\cX)$ is trivial, and there exists a canonical isomorphism $R\Gamma_\HK(\cX)\cong R\Gamma_\rig(Y^\varnothing/W^\varnothing)$  compatible with Frobenius such that the diagram
		\[\xymatrix{
		R\Gamma_\rig(Y^\varnothing/W^\varnothing)\ar[d]^\cong\ar[r] &R\Gamma_\rig(Y^\varnothing/V^\varnothing)\ar[d]^\cong\\
		R\Gamma_\HK(\cX)\ar[r]^{\Psi_q} & R\Gamma_\dR(\cX)
		}\]
	for any $q\in\frm\setminus\{0\}$ commutes.
	Here the upper horizontal map is the base change morphism of (non-logarithmic) rigid cohomology.
\end{proposition}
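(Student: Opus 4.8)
The plan is to construct, locally, an explicit strictly semistable log rigid $F$-datum for $Y:=\cX\times_{V^\sharp,i_\pi}k^0$ coming from a smooth lift of $Y^\varnothing$, and to exploit the elementary fact that the log structures of $V^\sharp$, $W^0$ and $\cS$ sit over $0$-dimensional log bases, so that all the logarithmic de Rham complexes occurring agree with their non-logarithmic analogues; the statement then comes out by assembling Corollary~\ref{cor: natural diagram}, Lemma~\ref{lem: free module} and Proposition~\ref{prop: HK is qis}.

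First I would reduce to $\cX^\varnothing=\Spwf A$ affine and choose a smooth weak formal lift $\cY^\varnothing$ of $Y^\varnothing$ over $W$ together with a Frobenius lift $\phi^\varnothing$. Setting $\cZ:=\cY^\varnothing\times_{W^\varnothing}\cS$ with the log structure pulled back from $\cS$, $\phi:=\phi^\varnothing\times\sigma$, and $i\colon Y\hookrightarrow\cZ$ the closed immersion cut out by $(p,s)$, one checks that $(Y,\cZ,i,\phi)$ is a strictly semistable log rigid $F$-datum over $(k^0\hookrightarrow\cS,\sigma)$ in the sense of Definition~\ref{def: ss quadruple} with $(n,m)=(1,0)$ (here $Y=Y^\varnothing$ is smooth over $k$, so $Y\to k^0(1,0)=k^0$ is strict log smooth, and $\cS(1,0)=\cS$). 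Then $\cX':=\cZ\times_{\cS,j_\pi}V^\sharp=\cY^\varnothing\times_WV$, carrying the log structure of $V^\sharp$, is another smooth weak formal lift of $Y^\varnothing$ over $V$, so Proposition~\ref{prop: log rigid coh} furnishes the canonical comparison $\theta\colon R\Gamma(\frX',\omega^\bullet_{\cX'/V^\sharp,\bbQ})\xrightarrow{\cong}R\Gamma_\dR(\cX)$. Since the log structures of $\cX/V^\sharp$, of $\cY:=\cZ\times_\cS W^0$ over $W^0$, and of $\cZ/\cS$ are all pulled back from the log base, they contribute nothing to the relative differentials:
\[\omega^\bullet_{\cX/V^\sharp,\bbQ}=\Omega^\bullet_{\cX^\varnothing/V,\bbQ},\qquad \omega^\bullet_{\cY/W^0,\bbQ}=\Omega^\bullet_{\cY^\varnothing/W,\bbQ},\qquad \omega^\bullet_{\cZ/\cS,\bbQ}=\Omega^\bullet_{\cY^\varnothing/W,\bbQ}\boxtimes\cO_\frS.\]
Hence $R\Gamma_\dR(\cX)=R\Gamma_\rig(Y^\varnothing/V^\varnothing)$, $R\Gamma_\rig(Y/W^0)=R\Gamma_\rig(Y^\varnothing/W^\varnothing)$, and $R\Gamma_\rig(Y/\cS)\cong R\Gamma_\rig(Y^\varnothing/W^\varnothing)\otimes_FF\{s\}$, under which $j_0^\ast$ and $j_\pi^\ast$ become ``set $s=0$'' and ``set $s=\pi$''.

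For (1) and (2): as $Y$ is strictly semistable, Corollary~\ref{cor: natural diagram} gives a natural Frobenius-compatible quasi-isomorphism $R\Gamma_\HK(\cX)=R\Gamma_\HK(Y)\xrightarrow{\;\sim\;}R\Gamma_\rig(Y/W^0)=R\Gamma_\rig(Y^\varnothing/W^\varnothing)$, which is the asserted isomorphism (its canonicity and the gluing of the local versions follow from the usual independence-of-datum arguments); the conceptual reason, visible from the decomposition $\omega^\bullet_{\cZ/W^\varnothing,\bbQ}[u]=\Omega^\bullet_{\cY^\varnothing/W,\bbQ}\boxtimes\omega^\bullet_{\cS/W^\varnothing,\bbQ}[u]$ and the fact that $R\Gamma_\HK(k^0)_\cS$ is quasi-isomorphic to $F$ in degree $0$ (Proposition~\ref{prop: HK and rig}, or Remark~\ref{rem: HK coh}(3)), is that the Hyodo--Kato complex here is just $R\Gamma_\rig(Y^\varnothing/W^\varnothing)$ tensored with the trivial factor. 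For the monodromy, by Lemma~\ref{lem: free module} the natural map identifies $H^k_\rig(Y/\cS)$ with $H^k_\HK(\cX)\otimes_FF\{s\}$, on which $N$ acts as the residue of the (here trivial) Gauss--Manin connection, i.e.\ as $\mathrm{id}\otimes s\tfrac{d}{ds}$; this annihilates the constant part $H^k_\HK(\cX)\otimes1$, so $\mathbf{N}=e^{-1}N=0$ on $H^k_\HK(\cX)$, proving (1).

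For the square in (3): by Proposition~\ref{prop: choice of pi}(1) and $\mathbf{N}=0$, $\Psi_q$ agrees with $\Psi_{\pi,\pi}$ in the derived category, and with the datum above $\Psi_{\pi,\pi}$ is the composite $R\Gamma_\HK(Y)_\cZ\to R\Gamma_\rig(Y/\cS)_\cZ\xrightarrow{j_\pi^\ast}R\Gamma_\rig(Y/V^\sharp)_{\cX'}\xrightarrow{\theta}R\Gamma_\dR(\cX)$, the first arrow sending $u^{[i]}\mapsto0$ for $i>0$. Under the identification $R\Gamma_\rig(Y/\cS)\cong R\Gamma_\HK(Y)\otimes_FF\{s\}$ of Lemma~\ref{lem: free module}, this first arrow is $\alpha\mapsto\alpha\otimes1$; hence $j_0^\ast\circ(\text{first arrow})$ is exactly the isomorphism of (2) (``set $s=0$'' on $\alpha\otimes1$ returns $\alpha$), while $j_\pi^\ast\circ(\text{first arrow})$ is $\alpha\mapsto\alpha\otimes1$ into $R\Gamma_\HK(Y)\otimes_FK=R\Gamma_\rig(Y^\varnothing/V^\varnothing)$, i.e.\ the base-change morphism $R\Gamma_\rig(Y^\varnothing/W^\varnothing)\to R\Gamma_\rig(Y^\varnothing/V^\varnothing)$ followed by $\theta$ --- and on the de Rham side $\theta$ is the same as the independence-of-datum isomorphism between the two $V$-lifts $\cY^\varnothing\times_WV$ and $\cX^\varnothing$ of $Y^\varnothing$ (the log structure of $V^\sharp$ being invisible). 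Chasing these identifications shows both composites send $\beta\mapsto\theta(\beta\otimes1)$, giving commutativity; Proposition~\ref{prop: HK is qis} together with base-change invariance of rigid cohomology along $F\subset K$ shows all four maps become quasi-isomorphisms after the appropriate base change. I expect the only real difficulty to be the bookkeeping --- verifying the datum is strictly semistable, pinning $\cX'$ against $\cX$, and matching the several comparison isomorphisms --- rather than any genuine obstacle, since all the substantive input is contained in the cited results.
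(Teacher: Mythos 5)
Your overall route is the paper's: you build exactly the same local datum $\cZ=\cY^\varnothing\times_{W^\varnothing}\cS$ with Frobenius $\phi^\varnothing\times\sigma$ and the zero-section embedding, observe that all the morphisms in sight are strict so the log de Rham complexes are the ordinary ones, and obtain the isomorphism $R\Gamma_\HK(\cX)\cong R\Gamma_\rig(Y^\varnothing/W^\varnothing)$ from Proposition~\ref{prop: HK and rig}/Corollary~\ref{cor: natural diagram}; that part is fine (and your observation that this situation is the case $n=1$, $m=0$ of strict semistability is correct). The genuine gap is in your argument that the monodromy is trivial. You invoke Lemma~\ref{lem: free module} to write $H^k_\rig(Y/\cS)\cong H^k_\HK(\cX)\otimes_F F\{s\}$ and then claim that $N$ acts on this as $\mathrm{id}\otimes s\tfrac{d}{ds}$, hence kills $H^k_\HK(\cX)\otimes 1$. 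But the trivialisation of Lemma~\ref{lem: free module} is the \emph{Frobenius-equivariant} one, given by the image of $H^k_\HK$, and on that subspace $N$ acts as $N_{\HK}\otimes 1$, because the natural map $R\Gamma_\HK(Y)\to R\Gamma_\rig(Y/\cS)$ commutes with the monodromy operators (Corollary~\ref{cor: natural diagram}). The description of $N$ as $\mathrm{id}\otimes s\tfrac{d}{ds}$ is valid only for the \emph{horizontal} (K\"unneth) trivialisation $H^k_\rig(Y^\varnothing/W^\varnothing)\otimes_F F\{s\}$ coming from the product structure of $\cZ$; asserting that $N$ annihilates $H^k_\HK(\cX)\otimes 1$ is precisely the assertion that the Frobenius section coincides with the horizontal one, i.e.\ that $N_{\HK}=0$ --- the statement you are trying to prove. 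To close the gap you must either prove uniqueness of Frobenius-equivariant sections of $j_0^*$ over $F\{s\}$ (a Dieudonn\'e--Dwork argument as in Lemma~\ref{lem: unique}, which the paper only records over $R_{PD}$), or argue as the paper does: the K\"unneth computation $H^k_\rig(Y/W^\varnothing)\cong H^k_\rig(Y^\varnothing/W^\varnothing)\oplus H^{k-1}_\rig(Y^\varnothing/W^\varnothing)\,d\log s$ shows that $H^k_\rig(Y/W^\varnothing)\to H^k_\HK(Y)$ is surjective, so every Hyodo--Kato class has a \v{C}ech representative involving no $u^{[i]}$ with $i>0$, and such a cocycle is visibly killed by $N$.

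Two secondary points. First, your reduction of $\Psi_q$ to $\Psi_{\pi,\pi}$ via Proposition~\ref{prop: choice of pi}(1) and $\mathbf{N}=0$ inherits the gap above, and even granting $\mathbf{N}=0$ on cohomology it only shows that $\exp(-e\log_q(\pi)\mathbf{N})$ induces the identity on cohomology groups, not that it is the identity in the derived category; so as written you would only get commutativity of the square after passing to cohomology. The paper avoids this entirely: it proves the square for \emph{every} $q$ at once, because the composite from $R\Gamma_\rig(Y^\varnothing/W^\varnothing)$ factors through $R\Gamma_\rig(Y/W^\varnothing)$, i.e.\ through $u$-free forms, on which $\Psi_{\pi,q}$ does not involve $q$ at all. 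Second, your final ``chasing these identifications'' is exactly the one computation that has to be done: one must check that the two maps $R\Gamma_\rig(Y^\varnothing/W^\varnothing)\to R\Gamma_\rig(Y/V^\sharp)$ --- through $\cZ$ and $s\mapsto\pi$ on one hand, and through base change $W\to V$ and the independence-of-datum comparison $\theta$ on the other --- agree; the paper does this by observing that the two compositions of projections $\cX_\bullet\to\cZ_\bullet\to\cU^\varnothing_\bullet$ and $\cX_\bullet\to\cX^\varnothing_\bullet\to\cU^\varnothing_\bullet$ coincide. With these repairs your argument becomes essentially the paper's proof.
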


\begin{proof}
	Let $\pi\in V$ be a uniformiser and let $Y := \cX\times_{V^\sharp,i_\pi}k^0$.
	By taking an affine open covering of $Y$, we may construct a simplicial (log) scheme $U_\bullet$ and a simplicial weak formal scheme $\cU_\bullet$ over $W$ such that the $\cU_m$ are smooth over $W$ and endowed with Frobenius lifts $\phi'_m$ which commute with each other.
	Consider the weak formal scheme $\cZ_m := \cU_m\times_{\Spwf W}\cS$  and endow it with the pull-back log structure of $\cS$.
	Let $\phi_m$ be the endomorphism on $\cZ_m$ defined by $\phi'_m$ on $\cU_m$ and $\sigma$ on $\cS$.
	We endow $\cU_m$ with the pull-back log structure of $\cZ_m$ via the zero section.
	Let $i_m\colon U_m\hookrightarrow\cZ_m$ be the embedding given by the zero section.
	Then $(U_\bullet,\cU_\bullet,i_\bullet,\phi_\bullet)$ is a simplicial log rigid $F$-quadruple.
	We use this to compute the cohomology groups
		\begin{equation}\label{eq: sm1}
		H^k_\rig(Y/W^0)=H^k(\frU_\bullet,\omega^\bullet_{\cU_\bullet/W^0,\bbQ})\cong H^k(\frU_\bullet,\Omega^\bullet_{\frU_\bullet})= H^k_\rig(Y^\varnothing/W^\varnothing)
		\end{equation}
	and
		\begin{equation}\label{eq: sm2}
		H^k_\rig(Y/W^\varnothing)\cong H^k_\rig(Y^\varnothing/W^\varnothing)\oplus (H^{k-1}_\rig(Y^\varnothing/W^\varnothing)\otimes_FFd\log s)
		\end{equation}
	by the K\"{u}nneth formula.
	By Proposition \ref{prop: HK and rig}, the isomorphism in \eqref{eq: sm1} gives a quasi-isomorphism $R\Gamma_\HK(\cX)\cong R\Gamma_\rig(Y^\varnothing/W^\varnothing)$.
	Together with \eqref{eq: sm2} this implies that $H^k_\rig(Y/W^\varnothing)\rightarrow H^k_\HK(Y)$ is surjective.
	Thus any element of $H^k_\HK(Y)$ can be represented by a \v{C}ech cocycle of $\omega^\bullet_{\cZ_\bullet/W^\varnothing,\bbQ}$, which maps to zero by the monodromy operator.
	
	Now we consider the following diagram:
		\[\xymatrix{
		R\Gamma_\rig(Y^\varnothing/W^\varnothing)\ar[d]_\cong \ar[rd]\ar[rr]& & R\Gamma_\rig(Y^\varnothing/V^\varnothing)\ar[dd]^\cong\\
		R\Gamma_\rig(Y/W^0) & R\Gamma_\rig(Y/W^\varnothing)\ar[ld]\ar[l]\ar[rd]\ar@{}[ru]^<<<<<<{(*)} & \\
		R\Gamma_\HK(Y)\ar[u]^\cong\ar[rr]_{\Psi_{\pi,q}} & & R\Gamma_\rig(Y/V^\sharp)	
		}\]
	The triangles except for $(*)$ commute naturally.
	Therefore it suffices to show that $(*)$ also commutes.
	
	Indeed, on the one hand $R\Gamma_\rig(Y^\varnothing/W^\varnothing)\rightarrow R\Gamma_\rig(Y/W^\varnothing)\rightarrow R\Gamma_\rig(Y/V^\sharp)$ comes from the composition of natural projections $\cX_\bullet := \cZ_\bullet\times_\cS V^\sharp\rightarrow \cZ_\bullet\rightarrow \cU^\varnothing_\bullet$, and on the other hand $R\Gamma_\rig(Y^\varnothing/W^\varnothing)\rightarrow R\Gamma_\rig(Y^\varnothing/V^\varnothing)\rightarrow R\Gamma_\rig(Y/V^\sharp)$ comes from the composition of $\cX_\bullet\rightarrow\cX^\varnothing_\bullet \rightarrow\cU^\varnothing_\bullet$, where the second morphism is the projection given by $\cX^\varnothing_\bullet= (\cU^\varnothing_\bullet\times_{W^\varnothing}\cS^\varnothing) \times_{\cS^\varnothing}V^\varnothing= \cU^\varnothing_\bullet\times_{W^\varnothing}V^\varnothing$.
	Thus  they coincide with each other.
\end{proof}

%
\section{Example: Tate curves}\label{Sec: Tate curve}
%

In this section, we compute the rigid Hyodo--Kato cohomology and the rigid Hyodo--Kato map of a Tate curve.
The computations will also demonstrate the advantage of weak formal schemes which are not necessarily adic over the base.
It allows us to vary a Tate curve over the open unit disk, which is not possible over the closed unit disk.
We continue to use the conventions established in \ref{main notations}.

Note that the $p$-adic Hodge structure of a Tate curve has been computed as the Di\'{e}udonne module associated to the dual of a Tate module.
Thus the computations in this section don't present a new result, but give an explicit description in terms of \v{C}ech cocycles.
In continuation of the previous section, we assume that $k$ is algebraic over $\bbF_p$.

For any $n\in\bbZ$, let $\cZ_n:=\Spwf W\llbracket s\rrbracket[v_n,w_n]^\dagger/(v_nw_n-s)$ and endow it with the log structure associated to the map 
	\[\bbN^2\rightarrow W\llbracket s\rrbracket[v_n,w_n]^\dagger/(v_nw_n-s);\ (1,0)\mapsto v_n, (0,1)\mapsto w_n.\]
If we set $t:=s^{n-1}v_n=\frac{s^n}{w_n}$, we may write 
	\[\cZ_n=\Spwf W\llbracket s\rrbracket[v_n,w_n]^\dagger/(v_nw_n-s)=\Spwf W\llbracket s\rrbracket[\frac{t}{s^{n-1}},\frac{s^n}{t}]^\dagger.\]

Let
	\begin{eqnarray*}
	\cV_n&:=&\Spwf W\llbracket s\rrbracket[v_n,v_n^{-1}]^\dagger=\Spwf  W\llbracket s\rrbracket [\frac{t}{s^{n-1}},\frac{s^{n-1}}{t}]^\dagger,\\
	\cW_n&:=&\Spwf W\llbracket s\rrbracket[w_n,w_n^{-1}]^\dagger=\Spwf W\llbracket s\rrbracket[\frac{s^n}{t},\frac{t}{s^n}]^\dagger
	\end{eqnarray*}
be open subsets of $\cZ_n$.

For $r\geq 1$, we glue $\cZ_1,\ldots,\cZ_r$ via the natural isomorphisms $\cV_{n+1}\cong\cW_n$ and the isomorphism $\cV_1\cong\cW_r,\ v_1\mapsto w_r^{-1}$.
Denote by $\cZ^{(r)}$ the resulting weak formal log scheme over $\cS$.
Let $i^{(r)}\colon Y^{(r)}\hookrightarrow\cZ^{(r)}$ be the exact closed immersion defined by the ideal $(p,s)$.
For $n$ varying, the endomorphisms $\phi_n\colon \cZ_n\rightarrow\cZ_n$ given by the Frobenius action on $W$ and $v_n\mapsto v_n^p$, $w_n\mapsto w_n^p$ naturally glue to form an endomorphism $\phi^{(r)}\colon \cZ^{(r)}\rightarrow\cZ^{(r)}$.
Note that $Y^{(r)}$ is strictly semistable if $r\geq 2$ and a nodal curve if $r=1$.
In particular, if $r\geq 2$, $(Y^{(r)},\cZ^{(r)},i^{(r)},\phi^{(r)})$ is a strictly semistable log rigid $F$-quadruple over $(k^0\hookrightarrow\cS,\sigma)$.
We denote by $\frZ^{(r)}$, $\frZ_n$,  $\frW_n$ and $\frS$ the dagger spaces associated to $\cZ^{(r)}$, $\cZ_n$, $\cW_n$ and $\cS$, respectively.

We note that for each $a\in\overline{\bbQ}_p$ with $\lvert a\rvert<1$, the fibre at $s=a$ in $\mathfrak{Z}^{(r)}$ is the Tate curve over $F(a)$ with period $a^r$.
Namely, there is a natural isomorphism of dagger spaces
	\[\mathfrak{Z}^{(r)}\times_{\frS,s\mapsto a}\Sp F(a)\cong F(a)^\times/a^{r\bbZ}\]
which sends $v_1$ to the canonical parameter $t$ of $F(a)^\times/a^{r\bbZ}$. 

To describe Hyodo--Kato cohomology and Hyodo--Kato map, we define a kind of ordered \v{C}ech complex $\check{C}_\HK^\bullet$ to be the complex
	\begin{eqnarray*}
	&&\prod_{n=1}^r\bigoplus_{i=0}^\infty\Gamma(\frZ_n,\cO_{\frZ_n} u^{[i]})\xrightarrow{D_1} \prod_{n=1}^r\bigoplus_{i=0}^\infty\Gamma(\frZ_n,\omega^1_{\cZ_n/W^\varnothing,\bbQ}u^{[i]})\oplus\prod_{n=1}^r\bigoplus_{i=0}^\infty \Gamma(\frW_n,\cO_{\frW_n} u^{[i]})\\
	&&\xrightarrow{D_2} \prod_{n=1}^r\bigoplus_{i=0}^\infty\Gamma(\frZ_n,\omega^2_{\cZ_n/W^\varnothing,\bbQ}u^{[i]})\oplus\prod_{n=1}^r\bigoplus_{i=0}^\infty \Gamma(\frW_n,\omega^1_{\cW_n/W^\varnothing,\bbQ} u^{[i]})\xrightarrow{D_3} \prod_{n=1}^r\bigoplus_{i=0}^\infty\Gamma(\frW_n,\omega^2_{\cW_n/W^\varnothing,\bbQ}u^{[i]})
	\end{eqnarray*}
with differentials given by
	\begin{align*}
	D_1\alpha:=(d\alpha,\partial\alpha),
	&&D_2(\alpha,\beta):=(d\alpha,d\beta-\partial\alpha),
	&&D_3(\alpha,\beta):=d\beta+\partial\alpha,
	\end{align*}
where
	\begin{eqnarray*}
	d\colon \prod_{i=1}^r\bigoplus_{i=0}^\infty\Gamma(\frZ_n,\omega^k_{\cZ_n/W^\varnothing,\bbQ}u^{[i]})&\rightarrow& \prod_{n=1}^r\bigoplus_{i=0}^\infty\Gamma(\frZ_n,\omega^{k+1}_{\cZ_n/W^\varnothing,\bbQ}u^{[i]})\\
	(\text{respectively }d\colon \prod_{i=1}^r\bigoplus_{i=0}^\infty\Gamma(\frW_n,\omega^k_{\cW_n/W^\varnothing,\bbQ}u^{[i]})&\rightarrow& \prod_{n=1}^r\bigoplus_{i=0}^\infty\Gamma(\frW_n,\omega^{k+1}_{\cW_n/W^\varnothing,\bbQ}u^{[i]})\\
	(\eta_1,\ldots,\eta_r)&\mapsto&(d\eta_1,\ldots,d\eta_r))
	\end{eqnarray*}
and
	\begin{eqnarray*}
	\partial\colon \prod_{i=1}^r\bigoplus_{i=0}^\infty\Gamma(\frZ_n,\omega^k_{\cZ_n/W^\varnothing,\bbQ}u^{[i]})&\rightarrow& \prod_{n=1}^r\bigoplus_{i=0}^\infty\Gamma(\frW_n,\omega^k_{\cW_n/W^\varnothing,\bbQ}u^{[i]})\\
	(\eta_1,\ldots,\eta_r)&\mapsto&(\eta_2-\eta_1,\ldots,\eta_r-\eta_{r-1},\eta_r-\eta_1)
	\end{eqnarray*}
denote the differential of $\omega^\bullet_{\cZ^{(r)}/W^\varnothing,\bbQ}$ and the \v{C}ech differential respectively.
Frobenius and monodromy operators on $\check{C}_\HK^\bullet$ are defined by the same manner as the Hyodo--Kato cohomology.

Let $K$ be a totally ramified extension of $F$ with ring of integers $V$, and let $\pi\in K$ be a uniformiser. 
Let $\cX:=\cZ^{(r)}\times_{\cS,s\mapsto\pi} V^\sharp$ and $\frX$ the associated dagger space, which is the Tate curve over $K$ with period $\pi^r$.
Then $R\Gamma_\dR(\cX)=R\Gamma(\frX,\omega^\bullet_{\cX/V^\sharp,\bbQ})= R\Gamma(\frX,\Omega^\bullet_\frX)$, and the Hodge filtration on the de Rham cohomology is induced by the stupid filtration $\mathrm{Fil}^i\Omega^\bullet_\frX:=\Omega_\frX^{\bullet\geq i}$.
The de Rham cohomology is computed by the ordered \v{C}ech complex $\check{C}_\dR^\bullet$ 
	\begin{eqnarray*}
	&&\prod_{n=1}^r\bigoplus_{i=0}^\infty\Gamma(\frX_n,\cO_\frX)\xrightarrow{D_1} \prod_{n=1}^r\bigoplus_{i=0}^\infty\Gamma(\frX_n,\Omega^1_{\frX})\oplus\prod_{n=1}^r\bigoplus_{i=0}^\infty \Gamma(\frX_n\cap\frW_n,\cO_\frX)\\
	&&\xrightarrow{D_2} \prod_{n=1}^r\bigoplus_{i=0}^\infty\Gamma(\frX_n,\Omega^2_{\frX})\oplus\prod_{n=1}^r\bigoplus_{i=0}^\infty \Gamma(\frX_n\cap\frW_n,\Omega^1_{\frX})\xrightarrow{D_3} \prod_{n=1}^r\bigoplus_{i=0}^\infty\Gamma(\frX_n\cap\frW_n,\Omega^2_{\frX}),
	\end{eqnarray*}
where the differentials are defined in a similar way as those of $C_\HK^\bullet$.

There is a commutative diagram
\[\xymatrix{
H^i(\check{C}_\HK^\bullet)\ar[r]^-{\check{\Psi}_{\pi,q}}\ar[d]&H^i(\check{C}_\dR^\bullet)\ar[d]^-\sim\\
H^i_\HK(\cX,\pi)\ar[r]^-{\Psi_{\pi,q}}&H^i_\dR(\cX)
}\]
where $\check{\Psi}_{\pi,q}$ is defined by the same rule as $\Psi_{\pi,q}$, i.e., $s\mapsto\pi$ and $u^{[i]}\mapsto\frac{(-\log_q\pi)^i}{i!}$, the left vertical map is compatible with Frobenius and monodromy operators, and the right vertical map is an isomorphism.
We obtain now the following description of the rigid Hyodo--Kato and de Rham cohomology groups and the rigid Hyodo--Kato map.

\begin{proposition}
	\begin{enumerate}
	\item We have $H^1_\HK(\cX)=H^1_\HK(Y^{(r)})\cong Fe_1^\HK\oplus Fe_2^\HK$ with $\varphi(e_1^\HK)=e_1^\HK$, $N(e_1^\HK)=0$, $\varphi(e_2^\HK)=pe_2^\HK$, and $N(e_2^\HK)=re_1^\HK$, where the classes $e_1^\HK$ and $e_2^\HK$ are the images of the classes of $H^1(\check{C}_\HK^\bullet)$ represented by the cocycles
			\begin{eqnarray*}
			(0,\ldots,0,1)&\in&\prod_{n=1}^r\Gamma(\frW_n,\cO_{\frW_n})\subset\check{C}^1_\HK,\\
			(d\log w_1,\ldots,d\log w_r)+(-u^{[1]},\ldots,-u^{[1]},u^{[1]})&\in&\prod_{n=1}^r\Gamma(\frZ_n,\omega^1_{\cZ_n/W^\varnothing,\bbQ})\oplus\prod_{n=1}^r\Gamma(\frW_n,\cO_{\frW_n}u^{[1]})\subset\check{C}^1_\HK
		\end{eqnarray*}
		respectively.
		Note that the normalized monodromy is given by $\mathbf{N}=e^{-1}N$ where $e$ is the ramification index of $K$ over $\bbQ_p$.
	\item We have $H^1_\dR(\cX)\cong Ke_1^\dR\oplus Ke_2^\dR$ with the Hodge filtration
			\[\mathrm{Fil}^i H^1_\dR(\cX)=\begin{cases}
			Ke_1^\dR\oplus Ke_2^\dR&\text{if }i\leq 0,\\
			Ke_2^\dR&\text{if }i=1,\\
			0&\text{if }i\geq 2,
			\end{cases}\]
		where the classes $e_1^\dR$ and $e_2^\dR$ are represented by the cocycles
			\begin{eqnarray*}
			(0,\ldots,0,1)&\in&\prod_{n=1}^r\Gamma(\frX_n\cap\frW_n,\mathcal{O}_{\frX})\subset\check{C}^1_\dR,\\
			(d\log w_1,\ldots,d\log w_r)&\in&\prod_{n=1}^r\Gamma(\frX_n,\Omega^1_{\frX})\subset\check{C}^1_\dR
			\end{eqnarray*}
		respectively.
	\item For $q\in\frm\setminus\{0\}$, the rigid Hyodo--Kato map $\Psi_q=\Psi_{\pi,q}\colon H^1_\HK(\cX)\rightarrow H^1_\dR(\cX)$ is given by
			\begin{align*}
			\Psi_q(e_1^\HK)=e_1^\dR&&\text{and}&&\Psi_q(e_2^\HK)=e_2^\dR-r\log_q(\pi)e_1^\dR.
			\end{align*}
	\end{enumerate}
\end{proposition}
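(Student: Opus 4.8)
The plan is to compute the two ordered \v{C}ech complexes $\check C^\bullet_\HK$ and $\check C^\bullet_\dR$ by hand, exhibit the asserted cocycles, and then evaluate $\varphi$, $N$ and $\Psi_{\pi,q}$ on them. Note first that $R\Gamma_\HK(\cX)=R\Gamma_\HK(Y^{(r)})$ by definition, since $\cX\times_{V^\sharp,i_\pi}k^0=\cZ^{(r)}\times_\cS k^0=Y^{(r)}$; and since each $\cZ_n$ and each $\cW_n$ is affine with dagger affinoid associated space, higher coherent cohomology vanishes and the \v{C}ech complexes above genuinely compute $R\Gamma_\HK$ and $R\Gamma_\dR$. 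I will then proceed in three steps: (i) identify $H^1$ of each complex and the bases $\{e_1^\HK,e_2^\HK\}$, $\{e_1^\dR,e_2^\dR\}$; (ii) compute $\varphi$ and $N$; (iii) compute $\Psi_{\pi,q}$.

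For step (i) the inputs are the explicit structure of $\omega^1_{\cZ_n/W^\varnothing,\bbQ}$ (free of rank two on $d\log v_n,d\log w_n$ with $d\log v_n+d\log w_n=d\log s$) and $\omega^2_{\cZ_n/W^\varnothing,\bbQ}$ (free of rank one on $d\log v_n\wedge d\log w_n$), together with the gluing relations $w_{n+1}=sw_n$ on $\cW_n$ for $n<r$ and $w_1=sw_r$ on $\cW_r$ (the latter from $v_1\mapsto w_r^{-1}$). Using these one checks directly that the listed cochains are cocycles: $e_1^\HK=(0,\dots,0,1)$ is obviously closed, and for $e_2^\HK$ one computes $\partial(d\log w_\bullet)=(d\log s,\dots,d\log s,-d\log s)$, which is exactly cancelled by the differential of $(-u^{[1]},\dots,-u^{[1]},u^{[1]})$ because $d\,u^{[1]}=-d\log s$; the de Rham cocycles $e_1^\dR,e_2^\dR$ are handled identically after $s\mapsto\pi$, which forces $d\log s=0$. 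That these classes are nonzero and span follows by combining a direct coboundary analysis — a residue computation along $\{v_n=0\}$ and $\{w_n=0\}$ shows $d\log w_n$ is not exact in $\omega^\bullet_{\cZ_n/W^\varnothing,\bbQ}[u]$ even after correcting by $u$-terms, and the \v{C}ech class $(0,\dots,0,1)$ does not bound, so no nonzero combination $a\,e_i^\HK+b\,e_2^\HK$ (resp.\ in $\check C_\dR$) is a coboundary — with the dimension count $\dim_F H^1_\HK(Y^{(r)})=2=\dim_K H^1_\dR(\cX)$; here the first equality comes from $H^1_\HK(Y^{(r)})\cong H^1_\rig(Y^{(r)}/W^0)$ (Lemma \ref{lem: free module} and Lemma \ref{lem: base change from S}) and the known shape of the log rigid cohomology of a necklace of $\bbP^1$'s (\cite{GK}), the second because $\frX$ is an elliptic curve. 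The Hodge filtration assertion is then immediate: the cocycle $((d\log w_n)_n,0)$ representing $e_2^\dR$ lies in $F^1\check C^\bullet_\dR$ (the $d\log w_n$ glue to a global holomorphic $1$-form on $\frX$ since $d\log s=0$), whereas $e_1^\dR$ has a nonzero \v{C}ech component, so $F^1H^1_\dR(\cX)=Ke_2^\dR$ by dimension.

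Step (ii) is a short computation on the fixed cocycles. Since $\phi^{(r)}$ acts by $w_n\mapsto w_n^p$ and $u^{[i]}\mapsto p^iu^{[i]}$, with $p\in F$ being $\sigma$-fixed, the representing cocycle of $e_1^\HK$ is preserved and that of $e_2^\HK$ is multiplied by $p$, giving $\varphi(e_1^\HK)=e_1^\HK$ and $\varphi(e_2^\HK)=pe_2^\HK$. Since $N$ annihilates $d\log$'s and sends $u^{[i]}\mapsto u^{[i-1]}$, we get $N(e_1^\HK)=0$, while $N(e_2^\HK)$ is represented by $(-1,\dots,-1,1)\in\prod_n\Gamma(\frW_n,\cO_{\frW_n})$; this differs from $r\,e_1^\HK$ by the coboundary $D_1(0,-1,\dots,-(r-1))$, using $\partial(c_\bullet)=(c_2-c_1,\dots,c_r-c_{r-1},c_r-c_1)$, so $N(e_2^\HK)=r\,e_1^\HK$, and the normalized operator is $\mathbf N=e^{-1}N$.

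For step (iii), on cocycles $\Psi_{\pi,q}$ is $\psi_{\pi,q}$ followed by restriction to $\frX$, where $\psi_{\pi,q}$ passes to $\omega^\bullet_{\cZ^{(r)}/\cS,\bbQ}$ (killing $d\log s$), specialises $s\mapsto\pi$, and sends $u^{[i]}\mapsto(-\log_q(\pi))^i/i!$. Hence $\Psi_q(e_1^\HK)$ is represented by $(0,\dots,0,1)$, i.e.\ $e_1^\dR$; and $\Psi_q(e_2^\HK)$ by $\big((d\log w_n)_n,(\log_q(\pi),\dots,\log_q(\pi),-\log_q(\pi))\big)$, which modulo the coboundary $D_1(c_\bullet)$ with $c_n=(n-1)\log_q(\pi)$ equals $\big((d\log w_n)_n,(0,\dots,0,-r\log_q(\pi))\big)=e_2^\dR-r\log_q(\pi)\,e_1^\dR$; by Proposition \ref{prop: choice of pi} the answer does not depend on the auxiliary uniformiser, so the notation $\Psi_q$ is justified. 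I expect the only real difficulty to be step (i): making the structure of the rings $\Gamma(\frZ_n,\cO_{\frZ_n})$ and the cohomology of the building blocks precise enough to prove the spanning statement intrinsically — importing $\dim H^1_\HK(Y^{(r)})=2$ from the comparison results of \S\ref{Sec: Rig HK coh} is precisely the device that sidesteps this, after which everything is bookkeeping with the explicit cocycles.
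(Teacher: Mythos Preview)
Your proposal is correct and is exactly the kind of direct \v{C}ech computation the paper's setup is designed to enable; indeed the paper states the proposition without proof, so there is no alternative argument to compare against. Your verifications of the cocycle conditions, of $\varphi$ and $N$ on the explicit representatives, and of $\Psi_{\pi,q}$ via the coboundary $D_1((n-1)\log_q(\pi))_n$ all check out, and importing $\dim_F H^1_\HK(Y^{(r)})=2$ from Corollary \ref{cor: natural diagram} (together with the known $H^1_\rig$ of a cycle of $\bbP^1$'s) is a clean way to avoid a full direct computation of the \v{C}ech cohomology.
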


\begin{proof}
The description of the de Rham cohomology is well-known.
The computation of the images of $e_1^\HK$ and $e_2^\HK$ by the Frobenius operator, the monodromy operator, and the Hyodo--Kato map is straightforward.
Then we see that $e_1^\HK$ and $e_2^\HK$ gives a base of $H^1_{\HK}(\cX)$, since $\Psi_{q,K}$ is an isomorphism.
\end{proof}

\begin{remark}
	We may also compute the zeroth and second cohomology groups of $\cX$.
	The rigid Hyodo--Kato map in both cases is independent of the choice of $q$, and the resulting filtered $(\varphi,N)$-modules are $K(0)$ and $K(-1)$, respectively.
\end{remark}

\section{Comparison with classical crystalline Hyodo--Kato theory}\label{Sec: comparison}

The goal of this section is to compare the rigid Hyodo--Kato theory introduced in this paper with classical crystalline Hyodo--Kato theory in the case of a proper semistable log scheme over $V^\sharp$ possibly with a horizontal divisor. Here by abuse of notation we also denote by $V^\sharp$ the scheme $\Spec V$ endowed with the canonical log structure.
We continue to use the conventions established in \ref{main notations}.
In addition,  a natural number $n\geq 1$ as an index denotes the reduction modulo $p^n$.

Our main references are \cite{HK}, where Hyodo--Kato theory was introduced, and  \cite{Fa,Ts}. 
Compare also \cite{Na} where some of the technical problems were solved.

In the following, we will use a definition of log crystalline cohomology which is a slight generalisation of the definitions in \cite{Sh2} and as such will be very useful for comparison with log convergent cohomology. 
For a non-logarithmic version compare \cite[\S 7]{BO}.

Let $T\hookrightarrow\cT$ be an exact closed immersion (not necessarily homeomorphic) of a fine log scheme $T$ over $\bbF_p$ into a noetherian fine formal log scheme $\cT$ over $\bbZ_p$.
Assume that $\cT=\Spf A$ is affine.
Let $\cI\subset A$ be the ideal of $T$ in $\cT$ and take an ideal of definition $\cJ$ of $\cT$.
Let $A_{PD}$ be the $\cJ$-adic completion of the $PD$-envelope of $A$ with respect to $\cI$ over $\bbZ_p$ (which is enodwed with the canonical $PD$-structure on $p\bbZ_p$).
We denote by $\gamma$ the $PD$-structure on $A_{PD}$.
Set $\cT_{PD}:=\Spf A_{PD}$ equipped with the pull-back log structure of that on $\cT$.
Denote by $\cT_{PD,n}$ the closed log subscheme of $\cT_{PD}$ defined by the ideal $\cJ^nA_{PD}$.
For a fine log scheme $Y$ over $T$ the log crystalline site $(Y/\cT_{PD,n})_{\cris}$ is defined as usual. 
To define its ``limit'' we follow definitions in \cite[Def.\ 1.2 and 1.4]{Sh2}, which is the logarithmic version of  \cite[7.17 Def.]{BO}.

\begin{definition}\label{def: crystalline site}
Let $Y$ be a fine log scheme over $T$.
	\begin{enumerate}
	\item
	 The log crystalline site $(Y/\cT_{PD})_\cris$ has as objects quadruples $(U,L,i,\delta)$ where $U$ is a strictly \'{e}tale fine log scheme  over $Y$, $L$ is a fine log scheme over $\cT_{PD,n}$ for some $n$, $i\colon U\hookrightarrow L$ is an exact closed immersion over $\cT_{PD}$, and $\delta$ is a $PD$-structure on the ideal of $U$ in $L$ which is compatible with $\gamma$.
	Morphisms in $(Y/\cT_{PD})_\cris$ are defined in the usual way.
	Covering families are induced by the \'{e}tale topology on $L$.
	
	\item  A sheaf $\sF$ on $(Y/\cT_{PD})_\cris$ is equivalent to a datum of sheaves (with respect to the \'etale topology) $\sF_L$ on $L$ for each  $(U,L,i,\delta)\in(Y/\cT_{PD})_\cris$ and appropriate transition maps. 
	It is clear how to define the crystalline structure sheaf $\sO_{Y/\cT_{PD}}$.
		An $\sO_{Y/\cT_{PD}}$-module $\sE$ is a \textit{crystal} on $Y$ over $\cT_{PD}$ if for any $f\colon (U',L',i',\delta')\rightarrow(U,L,i,\delta)$ in $(Y/\cT_{PD})_\cris$ the natural homomorphism $f^*\sE_L\rightarrow \sE_{L'}$ is an isomorphism.
		\end{enumerate}
\end{definition}

\begin{remark}
	Let $\cT_n$ be the closed subscheme of $\cT$ defined by $\cJ^n$.
	We observe that $\cT_{PD,n}$ coincides with the log $PD$-envelope of $T\hookrightarrow\cT_n$ over $\bbZ_p$ if $n$ is large enough.

	Namely, suppose that $\cT=\Spf A$ and that the ideal of definition $\cJ\subset A$ is generated by $f_1,\ldots,f_m\in A$.
	Let $n$ be an integer so that $\cJ^n\subset\cI$.
	Let $A'$ and $B_n$ be the $PD$-envelopes of $A/\cI$ in $A$ and $A/\cJ^n$, respectively.
	Let $A_{PD}$ be the $\cJ$-adic completion of $A'$.
	Then by \cite[\href{https://stacks.math.columbia.edu/tag/07HB}{Lem.\,07HB}]{stacks} $\cK_n:=\Ker(A'\rightarrow B_n)$ is generated by $(f_1^{n_1}\cdots f_m^{n_m})^k/k!$ for all $k\geq 1$ and $(n_i)_i\in\bbN^m$ with $\sum_in_i=n$.
	Clearly we have $\cJ^nA'\subset \cK_n$.
	If $k=1$, it is clear that  $(f_1^{n_1}\cdots f_m^{n_m})^k/k!$ belongs to $\cJ^nA$.
	Let $k\geq 2$ and $(n_i)\in\bbN^m$ with $\sum_i n_i=n$.
	If $n\geq m+1$, we have $n_i\geq 2$ for some $i$.
	Therefore, $n_i(k-1)\geq k$ and hence $f_i^{n_i(k-1)}/k!\in A'$.
	Thus
		\[(f_1^{n_1}\cdots f_m^{n_m})^k/k!=(f_1^{n_1}\cdots f_m^{n_m})\cdot(f_1^{n_1}\cdots f_m^{n_m})^{k-1}/k!\in\cJ^nA'.\]
	Consequently we have $\cJ^nA'=\cK_n$ and hence $A_{PD}/\cJ^nA_{PD}=A'/\cJ^nA'=B_n$ if $n\geq m+1$.
	This shows the remark.
\end{remark}

\begin{definition}
Let $Y$ be a fine log scheme over $T$.
The crystalline cohomology of $Y$ over $\cT_{PD}$ is the cohomology of the crystalline structure sheaf $\sO_{Y/\cT_{PD}}$ on the crystalline site $(Y/\cT_{PD})_{\cris}$ and similarly for the finite versions. 
Analogous to the non-logarithmic version in \cite[7.19 Prop.]{BO} we have
	\begin{equation}\label{equ: holim}
	R\Gamma_{\cris}(Y/\cT_{PD}) \cong \holim R\Gamma_{\cris}(Y/\cT_{PD,n}).
	\end{equation}
\end{definition}

We will consider crystalline sites for the following $PD$-bases obtained from Definition \ref{def: crystalline site}.

\begin{enumerate}
\item Consider the exact closed immersion $(T\hookrightarrow \cT)=(k^0\rightarrow W^0)$, with the ideals $(p)=\cJ=\cI$.
Because $W$ is already $p$-adically complete and the ideal  $(p)$ has divided powers we have $\cT_{PD}=W^0$ as well.
As mentioned before, the ring $W$ is endowed with the Witt vector Frobenius. 

\item Consider the  exact closed immersion $(T\hookrightarrow \cT)= (V_1^\sharp \hookrightarrow V^\sharp)$ of log schemes induced by reduction modulo $p$ with ideals $\cI=\cJ=(p)$. 
For the same reasons as above $\cT_{PD}$ is again $V^\sharp$.

\item 
Consider the exact closed immersion of fine formal log schemes 
$(T\hookrightarrow \cT) = (V_1^\sharp \hookrightarrow \cS)$  
given by the ideal $\cI=(p,f)$ where $f=s^e+ \sum_{i=0}^{e-1}a_is^i\in W[s]$  is the Eisenstein polynomial of a uniformiser $\pi$ of $V$, and $e$ the ramification index of $V$. 
Note that this immersion depends on $\pi$.
By the formalism described at the beginning of the section, 
we obtain a ring $R_{\PD}$   by completing with respect to the ideal $\cJ=\cI=(p,f)$ the $\PD$-envelope of $(V_1^\sharp \hookrightarrow \cS)$
for the ideal $\cI=(p,f)$. 
Since $(p,s)^e\subset (p,f)\subset (p,s)$, it is clear that $R_{\PD}$ is also $(p,s)$-adically complete. 
Its  $\PD$-ideal is given as the closure of $(p, \frac{f^n}{n!}\;\vert\;  n\geq 1)  =( p,\frac{s^{en}}{n!}\;\vert\; n\geq 1)$.
There is a lifting of Frobenius $\sigma$ to $R_{\PD}$ induced by $s\mapsto s^p$ extending the canonical Frobenius on $W$ which is a $\PD$-morphism.
Denote by $\cS_{\PD}$  the formal scheme $\Spf R_{\PD}$ with the log structure generated by $s$, and by  $\cS_{\PD,n}$ its reduction modulo $p^n$.
\end{enumerate}

\begin{remark}
Classically, one would rather start with the exact closed immersion of fine formal log schemes 
$(T\hookrightarrow \cT) = (V_1^\sharp \hookrightarrow \Spf\widehat{W[s]})$, 
where $\widehat{W[s]}$ denotes the $p$-adic completion of $W[s]$,  
given by the ideal $\cI=(p,f)$ where $f=s^e+ \sum_{i=0}^{e-1}a_is^i\in W[s]$  is the Eisenstein polynomial of a uniformiser $\pi$ of $V$ 
(compare \cite[Lem.\,5.2]{HK} or \cite[p.\,1721]{NN}). 
The next lemma shows however, that both approaches lead to identical 
PD-envelopes.
We chose the above construction to stress the relation with 
the rigid approach where $\cS$ is used instead of $\Spf\widehat{W[s]}$. 
\end{remark}

\begin{lemma}
Consider  the exact closed immersion of fine formal log schemes 
$(T\hookrightarrow \cT)=(V_1^\sharp \hookrightarrow \Spf \widehat{W[s]})$ 
given by the ideal $\cI=(p,f)$ 
as in the remark above. 
Let $R'_{\PD}$ be the $p$-adic completion of its $\PD$-envelope over $\bbZ_p$ with the canonical PD-structure. 
The formal scheme $\cS'_{\PD} = \Spf R'_{\PD}$ endowed with the log structure generated by $s$ 
coincides with the log formal scheme $\cS_{\PD}$. 
In particular, $\cS_{\PD}$ is $p$-adic.
\end{lemma}

\begin{proof}
We first show that the ring $R'_{\PD}$ is complete with respect to the 
$(p,s)$-adic topology.
Indeed, the ring $R'_{\PD}$ is the $W$-subalgebra of $F\llbracket s\rrbracket$ given by
$$
R'_{\PD}=\left\{ \sum_{i=0}^\infty a_i\frac{s^i}{\lfloor \frac{i}{e}\rfloor !}\;\big\vert\, a_i\in W(k),\lim_{i\rightarrow \infty} a_i=0\right\}.
$$
Since it contains divided powers, 
we have 
$$
s^{ep}=\frac{(p-1)! s^{ep}}{p!} \cdot p \in (p)=p R'_{\PD},
$$
where $e$ is the ramification index of $V$, 
and therefore $(p)^{ep} \subset (p,s)^{ep}\subset (p)$. 
It follows that $R'_{\PD}$ is $(p,s)$-complete 
if and only if it is $p$-complete, which it is by construction. 
Since $(p,s)^e\subset (p,f)\subset (p,s)$, 
the ring $R'_{\PD}$ is evidently $(p,f)$-complete, too. 

Next we observe that for any power of $(p,f)$, 
the ring $R'_{\PD}/(p,f)^n$ has divided powers, 
thus its spectrum is the PD-envelope of 
$(V_1 \hookrightarrow \Spec(R'_{\PD}/(p,f)^n))$. 
By definition of $R'_{\PD}$ this coincides with the PD-envelope of
$(V_1 \hookrightarrow \Spec(\widehat{W[s]}/(p,f)^n))$. 
But since $W\llbracket s\rrbracket /(p,f)^n = \widehat{W[s]}/(p,f)^n$, 
this in turn coincides with the PD-envelope of $(V_1 \hookrightarrow \Spec W\llbracket s\rrbracket/(p,f)^n)$ which we denote by $\cS_{n,\PD}= \Spec(R_{n,\PD})$. 
Putting everything together, we obtain 
$$
R_{\PD} = \varprojlim_n R_{n,\PD} = \varprojlim_n(R'_{\PD}/(p,f)^n) = R'_{\PD}, 
$$
where the first equality holds by definition and the last equality by the $(p,f)$-completeness of $R'_{\PD}$. 
This shows the claim. 
\end{proof}

Let $\pi$ be a uniformiser of $V$. There are exact closed embeddings
	$$
	W^0 \xrightarrow{j_0} \cS_{\PD}  \xleftarrow{j_\pi} V^\sharp 
	$$
via $s \mapsto  0$ and $s \mapsto  \pi$.
There is no ambiguity in denoting $\tau:=j_0\circ i_0$, where $i_0:k^0\hookrightarrow W^0$ is the canonical embedding.
Let $i_\pi: k^0\hookrightarrow V^\sharp$ be the unique morphism such that $\tau=j_\pi\circ i_\pi$.  
These morphisms fit into a commutative diagram of formal log schemes
\begin{equation}\label{diag: comm}
\xymatrix{
&k^0 \ar[dl]_{i_0} \ar[dr]^{i_\pi} \ar[d]^\tau&\\
W^0  \ar[r]_{j_0}& \cS_{\PD}& V^\sharp  \ar[l]^{j_\pi}.
}\end{equation}

\begin{definition}
For a fine proper log smooth log scheme  $X$ over $V^\sharp$ 
such that  $Y :=  X\times_{V^\sharp,i_\pi}k^0$ is of Cartier type  over $k^0$, we consider the log crystalline complexes
	\begin{align*}
	 R\Gamma_{\cris}(X/V^\sharp) &: =   \holim  R\Gamma_{\cris}(X_1/V_n^\sharp),\\
	 R\Gamma_{\cris}(X/\cS_{PD},\pi) &: =  \holim R\Gamma_{\cris}(X_1/\cS_{PD,n},\pi),\\	
	 R\Gamma_{\HK}^\cris(X,\pi) &: =    R\Gamma_{\cris}(Y/W^0): =  \holim  R\Gamma_{\cris}(Y/W_n^0).
	\end{align*}
The second complex is defined via the morphism $V^\sharp_1=\cS_{PD,1}\hookrightarrow \cS_{PD}$
which is the reason it depends on $\pi$. 
As for the last complex, it depends on $\pi$ because the immersion $Y\hookrightarrow X$ induced by the exact closed immersion of log schemes $i_\pi:k^0\hookrightarrow V^\sharp$ depends on $\pi$ by definition.

The Frobenius action $\varphi$ on  
$R\Gamma_{\cris}(X/\cS_{PD},\pi)$ (respectively $R\Gamma_{\HK}^\cris(X,\pi)$)  is induced by the absolute Frobenius on $X_1$ (respectively $Y$) and the Frobenius $\sigma$ on 
$\cS_{PD}$ (respectively  on $W$). 
The Frobenius action is invertible on $R\Gamma_{\HK}^\cris(X,\pi)_\bbQ$. 
\end{definition}

\begin{remark}
In the situation above, for two choices of uniformisers $\pi$ and $\pi'$ of $V$, let $Y :=  X\times_{V^\sharp,i_\pi}k^0$ and $Y':=  X\times_{V^\sharp,i_{\pi'}}k^0$.
If the $k^0$-log scheme $Y$ is of Cartier type over $k^0$, then the same is true for $Y'$. 
Thus for simplicity we say in this case that $X$ is of Cartier type. 
\end{remark}

\begin{proposition-definition}\label{prop-def: cris sections}
Let $X$ be a proper fine log smooth log scheme  over $V^\sharp$ 
which is of Cartier type.
Consider the morphisms
\begin{equation}\label{eq:crisHK}
	R\Gamma_{\HK}^\cris(X,\pi) \xleftarrow{j_0^\ast}  R\Gamma_{\cris}(X/{\cS_{PD}},\pi) \xrightarrow{j_\pi^\ast} R\Gamma_{\cris}(X/V^\sharp)
\end{equation}
induced by the morphisms of log schemes $j_0$ and $j_\pi$. 
The map $j_0^\ast: R\Gamma_{\cris}(X/{\cS_{PD}},\pi)_\bbQ \rightarrow   R\Gamma_{\HK}^\cris(X,\pi)_\bbQ$ admits in the derived category a unique functorial $F$-linear section $s_\pi\colon R\Gamma_{\HK}^\cris(X)_{\bbQ} \rightarrow   R\Gamma_{\cris}(X/{\cS_{PD}},\pi)_{\bbQ}$ which commutes with the Frobenius. 
We set
	$$
	\Psi_\pi^\cris: = j_\pi^\ast\circ s_\pi\colon  R\Gamma_{\HK}^\cris(X,\pi)_{\bbQ} \rightarrow   R\Gamma_{\cris}( X/V^\sharp)_{\bbQ}.
	$$
It induces a $K$-linear functorial quasi-isomorphism 
$$
\Psi_{\pi,K}^\cris :=  \Psi_{\pi}^\cris\otimes 1\colon   R\Gamma_{\HK}^\cris(X,\pi)\otimes_{W(k)}K  \rightarrow   R\Gamma_{\cris}( X/V^\sharp)_{\bbQ}.
$$
\end{proposition-definition}

\begin{proof}
The existence of such a section $s_\pi$ follows from \cite[Lem.\,5.2]{HK} (cf.\,\cite[Prop.~4.4.6]{Ts}). 
It was shown in \cite[Thm.\,5.1]{HK} that the composition $\Psi_\pi^\cris: = j_\pi^\ast\circ s_\pi$ induces a $K$-linear quasi-isomorphism $\Psi_{\pi,K}^\cris$ after tensoring with $K$. 

It remains to address the uniqueness. 
According to \cite[Lem.\,4.4.10]{Ts} 
the cohomology groups $H^n_{\cris}(X/\cS_{\PD})_{\bbQ}$, $n\in\bbN$, are finitely generated free $\bbQ\otimes R_{\PD}$-modules, 
and thus by \cite[Lem.\,4.4.11]{Ts} 
the section  $s_\pi$ are unique on the level of cohomology groups. 
In the derived category, we are dealing with
bounded complexes of $F$-vector spaces. 
But the category of $F$-vector spaces clearly is a semisimple abelian category 
and therefore its derived category is also semisimple abelian and 
in particular equivalent to the category of graded $F$-vector spaces. 
Therefore uniqueness on the level of cohomology groups imply uniqueness 
in the derived category as desired.
\end{proof}

\begin{remark}
According to \cite[(12)]{NN} there exists a canonical quasi-isomorphism
	$$
	\gamma\colon  R\Gamma_{\dR}(X_K)  \xrightarrow{\sim}  R\Gamma_{\cris}(X/V^\sharp)_\bbQ,
	$$
where  the left hand side is the de~Rham cohomology of $X_K$ with the Hodge filtration. 
The composition in the derived category 
	$$
	\Psi_\pi^\cris: = \gamma^{-1}\circ j_\pi^\ast\circ s_\pi\colon  R\Gamma_{\HK}^\cris(X,\pi)_{\bbQ}  \rightarrow   R\Gamma_{\dR}(X_K)
	$$
is classically called the Hyodo--Kato morphism.
\end{remark}

\begin{remark}
For a uniformiser $\pi$ and $Y:= X\times_{V^\sharp,i_\pi}k^0$ we can also consider the complexes $R\Gamma_{\cris}(Y/\cS_{PD})$ defined via $\tau: k^0\rightarrow \cS_{PD}$ and $R\Gamma_{\cris}(Y/V^\sharp)$ defined via $i_\pi: k_0 \rightarrow V^\sharp$. 
In the first case, one can show by a similar argument as for the Hyodo--Kato complex that this definition is up to canonical quasi-isomorphism independent of the choice of $\pi$. 
Moreover,  the natural morphism $\kappa_\pi\colon R\Gamma_{\cris}(X/\cS_{PD},\pi) \rightarrow R\Gamma_{\cris}(Y/\cS_{PD}) $ is a quasi-isomorphism. 
This does not hold in general over $V^\sharp$. 

In light of the above we have a commutative diagram
	$$
	\xymatrix{R\Gamma_{\HK}^\cris(X,\pi)_{\bbQ} \ar@<-.5ex>[rd]_{s_\pi}  & 
	R\Gamma_{\cris}(Y/\cS_{PD})_\bbQ  \ar[l]_{j_0^\ast} \ar[r]^{j_\pi^\ast} & 
	R\Gamma_{\cris}(Y/V^\sharp)_\bbQ \\
	      	&   
	R\Gamma_{\cris}(X/{\cS_{PD}},\pi)_{\bbQ} \ar[u]^\sim_{\kappa_\pi} \ar[r]^{j_{\pi}^\ast}  \ar@<-.5ex>[lu]_{j_0^\ast} & 
	R\Gamma_{\cris}(X/V^\sharp)_\bbQ \ar[u]_{\kappa_\pi}} 
	$$
where the right vertical map is not necessarily a quasi-isomorphism.
It highlights the fact that the classical crystalline Hyodo--Kato map depends on the choice of a uniformiser. 
\end{remark}

The comparison between the rigid Hyodo--Kato map and the crystalline Hyodo--Kato map passes through log convergent cohomology.

For a weak formal log scheme $\cZ$, the completion of the structure sheaf with respect to an ideal of definition defines a formal log scheme $\widehat{\cZ}$.
When the structure sheaf of $\cZ$ is already complete (e.g.\ $\cT=W^0,V^\sharp,\cS$), we often identify $\cZ$ and $\wh\cZ$, because they indeed coincide with each other as ringed spaces.

Let $\iota\colon T\hookrightarrow\cT$ be a homeomorphic exact closed immersion of a fine log scheme over $k$ into a fine weak formal log scheme over $W$.
For a fine log scheme $Y$ over $T$, the log convergent cohomology $R\Gamma_\conv(Y/\wh\cT)$ is defined in the same manner as the log rigid cohomology by using formal schemes and rigid analytic spaces instead of weak formal schemes and dagger spaces, respectively.
The completion induces a canonical morphism
\begin{equation}\label{eq: rigconv}
R\Gamma_\rig(Y/\cT)\rightarrow R\Gamma_\conv(Y/\wh\cT).
\end{equation}

Let $\wh\cT_\PD$ be the completion of the PD-envelope of $\wh\cT$ with respect to the ideal of $T$ in $\wh\cT$.

\begin{lemma}
	There exists a canonical morphism
	\begin{equation}\label{eq: convcris}
	R\Gamma_\conv(Y/\wh\cT)\rightarrow R\Gamma_\cris(Y/\wh\cT_\PD).
	\end{equation}
\end{lemma}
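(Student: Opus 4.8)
The plan is to construct the morphism locally from an embedding of an open piece of $Y$ into a log smooth formal lift, to recognise it there as a restriction map between the de Rham complexes of two nested rigid analytic tubes, and then to glue. Both $R\Gamma_\conv(Y/\wh\cT)$ and $R\Gamma_\cris(Y/\wh\cT_\PD)$ are built from the same kind of data --- (simplicial) exact closed immersions $U_\bullet\hookrightarrow\cZ_\bullet$ of a Zariski hypercovering of $Y$ into fine formal log schemes log smooth over the base --- so, passing to such a hypercovering exactly as in the construction of $R\Gamma_\rig(Y/\cT)$, it suffices to construct for a single affine $U$ and a single exact closed immersion $i\colon U\hookrightarrow\cZ$ with $\cZ$ log smooth over $\wh\cT$ (which exists after the formal analogue of the exactification of Proposition-Definition~\ref{def: exactification}) a morphism $R\Gamma_\conv(U/\wh\cT)_\cZ\to R\Gamma_\cris(U/\wh\cT_\PD)_\cZ$, to check its functoriality in $(\cZ,i)$, and to check compatibility with restriction to smaller opens.

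On the convergent side, $R\Gamma_\conv(U/\wh\cT)_\cZ$ is by definition the de Rham cohomology $R\Gamma(]U[_{\cZ},\omega^\bullet_{\cZ/\wh\cT,\bbQ})$ of the tube of $U$ in $\cZ$. On the crystalline side, set $\cZ_\PD:=\cZ\times_{\wh\cT}\wh\cT_\PD$, which is log smooth over $\wh\cT_\PD$ and into which $U$ embeds through $U\to T\hookrightarrow\wh\cT_\PD$, and let $D$ denote the completed PD-envelope of $U$ in $\cZ_\PD$ compatible with the PD-structure $\gamma$ of $\wh\cT_\PD$. By the logarithmic crystalline Poincar\'e lemma, together with the computation of crystalline cohomology by the de Rham complex $\omega^\bullet_{D/\wh\cT_\PD}$ of a PD-envelope (this is the pullback of $\omega^\bullet_{\cZ/\wh\cT}$ with divided-power extended differential; see the log analogue of \cite[\S7]{BO}, and cf.\ \cite{HK}, \cite{Sh2}) and commutation of $R\Gamma$ with the homotopy limit over the levels modulo $\cJ^n$, one has a natural isomorphism
\[
R\Gamma_\cris(U/\wh\cT_\PD)_\cZ\;\cong\;R\Gamma(D_K,\ \omega^\bullet_{D/\wh\cT_\PD,\bbQ}),
\]
where $D_K$ is the rigid analytic generic fibre of $D$. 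Unwinding the construction of the PD-envelope, $D_K$ is an admissible open subspace of the tube $]U[_{\cZ}$, a ``tube of smaller radius'' cut out by the condition that the divided powers of the generators of the ideal of $U$ converge.

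The morphism is then simply restriction. The structure morphism $D\to\cZ_\PD\to\cZ$ induces on generic fibres a canonical morphism $\lambda\colon D_K\to\,]U[_{\cZ}$ realising $D_K$ as the above admissible open, and since $\omega^\bullet_{D/\wh\cT_\PD}$ is the pullback of $\omega^\bullet_{\cZ/\wh\cT}$ it identifies $\lambda^{\ast}\omega^\bullet_{\cZ/\wh\cT,\bbQ}$ with $\omega^\bullet_{D/\wh\cT_\PD,\bbQ}$. Hence pullback along $\lambda$ gives
\[
R\Gamma_\conv(U/\wh\cT)_\cZ=R\Gamma(]U[_{\cZ},\omega^\bullet_{\cZ/\wh\cT,\bbQ})\xrightarrow{\ \lambda^{\ast}\ }R\Gamma(D_K,\omega^\bullet_{D/\wh\cT_\PD,\bbQ})\cong R\Gamma_\cris(U/\wh\cT_\PD)_\cZ.
\]
A morphism $(\cZ',i')\to(\cZ,i)$ of local embeddings induces compatible morphisms $D'\to D$ and $D'_K\to D_K$, $]U'[_{\cZ'}\to\,]U[_{\cZ}$, so this morphism is compatible with the transition quasi-isomorphisms used to prove independence of the embedding (the convergent and crystalline analogues of Proposition~\ref{prop: log rigid coh}, via the Poincar\'e lemma) and with restriction to opens; passing to simplicial embedding systems and taking homotopy limits, the local morphisms assemble into the desired canonical morphism $R\Gamma_\conv(Y/\wh\cT)\to R\Gamma_\cris(Y/\wh\cT_\PD)$, functorial in $Y$ and compatible with \eqref{eq: rigconv}.

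The main obstacle is the isomorphism used in the second paragraph: the precise comparison of $R\Gamma_\cris(U/\wh\cT_\PD)$ with the de Rham cohomology of the analytic generic fibre of the completed PD-envelope. It combines the logarithmic crystalline Poincar\'e lemma, the fact that $\omega^\bullet_{D/\wh\cT_\PD}$ computes crystalline cohomology, exchange of $R\Gamma$ with the homotopy limit over levels modulo $\cJ^n$, and a formal-to-rigid comparison for $D$. The last ingredient needs care because $D$ is not of finite type over $\wh\cT$ --- already its reduction modulo $p$ fails to be of finite type over $k$ --- so one must work within a suitable class of non-adic formal schemes and use the affineness of $U$, which makes both complexes explicit de Rham complexes of rings.
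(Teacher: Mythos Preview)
Your strategy matches the paper's: localise to an embedding of an open $U$ into a log smooth formal lift $\cZ$, compute both sides by de Rham complexes (of the tube on the convergent side, of a PD-envelope on the crystalline side), and observe that the PD-envelope sits inside the tube so that restriction gives the map. The differences are in execution, and the gap you yourself flagged is real.

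The paper avoids your ``main obstacle'' entirely. It never forms a rigid analytic generic fibre $D_K$ of the completed PD-envelope; instead it works at each finite level $n$ with the log PD-envelope $D_n$ of $Y\hookrightarrow\cZ_n$ (after passing to an exactifying factorisation $Y\hookrightarrow\cX\to\cZ$), and proves directly that the natural morphism $D_n\to\cX$ factors through the formal scheme $\cZ^{\mathrm{ex}}[\tfrac{\cJ^k}{p}]^\wedge$ for $k$ large enough. The argument is concrete: if the ideal of $Y$ in $\cX$ is generated by $f_1,\ldots,f_m$, then for $k\geq p(m-1)+1$ any monomial $f_1^{k_1}\cdots f_m^{k_m}$ with $\sum k_i=k$ has some $k_j\geq p$, so $f_1^{k_1}\cdots f_m^{k_m}/p$ is divisible by $f_j^p/p!$ and hence lies in $\cO_{D_n}$. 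Since crystalline cohomology is computed by the log de Rham complex of $D^{(\bullet)}_n$ (Kato, \cite[Thm.~6.4]{Ka}) and the convergent side is a colimit over the $\cZ^{\mathrm{ex}}[\tfrac{\cJ^k}{p}]^\wedge$, this factorisation yields the map on de Rham complexes without ever needing to make sense of $D_K$ as a rigid space.

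Your formulation in terms of $D_K\subset\,]U[_\cZ$ is morally the same statement, but you have pushed the technical content into two unproved assertions: that the completed PD-envelope has a well-behaved rigid generic fibre (which, as you note, is delicate since $D$ is not of finite type), and that this fibre is an admissible open of the tube. The first is unnecessary and the second is precisely what the paper's combinatorial argument establishes at the level of formal models. If you rewrite your argument to stay at finite level $n$ and to construct the factorisation of $D_n$ through a fixed $\cZ^{\mathrm{ex}}[\tfrac{\cJ^k}{p}]^\wedge$, the obstacle disappears.
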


\begin{proof}
Let $\{Y_\lambda\}_{\lambda\in\Lambda}$ be an open covering of $Y$ such that there exist closed immersions $Y_\lambda\hookrightarrow\cZ_\lambda$ admitting charts into formal schemes which are adic and log smooth over $\wh\cT$.
For $i\geq 0$ let $Y^{(i)}$ and $\cZ^{(i)}$ be the $(i+1)$-fold fiber product of $\coprod_{\lambda\in\Lambda}Y_\lambda$ over $Y$ and $\coprod_{\lambda\in\Lambda}\cZ_\lambda$ over $\cT$, respectively.
For $n\geq 1$, let $\cZ^{(i)}_n$ be the exact closed log subscheme of $\cZ^{(i)}$ defined by the ideal $\cI^n\cO_{\cZ^{(i)}}$, where $\cI$ is the ideal of $T$ in $\wh\cT$.
Note that $\cZ^{(i)}_n$ is a log scheme since $\cZ^{(i)}$ is adic over $\cT$.

Choose a factorization $Y^{(i)}\hookrightarrow \cX^{(i)}\rightarrow\cZ^{(i)}$ into an exact closed immersion $Y^{(i)}\hookrightarrow X^{(i)}$ and an adic log \'{e}tale morphism $\cX^{(i)}\rightarrow\cZ^{(i)}$.
Then the log PD-envelope $D_n^{(i)}$ of  $Y^{(i)}\hookrightarrow \cZ_n^{(i)}$ is given as the (non-logarithmic) PD-envelope of $Y^{(i)}\hookrightarrow \cX_n^{(i)}:=\cX^{(i)}\times_{\cZ^{(i)}}\cZ^{(i)}_n$ equipped with the pull-back log structure, and is independent of the choice of $\cX^{(i)}$.
Note that the 
exactification $Y^{(i)}\hookrightarrow\cZ^{(i),\mathrm{ex}}$ of $Y^{(i)}\hookrightarrow\mathcal{Z}^{(i)}$ is given as the completion of $\cX^{(i)}$ along $Y^{(i)}$.
Let $\cJ\subset\cO_{\cZ^{(i),\mathrm{ex}}}$ be the ideal of $Y^{(i)}$ in $\cZ^{(i),\mathrm{ex}}$.
For $k\geq 1$, let $\cZ^{(i),\mathrm{ex}}[\frac{\cJ^k}{p}]^\wedge$ the $p$-adic formal scheme by applying the construction of Definition \ref{def: gen fib} (but using completion instead of weak completion), and endow it with the pull-back log structure.
	
We claim that the natural morphism $D_n^{(i)}\rightarrow\cX^{(i)}$ factors through the natural morphism $\cZ^{(i),\mathrm{ex}}[\frac{\cJ^k}{p}]^\wedge\rightarrow\cX^{(i)}$ for $k$ large enough.
Indeed, suppose the ideal of $Y^{(i)}\hookrightarrow\cX^{(i)}$ is generated by $f_1,\ldots,f_m\in\cO_{\cX^{(i)}}$.
	Then $\cO_{\cZ^{(i),\mathrm{ex}}[\frac{\cJ^k}{p}]^\dagger}$ is given by adding to $\cO_{\cX^{(i)}}$ sections of the form $f_1^{k_1}\cdots f_m^{k_m}/p$ for $(k_i)\in\bbN^m$ with $k_1+\cdots+k_m=k$.
	If $k\geq p(m-1)+1$, one of $k_1,\ldots,k_m$ must satisfy $k_j\geq p$, hence $f_1^{k_1}\cdots f_m^{k_m}/p$ can be divided by $f_j^p/p!$.
	Since $\cO_{D^{(i)}_n}$ is given by adding to $\cO_{\cX^{(i)}_n}$ sections of the form $f_j^r/r!$ for $1\leq j\leq m$ and $r\geq 0$, we obtain a morphism
		\begin{equation}\label{eq: DT2}
		D^{(i)}_n\rightarrow\cZ^{(i),\mathrm{ex}}\left[\frac{\cJ^k}{p}\right]^\wedge
		\end{equation}
	as desired.
	Since $R\Gamma_\cris(Y/\widehat{\cT}_{\PD,n})$ is computed by the log de Rham complex of $D^{(\bullet)}_n$ over $\widehat{\cT}_{\PD,n}$ (\cite[Thm.\,6.4]{Ka}), \eqref{eq: DT2} induces the desired morphism \eqref{eq: convcris}.
\end{proof}

Contrary to the rigid definition given in this paper, the crystalline Hyodo--Kato map does not offer the possibility to choose a branch of logarithm.
It is therefore only logical that we may compare the rigid and crystalline Hyodo--Kato morphism only for a specific choice of the $p$-adic logarithm.

For the comparison between the rigid and the crystalline Hyodo--Kato maps, 
we restrict ourselves to the case of a proper strictly semistable log scheme over $V^\sharp$ with a horizontal divisor.

\begin{definition}
\begin{enumerate}
\item For a uniformiser $\pi\in V$ and integers $n\geq 1$ and $m\geq 0$, let $V^\sharp(n,m)\rightarrow V^\sharp$ be the morphism of fine log schemes induced by the diagram
		\[\xymatrix{
		\bbN^n\oplus\bbN^m\ar[d]_-\alpha & \bbN\ar[d]^-{\alpha_{\pi}}\ar[l]_-\beta\\
		V[\bbN^n,\bbN^m] / (\alpha\circ \beta (1)-\pi) & V\ar[l],
		}\]
where $\alpha_{\pi}$ is defined by $\alpha_\pi(1):=\pi$, $\alpha$ is the natural inclusion, and $\beta$ is the composition of the diagonal map $\bbN\rightarrow\bbN^n$ and the canonical injection $\bbN^n\rightarrow\bbN^n\oplus\bbN^m$.
	\item A $V^\sharp$-log scheme $X$ is called {\it strictly semistable}, if Zariski locally on $X$ there exists a strict log smooth morphism $X \rightarrow V^\sharp(n,m)$ over $V^\sharp$.
	Note that this condition is independent of the choice of a uniformizer $\pi$.
	By the local description above,
	\[\{x\in X\mid \text{$\forall a\in\cN_{X,x}$ $\exists b\in\cN_{X,x}$ s.t.\ $ab$ is contained in the image of $\cN_{V^\sharp,f(x)}$}\},\]
	where $f$ denotes the structure morphism $X\rightarrow V^\sharp$, is an open subset of $X$.
	We call its complement equipped with the reduced structure the \textit{horizontal divisor} of $X$.
	The horizontal divisor can be empty because we allow the case $m=0$.
\end{enumerate}
\end{definition}

Let $X$ be a strictly semistable log scheme over $V^\sharp$ with horizontal divisor $D$. 
The complement  $U:=X\backslash D$ is a strictly semistable log scheme over $V^\sharp$ with empty horizontal divisor. 
Sometimes, the pair $(U,X)$ is called a strictly semistable $V^\sharp$-log scheme with boundary as in \cite[Def.~4.4]{EY}.
Denote by $\cX$ and $\cU$ the weak completions of $X$ and $U$, respectively.

\begin{proposition}\label{prop: rig-cris}
In the situation described above, assume in addition that $X$ is proper over $V$. 
For any choice of uniformiser $\pi$ there is a commutative diagram
	\begin{equation}\label{eq: rig cris diag}
	\xymatrix{R\Gamma_\HK(\cU,\pi)  \ar[r]^{\Psi_{\pi,\pi}} &  R\Gamma_\dR(\cU) \\
	R\Gamma_\HK(\cX,\pi)   \ar[u]^\sim\ar[d]^\sim \ar[r]^{\Psi_{\pi,\pi}}  &  R\Gamma_\dR(\cX) \ar[d]^\sim \ar[u]^\sim\\
	R\Gamma_\HK^\cris(X,\pi)_{\bbQ} \ar[r]^{\Psi_\pi^\cris} & R\Gamma_\cris(X/V^\sharp)_{\bbQ}}
	\end{equation}
compatible with Frobenius operators.
\end{proposition}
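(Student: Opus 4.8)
The plan is to dispose of the horizontal divisor first and then reduce the bottom square to a uniqueness statement for Frobenius-equivariant sections. The top square of \eqref{eq: rig cris diag} commutes by functoriality of the construction of $\Psi_{\pi,\pi}$ along the strict open immersion $\cU\hookrightarrow\cX$, and its two vertical maps are quasi-isomorphisms by Lemma \ref{lem: horizontal divisor} together with the identification $R\Gamma_\dR(\cX)=R\Gamma_\rig(Y/V^\sharp)_\cX$ (so that the $V^\sharp$-line of Lemma \ref{lem: horizontal divisor} covers the de Rham side). Thus the content is the bottom square. The crucial point is that both Hyodo--Kato maps factor through cohomology over a base ``varying over the disc'': on the rigid side, since $\log_\pi(\pi)=0$ one has $\psi_{\pi,\pi}(u^{[i]})=0$ for $i>0$, so $\Psi_{\pi,\pi}$ equals $\theta\circ j_\pi^*\circ\sigma^\rig$, where $\theta$ is the canonical quasi-isomorphism of Proposition \ref{prop: log rigid coh} and $\sigma^\rig\colon R\Gamma_\HK(Y)\to R\Gamma_\rig(Y/\cS)$ is the natural map $u^{[i]}\mapsto 0$; on the crystalline side $\Psi_\pi^\cris=j_\pi^*\circ\xi^*\circ s_\pi$ by \eqref{diag: comm} and the Corollary asserting that $\Psi_\pi^\cris$ factors through $R\Gamma_\cris(X/\cS_{PD},\pi)_\bbQ$. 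By Corollary \ref{cor: natural diagram}, $\sigma^\rig$ is Frobenius-equivariant and $j_0^*\circ\sigma^\rig$ is the comparison quasi-isomorphism $R\Gamma_\HK(Y)\xrightarrow{\sim}R\Gamma_\rig(Y/W^0)$; on the crystalline side, $\xi^*\circ s_\pi$ is, by the Proposition asserting its uniqueness (which rests on Lemma \ref{lem: unique}), the only Frobenius-equivariant section of $j_0^*\colon R\Gamma_\cris(X/\cS_{PD},\pi)_\bbQ\to R\Gamma_\HK^\cris(X,\pi)_\bbQ$.

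The bridge between the two sides is constructed through log convergent cohomology. For $\cT=W^0$ one has $\widehat\cT_\PD=\cT$, and since $X$---hence $Y$---is proper, the composite of \eqref{eq: rigconv} and \eqref{eq: convcris},
\[
R\Gamma_\rig(Y/W^0)\rightarrow R\Gamma_\conv(Y/W^0)\rightarrow R\Gamma_\cris(Y/W^0)_\bbQ=R\Gamma_\HK^\cris(X,\pi)_\bbQ,
\]
is a quasi-isomorphism (log rigid equals log convergent cohomology in the proper case, and log convergent agrees rationally with log crystalline cohomology); composing with Corollary \ref{cor: natural diagram} yields the Frobenius-equivariant quasi-isomorphism $c_{W^0}\colon R\Gamma_\HK(\cX,\pi)\xrightarrow{\sim}R\Gamma_\HK^\cris(X,\pi)_\bbQ$. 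An analogous construction over $V^\sharp$---or equivalently the composite of the rigid-analytic GAGA identification $R\Gamma_\dR(\cX)\cong R\Gamma_\dR(X_K)$ with the quasi-isomorphism $\gamma$ of \cite[(12)]{NN} recalled in Definition \ref{def: cris HK}---gives $c_{V^\sharp}\colon R\Gamma_\dR(\cX)\xrightarrow{\sim}R\Gamma_\cris(X/V^\sharp)_\bbQ$. Running the same recipe over $\cT=\cS$ and composing with crystalline base change between the relevant $PD$-bases and with the inverse of the quasi-isomorphism $\kappa_\pi$ (the $\cS_{PD}$-analogue of the one in the Remark after Definition \ref{def: cris HK}) produces a Frobenius-equivariant morphism $c_\cS\colon R\Gamma_\rig(Y/\cS)\to R\Gamma_\cris(X/\cS_{PD},\pi)_\bbQ$. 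By functoriality of \eqref{eq: rigconv}, \eqref{eq: convcris}, of $\kappa_\pi$ and of crystalline base change in the $PD$-base, the maps $c_{W^0}$, $c_{V^\sharp}$ and $c_\cS$ are compatible with the restrictions $j_0^*$ ($s\mapsto 0$) and $j_\pi^*$ ($s\mapsto\pi$) of \eqref{diag: comm}.

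To conclude, observe that $c_\cS\circ\sigma^\rig\colon R\Gamma_\HK(\cX,\pi)\to R\Gamma_\cris(X/\cS_{PD},\pi)_\bbQ$ is Frobenius-equivariant and satisfies $j_0^*\circ(c_\cS\circ\sigma^\rig)=c_{W^0}\circ(j_0^*\circ\sigma^\rig)=c_{W^0}$. Since $c_{W^0}$ is a quasi-isomorphism, the uniqueness of $\xi^*\circ s_\pi$ forces $c_\cS\circ\sigma^\rig=(\xi^*\circ s_\pi)\circ c_{W^0}$. Post-composing with $j_\pi^*$ and using $j_\pi^*\circ\xi^*=j_\pi^*$ (from \eqref{diag: comm}), the compatibility of $c_\cS$ with $j_\pi^*$, the factorisations $\Psi_{\pi,\pi}=\theta\circ j_\pi^*\circ\sigma^\rig$ and $\Psi_\pi^\cris=j_\pi^*\circ\xi^*\circ s_\pi$, and the identification of $c_{V^\sharp}\circ\theta$ with $j_\pi^*\circ c_\cS$, one obtains $c_{V^\sharp}\circ\Psi_{\pi,\pi}=\Psi_\pi^\cris\circ c_{W^0}$---the commutativity of the bottom square---with Frobenius-compatibility built in at each step.

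The main obstacle is the second paragraph: making the log convergent intermediary precise and, above all, identifying $R\Gamma_\cris(Y/\widehat{\cS}_\PD)_\bbQ$ with $R\Gamma_\cris(X/\cS_{PD},\pi)_\bbQ$ coherently with Frobenius and with both specialisations $s\mapsto 0$, $s\mapsto\pi$. This forces one to relate the $PD$-envelope of the ideal of $k^0$ in $\cS$ with the Eisenstein $PD$-bases $\cS_{PD}$, $\cS'_{PD}$ through crystalline base change for $PD$-thickenings of a common special fibre, to use $\kappa_\pi$ over $\cS_{PD}$, and to invoke the comparison between log rigid, log convergent and log crystalline cohomology in the proper case, all while checking that the ambient canonical quasi-isomorphisms ($\theta$, $\kappa_\pi$, $\gamma$, the GAGA identification) are threaded consistently. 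Granting this, the uniqueness statement disposes of the rest without further computation.
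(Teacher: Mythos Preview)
Your approach is essentially the same as the paper's: dispose of the top square via Lemma~\ref{lem: horizontal divisor}, factor both Hyodo--Kato maps through cohomology ``over the disc'' (rigid over $\cS$, crystalline over $\cS_{PD}$), bridge them via log convergent cohomology, and conclude by the uniqueness of Frobenius-equivariant sections from Lemma~\ref{lem: unique}. This is exactly the paper's strategy.

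There is one point where the paper is cleaner. To construct your $c_\cS$ you pass through $R\Gamma_\cris(Y/\widehat{\cS}_{\PD})_\bbQ$ and then must identify this with $R\Gamma_\cris(X/\cS_{PD},\pi)_\bbQ$ via a $\kappa_\pi$-type map---which, as you correctly flag, requires relating the $PD$-envelope of $k^0\hookrightarrow\cS$ with the Eisenstein $PD$-base $\cS_{PD}$ (a $PD$-envelope of $V_1^\sharp\hookrightarrow\cS$). The paper sidesteps this entirely by inserting $X_1$ (the reduction of $X$ modulo $p$) as the intermediate object: since $Y\hookrightarrow X_1$ is a homeomorphic exact closed immersion, their log rigid cohomologies over any base coincide by definition, while on the crystalline side $R\Gamma_\cris(X_1/\cS_{PD})=R\Gamma_\cris(X/\cS_{PD},\pi)$ tautologically. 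Thus the bridge $R\Gamma_\rig(Y/\cS)\xleftarrow{\cong}R\Gamma_\rig(X_1/\cS)\to R\Gamma_\cris(X/\cS_{PD},\pi)_\bbQ$ requires no $\kappa_\pi$ and no comparison of distinct $PD$-envelopes. Your route can be made to work, but the $X_1$ trick removes precisely the obstacle you identified.
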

	
\begin{proof}
The upper square of \eqref{eq: rig cris diag} follows from Proposition \ref{prop: horizontal divisor}.
To give the bottom square, let $Y:=\cX\times_{V^\sharp,j_\pi}k^0$.
Consider the diagram
	$$
	\xymatrix{
	R\Gamma_{\HK}(Y) \ar[d]^\sim \ar[r]^{\psi} & R\Gamma_{\rig}(Y/\cS) \ar[r]^{j^\ast_\pi}\ar[ld]_-{j_0^{\ast}}& R\Gamma_{\rig}(Y/V^\sharp)  \\
	R\Gamma_{\rig}(Y/W^0)\ar[d]^\sim &R\Gamma_{\rig}(X_1/\cS)\ar[u]_-\sim\ar[d] \ar[r]^{j^\ast_\pi} \ar[l]_{j_0^\ast} & R\Gamma_{\rig}(X_1/V^\sharp)\ar[u]_-\sim\ar[d]^\sim \\
	R\Gamma_{\cris}(Y/W^0)_{\bbQ} \ar@{=}[u] \ar@<-0.5mm>[r]_{s_\pi}& R\Gamma_{\cris}(X/\cS_{\PD},\pi)_{\bbQ} \ar[r]^-{j^\ast_\pi} \ar@<-0.5mm>[l]_{j_0^\ast} & R\Gamma_{\cris}(X/V^\sharp)_\bbQ
	}$$
Note that, since $Y\hookrightarrow X_1$ is a homeomorphic exact closed immersion, their log rigid cohomologies coincide by definition.
The middle vertical morphisms from rigid to crystalline pass through log convergent cohomology as constructed in \eqref{eq: rigconv} and \eqref{eq: convcris}.
Note that $R\Gamma_\rig(Y/W^0)\rightarrow R\Gamma_\cris(Y/W^0)_\bbQ$ is a quasi-isomorphism by \cite[Lem.\,5.3]{EY} and \cite[Thm.\,2.36]{Sh2}.

Since the map $\psi$ is given by $u^{[i]}\mapsto 0$ for $i>0$, the composition of the upper horizontal maps gives $\Psi_{\pi,\pi}\colon R\Gamma_{\HK}(Y) \rightarrow R\Gamma_{\rig}(Y/V^\sharp)$.
The composition of the bottom maps give $\Psi_\pi^\cris: R\Gamma_{\HK}^{\cris}(X,\pi)_{\bbQ} \rightarrow R\Gamma_{\cris}(X/V^\sharp)_{\bbQ}$ by construction. 

The compositions
	\begin{align*}
	&R\Gamma_{\cris}(Y/W^0)_\bbQ\xleftarrow{\cong}R\Gamma_{\HK}(Y)\xrightarrow{\psi}R\Gamma_{\rig}(Y/\cS)\xleftarrow{\cong}R\Gamma_{\rig}(X_1/\cS)\rightarrow R\Gamma_{\cris}(X/\cS_{\PD},\pi)_{\bbQ},\\
	&R\Gamma_{\cris}(Y/W^0)_\bbQ\xrightarrow{s_\pi}R\Gamma_{\cris}(X/\cS_{\PD},\pi)_{\bbQ}\rightarrow R\Gamma_{\cris}(X/\cS_{\PD},\pi)_{\bbQ}
	\end{align*}
are both  sections of $j_0^{\ast}\colon R\Gamma_{\cris}(X/\cS_{\PD},\pi)_{\bbQ}\rightarrow R\Gamma_{\cris}(Y/W^0)_\bbQ$ and compatible with the Frobenius actions.
Hence the statement follows by uniqueness of such a section as given in Proposition-Definition \ref{prop-def: cris sections}.
\end{proof}

To finish this section, we compare our construction with Gro\ss{}e-Kl\"onne's construction of a rigid Hyodo--Kato morphism in \cite[\S~3]{GK3}. 
His construction for strictly semistable log schemes over $k^0$ was extended to allow a horizontal divisor in \cite[\S~3.1]{EY}. 
The construction uses an auxiliary simplicial log scheme with boundary over $\cS':=(\Spwf W[s]^\dagger,1\mapsto s)$. 

\begin{definition}
Let $Y$ be a strictly semistable log scheme over $k^0$ (not necessarily proper) with a horizontal divisor $D$, 
and set $U=Y\backslash D$.
Denote by $(V_\bullet^U,P_\bullet^U)$ and $(V^Y_\bullet , P^Y_\bullet)$ the simplicial log schemes with boundary associated to $U$ and $Y$, respectively, as described in \cite[\S~3.1]{EY}. 

Then there is a commutative diagram given in \cite[\S~3.1]{EY}
	$$
	\xymatrix{R\Gamma_\rig(U/W^0)  & R\Gamma_{\rig}((V^U_\bullet,P^U_\bullet)/\cS') \ar[l]_{j^\ast_0}^\sim \ar[r]^{j_\pi^\ast} & R\Gamma_\rig(U/V^\sharp)\\
	R\Gamma_\rig(Y/W^0)  \ar[u]^\sim & R\Gamma_{\rig}((V^Y_\bullet,P^Y_\bullet)/\cS') \ar[l]_{j^\ast_0}^\sim \ar[r]^{j_\pi^\ast} \ar[u]^\sim& R\Gamma_\rig(Y/V^\sharp) \ar[u]^\sim}
	$$
where the maps $j_0^\ast$ and $j_\pi^\ast$ are again induced by $s\mapsto 0$ and $s\mapsto \pi$, respectively.
By \cite[Lem.~3.3, Cor.~3.4]{EY} the vertical maps are quasi-isomorphisms. 	
For the upper row, it was shown in \cite[Thm.~3.1]{GK3} that the left horizontal map $j^\ast_0$ is a quasi-isomorphism
and that the composition $\Psi_\pi^{\GK}:= j_\pi^\ast\circ j_0^{\ast,-1}$ becomes a quasi-isomorphism which depends on $\pi$ after tensoring with $K$. 
It follows from \cite[Lem.~3.3, Cor.~3.4]{EY} that the same is true for the lower row.  
Moreover, it is functorial by \cite[Prop.~3.5]{EY}.
In both cases we refer to the map $\Psi_\pi^{\GK}$ as Gro\ss{}e-Kl\"onne's Hyodo--Kato map.
\end{definition}

\begin{corollary}
Let $X$ be a proper  strictly semistable $V^\sharp$-log scheme 
with a horizontal divisor $D$.
Let $U_X:=X\backslash D$. 
Denote by $\cU$  the weak completion of $U_X$.
For a choice of uniformiser $\pi$ of $V$, 
let $U:=\cU\times_{V^\sharp,j_\pi}k^0$.
There is a commutative diagram
	$$
	\xymatrix{R\Gamma_\HK(\cU,\pi) \ar[d]^\sim \ar[r]^{\Psi_{\pi,\pi}} &  R\Gamma_\dR(\cU) \ar@{=}[d] \\
	R\Gamma_\rig(U/W^0)  \ar[r]^{\Psi_\pi^{\GK}} & R\Gamma_\rig(U/V^\sharp)}
	$$
compatible with Frobenius operators.
\end{corollary}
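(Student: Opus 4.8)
The plan is to factor both composites in the square through the ``cohomology over the open unit disc'' $R\Gamma_\rig(U/\cS)$, with $\cS=\Spwf W\llbracket s\rrbracket$, and then to compare this with the complex over $\cS'=\Spwf W[s]^\dagger$ underlying Gro\ss{}e-Kl\"onne's construction by means of the natural morphism $\iota\colon\cS\to\cS'$, $s\mapsto s$. This $\iota$ realises the open unit disc as an admissible open of the closed unit disc, and it is compatible with the exact closed immersions of $W^0$ (via $s\mapsto 0$) and of $V^\sharp$ (via $s\mapsto\pi$) into both $\cS$ and $\cS'$, as well as with the Frobenius lifts $s\mapsto s^p$.

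First I would unwind the two arrows of the square. Let $\psi\colon R\Gamma_\HK(\cU,\pi)=R\Gamma_\HK(U)\to R\Gamma_\rig(U/\cS)$ denote the natural morphism, locally induced by $\omega^\bullet_{\cZ/W^\varnothing,\bbQ}[u]\to\omega^\bullet_{\cZ/\cS,\bbQ}$, $u^{[i]}\mapsto 0$ for $i>0$. By Corollary~\ref{cor: natural diagram} the left vertical quasi-isomorphism of the corollary equals $j_0^\ast\circ\psi$, where $j_0^\ast\colon R\Gamma_\rig(U/\cS)\to R\Gamma_\rig(U/W^0)$ is specialisation at $s=0$; and by the proof of Proposition~\ref{prop: HK is qis}, since $\log_\pi(\pi)=0$, the map $\Psi_{\pi,\pi}$ equals the composite of $\psi$ with the specialisation $j_\pi^\ast\colon R\Gamma_\rig(U/\cS)\to R\Gamma_\rig(U/V^\sharp)$, the target being identified with $R\Gamma_\dR(\cU)$ through the log rigid datum $\cU$ (legitimate by Proposition~\ref{prop: log rigid coh}). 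Thus, as $\psi$ is a common left factor, the corollary reduces to the identity $\Psi_\pi^{\GK}\circ j_0^\ast\circ\psi=j_\pi^\ast\circ\psi$ in the derived category, compatibly with Frobenius.

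Next I would bring in Gro\ss{}e-Kl\"onne's disc complex $\mathcal G:=R\Gamma_\rig((V^U_\bullet,P^U_\bullet)/\cS')$, for which \cite[Thm.~3.1]{GK} together with \cite[Cor.~3.4]{EY} furnishes specialisations $j_0^{\mathcal G}\colon\mathcal G\xrightarrow{\sim}R\Gamma_\rig(U/W^0)$ and $j_\pi^{\mathcal G}\colon\mathcal G\to R\Gamma_\rig(U/V^\sharp)$ with $\Psi_\pi^{\GK}=j_\pi^{\mathcal G}\circ(j_0^{\mathcal G})^{-1}$. Base changing the simplicial log rigid datum $(V^U_\bullet,P^U_\bullet)$ along $\iota$ yields a simplicial log rigid datum for $U$ over $\cS$ with unchanged special fibre, and since the complexes of continuous differentials relative to $\cS$ and relative to $\cS'$ coincide after this base change (because $\iota$ is strict and kills no continuous forms), one obtains — using independence of the chosen datum, i.e. the with--boundary analogue of Proposition~\ref{prop: log rigid coh} set up in \cite[\S3.1]{EY} — a canonical comparison morphism $\Theta\colon\mathcal G\to R\Gamma_\rig(U/\cS)$. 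Because $\iota$ commutes with the immersions of $W^0$ and of $V^\sharp$ and with the Frobenius lifts, $\Theta$ intertwines $j_0^{\mathcal G}$ with $j_0^\ast$, intertwines $j_\pi^{\mathcal G}$ with $j_\pi^\ast$, and respects Frobenius. Combining this with the description of $R\Gamma_\HK(U)$ as the Gau\ss--Manin--horizontal part of $R\Gamma_\rig(U/\cS)$ (Corollary~\ref{cor: natural diagram}, Lemma~\ref{lem: free module}, and the spectral sequence identifying $N$ with $\nabla_{\mathrm{GM}}$), one identifies $\psi$ with $\Theta\circ c$, where $c:=(j_0^{\mathcal G})^{-1}\circ j_0^\ast\circ\psi\colon R\Gamma_\HK(U)\to\mathcal G$. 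The diagram chase
$$\Psi_\pi^{\GK}\circ j_0^\ast\circ\psi=j_\pi^{\mathcal G}\circ(j_0^{\mathcal G})^{-1}\circ j_0^\ast\circ\psi=j_\pi^{\mathcal G}\circ c=j_\pi^\ast\circ\Theta\circ c=j_\pi^\ast\circ\psi$$
then gives the claim, and Frobenius compatibility follows since every arrow occurring is Frobenius compatible.

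The hard part will be the construction of $\Theta$ and the verification of its two specialisation compatibilities: this means reconciling the tube/with--boundary formalism of Gro\ss{}e-Kl\"onne and of Ertl--Yamada over the $p$-adic (adic) model $\cS'$ of the closed unit disc with the non-adic weak-formal-scheme-over-the-open-disc formalism used in the present paper, and checking that the overconvergence built into $(V^U_\bullet,P^U_\bullet)$ is exactly what makes $j_0^{\mathcal G}$, hence $\Theta\circ c$ and the horizontality used above, behave as stated. This is precisely the role of \cite[\S3.1, Lem.~3.3, Cor.~3.4]{EY}; granting it, the remainder is a routine diagram chase.
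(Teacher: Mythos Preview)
Your route is genuinely different from the paper's. The paper does not attempt a direct rigid-to-rigid comparison: it first invokes Proposition~\ref{prop: rig-cris} to reduce to showing that $\Psi_\pi^{\GK}$ is compatible with the \emph{crystalline} Hyodo--Kato map $\Psi_\pi^\cris$, then builds a large diagram passing through $R\Gamma_\cris(X/\cS_{\PD},\pi)_\bbQ$ and concludes by the uniqueness of a Frobenius-compatible section of $j_0^\ast$ at the crystalline level (Lemma~\ref{lem: unique}). Your idea, by contrast, is to stay entirely on the rigid side and compare over $\cS$ via a morphism $\Theta\colon\mathcal G\to R\Gamma_\rig(U/\cS)$ coming from $\iota\colon\cS\to\cS'$.

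There is, however, a real gap at the step ``one identifies $\psi$ with $\Theta\circ c$''. Your justification appeals to $R\Gamma_\HK(U)$ being ``the Gau\ss--Manin--horizontal part of $R\Gamma_\rig(U/\cS)$'', but this is not the correct characterisation: under the isomorphism of Lemma~\ref{lem: free module} the monodromy $N$ on $H^k_\HK$ is identified with $\nabla_{\mathrm{GM}}$ on $H^k_\rig(U/\cS)$, so the image of $\psi$ is \emph{not} the $\nabla_{\mathrm{GM}}$-horizontal subspace unless $N=0$. What actually pins down the image of $\psi$ inside the free $F\{s\}$-module $H^k_\rig(U/\cS)$ is that it is the unique $\varphi$-stable $F$-lattice mapping isomorphically onto $H^k_\rig(U/W^0)$ under $j_0^\ast$; equivalently, a Frobenius-compatible $F$-linear section of $j_0^\ast$ is unique. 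This is a Dwork-trick statement exactly parallel to Lemma~\ref{lem: unique} (with $F\{s\}$ in place of $\bbQ\otimes R_{\PD}$), and it requires Frobenius to act bijectively on $H^k_\HK(U)$ --- which is where properness enters. If you replace your horizontality sentence by this uniqueness statement and check that $\Theta\circ c$ is Frobenius-compatible (which you do), the chase goes through; as written, the key identification is unsupported. Note that with this fix your argument and the paper's become structurally the same --- both hinge on uniqueness of a Frobenius-compatible section of a $j_0^\ast$ --- only you run it over $F\{s\}$ while the paper runs it over $R_{\PD}$ after descending to crystalline cohomology.
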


\begin{proof}
Note that the left vertical map is the quasi-isomorphism from Corollary \ref{cor: natural diagram}. 
As we have seen in Proposition \ref{prop: rig-cris} our rigid Hyodo--Kato map $\Psi_{\pi,\pi}$ for $\cX$ is compatible with the crystalline one $\Psi_\pi^\cris$ for $X$. 
Hence it suffices to show that there is a commutative diagram
	$$
	\xymatrix{R\Gamma_\rig(U/W^0)  \ar[d]^\sim \ar[r]^{\Psi_\pi^{\GK}} &  R\Gamma_\rig(U/V^\sharp) \ar[d]^\sim \\
	R\Gamma_\HK^\cris(X,\pi)_{\bbQ} \ar[r]^{\Psi_\pi^\cris} & R\Gamma_\cris(X/V^\sharp)_{\bbQ}}
	$$
Indeed, with  $Y:=\cX \times_{V^\sharp,j_\pi}k^0$  for the weak completion $\cX$ of $X$, there is, as discussed in \cite[\S~5.2]{EY},  a commutative diagram
	$$
	\xymatrix{  	& R\Gamma_{\rig}((V^U_\bullet,P^U_\bullet)/\cS') \ar[ld]_{j^\ast_0}^\sim \ar[rd]^{j_\pi^\ast} & \\
	R\Gamma_\rig(U/W^0)  & R\Gamma_{\rig}((V^Y_\bullet, P^Y_\bullet)/\cS') \ar[ld]_{j^\ast_0}^\sim \ar[dr]^{j_\pi^\ast} \ar[u]^\sim \ar[d] &   R\Gamma_\rig(U/V^\sharp)\\
	R\Gamma_\rig(Y/W^0)  \ar[u]^\sim \ar[d]_\sim  & R\Gamma_{\rig}(Y/\cS) \ar[l]_{j^\ast_0} \ar[r]^{j_\pi^\ast} \ar[d] &     R\Gamma_\rig(Y/V^\sharp) \ar[u]^\sim \ar[d]_\sim \\
	R\Gamma_\HK^\cris(X,\pi)_{\bbQ}  \ar@<-.5ex>[r]_-{s_\pi}  & R\Gamma_{\cris}(X/{\cS_{PD}},\pi)_{\bbQ}   \ar@<-.5ex>[l]_-{j_0^\ast} \ar[r]^-{j_\pi^\ast} & R\Gamma_{\cris}(X/V^\sharp)_\bbQ  }
	$$
where the composition of the top maps give $\Psi_\pi^{\GK}: R\Gamma_\rig(U/W^0) \rightarrow R\Gamma_\rig(U/V^\sharp)$ and the composition of the bottom maps give $\Psi_\pi^\cris \colon R\Gamma_\HK^\cris(X,\pi)_{\bbQ}  \rightarrow R\Gamma_{\cris}(X/V^\sharp)_\bbQ$.
Note that we have $R\Gamma_\rig(Y/\cS)=R\Gamma_\rig(Y/\cS')$ since $Y$ is defined over $k^0$.
It is clear from the diagram that the composition 
\begin{align}\label{equ: composition}
R\Gamma_\HK^\cris(X,\pi)_{\bbQ}
\xleftarrow{\sim} R\Gamma_\rig(Y/W^0)
\xrightarrow{\sim} R\Gamma_\rig(U/W^0)
\xleftarrow[j_0^\ast]{\sim} R\Gamma_{\rig}((V^U_\bullet,P^U_\bullet)/\cS')\xleftarrow{\sim}\qquad\qquad\\
\xleftarrow{\sim} R\Gamma_{\rig}((V^Y_\bullet, P^Y_\bullet)/\cS')
\rightarrow R\Gamma_{\rig}(Y/\cS)
\rightarrow  R\Gamma_{\cris}(X/{\cS'_{PD}},\pi)_{\bbQ}\nonumber
\end{align}
is a section of $ R\Gamma_{\cris}(X/{\cS_{PD}},\pi)_{\bbQ} \xrightarrow{j_0^\ast} R\Gamma_\HK^\cris(X,\pi)_{\bbQ}$. 
Note that a priori it doesn't coincide with with $R\Gamma_\HK^\cris(X,\pi)_{\bbQ} \xrightarrow{s_\pi}  R\Gamma_{\cris}(X/{\cS'_{PD}},\pi)_{\bbQ}$. 
However it commutes with Frobenius:
Indeed, we have already discussed that $R\Gamma_{\cris}(X/{\cS_{PD}},\pi)_{\bbQ}$, $R\Gamma_{\rig}(Y/\cS)$, $R\Gamma_\rig(U/W^0)$, $R\Gamma_\rig(Y/W^0)$, and $R\Gamma_\HK^\cris(X,\pi)_{\bbQ}$ all have a Frobenius morphism which is naturally compatible with the morphisms between them. 
Furthermore, because of \cite[Lem.\,5.7]{EY} it is possible to endow $R\Gamma_{\rig}((V^U_\bullet,P^U_\bullet)/\cS')$ and $R\Gamma_{\rig}((V^Y_\bullet, P^Y_\bullet)/\cS') $  with a Frobenius morphism by means of the construction explained in the last paragraph of \cite[\S\,2.3.4]{EY}.
By functoriality properties of the axiomatisation of rigid complexes developed in \cite[\S\,2]{EY} these Frobenius morphisms are clearly compatible with the morphisms in the above diagram and consequently that composition (\ref{equ: composition}) commutes with Frobenius as desired. 
Now we may invoke again the uniqueness of Proposition-Definition \ref{prop-def: cris sections} and conclude that the composition (\ref{equ: composition}) coincides with the section $s_\pi$. 
It follows that the outer square of the above diagram is commutative, 
and in particular that 
$\Psi_\pi^{\GK}$ and $\Psi_\pi^\cris$ are compatible.
\end{proof}

\end{document}